\date{\today}
\newcommand{\f}{_{fuchs}}
\newtheorem{theorem}{Theorem}[section]
\newtheorem{prop}[theorem]{Proposition}
\newtheorem{cor}[theorem]{Corollary}
\newtheorem{lemma}[theorem]{Lemma}
\newtheorem{example}[theorem]{Example}
\newtheorem{definition}[theorem]{Definition}
\newtheorem{rem}[theorem]{Remark}
\date{\today}
\title{Branched projective structures with quasi-Fuchsian
holonomy}
\author{Gabriel Calsamiglia}
\address{Instituto de Matem\'atica, Universidade Federal Fluminense, Rua M\'ario Santos Braga s/n, 24020-140, Niter\'oi, Brazil} \email{gabriel@mat.uff.br}
\author{Bertrand Deroin}
\address{D\'epartement de Math\'ematiques d'Orsay, Universit\'e Paris 11, 91405 Orsay Cedex , France}
\email{bertrand.deroin@math.u-psud.fr}
\author{Stefano Francaviglia}
\address{Dipartimento di Matematica Universit\`a di Bologna, P.zza
Porta S. Donato 5, 40126 Bologna, Italy }
\email{francavi@dm.unibo.it}
\keywords{57M50, 30F35, 53A30, 14H15}
\begin{document}
\begin{abstract} We prove that if $S$ is a closed compact
surface of genus $g\geq 2$, and if $\rho : \pi_1(S)\rightarrow
\mathrm{PSL}(2,\mathbb C)$ is a quasi-Fuchsian representation, then the
deformation space $\mathcal M_{k,\rho}$ of branched projective structures
on $S$ with total branching order $k$  and holonomy $\rho$ is connected, as
soon as $k>0$. Equivalently, two branched projective structures with
the same quasi-Fuchsian holonomy and the same number of branch points are
related by a movement of branch points. In particular grafting annuli are obtained by moving branch points. In the
appendix we give an explicit atlas for $\mathcal M_{k,\rho}$ for non-elementary representations $\rho$. It is
shown to be  a smooth complex manifold modeled on
Hurwitz spaces.
\end{abstract}

\maketitle
\markright{BPS with Fuchsian holonomy \today}

\tableofcontents
\section{Introduction}
A $\mathbb C \mathbb P^1$-structure on a surface is a geometric structure
modeled on the Riemann sphere and its group of holomorphic
automorphisms, identified with $\mathrm{PSL}(2,\mathbb C)$. A chart of
a
$\mathbb C \mathbb P^1$-structure can be developed (i.e.
continued with the use of charts) to a map defined on the universal
cover of the surface,
which is equivariant with respect to a certain representation of the
fundamental group of the surface to $\mathrm{PSL} (2,\mathbb C)$,
called the holonomy. This is well-defined up to composition by inner
automorphisms of $\mathrm{PSL}(2,\mathbb C)$. Such a
$\mathbb{CP}^1$-structure will be referred to as projective structure.

Projective structures were introduced by studying second order ODE's, with
applications to the uniformization theorem,
which states that the universal cover of every Riemann surface is
biholomorphic to either $\mathbb{CP}^1$, $\mathbb C$, or $\mathbb
H^2$ corresponding to whether the Euler characteristic is positive, zero or negative. The
composition of the said biholomorphism with the natural inclusion in
$\mathbb{CP}^1$ defines a developing map of a projective structure on
the topological surface whose holonomy representation is the
identification of the fundamental group with a subgroup of
automorphisms of the uniformized covering map, which lies in
$\mathrm{PSL}(2,\mathbb C)$ in either case. In particular, hyperbolic
structures on
closed surfaces are examples of $\mathbb{CP}^1$-structures: the
developing map takes its values in the upper-half plane model of $\mathbb
H^2$ --- viewed as a subset of $\mathbb{CP}^1$ --- and the holonomy in
a discrete co-compact subgroup of $\mathrm{PSL}(2,\mathbb
R)<\mathrm{PSL}(2,\mathbb C)$. In general the holonomy of a
$\mathbb{CP}^1$-structure on a closed surface $S$ is said to be {\em
Fuchsian} if it
is faithful and its image is conjugated to a discrete co-compact
subgroup of $\mathrm{PSL}(2,\mathbb{R})$. For such a
representation we can always consider the corresponding hyperbolic
structure on $S$ which is called the {\em uniformizing} structure. A representation is
called quasi-Fuchsian if it is topologically conjugated to a Fuchsian
representation when acting on the Riemann sphere.

In the 70's, some exotic $\mathbb C \mathbb P^1$-structures with
quasi-Fuchsian holonomy were discovered (see
\cite{Hej},\cite{Mas69},\cite{SuThu}). More precisely, given a
$\mathbb C\mathbb P^1$-structure, there is a surgery called grafting,
which enables to produce a different projective structure without changing its
holonomy. A grafting of the uniformizing structure along a simple closed
curve $\gamma$ is the result of cutting $S$ along $\gamma$ and
gluing back a flat annulus of an appropriate modulus.
In~\cite{Goldman1}, Goldman showed that any projective
structure with quasi-Fuchsian holonomy is obtained by grafting the
uniformizing structure along a
multi-curve .

In particular, this implies that the set of projective structures with the same
quasi-Fuchsian holonomy is discrete.
Baba recently extended Goldman's result to the case where
the holonomy is a generic representation in
$\mathrm{Hom}(\pi_1,\mathrm{PSL}(2,\mathbb C))$, see~\cite{Baba}.

In this paper, we are interested in branched projective structures on closed orientable surfaces. These are given by atlas where
 local charts are finite branched coverings and transition maps lie in
$\mathrm{PSL}(2,\mathbb C)$. Such structures arise  naturally in many
contexts. For instance in the
theories of conical $\mathbb
H^2$-structures, of branched coverings, of locally flat projective
connections or of transversally projective holomorphic foliations (more
details are given later in this introduction and in
sections~\ref{s:definition_and_preliminaries} and~\ref{s:moduli}). As
in the unbranched case, a chart can be continued to define a
developing map on the universal cover of the surface equivariant with
respect to a holonomy representation of the fundamental group of the
surface in $\textrm{PSL}(2,\mathbb{C})$. In the spririt of Goldman's
theorem we give an explicit construction of any branched projective
structure with quasi-Fuchsian holonomy by elementary surgeries that
preserve the holonomy. These new surgeries can be varied continuously and allow to define a topology on the set $\mathcal M_{k,\rho}$ of branched projective
structures with fixed holonomy $\rho : \pi_1(S) \rightarrow
\mathrm{PSL}(2,\mathbb C)$ and total branching order $k$ on a
marked surface $S$ of genus $g$. We show that, unlike in the unbranched case, for quasi-Fuchsian $\rho$ and $k>0$, the deformation space is connected. 

\begin{theorem}[Main result] \label{t:main} Let $S$ be a compact
oriented closed surface. Every branched projective structure with
quasi-Fuchsian holonomy on $S$ having at least one branch point is obtained
from a uniformizing structure by bubblings and moving
branch points. Equivalently, if $k>0$ and $\rho : \pi_1(S) \rightarrow
\mathrm{PSL}(2,\mathbb C)$ is a quasi-Fuchsian representation, then the
deformation space $\mathcal{M}_{k,\rho}$  is
non-empty if and only if $k$ is even, and in this case it is connected
if $k>0$.
\end{theorem}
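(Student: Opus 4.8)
The plan is to treat the two assertions of Theorem~\ref{t:main} in turn, first disposing of the parity/non-emptiness statement and then proving connectedness by induction on $k$, using the two holonomy-preserving surgeries of the statement: bubbling and moving branch points. For the parity statement I would pass from a branched projective structure to the flat $\mathbb{CP}^1$-bundle $E_\rho$ associated with $\rho$. The developing map defines a section $s$ of $E_\rho$ whose tangencies with the horizontal foliation are exactly the branch points, counted with multiplicity, and an index computation expresses $k$ as the difference between the self-intersection number $s\cdot s$ and $\chi(S)=2-2g$. Since the self-intersection number of a section of an oriented sphere bundle over a surface is well-defined modulo $2$ (it is a topological invariant of $E_\rho$, independent of the section), and since $\chi(S)$ is even, $k$ has a fixed parity; the unbranched uniformizing structure, which exists for every quasi-Fuchsian $\rho$, realizes $k=0$, so that parity is even. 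Existence for every even $k$ then follows by bubbling the uniformizing structure $k/2$ times, each bubbling raising the branching order by exactly $2$.

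For connectedness I would use the manifold structure on $\mathcal{M}_{k,\rho}$ from the appendix, so that ``connected'' may be read as ``path-connected'', a path being a movement of branch points, and I would isolate three lemmas. First, a \emph{movement lemma}: the arc and auxiliary data defining a bubbling can be varied continuously, so that bubblings performed at different places give structures in the same component of $\mathcal{M}_{k,\rho}$, and the bubbling operation descends to a continuous map on connected components. Second, a \emph{visibility (de-bubbling) lemma}: every structure in $\mathcal{M}_{k,\rho}$ with $k\geq 2$ can, after a movement of branch points, be exhibited as the bubbling of a structure in $\mathcal{M}_{k-2,\rho}$; the idea is to drag one branch point along an embedded arc until the developing map folds back across an embedded bigon, revealing a standard bubble that can be removed. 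Third, a \emph{grafting lemma}: moving a branch point once around a simple closed curve $\gamma$ transforms a structure into its $2\pi$-grafting along $\gamma$, which is the precise meaning of the assertion that grafting annuli are obtained by moving branch points.

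Granting these, the induction runs as follows. The step from $k$ to $k+2$ is formal: by the visibility lemma every element of $\mathcal{M}_{k+2,\rho}$ is joined to the bubbling of some element of $\mathcal{M}_{k,\rho}$, which is connected by hypothesis, and the movement lemma ensures that bubbling carries a connected set to a connected set, so $\mathcal{M}_{k+2,\rho}$ is connected. The base case $k=2$ is where the quasi-Fuchsian hypothesis enters decisively. The visibility lemma reduces any structure in $\mathcal{M}_{2,\rho}$ to the bubbling of an unbranched structure, and Goldman's theorem, quoted in the introduction, identifies the unbranched structures with holonomy $\rho$ as the integral graftings of the uniformizing structure along multicurves. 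The grafting lemma then collapses this a priori discrete family: after bubbling, each grafting becomes a movement of a branch point around the corresponding multicurve, so all the bubbled graftings are joined to the single bubbled uniformizing structure and $\mathcal{M}_{2,\rho}$ is connected.

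I expect the grafting lemma to be the main obstacle, since it must convert the insertion of an entire flat annulus, a discrete holonomy-preserving cut-and-paste in the unbranched theory, into a continuous movement of finitely many branch points. The construction I would attempt is to open a slit from a branch point transverse to $\gamma$, use the extra $\mathbb{CP}^1$-sheet supplied by a neighbouring bubble to accommodate the monodromy of $\gamma$, which for quasi-Fuchsian $\rho$ is loxodromic with an explicit pair of fixed points on $\mathbb{CP}^1$, and then transport the branch point once around $\gamma$ so that the swept region develops precisely as the grafting cylinder. The delicate point is to verify that throughout this movement the developing map stays equivariant for the same $\rho$ and that no spurious branching is created, so that the path genuinely remains inside $\mathcal{M}_{k,\rho}$; this is exactly where the explicit geometry of quasi-Fuchsian holonomy, rather than that of a general representation, is needed.
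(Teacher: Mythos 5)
Your high-level skeleton coincides with the paper's: parity via an Euler-class count, then induction on $k$ where the step $k\to k+2$ is formal once every structure can be exhibited as a bubbling, and the base case $k=2$ combines Goldman's theorem with the fact that a grafting annulus can be produced by bubbling and moving branch points. The parity argument via the self-intersection of the developing section is a legitimate variant of the paper's Second Index Formula, and the reduction of quasi-Fuchsian to Fuchsian (which the paper does via a topological conjugacy of the sphere actions, Proposition~\ref{p:homeomorphism}) is a minor omission. The real problem is your ``visibility (de-bubbling) lemma''. This is not a lemma one can dispatch by ``dragging one branch point along an embedded arc until the developing map folds back across an embedded bigon'': it is the entire technical content of the paper, occupying Sections~\ref{s:decomposition} through~\ref{s:fuchspositivesing}. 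To reveal a bubble one needs the decomposition of $S$ into positive and negative parts along the real curve, the index formul\ae{} constraining how many branch points each component can carry, the analysis of peripheral geodesics of the conical hyperbolic metrics (including the possibility that they pass several times through a branch point, and the exceptional ``triangle'' configuration), the degeneration dichotomy when colliding branch points produces a nodal curve, and the criteria guaranteeing that twin paths stay embedded at all (Lemma~\ref{l:embedded_twins}), which is precisely where Fuchsian holonomy is used and precisely what fails for general representations. None of this is visible in your sketch, and without it the inductive step has no content.

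A second, more localized error: you place the essential use of the quasi-Fuchsian hypothesis in the grafting lemma, claiming that the explicit fixed-point geometry of a loxodromic element is needed to realize a grafting by moving branch points. The paper proves the opposite: Theorem~\ref{l:graftbub} (grafting equals a bubbling followed by a debubbling along a graftable curve) holds with \emph{no} restriction on the holonomy, and its proof is an explicit, comparatively easy cut-and-paste in an annular neighborhood of the curve (Section~\ref{s:graftbub}). The quasi-Fuchsian hypothesis is instead what makes your ``visibility lemma'' true. So the place you flag as the main obstacle is not the obstacle, and the place you treat as routine is where the proof actually lives.
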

Let us define the elementary surgeries and the topology on
$\mathcal{M}_{k,\rho}$ properly.

{\em Bubbling} a given branched projective structure consists in
cutting the surface along an embedded arc $\gamma$ whose image by the
developing map
is an embedded arc $\eta\subset \mathbb{CP}^ 1$, and gluing the disc
$\mathbb{CP}^1\setminus \eta$ endowed with the canonical
projective structure.
Obviously, the topology of the surface does not change nor does the
holonomy representation. At the endpoints of $\gamma$ the new branched
projective structure has two branch points.
By bubbling several arcs we can produce examples of branched
projective structures with any even number of simple branch points.

{\em Moving a branch point} is a local surgery that allows to change
the position of a branch point, collapse two or more branch points or
split a non-simple
branch point into several branch points of lower branching order. In
either case the holonomy of the resulting projective structure remains
fixed and the total order of the branching divisor too. This surgery
is a particular case of a Schiffer  variation and allows to
understand the local topology of $\mathcal{M}_{k,\rho}$.

Gallo-Kapovich-Marden~\cite[Problem 12.1.2, p. 700]{GKM} asked whether
any couple of branched projective structures with the same holonomy
are related by a sequence of elementary operations: grafting, degrafting,
bubbling and debubbling. For
quasi-Fuchsian representations Theorem~\ref{t:main} gives a positive answer
by replacing the elementary operations by: bubbling, debubbling and moving
branch points. The connectedness of $\mathcal{M}_{k,\rho}$  and a continuity argument shows that the answer to their original question
is positive for $k=2$. We believe that the argument generalizes for bigger $k$.

It is worth pointing out that in full generality the spaces
$\mathcal{M}_{k,\rho}$ are not necessarily connected. For instance,
consider $\rho$ to be the holonomy of a hyperbolic metric with two
conical points of angle $4\pi$ on a compact surface of genus bigger
than two. It can be thought as a branched projective structure whose
developing map has image in $\mathbb{H}^2$. On the other hand we can
construct examples with the same holonomy and branching order by doing
a bubbling to a $\mathbb{CP}^1$-structure with holonomy $\rho$ --which exists by applying \cite{GKM}-- and
this time the developing map is surjective onto $\mathbb{CP}^1$. Since
this last property is stable under moving branch points the two
projective structures lie in different connected components of
$\mathcal{M}_{2,\rho}$ (see \cite{Tan}, \cite{Mat1} and \cite{Mat2} for related arguments).

Another interesting example is the case where $\rho$ is the trivial
representation. The deformation spaces $\mathcal M_{k,\rho}$ are then the
so-called Hurwitz spaces, namely the moduli spaces of branched
coverings over $\mathbb C \mathbb P^1$. Since Clebsch and Hurwitz we
know that these spaces are connected (see for instance~\cite{HsM} and the more recent generalizations in \cite{LiuO}).

Let us focus on the surgery operations that preserve the holonomy defined so
far. Remark that bubbling adds two branch points, moving branch points
does not change the total branching order, and grafting does not
involve branch points at all. However, these surgeries relate to each
other in interesting ways. Simple examples of such relations can be easily produced: a bubbling followed by local movement of
branch points can be still interpreted as a bubbling, $k$ consecutive
bubblings are related by moving branch points independently of the
order and arcs where we bubble (see Corollaries \ref{c:bc} and
\ref{lemmaunico}). One of the most striking relations between
bubbling and grafting is:

\begin{theorem}\label{l:graftbub}
Over a given branched projective structure a bubbling followed by a
debubbling can produce any grafting on a simple closed curve.
\end{theorem}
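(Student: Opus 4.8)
The plan is to realise the grafting as a projective sphere that is attached along one arc and then detached along a translated arc, the net surgery being the insertion of the region swept between the two arcs. Fix the simple closed curve $\gamma$ and set $g=\rho(\gamma)$, a loxodromic element with fixed points $p_-,p_+\in\mathbb{CP}^1$. Recall that grafting along $\gamma$ cuts the surface along $\gamma$ and inserts the projective cylinder attached to $g$; in the developed picture this cylinder is built from the crescent $\mathcal B\subset\mathbb{CP}^1$ bounded by an embedded arc $\eta$ from $p_-$ to $p_+$ and its image $g\eta$, the two sides $\eta$ and $g\eta$ being glued to the two banks of $\gamma$ by the holonomy $g$. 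Different choices of $\eta$ (equivalently of $\mathcal B$) produce the different graftings along $\gamma$, so it suffices to produce each such insertion by a bubbling followed by a debubbling.

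First I would perform the bubbling. Choose an embedded arc $a$ running alongside $\gamma$ whose developing image is the arc $\eta$; bubbling along $a$ attaches to the surface, along the slit $\eta$, an entire projective sphere $\Sigma\cong\mathbb{CP}^1$, and creates two simple branch points developing to the endpoints $p_-,p_+$ of $\eta$. The point to exploit is that $\Sigma$ carries a complete $\mathbb{CP}^1$-structure, so it may be cut open along any embedded arc with embedded developed image, not only along $\eta$. In particular I would re-slit $\Sigma$ along the arc $b$ developing to $g\eta$; since $g\in\mathrm{PSL}(2,\mathbb C)$, the arc $g\eta$ is again embedded, and loxodromicity of $g$ (distinct fixed points $p_\pm$, whence an embedded crescent $\mathcal B$) guarantees that $\eta$ and $g\eta$ bound the crescent cleanly.

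Next I would debubble along $b$. The composite operation --- attach $\Sigma$ along the slit $\eta$, then detach it along the slit $g\eta$ --- has the effect of cutting the surface along $\eta$ and resealing it along $g\eta$, the part of $\Sigma$ trapped between the two slits, namely the crescent $\mathcal B$, remaining inserted in the surface. Because crossing from the $\eta$-side to the $g\eta$-side of $\Sigma$ realises the holonomy $g$, the two sides of the inserted crescent are glued to the banks of $\gamma$ exactly by $g$; thus the inserted region is precisely the grafting cylinder of $g$ built from $\mathcal B$, and the composite is the grafting along $\gamma$. Letting $\eta$ vary over the admissible arcs gives every grafting on $\gamma$.

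The main obstacle is the legitimacy and bookkeeping of the debubbling step: one must check that, after $\Sigma$ has been attached along $\eta$, the arc $b$ developing to $g\eta$ genuinely bounds on the correct side an embedded copy of $\mathbb{CP}^1$ minus an arc --- so that a debubbling is available --- and that this copy is glued back consistently with the holonomy identification along $\gamma$. Equivalently, one must reconcile the re-slicing of the abstract sphere $\Sigma$ with the holonomy identification of the two banks of $\gamma$, and verify that the resulting inserted crescent, with its side-gluings, matches the grafting cylinder on the nose. Establishing embeddedness of $\eta$, $g\eta$ and of the arcs $a,b$, where the loxodromic normal form of $g$ is used, is the remaining technical ingredient.
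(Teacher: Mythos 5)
Your high-level strategy --- bubble, then debubble along a different slit so that the region left behind is the grafting annulus --- is indeed the idea of the paper's proof, but the specific realization you propose breaks down at the very first step. You ask for an embedded arc $a$ in $S$ whose developed image is an arc $\eta$ joining the two fixed points $p_-,p_+$ of $g=\rho(\gamma)$, so that the two branch points created by the bubbling develop to $p_\pm$. Such an arc need not exist: the fixed points of $g$ need not lie in the image of the developing map at all. For the uniformizing structure (the case one must certainly handle, since every simple closed curve there is graftable) one has $D(\widetilde S)=\mathbb H^2$ while $p_\pm\in\mathbb{RP}^1$, so no point of $S$ develops to $p_\pm$ and no bubbling of the kind you describe can be performed. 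Relatedly, your description of the grafting annulus is off: the crescent between $\eta$ and $g\eta$ becomes the grafting annulus only after $\eta$ is glued to $g\eta$ by $g$ (it is a fundamental domain for $g$ acting on $\mathbb{CP}^1\setminus\overline{D(\widetilde\gamma)}$); its sides are not glued to the banks of $\gamma$, which develop onto $D(\widetilde\gamma)$, and the grafting along $\gamma$ is unique rather than parametrized by choices of $\eta$.

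The second, and deeper, gap is the one you yourself flag as ``the main obstacle'': the debubbling step is exactly where all the content lies, and ``re-slitting $\Sigma$ along $g\eta$'' is not a debubbling of the new structure. A debubbling requires a pair of embedded \emph{twin} paths joining the two branch points and bounding a disc projectively equivalent to $\mathbb{CP}^1$ minus an arc; since the developing map is injective on the bubble $\Sigma$, the twin of an arc of $\Sigma$ developing to $g\eta$ necessarily leaves $\Sigma$ and travels through the original surface, so both the existence and the embeddedness of the required pair have to be established there. Moreover a surgery supported near a single transversal arc cannot by itself cut the surface along the whole of $\gamma$, which the grafting requires. This is precisely what the paper's construction is engineered to achieve: it bubbles along a sub-arc $[p,q]$ of $\gamma$ itself (so the branch points develop to interior points of $D(\widetilde\gamma)$, not to the fixed points), and then exhibits the second bubble as bounded by the concatenations $\beta\star\eta_0\star\alpha_0$ and $\beta_1\star\eta_1\star\alpha$, built from arcs $\alpha,\beta$ pushed off $\gamma$ on either side together with an arc $\eta$ chosen in the quotient annulus $(\mathbb{CP}^1\setminus\overline{D(\widetilde\gamma)})/\langle\rho(\gamma)\rangle$; after debubbling, the leftover region is bounded by two parallel closed curves developing onto $D(\widetilde\gamma)$ and is checked to carry the projective structure of the grafting annulus. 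None of this bookkeeping is optional, and your proposal does not supply a workable substitute for it.
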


It immediately implies one of the key pieces for the proof of Theorem \ref{t:main}, namely that by moving branch points we can produce any grafting annulus. We point out that there are no restrictions on the holonomy representation in Theorem \ref{l:graftbub}.

As will be explained in next subsection, our initial motivation was to study holomorphic curves of general type in a class of non-K\" ahler
threefolds, and the problem led us to a question on the existence of rational curves in $\mathcal{M}_{k,\rho}$. Tan observed (in \cite{Tan}) that each $\mathcal{M}_{k,\rho}$ admits a complex structure. However, the obvious generalization of the complex structure defined in the  absence of branch points presents some subtleties that we want to point out.

It is well known that the space of unbranched projective structures on a given
compact orientable surface has a natural complex structure by using
the Schwarzian derivative of developing maps. In fact it is an affine
bundle over Teichm\" uller space whose fibers are affine spaces over
the vector space of holomorphic quadratic differentials. Its direct
generalization to branched projective structures does not have such a
regular structure. For a branched projective structure we can still
define its underlying complex structure and determine a point in
Teichm\" uller space.  At a branch point of the developing map we can
calculate the Schwarzian derivative with respect to the uniformizing
coordinate of the complex structure. It presents a pole of order two
regardless of the order of branching. In fact it is the coefficient of
the lowest term in the Laurent series that gives the order of
branching.  For functions with this type of development there are even
some extra algebraic conditions on the coefficients to be satisfied to
guarantee that the inversion of the Schwarzian operator produces a
holomorphic germ (as opposed to a pole or logarithmic pole). When we
want to allow to  collapse two different branch points we have a
discontinuity in the lowest coefficient of the series (see the
Appendix for more details). The spaces $\mathcal{M}_{k,\rho}$ are
subspaces of this ``singular complex space'' bundle. Nevertheless, in the
spirit of the topology defined by moving branch points, the spaces
$\mathcal{M}_{k,\rho}$ --where $\rho$ is non-elementary-- admit a
natural complex {\em manifold} structure of dimension $k$. The subject
is discussed in detail for future reference in the Appendix.

\subsection{Additional remarks and open problems} \label{ss:problems}

Determining all  components of
$\mathcal M_{k,\rho}$ seems interesting in general. In some cases we can identify special components. For instance, when the holonomy representation $\rho$ is purely loxodromic, all branched projective structures obtained by bubbling (unbranched) $\mathbb{CP}^1$-structures with the given holonomy, belong to the same connected component. Indeed, by Baba's theorem \cite{Baba}, Theorem \ref{l:graftbub}, and Corollary \ref{c:bc} we can join any pair of such structures by a movement of branch points. As was said before, sometimes it is not the unique connected component.

The next challenging problem is to understand the higher
homology/homo\-topy groups of the deformation spaces $\mathcal M_{k,\rho}$
when $\rho$ is quasi-Fuchsian. These spaces are all homeomorphic if we  fix the genus of the underlying surface and  $k$ (see Proposition \ref{p:homeomorphism}).
The understanding of the second homotopy group of $\mathcal
M_{k,\rho}$ has a strong relation with
 monodromies of linear differential equations on
curves and more precisely, to the Riemann Hilbert problem. Namely,
consider a differential equation of the form
\begin{equation}\label{eq:EDO} dv = \omega \cdot v \end{equation}
where $S$ is a complex algebraic curve, $\omega$ is a given $1$-form over $S$ with
values in the Lie algebra $sl (2,\mathbb C)$, and $v: S \rightarrow
\mathbb C^2$ is a holomorphic map. The Riemann-Hilbert problem
consists in characterizing the representations arising as the monodromy
of the solutions of an equation of type~\eqref{eq:EDO}. For instance, it is not known
whether a non-trivial real monodromy is possible.

If $g$ is the genus of $S$ and $\rho$ is the monodromy of~\eqref{eq:EDO}, then the deformation space
$\mathcal M_{2g-2,\rho}$ has a non trivial second homotopy
group. This is because for each initial value, the
solution $v$  of \eqref{eq:EDO} defines a branched projective structure on $S$ with
monodromy $\rho$, whose total branching order is easily seen to be
$2g-2$. The resulting rational curve in $\mathcal{M}_{k,\rho}$ projects to a rational curve in the moduli space of branched projective structures, whose homological class is non-trivial, since the moduli space is K\"ahler. Hence, proving that $\mathcal M_{2g-2,\rho}$
has trivial second homotopy group would prove that $\rho$ does not
appear as the monodromy of an equation of type~\eqref{eq:EDO}.

This problem is also related to the study of holomorphic curves in the non algebraic
manifolds $\Gamma \backslash \mathrm{SL}(2,\mathbb C)$ where $\Gamma$ is a
lattice in $\mathrm{SL}(2,\mathbb C)$. This space can be thought as the space
of orthonormal frames on a hyperbolic $3$-manifold (see \cite{Ghys}). If we have a solution of
\eqref{eq:EDO} whose monodromy lies in a lattice $\Gamma$ of $\mathrm{SL}(2,\mathbb C)$,
then the matrix formed by two independent solutions of~\eqref{eq:EDO}
defines a curve isomorphic to $S$ in the quotient. We mention here that Huckleberry and Margulis
proved that there is no complex hypersurface in such a complex manifold, see~\cite{HM}.

More generally, one could ask whether $\mathcal M_{k,\rho}$ is a
$K(\pi, 1)$ when $\rho$ is quasi-Fuchsian. This problem can be
compared to a problem of Kontsevich-Zorich on the topology of
connected components of the moduli space of translation surfaces
(which are particular branched projective structures), see~\cite{KoZo}
and the list of problems~\cite{Farbjuly}.

\subsection{Structure of the paper}
After introducing the basic concepts and tools for branched projective structures in section \ref{s:definition_and_preliminaries}, we analyze some special properties of those having Fuchsian holonomy in  Sections \ref{s:decomposition} and \ref{s:index_formula}. Then we proceed to prove Theorem \ref{l:graftbub}.

The proof of the main theorem is carried first under the hypothesis of Fuchsian holonomy. To generalize to quasi-Fuchsian representations, we prove that the space $\mathcal{M}_{k,\rho}$ is homeomorphic to some $\mathcal{M}_{k,\rho'}$ where $\rho'$ is Fuchsian (see Proposition \ref{p:homeomorphism}).

After Theorem  \ref{l:graftbub} the proof reduces to an induction argument that shows that, after moving branch points of a given branched projective structure, it coincides with a finite number of bubblings on a (possibly exotic)
$\mathbb{CP}^1$-structure. This argument takes up most of the paper and we have split it into different steps in Sections \ref{ap:wecanmove} to \ref{s:proofoftheorem}.
 As we mentioned before, there is an Appendix
where we describe the complex structure of the
deformation space $\mathcal M_{k,\rho}$, providing an explicit atlas modeled on Hurwitz
spaces. Other parameterisations of $\mathcal M_{k,\rho}$ are also
discussed.

Let us comment further on the details of the inductive argument. What
we prove is that given a branched projective structure with Fuchsian
holonomy, we can move the branch points so that the structure can be
debubbled. Since debubbling decreases the number of branch points by
$2$, after a finite number of debubblings we find an unbranched
projective structure, hence it is a grafting over a multi-curve of the uniformizing structure by
Goldman's theorem.

We use the point of view of Faltings (\cite{Fal}) and Goldman (\cite{Goldman1}), that is, for a branched projective structure with Fuchsian holonomy we look at
the decomposition of the surface obtained as the pull-back of the
$\mathrm{PSL}(2,\mathbb R)$-invariant decomposition of the Riemann
sphere $\mathbb C\mathbb P^1 = \mathbb H^+ \cup \mathbb R \mathbb P^1
\cup \mathbb H^-$, where $\mathbb H^\pm$ are the upper and lower half
planes. The components of the positive and negative parts
inherit a branched hyperbolic structure, i.e. a conical
hyperbolic metric. Coarse properties of these metrics are explained in
Section~\ref{s:decomposition}, where peripheral geodesics and
peripheral annuli are defined. Most of the work consists in
understanding the geometry of these components in detail, especially that of the peripheral geodesics.

Some topological invariants of the decomposition in positive and negative components
are described by an index formula which we prove by closely following the
ideas of Goldman's thesis (see Section~\ref{s:index_formula}).

To begin moving branch points, we first need to know how and where one can
move them. Sections~\ref{ap:wecanmove} and~\ref{s:deg} provide
sufficient conditions to move branch points.
In particular, in Section~\ref{s:deg}  we deal with possible degenerations to
nodal curves when two branch points collide.

The next step of the proof is  to reduce to the case where
all the branch points belong the positive part (see
Section~\ref{s:moving_positive}). The index
formula then tells us that  there
exist some negative discs isomorphic to a hyperbolic plane.

After that we are able to prove that the peripheral geodesic of the
juxtaposed component of
some negative disc, has a simple topology, namely, it
is a bouquet of at most three circles. This is done by moving the
branch points belonging to a positive component to a
single branch point (see Section~\ref{s:fuchspositivesing}). We then invoke a result proved in
Section~\ref{s:debubbling} by a direct case by case analysis,
which says that the structure can be debubbled.

One of the cases we have to deal with is a particular configuration  that
we called the ``triangles''. They constitute an especially interesting instance and we discuss an example in detail in
subsection~\ref{ss:_triangle}.

The main technical tool that is used along the way is the notion of
embedded twin paths for a branched projective structures. These allow us to move in each component of the deformation space of
branched projective structures.

\subsection{Acknowledgments}

 We are pleased to thank Shinpei Baba, Francesco Bonsante, Bill Goldman, Misha Kapovich, Cyril Lecuire, Samuel Leli\`evre, Frank Loray, Peter Makienko, Luca Migliorini, Gabriele Mondello, Joan Porti and Ser Peow Tan for interesting discussions along the elaboration of the paper.

We are grateful to the following institutions for the very nice
working conditions provided: CRM Barcelona, Institut Henri Poincar\'e,
Mittag-Leffler Institute, Orsay, Pisa University,
Universidade Federal Fluminense (UFF).

G. Calsamiglia's research is supported by CNPq/FAPERJ and CAPES-Mathamsud; B. Deroin's by the ANR projects: 08-JCJC-0130-01,
09-BLAN-0116, and received support from the Brazil - France
cooperation agreement.  S. Francaviglia's research received support from UFF.

\section{Definitions and preliminaries}
\label{s:definition_and_preliminaries}

\subsection{Branched projective structures (BPS)} For
$g\geq2$ let $\Gamma_g$ be a group isomorphic to the fundamental group
of a closed surface of genus $g$. A marked surface of genus $g$ is an
oriented closed surface $S$ of genus $g$ together with the data of a universal
cover $\pi: \widetilde{S} \rightarrow S$ and an identification of $\Gamma_g$
with the covering group of $\pi$.

\begin{definition} A {\bf branched projective structure} (in short
BPS) on a marked surface $S$ is a maximal atlas whose charts are
finite-sheeted, orientation preserving, branched coverings over open
subsets of $\mathbb{CP}^1$, and such that the transition functions belong
to $\mathrm{PSL}(2,\mathbb C)$. We identify two structures if there is a {\bf
projective} (in local charts) diffeomorphism which lifts to a $\Gamma_g$-equivariant diffeomorphism between the universal covers.\end{definition}

A BPS induces a complex structure and thus angles on $S$.  Unbranched
points are called {\bf regular}, the total angle around them
is $2\pi$.  The cone-angle around branch points is $2\pi$ times the
branch-order.

Given a BPS on a marked surface $S$, every local chart can be extended
to a projective map $D:\widetilde S\to \mathbb{CP}^1$, which is
equivariant w.r.t. a representation $\rho:\pi_1(S)\to \mathrm{PSL}(2,\mathbb
C)$:
$$D(\gamma x)=\rho(\gamma)D(x)
\qquad\forall x\in\widetilde S,\gamma\in\pi_1(S).$$

The map $D$ is well-defined up to left-composition by elements of
$\mathrm{PSL}(2,\mathbb C)$. Any representative of its $\mathrm{PSL}(2,\mathbb C)$-left
class is called a {\bf developing map} for the structure and the
representation $\rho$ is called the {\bf holonomy} of the developing
map. If $D_1$ and $D_2=\varphi\circ D_1$ are two developing maps for
the same structure, then the corresponding representations are related
by $$\rho_2=\varphi \rho_1\varphi^{-1}.$$ The conjugacy class of the
holonomy representation is called the holonomy of the structure.  Note
that if the holonomy has trivial centraliser-- for intance if its image
is a non-elementary group-- then once a representative in the conjugacy
class of the holonomy has been fixed, there is only one developing map
for the structure with that holonomy.

In the present paper we are interested in studying projective
structures having a fixed holonomy with some prescribed properties. In
particular we will treat the Fuchsian case. In the literature a Fuchsian group is a discrete subgroup of ${\rm PSL}(2,\mathbb R)$.

\begin{definition} Let $S$ be an oriented closed surface and $\rho:\pi_1(S)\to {\rm PSL}(2,\mathbb C)$ a representation. We say $\rho$ is {\bf Fuchsian} if it is {\bf faithful},  its image is conjugated to a
discrete subgroup $\Gamma$ of ${\rm PSL}(2,\mathbb R)$ with {\bf no parabolic
nor elliptic} elements other than the identity and there exists a $\rho$-equivariant diffeomorphism between $\widetilde{S}$ and $\mathbb H^2$ preserving the orientation.
\end{definition}

\subsection{Examples} \label{ss:examples}

The first obvious examples are complete {\bf
hyperbolic structures}: under the natural inclusion $\textrm{Aut}(\mathbb {H}^2)\hookrightarrow\textrm{Aut}(\mathbb C\mathbb P^1)$, any hyperbolic structure on a closed
surface can be considered as a $\mathbb C\mathbb P^1$-structure,
having no branch points and Fuchsian holonomy.

\begin{definition}[Uniformizing structures] Let $S$ be a closed surface of
genus at least two, and $\rho:\pi_1(S)\to {\rm PSL}(2,\mathbb C)$
be a Fuchsian representation. The {\bf uniformizing structure} on
$S$ is the projective structure induced by the hyperbolic metric on
$S$ with holonomy $\rho$.
\end{definition}

Next, we have {\bf branched coverings}. Let $S$ be a compact
hyperbolic surface and $S_1\rightarrow S$ be a branched covering. By
pulling back the atlas of the uniformizing structure of $S$ we get a
branched projective structure on $S_1$. In general the holonomy of
$S_1$ is not Fuchsian.
\newline

More interesting examples are produced by considering holomorphic
singular codimension one {\bf transversely projective foliations} on
complex manifolds. Such foliations satisfy that the changes of
coordinates of the foliated charts can be written as $(x,z)\mapsto
(h(x,z),\varphi (z))$ for some
$\varphi\in\textrm{PSL}(2,\mathbb{C})$. The foliated charts of a
transversely projective foliation $\mathcal{F}$ on a manifold $M$
induce a branched projective structure on any Riemann surface
$S\subset M$ that avoids the singular set of $\mathcal{F}$ and is
generically transverse to $\mathcal{F}$. It suffices to restrict the
local projections $(x,z)\mapsto z$ to $S$. At the points of tangency
between $S$ and $\mathcal{F}$ we obtain branch points for the induced
BPS on $S$. Transversely projective foliations have been extensively
studied and some accouts can be found in \cite{LoPe}, \cite{Sca} and \cite{Tou}. A particularly
interesting and important family of examples are regular holomorphic
foliations on $\mathbb{CP}^1$-bundles $B\rightarrow S$ over a Riemann
surface $S$ that are transverse to the $\mathbb{CP}^1$-bundle at all
points. Each local chart of the foliation can be defined on the local
trivializing coordinates for $B$. By lifting paths in $S$ starting at
$x_0\in S$ to the leaves of the foliation we can construct a
representation $\rho:\pi_1(S,x_0)\rightarrow
\textrm{PSL}(2,\mathbb{C})$ that actually characterizes the bundle $B$
up to biholomorphisms. In fact, the foliation is equivalent to the
suspension foliation constructed by quotient of the horizontal
foliation on $\widetilde{S}\times\mathbb{CP}^1$ by the action of
$\pi_1(S)$ defined by $\gamma\cdot(x,z)=(\gamma\cdot x,
\rho(\gamma)(z))$.

Now, by the previous construction the foliation induces a BPS on the
image of any holomorphic section $D:S\rightarrow B$ of the
$\mathbb{CP}^1$-bundle that is \emph{not} invariant by the
foliation. In this case the charts of the BPS can be taken as holonomy
germs of the foliation from the image of $D$ to the
$\mathbb{CP}^1$-fibre over a point $x_0\in S$. This BPS can be pulled
back to $S$ via $D$ to produce a BPS on $S$ whose holonomy is
precisely $\rho$. By varying the section (if possible) we can
construct families of branched projective structures on $S$ with the
same holonomy representation $\rho$.

Remark that any BPS on a  Riemann surface $S$ can be realized as the
one induced by a regular holomorphic foliation on a section of a
$\mathbb{CP}^1$- bundle over $S$. As can be readily seen from the
suspension construction, the graph of the developing map
$D:\widetilde{S}\rightarrow\mathbb{CP}^1$ of a BPS with holonomy
$\rho$ is invariant by the defined action of $\pi_1(S)$ on
$\widetilde{S}\times\mathbb{CP}^1$ and hence defines a section of the
quotient $\mathbb{CP}^1$-bundle. The quotient foliation induces the
initially given BPS on $S$ via the constructed section.

Under a more topological viewpoint, we can glue branched projective structures by {\bf cut and
paste}. Given a surface $S$ equipped with a BPS and $\gamma\subset S$ an embeddded curve we consider the surface with boundary obtained by cutting $S$ along $\gamma$ --which topologically can be thought as removing a disc-- and considering its geometric completion $GC(S,\gamma)$ with respect to some riemannian metric on $S\setminus\gamma$.  The curve $\gamma$ corresponds to two curves $\gamma^+$ and $\gamma^-$ in the boundary of $GC(S,\gamma)$, one for each side of the cut. We will sometimes refer to this surface with boundary as {\bf S cut along $\gamma$}.
Given two closed surfaces $S_0$ and $S_1$ equipped with
branched projective structures, let $\gamma_0\subset S_0$ and $\gamma_1\subset S_1$ be embedded segments, containing no
branch points and having
neighborhoods $U_0$ and $U_1$ such that there is a projective
diffeomorphism $f:U_0\to U_1$ mapping $\gamma_0$ to $\gamma_1$. The map $f$ can be defined as a diffeomorphism from $GC(S,\gamma_0)$ to $GC(S,\gamma_1)$ sending $\gamma_0^{\pm}$ to $\gamma_1^{\pm}$ and preserving orientations. By using this diffeomorphism as a gluing we get a new closed oriented surface equipped with a BPS. See Figure~\ref{fs1}.

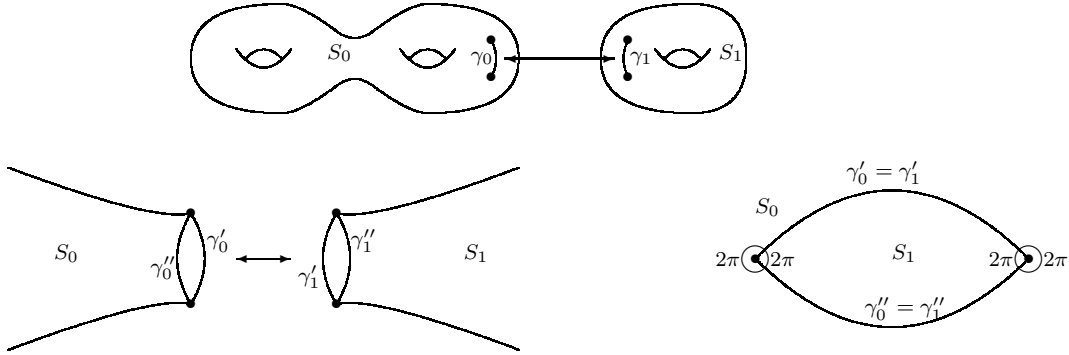
\begin{figure}[htbp] \centering
\tiny{ \setlength{\unitlength}{1ex}
\begin{picture}(115,35)

\put(0,-30){ \put(20,59){
\qbezier(0,0)(0,6)(10,6)\qbezier(10,6)(12,6)(16,3)
\qbezier(16,3)(17,2.25)(18,2.25) \qbezier(6,0)(8,2)(10,0)
\qbezier(0,0)(0,-6)(10,-6)\qbezier(10,-6)(12,-6)(16,-3)
\qbezier(16,-3)(17,-2.25)(18,-2.25) \qbezier(5,1)(8,-3)(11,1)
\put(36,0){ \qbezier(-0,0)(-0,6)(-10,6)\qbezier(-10,6)(-12,6)(-16,3)
\qbezier(-16,3)(-17,2.25)(-18,2.25) \qbezier(-8,0)(-10,2)(-12,0)
\qbezier(-0,-0)(-0,-6)(-10,-6)\qbezier(-10,-6)(-12,-6)(-16,-3)
\qbezier(-16,-3)(-17,-2.25)(-18,-2.25) \qbezier(-7,1)(-10,-3)(-13,1) }
\qbezier(33,2)(34,0)(33,-2) \put(33,2){\makebox(0,0){$\bullet$}}
\put(33,-2){\makebox(0,0){$\bullet$}}
\put(32,0){\makebox(0,0){$\gamma_0$}}

                \put(45,0){
\qbezier(0,0)(0,6)(10,6)\qbezier(10,6)(16,6)(16,0)
\qbezier(7,0)(9,2)(11,0)
\qbezier(0,0)(0,-6)(10,-6)\qbezier(10,-6)(16,-6)(16,0)
\qbezier(6,1)(9,-3)(12,1) } \qbezier(48,2)(47,0)(48,-2)
\put(48,2){\makebox(0,0){$\bullet$}}
\put(48,-2){\makebox(0,0){$\bullet$}}
\put(49.5,0){\makebox(0,0){$\gamma_1$}}

         \put(40,0){\vector(1,0){6.5}} \put(40,0){\vector(-1,0){5.5}}

         \put(15,0){$S_0$} \put(58,0){$S_1$} }

       \put(20,37){ \qbezier(0,5)(-4,4)(-20,10)
\qbezier(0,-5)(-4,-4)(-20,-10) \qbezier(0,5)(-3,0)(0,-5)
\put(0,5){\makebox(0,0){$\bullet$}}
\put(0,-5){\makebox(0,0){$\bullet$}}
\put(-3,-1){\makebox(0,0){$\gamma_0''$}}
\put(3,2){\makebox(0,0){$\gamma_0'$}} \qbezier(0,5)(3,0)(0,-5)
\put(-15,0){$S_0$} \put(16,0){ \qbezier(0,5)(-3,0)(0,-5)
\qbezier(0,5)(3,0)(0,-5) \qbezier(0,5)(4,4)(20,10)
\qbezier(0,-5)(4,-4)(20,-10) \put(0,5){\makebox(0,0){$\bullet$}}
\put(0,-5){\makebox(0,0){$\bullet$}}
\put(-3,-2){\makebox(0,0){$\gamma_1'$}}
\put(3,2){\makebox(0,0){$\gamma_1''$}} \put(14,0){$S_1$} }
\put(8,0){\vector(1,0){3}} \put(8,0){\vector(-1,0){3}} }

          \put(97,37){ \qbezier(-15,0)(0,15)(15,0)
\put(-15,0){\makebox(0,0){$\bullet$}} \qbezier(-15,0)(0,-15)(15,0)
\put(15,0){\makebox(0,0){$\bullet$}} \put(-15,5){$S_0$}
\put(0,0){$S_1$} \put(-3,-6){$\gamma_0''=\gamma_1''$}
\put(-5,9){$\gamma_0'=\gamma_1'$} \put(-15,0){\circle{3}}
\put(15,0){\circle{3}} \put(-18,0){\makebox(0,0){$2\pi$}}
\put(-12,0){\makebox(0,0){$2\pi$}} \put(18,0){\makebox(0,0){$2\pi$}}
\put(12,0){\makebox(0,0){$2\pi$}} } }
\end{picture} }
\caption{Gluing two surfaces along a segment}
\label{fs1}
\end{figure}

The topological result of the entire operation is the connected sum
$S_0\sharp S_1$. As for the branched projective structures, two new branch points
appeared: the end-points of $\gamma_0$ now identified with
the endpoints of $\gamma_1$.

The holonomy of the resulting structure can be computed from the two
initial holonomies. In particular we note that the loop corresponding
to $\gamma_0'\cup\gamma_0''$ has trivial holonomy. Thus, if both
surfaces have non-trivial topology ({\em i.e.} with non-positive Euler characteristic) then the
resulting holonomy is {\bf not faithful} and in general it is not
discrete.

A slightly subtler example is the
{\bf conical cut an paste}\label{conicalcutandpaste}, which is a surgery
as before that allows irrational cone-singularities.  For instance, suppose
$S_0$ and $S_1$ have complete hyperbolic metric, each with one
cone-singularity with angle respectively $\alpha_0$ and
$\alpha_1$. Suppose further that the cone-points are exactly the ends of two
geodesic embedded segments $\gamma_0$ and $\gamma_1$ of the same
length, and suppose moreover that
$$\alpha_0+\alpha_1=2\pi.$$

Cut $S_0$ and $S_1$ along $\gamma_0$ and $\gamma_1$ and glue the result isometrically
along the boundary. In Figure~\ref{fs1} one has to consider the bottom
right picture.  In the former example we had angles $4\pi$ at both
marked points, whereas now those are $4\pi$ at one point and $\alpha_0+\alpha_1$
at the other. Therefore, the resulting structure has only one new
branch point, as one of the marked point of the loop
$\gamma_0'\cup\gamma_0''$ has total angle $2\pi$, and so it is
regular.  The holonomy of the loop
$\gamma_0'\cup\gamma_0''$ is an elliptic transformation of $\mathrm{PSL}(2,\mathbb C)$.

\subsection{Grafting, Bubbling and moving branch points} Here we
describe three ways of producing new structures starting from a given
one, without changing the holonomy. We will need the following definition.

\begin{definition} Let $S$ be a surface equipped with a BPS with
developing map $D$. For any subset $K\subset S$ contained in some
simply connected open set $U$, the {\bf developed image} of $K$ is the
projective class of $D(\widetilde K)$, where $\widetilde K$ is any
lift of $K$ to the universal cover $\widetilde S$ of $S$. For any
continuous map $f$ with values in $S$ that lifts to $\widetilde S$, the {\bf
developed} of $f$ is the projective class of $D\circ f$.
\end{definition}

The first construction is the so-called {\bf grafting} (of angle
$2\pi$), and it can be described as follows. Let $S$ be a marked
surface equipped with a BPS with holonomy $\rho$. Suppose that there is a simple closed
curve in $S$ with loxodromic holonomy, and such that
any of its lift $\widetilde{\gamma}$ in $\widetilde{S}$ develops
injectively in $\mathbb {CP}^1$. Hence the path $D \circ
\widetilde{\gamma}$
tends to the fixed points of the corresponding holonomy map. Cut
$\widetilde{S}$ on each $\widetilde{\gamma}$, and glue a copy of the
 canonical projective structure on  $\mathbb {CP}^1$ cut along $D\circ \widetilde{\gamma}$, by using the
developing map.

We
obtain in this way a simply connected surface $\widetilde{S}'$, with a free and
discontinuous action of $\Gamma_g$, and a $\rho$-equivariant map $D' :
\widetilde{S'} \rightarrow \mathbb{CP}^1$ which is a local branched
covering. As the endpoints of the cut are \emph{not} on $\widetilde{S}$, we have not added any new branch points when gluing. Hence, this defines a new BPS on the marked surface $S' :=
\Gamma_g \backslash \widetilde{S'}$, which is called the grafting of $S$ along $\gamma$. The
quotient $S'$ is obtained from $S$ by replacing $\gamma$ with a
cylinder.A detailed description of the projective structure on the cylinder can be found in Section \ref{s:graftbub}.

In general it is not easy to find a \textbf{graftable curve on a BPS}, that is, a simple closed curve in a given BPS with loxodromic holonomy which develops injectively when lifted to the universal cover. Baba showed in~\cite{Baba} that this is always possible if the projective structure on $S$ has no branch points. However, we do not know whether it is still true when there is at least one branch point. Remark that in the case where the original structure on $S$ is a uniformizing structure, then {\bf every} simple closed curve on $S$ has this property, giving rise to a lot of possible graftings.

The second construction is what we denote by {\bf bubbling}, which is
nothing but the cut and paste with a $\mathbb{CP}^1$ along an embedded arc. In this case the number of branch points changes by two.

\begin{definition} Let $S$ be a surface endowed with a branched
projective structure $\sigma$. Let $\gamma$ be an embedded segment in
$S$ having embedded developed image $D\circ\gamma$ in $\mathbb{CP}^1$. Let $\sigma_1$
be the branched projective structure  obtained by cutting $S$ along $\gamma$ and gluing
a copy of the canonical projective structure on $\mathbb{CP}^1$ cut along $D\circ \gamma$ via the developing map.
 We say that $\sigma_1$ is obtained by {\bf bubbling}
$\sigma$ and that $\sigma$ is obtained by {\bf debubbling} $\sigma_1$.
\end{definition}

Bubbling is topologically the connected sum with a sphere, so the
fundamental groups before and after bubbling are canonically
isomorphic. Thus the marking is preserved, and doing the construction
at the level of the fundamental group shows that
the holonomy does not change under bubbling.

Our third way to constructs new structures keeping the holonomy fixed is
the procedure of {\bf moving branch points}. Let $S$ be an oriented
closed surface equipped with a BPS with developing map $D$.

\begin{definition} Two distinct paths $\gamma_0$ and $\gamma_1$ on
$S$, both defined on the same interval $[0,T]$, are {\bf twins} if they
overlap once developed, i.e.:
\begin{itemize}
\item $\gamma_0(0)=\gamma_1(0)$ is a branch point of $S$;
\item If $\alpha:[-T,T]\to S$ is given by $\alpha(t)=\gamma_0(t)$ for
$t\geq 0$ and $\gamma_1(-t)$ for $t\leq 0$, then the developed
$\bar\alpha$ of $\alpha$ is even: $\bar\alpha(t)=\bar\alpha(-t)$.
\end{itemize} If $\gamma_0$ and $\gamma_1$ are embedded and disjoint
appart from $\gamma_0(0)= \gamma_1(0)$, they are called {\bf embedded
twin paths}.
\end{definition}

Let $x$ be a branch point of $S$. Let $\gamma_0$ and $\gamma_1$ be
embedded, piecewise smooth,
twin paths starting from $x$ and defined on $[0,T]$. We
denote by $\alpha$ and $\beta$ the two angles that they form at $x$,
and by $\theta_i$ the angle around $\gamma_i(T)$, $i=0,1$ ($\theta_i$
is $2\pi$ if $\gamma_i(T)$ is a regular point).  We cut $S$ along the
images of the $\gamma_i$'s. The resulting surface $S_0$ has a boundary
formed by two copies $\gamma_0'$ and $\gamma_0''$ of $\gamma_0$ and
two copies $\gamma_1'$ and $\gamma_1''$ of $\gamma_1$, all of them
parameterized by $[0,T]$, and so that $\gamma_0'(0)=\gamma_1'(0)$ and
$\gamma''_0(0)=\gamma_1''(0)$. (See Figure~\ref{f:movsing}.)

Now we glue back by identifying, for any $t\in [0,T]$, $\gamma_0'(t)$
with $\gamma_1'(t)$ and $\gamma_0''(t)$ with $\gamma_1''(t)$.

\begin{figure}[h] \tiny{\setlength{\unitlength}{.4ex}
\makebox[\textwidth]{
\begin{picture}(170,50) \put(0,0){

\multiput(10,39)(40,0){3}{$\bullet$} \put(11,40){\line(1,0){80}}

\multiput(11,11)(79,0){2}{\makebox(0,0){$\bullet$}}
\multiput(49,20)(0,-20){2}{$\bullet$}

\put(11,11){\line(4,1){39}} \put(11,11){\line(4,-1){39}}
\put(50,21){\line(4,-1){40}} \put(50,1){\line(4,1){40}}

\qbezier(12,9)(8.5,9)(8.5,12) \qbezier(12,9)(13,9)(14,10.1)
\qbezier(12,14)(8.5,14)(8.5,12) \qbezier(12,14)(13,14)(14,12)

\put(101,0){ \qbezier(-12,9)(-8.5,9)(-8.5,12)
\qbezier(-12,9)(-13,9)(-14,10.1) \qbezier(-12,14)(-8.5,14)(-8.5,12)
\qbezier(-12,14)(-13,14)(-14,12) }

\put(0.45,-6.1){ \qbezier(50,30)(47,30)(47,26.5)
\qbezier(50,30)(53,30)(53,26.5)}

\put(0.45,28.2){ \qbezier(50,-30)(47,-30)(47,-26.5)
\qbezier(50,-30)(53,-30)(53,-26.5)}

\put(8,17){\makebox(0,0){$\theta_0$}}
\put(91,18){\makebox(0,0){$\theta_1$}}
\put(50,27){\makebox(0,0){$\alpha$}}
\put(50,-5){\makebox(0,0){$\beta$}}

\multiput(151,0)(0,20){3}{\makebox(0,0){$\bullet$}}
\put(151,1){\line(0,1){40}}

\put(10,45){$\gamma_0(T)$} \put(90,45){$\gamma_1(T)$}
\put(34,45){$x=\gamma_0(0)=\gamma_1(0)$}

\put(30,20){$\gamma_0'$} \put(61,20){$\gamma_1'$}
\put(27,0){$\gamma_0''$} \put(64,0){$\gamma_1''$}

\put(70,35){\vector(0,-1){15}} \put(72,27){\textrm{cut}}

\put(155,20){$\theta_1$} \put(142,20){$\theta_0$}
\put(162,9){\vector(-1,1){9}}\put(163,7){$y$}
\put(162,29){\vector(-1,1){9}}\put(163,27){$x'$}
\put(138,0){\vector(1,0){9}}\put(133,-1){$x''$}

\multiput(151,40)(0,-20){3}{\circle{5}} \put(145,43){$\alpha$}
\put(152,-6){$\beta$}

\put(110,20){\vector(4,1){30}} \put(105,25){\textrm{gluing}} }
\end{picture} } }
\caption{Moving a branch point}\label{f:movsing}
\end{figure}
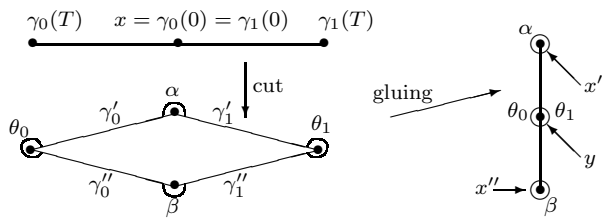

The result is a surface $S_1$, with three distinguished points:
\begin{itemize}
\item The point $y$ resulting from the identification of
$\gamma_0(T)'$ with $\gamma_1'(T)$. The total angle around that point
is $\theta_1+\theta_2$.
\item The point $x'=\gamma_0'(0)=\gamma_1'(0)$. The angle around it is
$\alpha$.
\item The point $x''=\gamma_0''(0)=\gamma_1''(0)$. The angle around it
is $\beta$.
\end{itemize}

\medskip

Note that angles $\alpha$ and $\beta$ are both multiples of $2\pi$. If
$x$ was a branch point of order two, then
$\alpha=\beta=2\pi$. Similarly, the $\theta_i$'s may be different from
$2\pi$ but they are integer multiples of $2\pi$.  Note also that
segments $[y,x']$ and $[y,x'']$ are twin and that, in local charts,
the developed image of $[y,x']$ is the same as the one of $\gamma_0$
and $\gamma_1$.

The surface $S_1$, which is endowed with a BPS, is clearly
diffeomorphic to $S$, and an isotopy-class of diffeomorphisms between
$S_1$ and $S$ is well-defined, so that the marking and the holonomy
are preserved.

\begin{definition}
 We say that a branched projective structure
$\sigma_1$ is obtained from $\sigma$ by {\bf moving branch points} if
it is the resulting structure after a finite number of cut-and-paste
procedures as above. Two structures obtained one from the other by
moving branch points are {\bf connected by moving branch points}.
\end{definition}

The following two lemmas are easy to establish and the proofs are left to
the reader.
\begin{lemma}\label{lemma28}
  Let $S$ and $P$ be surfaces endowed with BPS's $\sigma_S$ and $\sigma_P$.
  Let $\gamma_0\subset S$ and $\eta_0\subset P$ be embedded paths
  that have neighborhoods $V_S$ and $V_P$ so that $(V_S,\gamma_0)$ and
  $(V_P,\eta_0)$ are
  projectively equivalent. Suppose that
  $\gamma_0$ is isotopic to $\gamma_1$ via an isotopy
  $\{\gamma_t\}_{t\in[0,1]}$ that fixes end-points, and that $\eta_0$
  is isotopic to $\eta_1$ via $\{\eta_t\}_{t\in[0,1]}$ with fixed
  end-points. Suppose moreover $(V_S,\gamma_t)$ and $(V_P,\eta_t)$
  are projectively equivalent for any $t$.  Let $R_t$ be the
  surface obtained by cut-and-pasting $S$ and $P$ along $\gamma_t$ and
  $\eta_t$, endowed with the BPS $\sigma_t$ induced by $\sigma_S$ and
  $\sigma_P$. Then, $\sigma_t$ is projectively equivalent to
  $\sigma_\tau$ for any $t,\tau\in[0,1]$.
\end{lemma}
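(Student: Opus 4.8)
The plan is to show that the class of $\sigma_t$ in the deformation space is locally constant in $t$, and then to conclude by connectedness of $[0,1]$. The starting remark is that the construction is \emph{local}: outside the neighborhoods $V_S$ and $V_P$ the structure $\sigma_t$ is canonically identified with $\sigma_S$ on the $S$-part and with $\sigma_P$ on the $P$-part, independently of $t$. Fix once and for all developing maps $D_S$ and $D_P$ for $\sigma_S$ and $\sigma_P$, and for each $t$ let $f_t\colon(V_S,\gamma_t)\to(V_P,\eta_t)$ be the given projective equivalence, with associated transformation $g_t\in\mathrm{PSL}(2,\mathbb C)$ characterised by $D_P\circ f_t=g_t\circ D_S$. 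Then a developing map $D_t$ for $R_t$ is obtained by declaring it to be $D_S$ on the $S$-part and $g_t^{-1}\circ D_P$ on the $P$-part: these agree along the cut precisely because $f_t$ is a projective equivalence carrying $\gamma_t$ onto $\eta_t$, and the resulting holonomy is the amalgamated representation $\rho$, the same for every $t$.

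First I would check that $g_t$ may be taken independent of $t$. Choosing the family $f_t$ to depend continuously on $t$ with the prescribed value at $t=0$, the transformation $g_tg_0^{-1}$ represents, in the chart $D_P$, the projective self-diffeomorphism $f_t\circ f_0^{-1}$ of $V_P$; it fixes the two developed endpoints of $\eta_t$ (which do not move, the isotopies fixing endpoints) and equals the identity at $t=0$. Since a nontrivial element of $\mathrm{PSL}(2,\mathbb C)$ cannot carry $D_P(V_P)$ onto itself once $V_P$ is chosen with trivial stabiliser in $\mathrm{PSL}(2,\mathbb C)$ (which one may always arrange by shrinking $V_P$), continuity forces $g_t\equiv g_0=:g$. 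Consequently the maps $D_t$ all restrict to the \emph{same} maps $D_S$ and $g^{-1}D_P$ on the two pieces, the only $t$-dependence being the position of the cut.

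The key geometric point is then the following local move. For $\tau$ close to $t$ the arcs $\gamma_t,\gamma_\tau$ share their endpoints and bound a thin embedded bigon $B\subset V_S$ whose developed image $\beta=D_S(B)\subset\mathbb{CP}^1$ is embedded. Near the cut both $R_t$ and $R_\tau$ are the double branched cover of a neighbourhood of $\beta$, branched over the two developed endpoints, realised by glueing the $S$-sheet (developing via $D_S$) and the $P$-sheet (developing via $g^{-1}D_P$) along the cut. Now the double branched cover of a disc branched over two interior points does not depend on the arc chosen to join them: passing from the cut $\gamma_t$ to the cut $\gamma_\tau$ is realised by the map that is the identity outside $B$ and transposes the two sheets over $B$. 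Crucially, over any point of $\beta$ both sheets have the \emph{same} developed image, so this sheet transposition preserves developed images; one checks it is continuous across the two arcs (crossing a cut interchanges the banks, which exactly compensates the transposition) and therefore is a diffeomorphism $\phi\colon R_t\to R_\tau$ satisfying $D_\tau\circ\phi=D_t$. Being the identity away from a disc, it respects the marking and lifts to a $\Gamma_g$-equivariant map, hence it realises a projective equivalence $\sigma_t\cong\sigma_\tau$.

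Finally, covering $[0,1]$ by finitely many such intervals shows that $\sigma_t$ is projectively equivalent to $\sigma_\tau$ for all $t,\tau$. The main obstacle is the bookkeeping around the cut: one must ensure that the developed bigons stay embedded and avoid the branch points throughout the isotopy (subdividing the interval so that each elementary step takes place inside a single chart), and one must verify carefully that the sheet transposition matches the crosswise glueing of banks so as to assemble into a globally smooth, equivariant map. The constancy of the glueing transformation $g_t$, although elementary, is the other point that genuinely uses the hypothesis that the neighborhoods $V_S,V_P$ and the endpoints are fixed for all $t$.
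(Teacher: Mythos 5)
The paper declares this lemma ``easy to establish'' and leaves the proof to the reader, so there is nothing to compare against directly; judged on its own merits, your overall strategy --- local constancy in $t$ via the observation that near the cut $R_t$ is the double cover of a neighbourhood of the developed arc branched at its two endpoints, so that replacing the cut $\gamma_t$ by a nearby $\gamma_\tau$ is realised by transposing the two sheets over the bigon they cobound --- is sound and is surely close to what the authors intended. Your verification that the transposition is continuous across the banks and commutes with the developing maps is correct.

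There is, however, one step that is wrong as written and leaves a genuine gap: the assertion that $\phi$ is ``the identity away from a disc'' and therefore respects the marking. The preimage in $R_t$ of the closed bigon $\bar B$ is not contained in a disc in general: it contains the entire cut locus $\gamma_t'\cup\gamma_t''$, i.e.\ the connected-sum circle of $S\# P$, and is a pinched annular neighbourhood of that circle (two copies of $\bar B$ glued at the two branch points). That circle bounds a disc in $R_t$ only when one of the two surfaces is a sphere (e.g.\ in the bubbling case $P=\mathbb{CP}^1$); for a general $P$ it is essential, and a homeomorphism supported in an annular neighbourhood of an essential circle may well be a nontrivial Dehn twist. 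So you must separately rule out that $\phi$ differs from the reference identification of $R_t$ with $R_\tau$ by a twist along the connect-sum circle. This is fixable --- for instance, track the $\phi$-image of the lift of a fixed arc $\delta\subset U$ joining the two boundary circles and crossing $\beta_t$ and $\beta_\tau$ once each, and check that it is the corresponding lift in $R_\tau$; or use that a fibre-preserving isomorphism of the two branched double covers is unique once it is required to be the identity on the two boundary circles of the annulus --- but it is not automatic, and the justification you give fails. Two smaller points to shore up: for $\tau$ near $t$ the arcs $\gamma_t$ and $\gamma_\tau$ need not be disjoint away from their endpoints, so they may cobound several bigons rather than one (reduce to elementary bigon moves by an innermost-bigon argument, not merely by subdividing $[0,1]$); and the constancy of $g_t$ is better argued by noting that $g_tg_0^{-1}$ is a continuous path of M\"obius maps preserving the \emph{fixed} domain $D_P(V_P)$ and fixing the two developed endpoints, rather than by shrinking $V_P$, which must go on containing every $\eta_t$.
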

Applied to $P=\mathbb{CP}^1$, Lemma~\ref{lemma28} says that bubblings do not
depend on the local isotopy class of the segment chosen to do the
cut-and-paste procedure.

\begin{lemma}\label{l:connectingpaths}
  Let $S$ be a surface endowed with a BPS $\sigma$. Let
  $\gamma:[0,1]\to S$ be an
  embedded path having embedded developed image. Let $\tau:[1,2]\to S$
  be another embedded path so that $\tau(1)=\sigma(1)$. Suppose that
  $\gamma*\tau:[0,2]\to S$ is embedded with embedded developed
  image. Then the bubbling $\sigma$ along $\gamma$ is
  obtained by that along $\gamma*\tau$ by moving branch points.
\end{lemma}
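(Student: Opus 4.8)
The plan is to realise the passage from the bubbling along $\gamma$ to the bubbling along $\gamma*\tau$ as a single movement of a branch point, the branch point being slid along $\tau$. (Here I read the hypothesis as $\tau(1)=\gamma(1)$, so that the concatenation $\gamma*\tau$ is defined.) Write $q=\gamma(1)=(\gamma*\tau)(1)$ and $r=\tau(2)=(\gamma*\tau)(2)$, and let $\sigma_\gamma$ (resp.\ $\sigma_{\gamma*\tau}$) denote the bubbling of $\sigma$ along $\gamma$ (resp.\ along $\gamma*\tau$). In $\sigma_\gamma$ the point $q$ is a simple branch point of angle $4\pi$, while in $\sigma_{\gamma*\tau}$ the slit passes smoothly through $D(q)$, so that $q$ is regular and instead $r$ carries extra branching. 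The target is therefore to move the branch point of $\sigma_\gamma$ sitting at $q$ to the point $r$, and to check that the structure so obtained is projectively $\sigma_{\gamma*\tau}$.

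First I would produce the twin path along which to move. Decompose $\sigma_\gamma$ as $S_\gamma\cup B_\gamma$, where $S_\gamma$ is $S$ cut along $\gamma$ and $B_\gamma$ is the inserted copy of $\mathbb{CP}^1$ slit along $D\circ\gamma$, glued by the developing map. The developing map restricted to $B_\gamma$ is (a projective chart of) the identity, and since $D\circ(\gamma*\tau)$ is embedded, the arc $D\circ\tau$ meets the slit $D\circ\gamma$ only at its endpoint $D(q)$. Hence $D\circ\tau$ lifts to an embedded arc $\tau'$ in $B_\gamma$ issuing from the corner $q$ of the slit, disjoint from $\tau\subset S_\gamma$ except at $q$, and developing to the same path as $\tau$. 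Thus $\tau$ and $\tau'$ are embedded twin paths starting at the branch point $q$. Since $q$ has order two, $\tau$ and $\tau'$ cut it into two sectors of angle $\alpha=\beta=2\pi$; the endpoint $\tau'(2)$ is an interior point of the sphere, so $\theta_1=2\pi$, while $\theta_0$ is the angle of $\sigma$ at $r$. Moving the branch point along $\tau,\tau'$ thus produces a structure whose branch divisor is the untouched point $\gamma(0)$ together with a branch point of angle $\theta_0+2\pi$ at $r$, the former site $q$ becoming regular---precisely the branch divisor of $\sigma_{\gamma*\tau}$.

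The main step, and the one I expect to be the real work, is to verify that the structure obtained is projectively $\sigma_{\gamma*\tau}$ and not merely a structure with the same branch points. For this I would track the cut-and-paste explicitly. The movement of the branch point cuts $S_\gamma$ along $\tau$ and $B_\gamma$ along $\tau'$; since $\gamma*\tau$ is embedded, cutting $S_\gamma$ further along $\tau$ produces $S$ cut along the whole arc $\gamma*\tau$, i.e.\ $S_{\gamma*\tau}$, and likewise cutting $B_\gamma$ along $\tau'$ produces $\mathbb{CP}^1$ slit along $D\circ\gamma\cup D\circ\tau=D\circ(\gamma*\tau)$, i.e.\ $B_{\gamma*\tau}$. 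It remains to compare the gluings. The regluing prescribed by the branch-point movement identifies the two copies of $\tau$ with the two copies of $\tau'$ side by side, while the original bubbling gluing identifies the two sides of $\gamma$ in $S_\gamma$ with the two sides of the slit in $B_\gamma$; concatenating these along the common endpoint $q$ gives precisely the identification of the two boundary copies of $\gamma*\tau$ in $S_{\gamma*\tau}$ with the two sides of the slit $D\circ(\gamma*\tau)$ in $B_{\gamma*\tau}$, which is the definition of $\sigma_{\gamma*\tau}$. The only delicate point is the bookkeeping of the four boundary arcs and their orientations, to be sure that each side of $\gamma$ is concatenated with the matching side of $\tau$; Lemma~\ref{lemma28} lets me ignore inessential isotopies of the arcs while carrying out this matching. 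Once the two gluing patterns are seen to coincide, the developing maps agree piece by piece and the structures are projectively equivalent, which proves the lemma.
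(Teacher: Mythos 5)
The paper leaves this lemma to the reader (``easy to establish''), so there is no written proof to compare against; your argument is correct and is evidently the intended one. Realising the passage as a single branch-point move along the twin pair $\tau\subset S$ and its lift $\tau'$ of $D\circ\tau$ into the bubble (the sectors at $q$ being forced to be $2\pi$ each since twins cut the $4\pi$ angle into multiples of $2\pi$), and then checking that the two cut-and-paste patterns compose to the bubbling identification along $\gamma*\tau$, is exactly the right verification, and your bookkeeping of the four boundary arcs is sound.
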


\begin{cor}[Bubblings commute]\label{c:bc}
  Let $S$ be a surface endowed with a BPS $\sigma$. Let
  $\beta_1$ and $\beta_2$ be bubblings of $\sigma$ along paths
  $\gamma_1$ and $\gamma_2$ respectively. Then $\beta_1$ is connected to
  $\beta_2$ by moving branch points.
\end{cor}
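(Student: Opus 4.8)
The plan is to reduce the statement to the two preceding lemmas by producing a single embedded path whose developed image is embedded and which is linked, through moving branch points, to both $\gamma_1$ and $\gamma_2$. First I record two elementary facts. Bubbling along a path is insensitive to the orientation of that path: the construction only cuts $S$ along $\gamma$ and glues a copy of $\mathbb{CP}^1$ cut along $D\circ\gamma$, and neither operation depends on a parametrization. Moreover ``connected by moving branch points'' is an equivalence relation, since each elementary move is reversible (one moves the branch point back along the same twin paths) and the moves compose. I will write $\gamma\sim\delta$ to mean that the bubblings of $\sigma$ along $\gamma$ and along $\delta$ are connected by moving branch points.

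The key elementary move is the following consequence of Lemma~\ref{l:connectingpaths}. If $\delta=\gamma*\tau$ is an embedded path with embedded developed image, obtained by concatenating $\gamma$ with $\tau$ at the common point $\gamma(1)=\tau(0)$, then $\gamma\sim\delta$ by the lemma; applying the lemma to the reversed path $\bar\delta=\bar\tau*\bar\gamma$ and using orientation-independence of bubbling gives in addition $\tau\sim\delta$. In words: a bubbling along a path is connected by moving branch points to the bubbling along either of its terminal sub-arcs, and conversely it may be ``slid'' from a sub-arc out to the whole path.

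Granting these moves the proof is a short chain. I would choose an embedded arc $c$ in $S$ joining $\gamma_1(1)$ to $\gamma_2(0)$, disjoint from $\gamma_1$ and $\gamma_2$ apart from these endpoints and avoiding the branch locus, such that $\Gamma:=\gamma_1*c*\gamma_2$ is embedded with embedded developed image. Applying the elementary move three times then yields $\gamma_1\sim\gamma_1*c$ (with $\tau=c$), $\gamma_1*c\sim\Gamma$ (with $\tau=\gamma_2$), and $\gamma_2\sim\Gamma$ (viewing $\gamma_2$ as a terminal sub-arc of $\Gamma$ via the reversed path). Composing gives $\gamma_1\sim\gamma_2$, which is exactly the claim that $\beta_1$ and $\beta_2$ are connected by moving branch points.

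The only real point, and the hard part, is producing the connecting arc $c$ so that $D\circ\Gamma$ is embedded: this self-avoidance of the developed image in $\mathbb{CP}^1$ can fail outright if $D\circ\gamma_1$ and $D\circ\gamma_2$ already meet. I would first shrink using the elementary move, replacing $\gamma_i$ by a short initial sub-arc $s_i$ with $\gamma_i\sim s_i$, so that $D\circ s_1$ and $D\circ s_2$ are arbitrarily short disjoint arcs; then I would pick $c$ with $D\circ c$ an embedded arc meeting $D\circ s_1$ and $D\circ s_2$ only at the prescribed endpoints, working in a lift to $\widetilde S$ where $D$ is a local diffeomorphism, using that $S$ minus finitely many arcs and the branch locus is path-connected, and using the isotopy freedom granted by Lemma~\ref{lemma28}. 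The chain of the previous paragraph applied to $s_1,s_2$ then gives $s_1\sim s_2$, whence $\gamma_1\sim s_1\sim s_2\sim\gamma_2$, completing the argument.
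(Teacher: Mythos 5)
Your overall strategy is the paper's: reduce everything to Lemma~\ref{l:connectingpaths}, using that a bubbling along a concatenation $\gamma*\tau$ is connected by moving branch points to the bubblings along $\gamma$ and (after reversing orientation) along $\tau$, and then chain. That ``elementary move'' is correct and is exactly how the paper uses the lemma. The gap is in the one step you yourself flag as the hard part: you try to connect $\gamma_1$ to $\gamma_2$ by a \emph{single} arc $c$ such that the whole concatenation $\Gamma=\gamma_1*c*\gamma_2$ (or $s_1*c*s_2$) has embedded developed image. This cannot be arranged in general. Shrinking the $\gamma_i$ to short sub-arcs $s_i$ controls $D\circ s_i$, but the connecting arc $c$ must travel between two possibly distant points of $S$, and the developed image $D\circ\tilde c$ of a long arc is typically forced to self-intersect in $\mathbb{CP}^1$ (think of a structure whose developing map is surjective, or wraps many times over a region): embeddedness of $c$ in $S$, path-connectedness of $S$ minus the branch locus, and the isotopy freedom of Lemma~\ref{lemma28} give you no control whatsoever on injectivity of $D$ along the lift of $c$. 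So the existence of your $c$ is unproved and in general false.

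The repair is the paper's device: instead of one arc, take a finite chain of embedded paths $\tau_0=\gamma_1,\tau_1,\dots,\tau_n=\gamma_2$, with $\tau_i(i+1)=\tau_{i+1}(i+1)$, each interior $\tau_i$ avoiding the branch points and the neighbouring paths, and such that only each \emph{consecutive} concatenation $\tau_{i-1}*\tau_i$ is embedded with embedded developed image. Since the atlas is a local homeomorphism off the branch points, this is achievable by taking the intermediate arcs short enough to sit inside charts; no global injectivity of the developing map is ever needed. Applying your elementary move to each consecutive pair gives $\tau_{i-1}\sim\tau_{i-1}*\tau_i\sim\tau_i$, and transitivity finishes the proof. (The preliminary step of making $\gamma_1$ and $\gamma_2$ disjoint, which the paper does by moving the new branch points along initial segments of the twin paths, is also needed before setting up the chain.)
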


\begin{proof}
  For $i=1,2$, let $\gamma_i'$ and $\gamma_i''$ be the twin
  paths in $\beta_i$ arising from $\gamma_i$, and let $x_i,y_i$ be their
  common end-points. By moving $x_i$ and $y_i$ along initial segments
  of  $\gamma_i'\cup\gamma_i''$ we reduce to the case that $\gamma_1$
  and $\gamma_2$ are disjoint.

Since a BPS has an atlas which is a local homeomorphism outside
branch points, there is a finite sequence of embedded paths with
embedded developed images, connecting
$\gamma_1$ and $\gamma_2$. That is to say paths  $\tau_i:[i,i+1]\to S$,
$i=0,\dots,n$  so that:
\begin{itemize}
\item $\tau_0=\gamma_1$ and $\tau_n=\gamma_2$;
\item $\tau_i(i+1)=\tau_{i+1}(i+1)\ \forall i=0,\dots, n-1$ ;
\item $\tau_i(t)\in S\setminus(\{$branch points of $\sigma\}\cup
  Im(\tau_{i-1})\cup Im(\tau_{i+1}))\ \forall t\in(i,i+1)$ and  $i=1,\dots,n-1$;
\item $\tau_{i-1}*\tau_i:[i-1,i+1]\to S$ is embedded with embedded
  developed image.
\end{itemize}
By Lemma~\ref{l:connectingpaths} recursively applied to
$\tau_i,\tau_{i+1}$, we get the desired claim.
\end{proof}

\begin{cor}[Cut-and-paste and moving commute]\label{lemmaunico}
 Let $B$ and $C$ two surfaces equipped with BPS's. Let
 $\gamma_B\subset B$ and
 $\gamma_C\subset C$ be segments with neighborhoods that are
 projectively equivalent and with regular end-points.
 Let $A$ be the surface obtained by cut-and-pasting $B$ and
 $C$ along $\gamma_B$ and $\gamma_C$ (see Figure~\ref{fs1}), endowed
 with the BPS's induced by those of $B$ and $C$. Let $D$ be a BPS obtained from
 $C$ by moving branch points.

Then, $A$ is connected by moving branch points to a cut-and-paste of
$B$ and $D$.
\end{cor}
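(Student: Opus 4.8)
The plan is to reduce the statement to the commutation of two cut-and-paste surgeries supported on disjoint regions. By definition, $D$ is obtained from $C$ by finitely many elementary moves of branch points, each of which is a cut-and-paste along a pair of embedded twin paths issuing from a branch point of $C$. Arguing by induction on the number of such moves, it suffices to treat the case in which $D$ is obtained from $C$ by a single move of a branch point $p$ along embedded twin paths $\gamma_0,\gamma_1$. Recall that $\gamma_C$ contains no branch points and that its end-points are regular in $C$; in particular $p\notin\gamma_C$, and in $A$ the image of $\gamma_C$ is a seam carrying exactly two branch points of order two at its end-points, while the branch points of $C$ (among them $p$) persist unchanged in $A$.

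First I would isolate the case in which the twin paths $\gamma_0,\gamma_1$, together with the small neighborhood in which the cut-and-reglue is performed, are disjoint from $\gamma_C$. In that situation the move is a surgery supported in $C\setminus\gamma_C$, whereas the gluing with $B$ is supported on $\gamma_C$; two cut-and-paste operations with disjoint supports evidently commute. Performing the move inside $A$ therefore alters only the $C$-part and yields precisely the cut-and-paste of $B$ with the moved copy of $C$, which is the cut-and-paste of $B$ and $D$. This already settles the statement under the disjointness hypothesis, with no auxiliary moves needed.

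The substance of the argument --- and the step I expect to be the main obstacle --- is to arrange this disjointness. The twin paths cannot be deformed freely, since they must keep developing evenly; instead I would move the seam $\gamma_C$ off them. Because the end-points of the seam are branch points of $A$, sliding these end-points and deforming the seam is itself accomplished by moving branch points, in the spirit of Lemma~\ref{l:connectingpaths} and Corollary~\ref{c:bc}; and by Lemma~\ref{lemma28} the surface obtained after any such deformation is still a cut-and-paste of $B$ with $C$ along a segment whose neighborhood is projectively equivalent to a correspondingly deformed segment $\gamma_B'\subset B$. Since the twin paths form a finite $1$-complex in the surface, a generic segment can be pushed off them; the delicate point is to carry out this general-position argument \emph{within} a region where the developing map is injective, so that the projective equivalence required by Lemma~\ref{lemma28} is maintained. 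Here one uses that $B$ supplies enough freedom to match the deformed seam --- the loop encircling the seam having trivial holonomy, as noted for the cut-and-paste construction above --- so that reparametrizing the gluing does not obstruct the projective matching.

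Once the seam has been isotoped off $\gamma_0\cup\gamma_1$, the argument of the second paragraph applies, and the move, performed in $A$, produces the cut-and-paste of $B$ and $D$. All the intermediate surgeries used to reposition the seam are themselves moves of branch points, so $A$ is connected by moving branch points to the cut-and-paste of $B$ and $D$. This completes the inductive step and hence the proof.
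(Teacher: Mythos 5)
Your overall skeleton --- induction on the number of elementary moves, reduction to a single move along twin paths, and the observation that two cut-and-paste surgeries with disjoint supports commute --- is exactly the paper's. The gap is in the step you yourself flag as delicate: arranging the disjointness. You propose to isotope the seam $\gamma_C$ off the $1$-complex formed by the twin paths, which forces you to produce, at every stage of the isotopy, a segment of $B$ whose neighborhood is projectively equivalent to that of the deformed seam. You do not supply this, and it is not clear it can be supplied in general: the twin paths emanate from a branch point, where the developing map is not injective, and Lemma~\ref{lemma28} only applies to isotopies along which the projective matching with $B$ is maintained throughout; the remark that the loop encircling the seam has trivial holonomy does not by itself produce the required matching isotopy in $B$.

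The paper sidesteps this entirely by moving the seam \emph{along itself} rather than off the twin paths. After perturbing so that $\gamma_C$ is transverse to the twin paths $\tau_0,\tau_1$ (this is the only use of Lemma~\ref{lemma28}), one picks a point $p\in\gamma_C$ with a neighborhood $U$ disjoint from $\tau_0\cup\tau_1$. In $A$ the seam gives rise to two twin copies $\gamma',\gamma''$ joining the two branch points $x,y$ created by the gluing; moving $x$ and $y$ inward along initial segments of $\gamma'$ and $\gamma''$ is a legitimate move of branch points in $A$, leaves the structures of $B$ and $C$ untouched (the corresponding points $x_B,y_B,x_C,y_C$ are regular there), and exhibits the resulting structure $\bar A$ as a cut-and-paste along subsegments $\bar\gamma_B\subset\gamma_B$ and $\bar\gamma_C\subset\gamma_C$. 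Shrinking until $\bar\gamma_C\subset U$ achieves the disjointness with no general-position argument for the seam and no need to re-match anything in $B$. Your first two paragraphs stand; the third should be replaced by this shrinking argument.
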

\begin{proof}
We parameterize $\gamma_B$ and $\gamma_C$ in a projectively equivalent way.
The cut-and-paste consists in cutting $B$ along $\gamma_B$ and $C$
along $\gamma_C$, so that each $\gamma_\bullet$ splits in two copies
$\gamma_\bullet'$ and $\gamma_\bullet''$ (for $\bullet=B,C$), and then
in identifying  $\gamma_B'(t)$ with $\gamma'_C(t)$ and $\gamma_B''(t)$
with $\gamma''_C(t)$. The two resulting twin paths in $A$ are named
 $\gamma'$ and $\gamma''$, and their end-points are named
 $\gamma'(0)=x,\gamma'(1)=y$.  Also, we name
 $x_B=\gamma_B(0), y_B=\gamma_B(1), x_C=\gamma_C(0), y_C=\gamma_C(1)$.

Let $\mathfrak M$ be the finite sequence of movements on $C$ that produces
$D$. By arguing by induction on the number of cut-and-paste procedures
of $\mathfrak M$ we reduce to the case of a single cut-and-paste along
twin paths $\tau_0,\tau_1$ in $C$.

If the $\tau_i$'s do not intersect $\gamma_C$
the claim is obvious.
The $\tau_i$'s are piece-wise smooth by
definition of moving. Therefore, by Lemma~\ref{lemma28} we can perturb
 $\gamma_C$ and $\gamma_B$ via isotopies, without changing $A$,
so to reduce to
the case where $\gamma_C$ is transverse to the $\tau_i$'s.

Let $U$ be a
neighborhood of a point $p\in\gamma_C$
so that $U\cap \tau_0=U\cap\tau_1=\emptyset$.
In $A$, we move the branch points $x$ and $y$ by using, as twin paths,
initial segments of $\gamma'$ and $\gamma''$. Of course, this affects
the structure on $A$ but not those of $B$ and $C$ because $x_B,y_B$ are
regular points in $B$, and $x_C,y_C$ are regular in $C$. After such a move,
the new structure $\bar A$ is the cut-and-paste of $B$ and $C$ along
segments $\bar\gamma_B\subset \gamma_B$ and
$\bar\gamma_C\subset\gamma_C$. In particular we can move $x,y$ enough to
obtain $\bar\gamma_C\subset U$. Since $\bar\gamma_C$ does not
intersect the $\tau_i$'s, the claim is true for $\bar A$.
Since $\bar A$ is connected to $A$ by
moving branch points, the claim follows.
\end{proof}

In general one can always move branch points locally, but a priori there is no guarantee that
one can do it along any given path. More precisely, if one starts with a germ of
embedded twin paths, it is possible that their analytic continuations cease to be embedded very
soon. In general, there does not exist an a priori lower bound on the
maximal size where twin paths are embedded.
In Section~\ref{ap:wecanmove} we give precise statements
ensuring that all moves needed throughout our
proofs are possible under the given hypotheses.

\section{Fuchsian holonomy: real curve and decomposition into
hyperbolic pieces} \label{s:decomposition}

In this section $S$ is a closed oriented surface endowed with a BPS
$\sigma$ and $D:\widetilde S\to\mathbb{CP}^1$ is a developing map for
$\sigma$ with Fuchsian holonomy $\rho:\pi_1(S)\to \mathrm{PSL}(2,\mathbb R)$. All this material can be extended to the case where the representation is quasi-Fuchsian, but for simplicity we restrict ourselves to Fuchsian ones.

\subsection{Real curve and decomposition}

The decomposition $\mathbb{CP}^1=\mathbb{H}^+\sqcup\mathbb{R
P}^1\sqcup\mathbb{H}^-$ into the real line and the two hemispheres
$\mathbb{H}^+=\mathbb{H}^2=\{\Im(z)>0\}$ and
$\mathbb{H}^-=\{\Im(z)<0\}$ can be pulled back via $D$ to
$\widetilde{S}$ and defines a decomposition of $S=S^+\sqcup
S_{\mathbb{R}}\sqcup S^-$.

\begin{definition} The {\bf real curve} is the set $S_{\mathbb R}$,
the {\bf positive} part (resp. {\bf negative}) is the set $S^+$
(resp. $S^-$).
\end{definition}

Since the holonomy takes values in $\mathrm{PSL}(2,\mathbb R)$, the real
curve is a compact real analytic sub-manifold of $S$ of dimension $1$ --- possibly singular if it
contains some branch point --- and the lifts $\widetilde{S^\pm}$ of
$S^{\pm}$ to $\widetilde{S}$ are precisely $D^{-1}(\mathbb{H}^\pm)$.
Each connected component $C$ of $S\setminus S_{\mathbb{R}}$ inherits a
branched $(\mathbb{H}^2,{\rm {\rm PSL}}(2,\mathbb{R}))$-structure by
restriction of $D$ (in the case $C\subset S^+$, or of its complex
conjugate $\overline{D}$ in the case $C\subset S^-$) to a lift
$\widetilde{C}\subset\widetilde{S}$ of $C$. In a similar way every connected component $l$ of $S_{\mathbb{R}}$ inherits a branched $(\mathbb{RP}^1, {\rm {\rm PSL}}(2,\mathbb{R}))$-structure. Indeed, it suffices to consider $D|_{\widetilde{l}}$ where $\widetilde{l}$ of is a lift of $l$ to $\widetilde{S}$.  In the next two subsections we will analyze the properties of the geometric structures induced by $\sigma$ on the real curve and on its complement in $S$.

\subsection{The real projective structure on $S_{\mathbb{R}}$} \label{ss:real projective structures}

 On each connected component $l$ of $S_{\mathbb{R}}$ we distinguish some special points corresponding to the fixed points by $\alpha:=\rho([l])\in Aut(\mathbb{RP}^1)$. If $p_i\in\mathbb{RP}^1$ is fixed by $\alpha$ the set $D|_{\widetilde{l}}^{-1}(p_i)$ is discrete and invariant by the action of $[l]$ on $\widetilde{S}$ and thus defines a finite set $P_i$ of points in $l$. The cardinality $I_l$ of $P_i$ is independent of the choice of fixed point $p_i$ and will be defined as the \textbf{index} of the $\mathbb{RP}^1$-structure on $l$. In the case of trivial $\alpha$, the map $D$ descends to a map $l\rightarrow\mathbb{RP}^1$ and the index coincides with the degree of this map.

 As with complex projective structures, we say that two $\mathbb{RP}^1$-structures on a circle $l$ are equivalent if there is a diffeomorphism between the two structures which is projective in the charts of the given projective structures. The following proposition gives the classification of unbranched $\mathbb{RP}^1$-structures on $l$ having some fixed point in the holonomy.

\begin{prop}\label{p:RP1structure}
Two unbranched $\mathbb{RP}^1$-structures on an oriented circle $l$  whose respective holonomies $\alpha$ and $\alpha'$ fix at least one point and with indices $I,I'$  are equivalent if and only if $I=I'$ and $\alpha'=\varphi\circ \alpha\circ \varphi^{-1}$ for some $\varphi\in {\rm PSL}(2,\mathbb{R})$. The only case that cannot occur is $\alpha=id$ and $I=0$.
\end{prop}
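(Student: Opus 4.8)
The plan is to reduce the classification to a single integer invariant, namely the translation number of a canonical lift of the holonomy to $\widetilde{\mathrm{PSL}}(2,\mathbb{R})$, and then to identify this invariant with the index $I$. The necessity of the two conditions is the easy half: a projective equivalence $h\colon l\to l'$ lifts to an orientation‑preserving homeomorphism $\tilde h$ of the universal covers satisfying $D'\circ\tilde h=\varphi\circ D$ for some $\varphi\in\mathrm{PSL}(2,\mathbb{R})$, and equivariance forces $\alpha'=\varphi\,\alpha\,\varphi^{-1}$. Since $\varphi$ carries the fixed points of $\alpha$ onto those of $\alpha'$ and $\tilde h$ carries $D^{-1}(p)$ bijectively and $\mathbb{Z}$‑equivariantly onto $D'^{-1}(\varphi p)$, the indices agree, $I=I'$.

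For sufficiency I would conjugate so that $\alpha=\alpha'$ and identify $l$ with $\mathbb{R}/\mathbb{Z}$, so that the developing map becomes a strictly increasing immersion $D\colon\mathbb{R}\to\mathbb{RP}^1$ with $D(t+1)=\alpha D(t)$. Lifting $D$ through the universal cover $\Pi\colon\mathbb{R}\to\mathbb{RP}^1$ to a strictly increasing $\hat D\colon\mathbb{R}\to\mathbb{R}$, the equivariance singles out a unique lift $\tilde\alpha\in\widetilde{\mathrm{PSL}}(2,\mathbb{R})$ of $\alpha$ with $\hat D(t+1)=\tilde\alpha(\hat D(t))$; this lift is intrinsic to the structure, as changing $\hat D$ by a deck translation leaves it unchanged. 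Because $\alpha$ fixes a point of $\mathbb{RP}^1$, its rotation number vanishes and the translation number $m:=\tau(\tilde\alpha)$ is an integer. The \emph{key lemma} is that $m=I$: lifting a fixed point $p$ to $\hat p$ one has $\tilde\alpha(\hat p)=\hat p+m$, and the preimages of $p$ in one period correspond to the points of $\hat p+\mathbb{Z}$ lying in $[\hat D(t_0),\tilde\alpha(\hat D(t_0)))$; a short pairing argument (when the left endpoint crosses $\hat p+k$, the right endpoint crosses $\tilde\alpha(\hat p+k)=\hat p+k+m$) shows this count is independent of $t_0$, and evaluating at $\hat p$ gives exactly $m$. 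Since distinct lifts of $\alpha$ have translation numbers differing by the nonzero integer governing the central $\mathbb{Z}\subset\widetilde{\mathrm{PSL}}(2,\mathbb{R})$, the lift $\tilde\alpha$ is determined by $\alpha$ together with $I$; hence two structures with $\alpha=\alpha'$ and $I=I'$ give rise to the \emph{same} $\tilde\alpha$.

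It then remains to manufacture the equivalence from the coincidence of the lifts. When $I=m\ge 1$ the map $\tilde\alpha$ has no fixed point, so $\hat D$ and $\hat D'$ are increasing bijections of $\mathbb{R}$ and the composite $\hat h:=\hat D'^{-1}\circ\hat D$ is an increasing homeomorphism with $\hat h(t+1)=\hat h(t)+1$; it descends to a diffeomorphism $h$ of $l$ with $D'\circ h=D$, which reads as the identity in the charts $D,D'$ and is therefore projective. When $I=0$ (forcing $\alpha\neq\mathrm{id}$, since $\alpha=\mathrm{id}$ gives the degree $\geq 1$ of a circle cover) the images of $\hat D$ and $\hat D'$ are full open arcs between consecutive fixed points of $\tilde\alpha$; an orientation argument shows both developed images lie in the arc on which $\tilde\alpha$ expands, so after translating $\hat D'$ by a suitable integer, or equivalently after applying an element of the centraliser $Z(\alpha)$, the two images coincide and the same formula $\hat h=\hat D'^{-1}\circ\hat D$ applies. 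Existence for the admissible invariants is obtained dually, by choosing the lift $\tilde\alpha$ of translation number $m$ and interpolating an increasing $\tilde\alpha$‑equivariant $\hat D$ across one fundamental interval; this succeeds for every $m\ge 0$ when $\alpha\neq\mathrm{id}$ and every $m\ge 1$ when $\alpha=\mathrm{id}$, while the excluded case $\alpha=\mathrm{id}$, $I=0$ is impossible because $\tilde\alpha=\mathrm{id}$ would force $\hat D$ to be at once periodic and strictly increasing.

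The hard part will be the key lemma $I=\tau(\tilde\alpha)$ and the careful bookkeeping of lifts it demands, together with the degenerate case $I=0$. There the developing map is not surjective, so one cannot argue purely by degrees; instead one must show that the orientation together with the equivariance pins down which of the two arcs contains the developed image, so that an integer translation (or the centraliser of $\alpha$) genuinely suffices to align the two structures rather than only conjugating $\alpha$ to $\alpha^{-1}$.
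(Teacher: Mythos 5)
Your proposal is correct and follows essentially the same route as the paper: lift the developing map to $\widetilde{\mathbb{RP}^1}$, observe that the equivariance singles out a distinguished lift of the holonomy (your translation-number-$I$ lift is the paper's $\widetilde{\alpha}\circ T^I$ with $\widetilde{\alpha}$ the fixed-point lift), and then split into the cases $I>0$ (lifted developing map is a bijection of the line) and $I=0$ (image is the positive interval between consecutive fixed points). The only difference is one of bookkeeping — you compare two structures directly and spell out the identification of the index with the translation number, where the paper compares each structure to an explicit model and asserts that identification without proof.
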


\begin{proof}
 If $\alpha$ and $\alpha'$ are trivial, we just need to prove that two coverings of $l\rightarrow\mathbb{RP}^1$ are equivalent if and only if they have the same degree, which is obviously true. The degree zero covering is impossible since there are no branch points. The proof of the proposition is a generalization of the proof of the previous fact.
  We first  construct a model of a $\mathbb{RP}^1$-structure on the circle  $\mathbb{S}^1$ with prescribed index $I$ and holonomy $\alpha$. Let $\widetilde{\mathbb{RP}^1}\rightarrow\mathbb{RP}^1$ denote a universal covering map and $T:\widetilde{\mathbb{RP}^1}\rightarrow\widetilde{\mathbb{RP}^1}$ denote the action of the positive generator of $\pi_1(\mathbb{RP}^1)$ on the universal cover $\widetilde{\mathbb{RP}^1}$ (positive means that $T (x)$ is on the right of $x$ for every $x\in \widetilde{\mathbb{RP}^1}$). Lift $\alpha:\mathbb{RP}^1\rightarrow \mathbb{RP}^1$ to a map $\widetilde{\alpha}$ from $\widetilde{\mathbb{RP}^1}$ to itself which has at least one fixed point. Since $\widetilde{\alpha}$ and $T$ commute, the quotient of $\widetilde{\mathbb{RP}^1}$ by $\widetilde{\alpha}\circ T^I$ is homeomorphic to a circle $\mathbb{S}^1$ equipped with a $\mathbb{RP}^1$-structure with index $I$ and holonomy $\alpha$. Of course, if we compose the chosen universal covering map on the left by an element $\varphi\in\mathrm{Aut}(\mathbb{RP}^1)$ we get an equivalent $\mathbb{RP}^1$-structure with holonomy $\alpha'=\varphi\circ\alpha\circ\varphi^{-1}$.

Given any $\mathbb{RP}^1$-structure on a circle $l$ with holonomy $\alpha$, its developing map $d:\widetilde{l}\rightarrow\mathbb{RP}^1$ satisfies $d([l]\cdot z)=\alpha(d(z))$ and lifts to the covering $\widetilde{\mathbb{RP}^1}\rightarrow(\widetilde{\mathbb{RP}^1}/T)\cong\mathbb{RP}^1$ as a map $\widetilde{d}:\widetilde{l}\rightarrow \widetilde{\mathbb{RP}^1}$, such that $$\widetilde{d}([l]\cdot z)=(\widetilde{\alpha}\circ T^I)(\widetilde{d}(z)),$$
for some integer $I$. Observe that the integer $I$ is necessary non negative since $T$ is positive and $\widetilde{d}$ preserves orientation; this integer is nothing but the index of the projective structure.

In the case where $I=0$, the image of $\widetilde{d}$ is an interval between two consecutive fixed points of $\widetilde{\alpha}$, hence the structure is the quotient of the (unique) open interval in $\mathbb {RP}^1$ between consecutive fixed points of $\alpha$ where $\alpha$ acts as a positive map.

In the case where $I>0$, the image of $\widetilde{d}$ is the whole $\widetilde{\mathbb{RP}^1}$, since $T$ acts discretely; hence $\widetilde{d}$ is a diffeomorphism, which induces a projective diffeomorphism between the given $\mathbb{RP}^1$-structure on $l$ and the model $\mathbb{RP}^1$-structure on $\mathbb{S}^1$ with index $I$ and holonomy $\alpha$ constructed above. Hence the result.
\end{proof}

In the Fuchsian case the hypothesis on the holonomy of Proposition~\ref{p:RP1structure} is always satisfied, since we have either trivial holonomy or precisely two fixed points $p_1,p_2\in\mathbb{RP}^1$ for the loxodromic holonomy $\alpha$. In the latter case we can carry the decomposition of $S$ induced by the properties of the holonomy representation further. Indeed, after conjugation we can suppose $p_1=0$, $p_2=\infty$ and $\alpha(z)=\lambda z$ for some $\lambda>0$. The partition $\mathbb{RP}^1=0\sqcup\mathbb{R}^+\sqcup\infty\sqcup\mathbb{R}^-$ is thus invariant by $\alpha$ and induces, via the developing map, a partition of $l$ as $l=P_1\sqcup l^+\sqcup P_2\sqcup l^-$ where $l^+$ and $l^-$ are unions of disjoint oriented intervals and $P_1$, $P_2$ correspond to the sets used in the definition of the index of the $\mathbb{RP}^1$- structure.

\subsection{Geometry of the hyperbolic structures on $S\setminus S_{\mathbb{R}}$}

The pull-back of the hyperbolic metric
on $\mathbb H ^+$ by the developing map defines on $S^+$ a metric
which is smooth and has curvature $-1$ away from the branch points. At a point with branching order $n\geq 1$ the metric is singular and
it has conical angle $2(n+1)\pi$.  Denote by $d$ the induced distance.
Completeness of $d$ is tricky in a general setting, and the matter is
settled in~\cite{ChoiLee}. In our case there is an easy proof that we
include for the reader's convenience.  First, we need a family of
nice neighborhoods of the points of $\partial S^+$, that we call
hyperbolic semi-planes.

\begin{definition} A hyperbolic semi-plane in $S^+$ is a closed set
$\delta\subset S^+$, whose closure in $\overline{S^+}$ is a closed
disc and such that, for any lift $\tilde \delta\subset\widetilde S$ of
$\delta$, the restriction of $D$ to $\tilde\delta$
is a homeomorphism onto a closed hyperbolic
semi-plane of $\mathbb H^+$, that is, a sub set of $\mathbb H^+$ isometric to
$\{\Im(z)>0, \Re(z)\geq0\}$.)
\end{definition}

\begin{lemma} \label{l:completeness} The metric space $(S^+, d) $ is
complete.
\end{lemma}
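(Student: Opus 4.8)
The plan is to show that the boundary $\partial S^+=S_{\mathbb R}$ lies at infinite $d$-distance from every interior point, so that a $d$-Cauchy sequence cannot approach $S_{\mathbb R}$, and then to invoke the local completeness of the hyperbolic cone metric in the interior of $S^+$. The only genuine input is the elementary fact that the ideal boundary of $\mathbb H^2$ is at infinite hyperbolic distance, transported to $S^+$ through hyperbolic semi-planes.

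First I would check that a neighborhood of each point $q\in S_{\mathbb R}$ in $\overline{S^+}$ is covered by finitely many hyperbolic semi-planes. If $q$ is regular, the developing map is a local homeomorphism near a lift $\tilde q$, with $D(\tilde q)\in\mathbb{RP}^1$; pulling back via $D$ a closed hyperbolic semi-plane of $\mathbb H^+$ whose ideal boundary is a small arc of $\mathbb{RP}^1$ around $D(\tilde q)$ produces a single hyperbolic semi-plane that is a neighborhood of $q$ in $\overline{S^+}$. If $q$ is a branch point of order $n$, then in suitable local coordinates $D$ equals $w\mapsto w^{n+1}$ up to a M\"obius map, so $D^{-1}(\mathbb{RP}^1)$ is a union of $2(n+1)$ rays cutting a neighborhood of $q$ into $2(n+1)$ sectors lying alternately in $S^+$ and $S^-$. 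Each of the $n+1$ positive sectors has angle $\pi/(n+1)$ and hence develops injectively onto a half-disc neighborhood of $D(\tilde q)$ in $\overline{\mathbb H^+}$, so it contains a hyperbolic semi-plane whose closure reaches $q$; the closures of these $n+1$ semi-planes cover a neighborhood of $q$ in $\overline{S^+}$.

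Next I would record the decisive point: under the isometry $D|_{\tilde\delta}$, the boundary arc of any hyperbolic semi-plane $\delta$ lying on $S_{\mathbb R}$ corresponds to an arc of the ideal boundary $\mathbb{RP}^1$ of $\mathbb H^+$ in whose interior $D(\tilde q)$ sits, so $D(\tilde q)$ is not an endpoint of the bounding geodesic of $\delta$. Fixing a base point $x_0\in S^+$, I take a $d$-Cauchy sequence $(x_n)$, which is $d$-bounded, and extract by compactness of $S$ a subsequence $x_{n_k}$ converging in the ambient surface to some $p\in\overline{S^+}$. If $p$ were in $S_{\mathbb R}$, then for large $k$ the points $x_{n_k}$ would lie in the finitely many semi-planes covering $p$, and since any path from $x_0$ must cross their bounding geodesics, the fact that the developed images of $x_{n_k}$ tend to a non-endpoint ideal point forces $d(x_0,x_{n_k})\to\infty$, contradicting $d$-boundedness. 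Hence $p\in S^+$. Near this interior point $d$ is the hyperbolic metric if $p$ is regular, or a hyperbolic cone metric of angle $2(n+1)\pi$ if $p$ is a branch point of order $n$; in either case small closed $d$-balls about $p$ are compact and the $d$-topology agrees there with the ambient topology, so $x_{n_k}\to p$ in $d$. A Cauchy sequence with a convergent subsequence converges, so $x_n\to p$ and $(S^+,d)$ is complete.

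The step I expect to be the main obstacle is the branch-point case of the covering: one must verify that each positive sector at a branch point on $S_{\mathbb R}$ develops injectively all the way up to $q$ and genuinely contains a hyperbolic semi-plane reaching $q$, so that these semi-planes really cover a neighborhood of $q$ in $\overline{S^+}$. Everything after that reduces to the infinite-distance property of the ideal boundary of $\mathbb H^2$ and to the evident local completeness of the hyperbolic cone metric.
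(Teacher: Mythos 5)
Your proof is correct, and it rests on the same key object as the paper's --- the hyperbolic semi-planes, isometric via the developing map to half-planes of $\mathbb H^2$ --- but it is organized differently. The paper runs a dichotomy directly on the Cauchy sequence: fixing $n_0$ with $d(p_m,p_n)<1$ for $m,n\geq n_0$, either some $p_m$ lies at depth $>1$ inside a semi-plane $\delta$, in which case all later terms are trapped in $\delta$ and converge there because a closed half-plane of $\mathbb H^2$ is complete; or all later terms lie in the compact set $K_1$ (the exterior of the union of the depth-$1$ parts $\delta_1$ of all semi-planes), and compactness finishes. You instead extract a subsequence converging in the compact ambient surface and exclude a limit on $S_{\mathbb R}$ by showing the real curve is at infinite $d$-distance: the developed images tend to an ideal point interior to the ideal arc of one of the covering semi-planes, so the distance to its bounding geodesic blows up. What your write-up buys is an explicit verification --- including at branch points lying on $S_{\mathbb R}$, where the positive sectors of angle $\pi/(n+1)$ develop injectively --- that closures of hyperbolic semi-planes cover a neighborhood of $S_{\mathbb R}$ in $\overline{S^+}$; this is exactly the content the paper compresses into the unproved observation that the exterior of $\bigcup_\delta\delta_r$ is compact, so your argument is the more self-contained of the two. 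What the paper's version buys is economy: it never needs ambient compactness of $S$, the extension of $D$ to the closure of a semi-plane, or the agreement of the $d$-topology with the ambient topology at interior cone points, all of which are harmless but are exactly the small checks your outline has to supply.
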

\begin{proof} For every hyperbolic semi-plane $\delta$ in $S^+$, and
every $r>0$, we denote by $\delta_r$ the set of points of $\delta$
which are
at distance more than
$r$ from $\partial\delta$ with respect to the hyperbolic metric of $S^+$.  Observe that for any fixed $r>0$, for $\delta$
varying among all  hyperbolic semi-planes of $S^+$,
the union of all the sets $\delta_r$  is an open set
whose exterior in $S^+$ is a compact set $K_r$.

Let $(p_n)$ be a Cauchy sequence in $S^+$. Let $n_0$ be such that for
$m,n\geq n_0$, the distance between $p_m$ and $p_n$ is less than $1$.

First suppose that there is $m\geq n_0$ such that $p_m $ belongs
$K_1^c$, i.e. $p_m\in \delta_1$ for some hyperbolic semi-plane $\delta
$ in $S^+$. Then because the hyperbolic distance in $\delta$ is not
bigger than the restriction of the distance $d$ to $\delta$, the
points $p_n$ belong to $\delta$ for every $n\geq n_0$, and form a
Cauchy sequence for the hyperbolic distance in $\delta$. Hence, the
sequence $p_n$ has a limit in $\delta$.

The remaining case to consider is when for all $m\geq n_0$, the point
$p_m$ belongs to $K_1$. Since $K_1$ is compact, the Cauchy sequence
$(p_n)$ converges to a point. Thus $(S^+, d)$ is complete.
\end{proof}

Geodesics of components are curves that locally minimize distance. In
fact, they are piecewise smooth geodesics (for the hyperbolic metric
defined outside the branch points) with singularities at
branch points, where they form angles always bigger or equal than $\pi$.

\begin{lemma} \label{l:limit} Let $\gamma : [0,\infty) \rightarrow
S^+$ be a geodesic which exits all compact sets of $S^+$.  Then
$\gamma$ has a limit $\gamma(\infty)\in\partial S^+$.  If
$\gamma(\infty)$ is not a branch point, then $\gamma$ analytically
extends to a curve ending in $S^-$. The statement remains true if we exchange the roles of $S^+$ and $S^-$.
\end{lemma}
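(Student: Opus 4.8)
The plan is to reduce the statement to a computation in the upper half-plane via the developing map, and then transport the conclusion back to $S^+$ by means of the hyperbolic semi-planes introduced above. Since there are only finitely many branch points and $\gamma$ exits every compact subset of $S^+$, there is a time $T$ after which $\gamma$ meets no branch point, so that $\gamma|_{[T,\infty)}$ is a smooth hyperbolic geodesic. Fixing a lift $\tilde\gamma$ to $\widetilde{S^+}=D^{-1}(\mathbb H^+)$, the developed curve $c:=D\circ\tilde\gamma$ restricted to $[T,\infty)$ is then a genuine hyperbolic geodesic ray in $\mathbb H^2=\mathbb H^+$. As $\gamma$ is parametrized by arclength on $[0,\infty)$ it has infinite length, hence so does $c$, and every geodesic ray of $\mathbb H^2$ converges to a unique point $\xi\in\partial_\infty\mathbb H^2=\mathbb{RP}^1\subset\mathbb{CP}^1$. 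After conjugating so that $\xi=\infty$ we may assume $c|_{[T,\infty)}$ is a vertical half-line $\{x_0+iy:\ y\geq y_0\}$, which is contained in the hyperbolic half-plane $P=\{\Re z\geq x_0-1,\ \Im z>0\}$ having $\xi$ on its ideal boundary.

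Next I would transport this back to $S^+$. Because $\gamma$ exits every compact set, I may choose $t_0\geq T$ so large that $\gamma(t_0)$ lies in $\delta_r$ for some hyperbolic semi-plane $\delta$ and some large $r$, such semi-planes exhausting the complement of the compact sets $K_r$ appearing in the proof of Lemma~\ref{l:completeness}. The restriction $D|_{\tilde\delta}$ is a homeomorphism of $\tilde\delta$ onto a closed hyperbolic half-plane, and it extends to the closures, identifying $\overline{\tilde\delta}\subset\overline{S^+}$ with the closure of that half-plane in $\mathbb{CP}^1$; the ideal arc of the half-plane corresponds to an arc of $\partial S^+=S_{\mathbb R}$. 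The key point is that $\xi$ is an ideal endpoint of the half-plane onto which $\delta$ develops: the ray $c$ issues from the deep point $c(t_0)$ and runs to $\xi$, and a geodesic can leave a half-plane only through its bounding geodesic, so once it is this deep it remains in the half-plane and $\xi$ is forced to lie on its ideal boundary. Consequently $c|_{[t_0,\infty)}$ stays in the developed image of $\delta$, so $\tilde\gamma|_{[t_0,\infty)}=(D|_{\tilde\delta})^{-1}\circ c|_{[t_0,\infty)}$ stays in $\tilde\delta$, and $c(t)\to\xi$ yields $\tilde\gamma(t)\to p:=(D|_{\tilde\delta})^{-1}(\xi)\in\overline{\tilde\delta}\cap D^{-1}(S_{\mathbb R})$. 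Projecting gives $\gamma(t)\to\gamma(\infty):=\pi(p)\in\partial S^+$, as claimed.

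Finally, suppose $\gamma(\infty)$ is not a branch point. Then $D$ is a local biholomorphism near $p$, and near $\xi$ the circle $\mathbb{RP}^1$ is a real-analytic arc that $D$ identifies with a neighbourhood of $\gamma(\infty)$ in $S_{\mathbb R}$. The developed geodesic $c$ is a circular arc meeting $\mathbb{RP}^1$ orthogonally at $\xi$, hence real-analytic across $\xi$, and its analytic continuation enters $\mathbb H^-$. Pulling this continuation back through the local inverse of $D$ extends $\gamma$ analytically across $S_{\mathbb R}$ into $S^-$. Since the structure on $S^-$ is governed in the same way by the conjugate developing map $\overline{D}$, whose image again lies in $\mathbb H^+$, the entire argument applies verbatim with the roles of $S^+$ and $S^-$ exchanged.

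The main obstacle is the transport step: one must ensure that the developed convergence $c(t)\to\xi$ yields genuine convergence of $\gamma(t)$ in $S$, i.e. that $\gamma$ does not wander between distinct boundary points while its development marches monotonically to $\xi$. This is precisely what the hyperbolic semi-planes are designed to control, and the delicate part is verifying that the semi-plane chosen at the deep point $\gamma(t_0)$ really develops onto a half-plane having $\xi$ on its ideal boundary, so that the trapping mechanism behind Lemma~\ref{l:completeness} confines the whole tail of $\tilde\gamma$ to a single $\tilde\delta$.
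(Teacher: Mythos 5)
Your argument is correct and rests on the same mechanism as the paper's own (very terse) proof: trap the tail of $\gamma$ in a hyperbolic semi-plane $\delta$ using the exhaustion by the compacta $K_r$ from the proof of Lemma~\ref{l:completeness}, and then read off the limit from the fact that $\delta$ develops homeomorphically onto a genuine half-plane of $\mathbb H^2$, where geodesic rays converge to ideal points. What you add beyond the paper's two sentences is worthwhile: you first discard the finitely many branch points so that the developed tail is an honest geodesic ray of $\mathbb H^2$ with a well-defined ideal endpoint $\xi\in\mathbb{RP}^1$, and you actually prove the second clause of the statement (the analytic continuation into $S^-$ when $\gamma(\infty)$ is unbranched, via the fact that a hyperbolic geodesic is a circular arc orthogonal to $\mathbb{RP}^1$, hence real-analytic across $\xi$, pulled back through the local inverse of $D$), which the paper's proof does not address at all. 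The one step that is not fully justified --- and you correctly single it out --- is the trapping itself: ``once $\gamma(t_0)$ lies in $\delta_r$ it never leaves $\delta$'' does not follow from ``a geodesic can only exit through the bounding geodesic,'' since a deep geodesic aimed at the bounding geodesic will eventually cross it. With your setup the natural repair is the dichotomy: if $\xi$ lies in the ideal arc of $H=D(\tilde\delta)$ then convexity of $H$ confines the whole ray $c|_{[t_0,\infty)}$, hence $\tilde\gamma$, to $\tilde\delta$; otherwise $c$ crosses the bounding geodesic exactly once and never returns, and one must still rule out that this second alternative occurs for every semi-plane entered deeply. The paper asserts the trapping with exactly the same brevity, so this is a shared ellipsis rather than a defect of your proof relative to the source.
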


\begin{proof} By hypothesis, $\gamma$ eventually exists any $K_r$
(defined as in the proof of Lem\-ma~\ref{l:completeness}), so it
enters a hyperbolic semi-plane $\delta$ and never exits again. The
claim follows because $\delta$ is isometric to a half-plane in the
hyperbolic plane, where geodesics have limits on the boundary.
\end{proof}

\begin{lemma} Let $C$ be a conneccted component of $S\setminus
S_{\mathbb R}$.  The universal cover $\widetilde{C}$ is a
$\mathrm{CAT}(-1)$-space, whose geometric boundary is an oriented circle so
that $\widetilde C \cup \partial\widetilde C$ is a closed disc.
\end{lemma}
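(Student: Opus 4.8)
The plan is to treat the case $C\subset S^+$; the case $C\subset S^-$ is identical after replacing $D$ by its complex conjugate $\overline D$. On $C$ we work with the singular hyperbolic metric $d$ obtained by pulling back the metric of $\mathbb H^2$ via $D$. Recall that this metric has curvature $-1$ away from the branch points, and that at a branch point of order $n$ it has a cone singularity of angle $2(n+1)\pi$. The point on which everything hinges is that every such cone angle satisfies $2(n+1)\pi\ge 4\pi\ge 2\pi$, so $(C,d)$ carries an upper curvature bound of $-1$ at every point.

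First I would verify that $(C,d)$ is complete. Since $C$ is a connected component of the open set $S^+\subset S$ and $S$ is a manifold, hence locally connected, $C$ is simultaneously open and closed in $S^+$; as $d$ is a length metric on $S^+$, no rectifiable path joins distinct components, so $(C,d)$ is isometrically a clopen factor of $(S^+,d)$. By Lemma~\ref{l:completeness} the latter is complete, and therefore so is $(C,d)$. Note in particular that the real curve $S_{\mathbb R}=D^{-1}(\mathbb{RP}^1)$ develops to the ideal boundary $\mathbb{RP}^1=\partial_\infty\mathbb H^2$ and hence lies at infinite distance, so completing $C$ adds no boundary. Passing to the universal cover, $\widetilde C$ with the lifted metric is again a complete, locally compact length space, hence proper and geodesic by Hopf--Rinow.

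Next I would show that $(C,d)$ is locally $\mathrm{CAT}(-1)$. At a regular point this is immediate, a neighborhood being isometric to an open subset of $\mathbb H^2$. At a cone point the link is a metric circle of length equal to the cone angle $2(n+1)\pi\ge 2\pi$, so it is $\mathrm{CAT}(1)$; the standard cone criterion (e.g. the hyperbolic cone computation in Bridson--Haefliger) then gives a $\mathrm{CAT}(-1)$ neighborhood of the cone point. Thus $\widetilde C$, being locally isometric to $C$, is locally $\mathrm{CAT}(-1)$; being in addition complete and simply connected, the Cartan--Hadamard theorem for spaces of curvature bounded above by a negative constant promotes this to a global statement, so $\widetilde C$ is globally $\mathrm{CAT}(-1)$.

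Finally, $\widetilde C$ is a simply connected surface equipped with a proper $\mathrm{CAT}(-1)$ (hence Hadamard) metric, so it is contractible and homeomorphic to $\mathbb R^2$. For such a space the geometric boundary $\partial\widetilde C=\partial_\infty\widetilde C$ is defined through asymptoty classes of geodesic rays, and the visual compactification $\widetilde C\cup\partial_\infty\widetilde C$ with the cone topology is homeomorphic to the closed disc, with boundary a topological circle; the orientation inherited from $S$ orients this circle. I expect the two delicate points to be the transfer of completeness from $S^+$ to the component $C$ (which is why I isolate the clopen/length-metric argument above) and the correct invocation of the cone criterion together with Cartan--Hadamard in the singular setting; once these are secured, the identification of the boundary with an oriented circle is routine from the theory of Hadamard spaces.
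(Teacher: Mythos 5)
Your proof is correct, but it reaches the $\mathrm{CAT}(-1)$ property and the boundary circle by a different route than the paper. For the curvature bound, the paper approximates the singular metric by smooth Riemannian metrics of curvature $<-1$ agreeing with $ds^2$ away from the branch points, and obtains the $\mathrm{CAT}(-1)$ comparison inequalities by passing to the limit; you instead invoke the cone criterion (a hyperbolic cone over a circle of length $\ge 2\pi$ is locally $\mathrm{CAT}(-1)$) together with the metric Cartan--Hadamard theorem to globalize. Your version is the more standard modern argument and avoids having to justify convergence of the comparison inequalities under smoothing, at the price of citing two named theorems from Bridson--Haefliger; the paper's smoothing also does double duty, since the auxiliary smooth metric is reused in the second half. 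For the boundary, the paper identifies $\partial\widetilde C$ via a quasi-isometry between $(\widetilde C,d)$ and $(\widetilde C,d_{smooth})$, reducing to the classical fact that a simply connected complete Riemannian surface of uniformly negative curvature compactifies to a closed disc; you appeal directly to the visual compactification of a proper $\mathrm{CAT}(-1)$ plane. The latter is true but is the one place where ``routine'' is a little optimistic --- that a proper $\mathrm{CAT}(-1)$ space homeomorphic to $\mathbb R^2$ has compactification a closed disc does require an argument (the paper's quasi-isometry trick is precisely a way of outsourcing it). Your isolation of the completeness transfer from $S^{\pm}$ to the component $C$ is a point the paper leaves implicit, and your clopen/length-metric justification is sound.
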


\begin{proof} Since the conical singularities at branch points have
angles bigger that $2\pi$, and the metric is hyperbolic elsewhere, the
singular metric $ds^2$ of $C$ can be approximated by smooth metrics of
curvature less than $-1$, hence CAT$(-1)$ inequalities hold for
triangles and pass to the limit. Thus $\widetilde{C}$ is a
$\mathrm{CAT}(-1)$-space.  Let $ds^2 _ {smooth}$ be a smooth metric of
curvature less than $-1$ on $C $, which equals $ds^2$ outside some
compact neighborhood of branch points, and let $d_{smooth}$ be the
induced distance. Then the identity is a quasi-isometry between
$(\widetilde{C}, d)$ and $(\widetilde{C}, d_{smooth})$, hence these
two spaces have the same boundaries. On the other hand, complete,
simply connected, Riemannian surfaces of uniformly negative curvature
are open discs whose geometric and topological boundaries are homeomorphic.
\end{proof}

\begin{cor}\label{cor:cat-1} Any path in a component $C$ of $S\setminus S_{\mathbb{R}}$ is homotopic with fixed
end-points to a unique geodesic. Any closed loop in $C$ which is not
null-homotopic is freely homotopic to a unique closed
geodesic. Between any two points in $\widetilde C$ there is a unique
geodesic. Geodesics of $\widetilde C$ are simple. Two non-disjoint
geodesics of $\widetilde C$ intersect either transversally, or in a
connected geodesic segment (possibly a point) with end-points at
branch points.
\end{cor}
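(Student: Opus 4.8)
The plan is to deduce all five assertions from the single fact, proved in the preceding lemma, that $\widetilde{C}$ is a complete, proper $\mathrm{CAT}(-1)$ space (in particular $\mathrm{CAT}(0)$) whose geometric boundary is a circle, supplemented by the local description of the singular metric (smooth of curvature $-1$ away from the branch points, with cone angles $>2\pi$ at them). I recall that proper $\mathrm{CAT}(0)$ spaces are uniquely geodesic; this is exactly the third assertion, that any two points of $\widetilde{C}$ are joined by a unique geodesic. The first assertion then follows by lifting: a path $c$ in $C$ from $p$ to $q$ lifts, after a choice of lift $\tilde p$, to a path in $\widetilde{C}$ ending at a definite point $\tilde q$ over $q$; the unique geodesic $[\tilde p,\tilde q]$ projects to a geodesic homotopic to $c$ rel endpoints, and any competitor lifts to a geodesic with the same endpoints, hence agrees with it by uniqueness.

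For the second assertion I would realize a non-null-homotopic loop $\alpha$ as a nontrivial deck transformation $g$ of $\widetilde{C}$ and show that $g$ is hyperbolic. The developing map $\widehat D:\widetilde{C}\to\mathbb{H}^2$ is $1$-Lipschitz, being a local isometry, and $\rho$-equivariant, so $d_{\widetilde{C}}(x,gx)\ge d_{\mathbb{H}^2}(\widehat D(x),\rho(g)\widehat D(x))\ge\ell$, where $\ell>0$ is the translation length of the loxodromic element $\rho(g)$; here one uses that in the Fuchsian setting the holonomy of an essential curve, when nontrivial, is loxodromic and never parabolic. Thus $g$ has positive infimal displacement and, acting freely and properly on a proper $\mathrm{CAT}(-1)$ space, is a hyperbolic isometry with an axis. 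Strict negativity of the curvature is what makes the axis unique: two distinct complete geodesics of a $\mathrm{CAT}(-1)$ space cannot remain within bounded Hausdorff distance. The axis then projects to the unique closed geodesic freely homotopic to $\alpha$.

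The fourth assertion is immediate from unique geodesicity: were a geodesic $\gamma$ to satisfy $\gamma(s)=\gamma(t)$ with $s<t$, the constant path and $\gamma|_{[s,t]}$ would be two distinct geodesics between one and the same point, which is impossible. For the last assertion, let $\gamma_1\ne\gamma_2$ be non-disjoint geodesics. If their intersection were disconnected, two points lying in distinct components would be joined by two distinct geodesic subarcs, one inside each $\gamma_i$, again contradicting uniqueness; hence the intersection is connected, and as the common part of two geodesics it is a (possibly degenerate) geodesic segment. I then examine its endpoints: near a non-branch point the metric is smooth hyperbolic, where a geodesic is determined by one point and one direction, so if the overlap ended at such an interior point the two geodesics would have equal incoming directions there and continue identically beyond it, prolonging the overlap --- a contradiction. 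Therefore the endpoints of a genuine overlap are branch points. When the common part is a single point that is not an overlap, the two geodesics pass through it with distinct directions and cross transversally, which is the remaining alternative.

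The main obstacle is the second assertion: everything else is a soft consequence of unique geodesicity, whereas producing the closed geodesic requires knowing that $g$ is hyperbolic and that its axis is unique. The first point is secured by the $1$-Lipschitz equivariant developing map together with the absence of parabolics in the Fuchsian holonomy, which rules out cusps and forces positive infimal displacement; the second is precisely where the $\mathrm{CAT}(-1)$ hypothesis, rather than mere non-positivity, is indispensable. The local smooth-versus-cone dichotomy underlying the intersection statement is then elementary once uniqueness of geodesics is in hand.
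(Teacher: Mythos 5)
The paper offers no argument at all for this corollary --- it is stated as an immediate consequence of the preceding lemma that $\widetilde C$ is a proper $\mathrm{CAT}(-1)$ disc --- so your write-up is in effect supplying the proof the authors left implicit, and four of your five points (unique geodesics between points of $\widetilde C$, the rel-endpoints statement, simplicity, and the structure of intersections) are correct and are exactly the standard $\mathrm{CAT}(0)$/$\mathrm{CAT}(-1)$ deductions one would expect.

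There is, however, a genuine gap in your treatment of the second assertion. Your existence argument for the closed geodesic rests entirely on the lower bound $d_{\widetilde C}(x,gx)\ge \ell(\rho(g))>0$ coming from the $1$-Lipschitz equivariant developing map, and you yourself hedge with ``when nontrivial''. But the deck group of $\widetilde C\to C$ is $\pi_1(C)$, and when $C$ is \emph{compressible} in $S$ a loop can be essential in $C$ while being null-homotopic in $S$; by faithfulness of $\rho$ its holonomy is then the identity, $\ell=0$, and your displacement bound is vacuous, so nothing rules out the class being ``parabolic'' (no geodesic representative, minimizing sequences escaping to infinity). This is not a marginal case: the paper applies the corollary precisely there, e.g.\ in Section~\ref{s:fuchspositivesing} the peripheral geodesics $\gamma_i$ are the geodesic representatives of curves $l_i=\partial D_i$ bounding negative discs, which are homotopically trivial in $S$ and hence have trivial holonomy. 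To close the gap one needs a separate compactness argument, for instance: any essential loop in $C$ of length $\le L$ must meet the compact set $K_L$ of Lemma~\ref{l:completeness}, because a loop lying in some $\delta_r$ with $r>L$ is entirely contained in the simply connected hyperbolic semi-plane $\delta$ (it cannot reach $\partial\delta$) and is therefore null-homotopic; a length-minimizing sequence in the free homotopy class thus stays in a fixed compact set, Arzel\`a--Ascoli produces a closed local geodesic in the class, and uniqueness then follows from the flat-strip/$\mathrm{CAT}(-1)$ argument you already give. (A smaller quibble: even in the loxodromic case, ``positive infimal displacement on a proper $\mathrm{CAT}(-1)$ space implies axial'' is best justified via the bound $d(x,g^nx)\ge n\ell$ for all powers, giving positive stable translation length and hence an axis by the Morse lemma plus uniqueness of geodesics with prescribed endpoints at infinity; the inequality for $n=1$ alone is not quite the standard hypothesis.)
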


\subsection{Ends of components} Let $C$ be a connected component of
$S\setminus S_{\mathbb R}$.  We identify oriented bi-infinite
geodesics of $\widetilde{C}$ (up to parametrization) with the couples
$(a,b)$ of their end-points in $\partial\widetilde C$. By Jordan's
theorem, any $(a,b)$ divides $\widetilde C$ in two discs.

\begin{definition} Let $(a,b)$ be an oriented geodesic in
$\widetilde{C}$. We denote by $R(a,b)$ and $L(a,b)$ the component of
$\widetilde{C} \setminus (a,b)$ which is respectively at the right and
left-side of $(a,b)$.
\end{definition}

\begin{lemma} \label{l:disjointsemi-planes} Let $a,b,c,d$ be distinct
points.  Then $R(a,b)$ and $R(c,d)$ are disjoint if and only if
$a,b,c,d$ are disposed in a positive cyclic order.
\end{lemma}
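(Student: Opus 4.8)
The plan is to reduce the statement to a purely combinatorial condition on the oriented boundary circle $\partial\widetilde C$, the key geometric input being an identification of which boundary arc each region $R(\cdot,\cdot)$ ``sees''. First I would pin down the boundary of $R(a,b)$. By Corollary~\ref{cor:cat-1} the geodesic $(a,b)$ is simple, and since $\widetilde C\cup\partial\widetilde C$ is a closed disc it is a properly embedded arc joining the boundary points $a,b$; by Jordan's theorem it cuts the closed disc into two closed sub-discs, whose interiors are $R(a,b)$ and $L(a,b)$, and each of these meets $\partial\widetilde C$ in exactly one of the two open arcs between $a$ and $b$. Tracking orientations --- $(a,b)$ oriented from $a$ to $b$, and $\partial\widetilde C$ carrying its given orientation --- one checks (the elementary ``interior on the left'' computation, verified once in the disc model) that the right region meets $\partial\widetilde C$ precisely in the open arc $(a,b)_+$ running from $a$ to $b$ in the positive direction, so that $\overline{R(a,b)}\cap\partial\widetilde C=\overline{(a,b)_+}$. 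I would also record that $R(a,b)$ is geodesically convex: a geodesic between two of its points cannot meet the line $(a,b)$ twice, by the intersection alternative of Corollary~\ref{cor:cat-1}.

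With this, the whole lemma becomes the equivalence ``$R(a,b)\cap R(c,d)=\emptyset$ if and only if the open arcs $(a,b)_+$ and $(c,d)_+$ are disjoint'', combined with the elementary check that, among the six positive cyclic orders of four distinct points, these two arcs are disjoint exactly when the order is $a,b,c,d$. So I would prove the equivalence in two directions.

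For ``arcs meet $\Rightarrow$ regions meet'': if $x$ lies in the open set $(a,b)_+\cap(c,d)_+$, then $x$ is an interior point of both boundary arcs, away from all four endpoints and from both geodesics, so every interior point of $\widetilde C$ close enough to $x$ lies on the region side of each geodesic, hence in $R(a,b)\cap R(c,d)$, which is therefore non-empty. For the converse, suppose the open arcs are disjoint, so the cyclic order is $a,b,c,d$; then $c,d$ both lie in the complementary arc $(b,a)_+$, i.e. the boundary arc of $L(a,b)$. By convexity the geodesic $(c,d)$ is contained in $\overline{L(a,b)}$ and, being distinct from $(a,b)$, is disjoint from it in the interior; hence $R(a,b)$ is connected and disjoint from the line $(c,d)$, so it lies entirely in $R(c,d)$ or in $L(c,d)$. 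Comparing limiting boundary arcs decides which: $R(a,b)$ accumulates on $(a,b)_+$, which is contained in the arc $(d,c)_+$ seen by $L(c,d)$, so $R(a,b)\subset L(c,d)$ and in particular $R(a,b)\cap R(c,d)=\emptyset$.

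The step I expect to be delicate is the first one: fixing the orientation convention and rigorously establishing that the right region $R(a,b)$ sees the positive arc $(a,b)_+$, since an error here would flip the conclusion to negative cyclic order. The second subtle point is the use of convexity of the half-spaces $R(\cdot,\cdot)$ and $L(\cdot,\cdot)$, which is precisely what allows me to conclude that a region disjoint from a geodesic lies entirely on one of its sides and is then pinned down by its limiting boundary arc.
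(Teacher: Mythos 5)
Your proof is correct and follows essentially the same route as the paper: the key step in both is that, when $a,b,c,d$ are in positive cyclic order, the geodesic $(c,d)$ has both ideal endpoints on the boundary arc of $L(a,b)$ and, by the intersection alternative of Corollary~\ref{cor:cat-1}, cannot enter $R(a,b)$ and exit again, after which an orientation check forces the two right half-discs apart. You simply spell out in more detail the two points the paper leaves implicit, namely the identification of the boundary arc seen by $R(a,b)$ and the converse direction (which the paper dismisses as immediate and you prove by producing an interior point near a common boundary point of the two arcs).
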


\begin{proof} Suppose $a,b,c,d$ are cyclically ordered. Then, $(c,d)$
starts and ends in $L(a,b)$. By Corollary~\ref{cor:cat-1}, it cannot
enters $R(a,b)$ and exits again, so it stays always on its
complement. The orientation of $(c,d)$ tells us that $R(c,d)$ is contained
in $L(a,b)$. The converse is immediate.
\end{proof}

\begin{definition} Let $l_1,\dots,l_k$ be the boundary components of
$C$ (which are components of the real curve).  The {\bf peripheral
geodesic} $\gamma_i$ corresponding to $l_i$ is its geodesic
representative in $C$, oriented as in $\partial C$. The {\bf end}
$E_i$ corresponding to $l_i$ is the connected component of $C\setminus
\gamma_i$ having $l_i$ in its boundary.
\end{definition}

Peripheral geodesics can be complicated.
However, ends are simple.
\begin{lemma} [Annular ends] \label{l:peripheralannulus} Any end of
$C$ is an open annulus.
\end{lemma}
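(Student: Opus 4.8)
The plan is to pass to the universal cover $\widetilde C$ and to exhibit the end $E_i$ as the quotient of a single half-plane by the cyclic group generated by the boundary loop. I would write $c=[l_i]\in\pi_1(C)$ for the class of $l_i$; since the holonomy of $l_i$ is loxodromic, $c$ is non-trivial, and by Corollary~\ref{cor:cat-1} it is freely homotopic to a unique closed geodesic, namely the peripheral geodesic $\gamma_i$. Lifting $\gamma_i$ produces a $c$-invariant bi-infinite geodesic $A\subset\widetilde C$, the axis of $c$, with endpoints $\xi^\pm\in\partial\widetilde C$ the two fixed points of $c$. These cut $\partial\widetilde C$ into two arcs; let $I$ be the one on the side of $l_i$ and let $R(A)$ be the component of $\widetilde C\setminus A$ facing $I$, which with a suitable orientation is $R(\xi^-,\xi^+)$ in the notation of Lemma~\ref{l:disjointsemi-planes}. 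The goal is then to identify $E_i$ with $R(A)/\langle c\rangle$ and to recognise the latter as an annulus.

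The core of the argument has two steps. First I would show that no lift of $\gamma_i$ other than $A$ meets the open region $R(A)$. For this, note that a geodesic ray of $\widetilde C$ aimed at an interior point of $I$ projects to a ray of $C$ that leaves every compact set towards $l_i$, so by Lemma~\ref{l:limit} it limits onto $l_i\subset\partial C$; hence the interior of $I$ contains no fixed point of a conjugate of $c$, and every other lift $gA=(g\xi^-,g\xi^+)$ (with $g\notin\langle c\rangle$) has both endpoints outside the open arc $I$. Since by Corollary~\ref{cor:cat-1} two distinct geodesics of $\widetilde C$ are simple and meet at most transversally or along a segment with endpoints at branch points, and since $\gamma_i$ is freely homotopic to the embedded curve $l_i$ and so has no transverse self-intersection, such a $gA$ stays in the closed complement of $R(A)$ and never enters the open half-plane. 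Thus $R(A)$ is a whole connected component of $\widetilde C\setminus\pi^{-1}(\gamma_i)$. Second, for $g\notin\langle c\rangle$ one has $gI\neq I$, so the four points $\xi^\pm,g\xi^\pm$ lie in positive cyclic order on $\partial\widetilde C$, and Lemma~\ref{l:disjointsemi-planes} gives $R(A)\cap gR(A)=\emptyset$.

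Together these show that $\pi$ restricts to a covering $R(A)\to E_i$ whose deck group is exactly $\langle c\rangle$, that is $E_i\cong R(A)/\langle c\rangle$. To finish I would observe that $R(A)$ is an open half-plane bounded by the geodesic $A$, on which $c$ acts freely and properly discontinuously as a hyperbolic translation along $A$; in Fermi coordinates $(t,\rho)$ along $A$ the region is $\{\rho>0\}$ and $c$ acts by $t\mapsto t+\ell(\gamma_i)$, so the quotient is diffeomorphic to $S^1\times(0,\infty)$ and $E_i$ is an open annulus. The hard part will be the first step: ruling out that other lifts of the possibly non-simple (``bouquet'') peripheral geodesic re-enter the funnel region $R(A)$. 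This is exactly where one must use that self-intersections of $\gamma_i$ occur only at branch points and never transversally, together with Lemma~\ref{l:limit} to keep the interior of $I$ free of limit points.
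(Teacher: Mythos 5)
Your architecture is the same as the paper's (exhibit the end as the quotient of one side $R(A)$ of a lift of the peripheral geodesic by the cyclic group $\langle c\rangle$, after showing that $R(A)$ is precisely invariant), but the step you yourself flag as ``the hard part'' is exactly where the argument does not close. You claim that a geodesic ray aimed at an interior point of the arc $I$ projects to a ray of $C$ that leaves every compact set, and from this deduce that $\mathrm{int}(I)$ contains no fixed point of a conjugate of $c$. No justification is offered for that claim, and it is essentially equivalent to what you are trying to prove: knowing that every ray into $\mathrm{int}(I)$ escapes towards $l_i$ amounts to knowing that $R(A)/\langle c\rangle$ is a funnel-type end, i.e.\ that $R(A)$ is precisely invariant. (Endpoints of lifts of closed geodesics are typically dense in $\partial\widetilde C$, so excluding them from an open arc requires a genuine input.) Likewise, your assertion that $\gamma_i$ has no transverse self-intersection ``because it is freely homotopic to the embedded curve $l_i$'' is not available for free in this singular setting --- the paper stresses that peripheral geodesics are in general not embedded --- and the unlinking of endpoints of distinct lifts is again the very fact in question. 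Finally, the holonomy of $l_i$ need not be loxodromic (it may be trivial, e.g.\ when $l_i$ bounds a disc on the other side); what you actually need is that $[l_i]\neq 1$ in $\pi_1(C)$, which holds as soon as $C$ is not a disc.

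The paper closes this gap with a purely topological device that is missing from your sketch: push $l_i$ slightly into $C$ to get embedded curves $l_t$ bounding collar annuli $A_t$, with $t_0$ chosen so that $\gamma_i$ avoids $A_{t_0}$. A lift $\tilde l_t$ is a properly embedded line whose right-side $R(\tilde l_t)$ is a disc covering the annulus $A_t$, hence is its universal cover; therefore distinct lifts have pairwise disjoint right-sides. Since each $\tilde l_t$ has the same endpoints at infinity as the corresponding lift of $\gamma_i$ (they are at bounded Hausdorff distance), this forces the endpoints $a_1,b_1,a_2,b_2$ of distinct lifts of $\gamma_i$ into cyclic order, and Lemma~\ref{l:disjointsemi-planes} then gives $R(a_1,b_1)\cap R(a_2,b_2)=\emptyset$. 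With that input your quotient argument goes through, modulo the cosmetic point that Fermi coordinates are not literally available if $R(A)$ contains cone points; one should instead invoke that the quotient of an open disc by a free, properly discontinuous, orientation-preserving $\mathbb Z$-action is an open annulus.
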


\begin{proof} Let $\overline C$ be a compact surface with boundary
whose interior is $C$. Let $l$ be a component of $\partial C$ and
consider a neighborhood of $l$ in $\overline C$ homeomorphic to
$l\times[0,1)$. Let $l_t=l\times\{t\}$. The length of $l_t$ tends to
$\infty$ for $t\to 0$, so we can choose $t_0$ so that the peripheral
geodesic $\gamma$ corresponding to $l$ belongs to the complement of
the annulus $A_{t_0}=l\times [0,t_0]$.

Any lift $\tilde{l_t}$ has distinct end-points
$a,b\in\partial\widetilde C$, because $\tilde{l_t}$ stays at a finite
distance from the corresponding lift $\tilde{\gamma}= (a,b)$ of
$\gamma$.  For any lift $\tilde{l_t}$ of $l_t$ in $\widetilde{C}$, we
denote by $R(\tilde{l_t})$ the component of $\widetilde{C} \setminus
\tilde{l_t}$ which is on the right of $l_t$. Since it is a topological
disc, $R(\tilde{l_t})$ is the universal covering of $A_t$. Hence, the
discs $R(\tilde{l_t})$ are disjoint for distinct lifts
$\tilde{l_t}$. Thus, if we denote by $a_i,b_i$ the ends of two
distinct lifts $\tilde{\gamma}_ i $ of $\gamma$, then
$a_1,b_1,a_2,b_2$ are in cyclic order, and by
Lemma~\ref{l:disjointsemi-planes}, we get that the discs $R(a_i, b_i)$ are
disjoint.

Hence, the quotient of $R(a,b)$ by the action of $\pi_1(C)$ is the
same as its quotient by the stabilizer of $(a,b)$, ant so it is an
open annulus. Since it is connected, open and closed in $C\setminus
\gamma$, and contains $A_{t_0}$, it is the end $E$ corresponding to $l$.
\end{proof}

Any end is therefore an open annulus $E$ embedded in $C$, but not
necessarily properly embedded. Indeed, there is no reason for the
peripheral geodesic $\gamma$ to be embedded (and in fact in general it
is not). However, from the fact that for any two lifts
$\tilde\gamma_1=(a_1,b_1)$ and $\tilde\gamma_2(a_2,b_2)$ of $\gamma$,
the discs $R(a_1,b_1)$ and $R(a_2,b_2)$ are disjoint, it follows that
the right-side of $\gamma$ in $C$ is well-defined and it is an
embedded annulus (which actually equals $E$).  In other words:

\begin{lemma}\label{halftub} Let $\gamma$ be a peripheral geodesic of
$C$ and $E$ be the corresponding end. For any $\varepsilon >0$ and for
any $x\in\gamma$ the set Right$_\varepsilon(\gamma,x)=\{p\in C: d(p,x)
<\varepsilon\}\cap E$ is non-empty, and the set
Right$_\varepsilon(\gamma)=\cup_{x\in\gamma}$Right$_\varepsilon(\gamma,x)$
is an embedded annulus.
\end{lemma}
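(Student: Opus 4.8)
The plan is to work in the universal cover $\widetilde C$ and reduce everything to understanding a one-sided neighbourhood of a single lift of $\gamma$. First I fix a lift $\tilde\gamma=(a,b)$ of the peripheral geodesic. By (the proof of) Lemma~\ref{l:peripheralannulus}, the right half-disc $R(a,b)$ is the universal cover of the end $E$, with deck group the infinite cyclic stabilizer $\langle T\rangle$ of $(a,b)$, where $T$ is the holonomy of $[l]$ translating along its axis $\tilde\gamma$. Then I set $\widetilde{\mathrm{Right}}_\varepsilon=R(a,b)\cap\{\tilde p\in\widetilde C:d(\tilde p,\tilde\gamma)<\varepsilon\}$, and I will show that $\widetilde{\mathrm{Right}}_\varepsilon$ is exactly the full preimage of $\mathrm{Right}_\varepsilon(\gamma)$ under the covering $R(a,b)\to E$, so that the annulus structure can be read off from the quotient by $\langle T\rangle$.

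The heart of the argument is the distance identity $d_C(p,\gamma)=d(\tilde p,\tilde\gamma)$ for every $p\in E$ and its lift $\tilde p\in R(a,b)$. Since $\widetilde C\to C$ is a local isometry, $d_C(p,\gamma)$ equals the infimum of $d(\tilde p,\hat\gamma)$ over all lifts $\hat\gamma=(a',b')$ of $\gamma$. For a lift $\hat\gamma\neq\tilde\gamma$, the discs $R(a,b)$ and $R(a',b')$ are disjoint (Lemma~\ref{l:peripheralannulus}), so $a,b,a',b'$ are in cyclic order (Lemma~\ref{l:disjointsemi-planes}); their endpoints are therefore unlinked, and by Corollary~\ref{cor:cat-1} the geodesic $\hat\gamma$ does not enter the open half-disc $R(a,b)$. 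Consequently any geodesic segment from $\tilde p\in R(a,b)$ to a point of $\hat\gamma$ must cross $\tilde\gamma$, whence $d(\tilde p,\hat\gamma)\geq d(\tilde p,\tilde\gamma)$. Taking the infimum gives the identity, so $p\in\mathrm{Right}_\varepsilon(\gamma)$ if and only if $d(\tilde p,\tilde\gamma)<\varepsilon$, i.e. if and only if $\tilde p\in\widetilde{\mathrm{Right}}_\varepsilon$. Non-emptiness of $\mathrm{Right}_\varepsilon(\gamma,x)$ is then immediate: lifting $x$ to $\tilde x\in\tilde\gamma=\partial R(a,b)$, the openness of $R(a,b)$ provides points of $R(a,b)$ within $\varepsilon$ of $\tilde x$, which project into $E$ within $\varepsilon$ of $x$.

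It remains to identify the topology. Both $R(a,b)$, a half-disc bounded by the complete geodesic $\tilde\gamma$, and the tube $\{d(\cdot,\tilde\gamma)<\varepsilon\}$ are convex in the $\mathrm{CAT}(-1)$ space $\widetilde C$, so their intersection $\widetilde{\mathrm{Right}}_\varepsilon$ is convex, hence connected and simply connected. It is open and $\langle T\rangle$-invariant, and $\langle T\rangle\cong\mathbb Z$ acts freely and properly discontinuously; therefore the quotient $\widetilde{\mathrm{Right}}_\varepsilon/\langle T\rangle$ is an open surface with fundamental group $\mathbb Z$, i.e. an open annulus. Since this quotient is precisely the image of $\widetilde{\mathrm{Right}}_\varepsilon$ under $R(a,b)\to E$ and $E$ is embedded in $C$ (Lemma~\ref{l:peripheralannulus}), it is identified with $\mathrm{Right}_\varepsilon(\gamma)$ as an embedded annulus in $C$, which proves the lemma. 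I expect the main obstacle to be the distance identity of the second paragraph: it is what forces the naive one-sided $\varepsilon$-neighbourhood downstairs to lift to a clean one-sided tube upstairs, and it relies essentially on the disjointness of the right half-discs of distinct lifts of $\gamma$, hence on the $\mathrm{CAT}(-1)$ geometry together with the cyclic ordering of Lemma~\ref{l:disjointsemi-planes}.
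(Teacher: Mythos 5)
Your proof is correct and follows essentially the same route as the paper, which deduces the lemma directly from the disjointness of the right half-discs $R(a_i,b_i)$ of distinct lifts of $\gamma$ (established in the proofs of Lemmas~\ref{l:peripheralannulus} and~\ref{l:disjointsemi-planes}) together with the identification of $R(a,b)$ as the universal cover of the end $E$; your distance identity simply makes that one-line deduction explicit. The only nitpick is that the open half-disc $R(a,b)$ need not be literally convex when $\tilde\gamma$ passes through branch points (a geodesic chord between two points of $R(a,b)$ can graze $\tilde\gamma$ along a subsegment through cone points), but $\overline{R(a,b)}$ is convex and the connectedness and simple connectedness of $\widetilde{\mathrm{Right}}_\varepsilon$ survive after removing the frontier geodesic, so your conclusion stands.
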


\begin{lemma}\label{endisjoint} Ends corresponding to different
components of the boundary of $C$ are disjoint.
\end{lemma}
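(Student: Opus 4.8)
The plan is to lift everything to the universal cover $\widetilde C$ and reduce the statement to the disjointness criterion of Lemma~\ref{l:disjointsemi-planes}. Recall from the proof of Lemma~\ref{l:peripheralannulus} that each end $E_m$ is the projection of a single right half-disc $R(a_m,b_m)$ bounded by a lift $(a_m,b_m)$ of the oriented peripheral geodesic $\gamma_m$, and that its full preimage $\pi^{-1}(E_m)$ is the \emph{disjoint} union of the right half-discs $R(g a_m, g b_m)$ taken over all the lifts of $\gamma_m$. Consequently $E_i\cap E_j=\emptyset$ if and only if every right half-disc over a lift of $\gamma_i$ is disjoint from every right half-disc over a lift of $\gamma_j$. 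So I would fix once and for all a lift $(a,b)$ of $\gamma_i$ and a lift $(c,d)$ of $\gamma_j$ and aim to prove $R(a,b)\cap R(c,d)=\emptyset$; by Lemma~\ref{l:disjointsemi-planes} this is equivalent to showing that $a,b,c,d$ occur in positive cyclic order on $\partial\widetilde C$.

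The first point is that these two geodesic lifts do not cross. If $(a,b)$ and $(c,d)$ met at an interior point, then by Corollary~\ref{cor:cat-1} the meeting is either transverse or along a connected geodesic segment with endpoints at branch points; in the transverse case the intersection projects to a transverse intersection of $\gamma_i$ and $\gamma_j$ in $C$. But $\gamma_i$ and $\gamma_j$ are the geodesic representatives of the free homotopy classes of the disjoint simple closed curves $l_i$ and $l_j$, and in the $\mathrm{CAT}(-1)$ surface $C$ a geodesic representative realizes the minimal geometric intersection number in its class, which here is $i(l_i,l_j)=0$. This rules out the transverse case. The boundary case (a common geodesic segment) forces $\gamma_i$ and $\gamma_j$ to share the whole geodesic, which can only happen when $C$ is an annulus, $l_i$ and $l_j$ are isotopic, and the two peripheral geodesics coincide as unoriented curves; there the boundary orientation of $\partial C$ makes $(a,b)$ and $(c,d)$ the same geodesic with opposite orientation, so $R(a,b)$ and $R(c,d)$ are the two opposite sides and are automatically disjoint. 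In all remaining cases the four endpoints $a,b,c,d$ are genuinely cyclically ordered.

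It then remains to fix the orientation, i.e. to exclude the negative order $a,b,d,c$. Here I would use the disjoint collars from the proof of Lemma~\ref{l:peripheralannulus}: choose $t_0$ small enough that the collars $A^i_{t_0}=l_i\times[0,t_0]$ and $A^j_{t_0}=l_j\times[0,t_0]$ are disjoint and that $\gamma_i,\gamma_j$ lie outside them. The half-disc $R(a,b)$ contains a lift of $A^i_{t_0}$ and $R(c,d)$ contains a lift of $A^j_{t_0}$, and both are funnel sides exiting $C$ to the right towards the boundary components $l_i,l_j$ at infinity (as recorded in Lemma~\ref{halftub}). The disjointness of the two collars, combined with the fact that both right-sides point outward to distinct boundary components, forces the cyclic order of $a,b,c,d$ to be positive rather than negative. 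Lemma~\ref{l:disjointsemi-planes} then yields $R(a,b)\cap R(c,d)=\emptyset$, and the reduction of the first paragraph gives $E_i\cap E_j=\emptyset$.

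The main obstacle I anticipate is precisely this last orientation bookkeeping: ruling out that the two funnels "face one another" across the core, since a priori a negative cyclic order would make the right half-discs overlap. The delicate point is that peripheral geodesics here may be non-embedded and may pass through cone points, so I would avoid invoking a general intersection-number theorem and instead argue directly from the disjoint-collar lifts together with the convexity of the right half-discs established in Lemmas~\ref{l:peripheralannulus} and~\ref{halftub}, which pins the outward orientation down unambiguously.
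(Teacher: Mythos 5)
Your proof is correct and follows essentially the same route as the paper: reduce to the disjointness of the right half-discs of lifts of the two peripheral geodesics, derive the positive cyclic order of their four endpoints at infinity from the disjointness of the lifted collar annuli (which share those endpoints with the geodesic lifts), and conclude with Lemma~\ref{l:disjointsemi-planes}. The only real difference is that your second paragraph (ruling out crossings via minimal intersection numbers) is redundant, since the disjoint-collar argument already yields the full positive cyclic order in one step, which is exactly how the paper proceeds.
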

\begin{proof} Let $l$ and $l'$ be two distinct components of
$\partial C$.  The proof goes as in Lemma~\ref{l:peripheralannulus},
from which we borrow notations. Choose $t,s$ so that the annuli
$A_{l_t}$ and $A_{l'_s}$ are disjoint.  Then, for any two lifts
$\tilde{l_t}$ and $\tilde{l'_s}$, the right components $R(\tilde
l_t)$ and $R(\tilde l'_s)$ do not intersect. By denoting
$a_l,b_l$ the extremities of $\tilde{l_t}$, and similarly $a_l',b_l'$ for
$\tilde{l'_s}$, we get that $a_l,b_l,a_{l'},b_{l'}$ are in
cyclic order. Hence Lemma~\ref{l:disjointsemi-planes} shows that
$R(a_l,b_l)$ and $R(a_{l'},b_{l'})$ are disjoint.  This being
true for any choice of the lifts, we deduce that the ends
corresponding to $l$ and $l'$ are disjoint.
\end{proof}

Note that the closure of different ends may possibly touch.
Nonetheless, as a direct corollary of Lemmas~\ref{halftub}
and~\ref{endisjoint} we get that this happens in a controlled way.

\begin{definition} The exterior angle at a point $x$ of a peripheral
geodesic is the angle that is seen on the right of the geodesic at
$x$.
\end{definition}

\begin{cor} Let $x$ be a branch point in $C$ of angle $2\pi
(n+1)$. The exterior angles of all peripheral geodesics passing throgh
the point $x$ are disjoint. In particular, their sum is not bigger
than $2\pi (n+1)$.
\end{cor}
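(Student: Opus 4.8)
The plan is to lift the entire picture to the universal cover $\widetilde C$ and to read off the exterior angles directly from the right half-discs $R(a,b)$ studied in Lemmas~\ref{l:disjointsemi-planes}, \ref{l:peripheralannulus} and~\ref{endisjoint}. First I would fix once and for all a lift $\tilde x\in\widetilde C$ of the branch point $x$. Since the covering $\widetilde C\to C$ is a local isometry, a small neighbourhood of $\tilde x$ is isometric to a small neighbourhood of $x$, hence is a hyperbolic cone of total angle $2\pi(n+1)$; in particular the covering identifies the circle of directions at $\tilde x$ with that at $x$, preserving angular measures. All the counting can therefore be done at $\tilde x$.

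Next I would set up the correspondence between passages and lifts. Each time a peripheral geodesic $\gamma_i$ runs through $x$ it determines a local geodesic arc through $x$, which lifts to a unique bi-infinite oriented geodesic $(a,b)$ of $\widetilde C$ passing through $\tilde x$; since geodesics of $\widetilde C$ are simple (Corollary~\ref{cor:cat-1}) each such lift meets $\tilde x$ exactly once, so distinct passages give distinct lifts, and conversely every lift of a peripheral geodesic through $\tilde x$ projects to exactly one such passage. Under this bijection, the exterior angle of the passage --- the angle seen on the right of the geodesic at $x$ --- is precisely the angular width at $\tilde x$ of the right half-disc $R(a,b)$, because near $\tilde x$ the oriented geodesic cuts the cone-neighbourhood into a left and a right sector, and $R(a,b)$ is the right one.

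The heart of the argument is then the disjointness of these right half-discs. If $(a,b)$ and $(c,d)$ are two distinct lifts through $\tilde x$ of the same peripheral geodesic $\gamma_i$, the proof of Lemma~\ref{l:peripheralannulus} shows that their endpoints occur in positive cyclic order, whence $R(a,b)$ and $R(c,d)$ have disjoint interiors by Lemma~\ref{l:disjointsemi-planes}; if instead they are lifts of peripheral geodesics attached to different boundary components, Lemma~\ref{endisjoint} yields the same conclusion. Here \emph{disjoint} is understood for the open half-discs: two such lifts may legitimately share $\tilde x$ as a common boundary point precisely because the cone angle exceeds $2\pi$ --- this is exactly the phenomenon the corollary quantifies. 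It follows that the right sectors of all these lifts at $\tilde x$ have pairwise disjoint interiors.

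Finally, all these sectors live inside the single circle of directions at $\tilde x$, whose total angular measure is $2\pi(n+1)$, so disjointness of interiors forces the sum of their measures to be at most $2\pi(n+1)$; transporting back through the local isometry to $x$ gives the statement. The one point demanding care --- and the only genuine obstacle --- is the bookkeeping when a single peripheral geodesic passes through $x$ several times: one must check that distinct passages produce distinct lifts whose right half-discs are still disjoint, which is guaranteed by the simplicity of geodesics in $\widetilde C$ together with the cyclic-order argument above.
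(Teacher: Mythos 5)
Your proof is correct and follows the paper's intended route: the paper states this as a direct corollary of Lemmas~\ref{halftub} and~\ref{endisjoint}, and your argument simply unwinds the content of those lemmas at the level of the universal cover, namely the pairwise disjointness of the right half-discs $R(a,b)$ obtained from the cyclic ordering of endpoints and Lemma~\ref{l:disjointsemi-planes}. The identification of each exterior angle with the angular sector of the corresponding $R(a,b)$ at a fixed lift $\tilde x$, together with the bookkeeping for multiple passages via simplicity of geodesics in $\widetilde C$, supplies exactly the details the paper leaves implicit.
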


\subsection{Example: The triangle} \label{ss:_triangle}

Here we describe the example of a bran\-ched projective structure $\sigma$ on a compact surface $S$ with the following properties: the holonomy is Fuchsian, and there exists a component $l$ of the real curve, bounding a negative disc $D$ on the right isomorphic to the lower half plane, and a positive pair of pants $C$ on the left containing a unique branch point (of angle $6\pi$), such that the peripheral geodesic corresponding to $l$ in $C$ is a bouquet of three circles that develops as a geodesic triangle in the upper half plane. Such an example will be called a "triangle". This kind of structure shows up in the proof of the main theorem, see case~\ref{enum:3} of Lemma~\ref{l:triangle}.

We begin by constructing a branched projective structure $\sigma_{\Pi}$ on a pair of pants $\Pi$ with a unique branch point (of angle $6\pi$), whose boundary components are positive geodesics not containing the branch point and whose decomposition into real, positive and negative parts is as follows (see Figure~\ref{fig:pants}):
\begin{enumerate}
\item  the real part $\Pi ^{\mathbb R}$ is the union of $\partial \Pi$ and a bouquet $\mathcal B$ of three circles attached on a branch point of angle $6\pi$,
\item  the negative part $\Pi^-$ consists of the component on the right of $\mathcal B$ being isomorphic to the lower half plane, and
\item the positive part consists of the disjoint union of three hyperbolic annuli on the left of $\mathcal B$.
\end{enumerate}
The structure $\sigma$ will then be obtained from the structure
$\sigma_{\Pi}$ by the following operations: first, moving the branch
point in the positive component (as a point of angle $6\pi$), and then
attaching a pair of pants with geodesic boundary to the boundary of
$\Pi$.
\begin{figure}[httb] \centering
\centering

\includegraphics[width=1.5in]{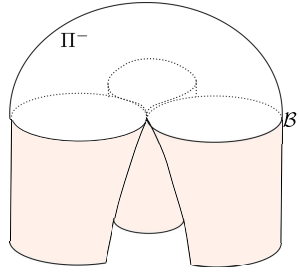}
\caption{The pants for the triangle}\label{fig:pants}
\end{figure}

Let us start with a Schottky group of a pair of pants. To introduce this group, let $\alpha$ and $\beta$ be elements of $\mathrm{PSL}(2,\mathbb R)$ and $A_{\alpha}$, $R_{\alpha}$, $A_{\beta}$, $R_{\beta}$ be disjoint closed intervals in $\mathbb R \mathbb P^1$, such that $\alpha (R_{\alpha} ^c) =  \mathrm{Int} (A_{\alpha} ) $ and $\beta ( R_{\beta} ^c) = \mathrm{Int} (A_{\beta} )$. The group $\Gamma$ generated by $\alpha$ and $\beta$ is a discrete group. The condition that the quotient $\Gamma \backslash \mathbb H^+$ is a pair of pants -- as opposed to a punctured torus -- is that the intervals $A_{\alpha}, R_{\alpha}, A_{\beta}, R_{\beta}$ are in cyclic ordering. Introduce the transformation $\gamma $ in $\mathrm{PSL}(2,\mathbb R)$ such that $\gamma \beta \alpha = \mathrm{id}$.

Let $q$ be a point in the region delimited by the three axes of $\alpha , \beta, \gamma$ in $\mathbb H^+$, and $T$ be the triangle $q= \gamma \beta \alpha (q), \alpha (q) , \beta \alpha (q)$.  The union of the images of $T$ by the elements of $\Gamma  $ is a connected part of $\mathbb H^+$ (see Figure~\ref{fig:triangle}). The quotient of the $1$-squeleton of $T$ in $\Gamma \backslash \mathbb H^2$ is a bouquet of three circles, and the restriction to $T$ of the quotient map $\mathbb H^+ \mapsto \Gamma \backslash \mathbb H^+$ just consists in identifying the vertices of $T$. We aim to find our branched projective structure on $\Gamma \backslash \mathbb H^-$ with the image of the interior of $T$ as the negative component, and the branch point of angle $6\pi$ the image of the vertices. To define this structure we will define its developing map $D: \mathbb H^+ \rightarrow \mathbb C \mathbb P^1$, equivariant with respect to the identity.

\begin{figure}[httb] \centering
\centering
\includegraphics[width=3in]{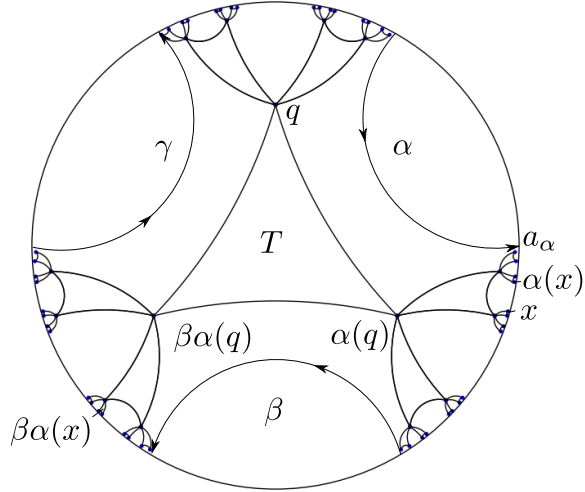}

\caption{Construction of the triangular real curve}\label{fig:triangle}
\end{figure}

The interior of the triangle $T$ should be negative, and should not contain any branch point, so that the developing map in restriction to $\mathrm{Int} (T)$ needs to be a diffeomorphism from $\mathrm{Int}(T)$ to $\mathbb H^-$ (by completeness of the hyperbolic metric in the negative component), that extends to a diffeomorphism from $T$ to $\overline{\mathbb H^-}$. For our purpose, it will be sufficient to consider any diffeomorphism from $T$ to $\overline{\mathbb H ^-}$ such that the points $x = D(q)$, $\alpha (x) $ and $\beta \alpha (x) $ are in cyclic order. We claim that a point $x$ which sits short before the attracting point of $\alpha$ -- i.e. the fixed point $a_{\alpha}$ of $\alpha$ lying in $A_{\alpha}$ -- is such a point. Indeed, $\alpha (x)$ is between $x$ and , and then $\beta \alpha(x)$ is between $a_{\alpha}$ and the attractive fixed point of $\beta$. i.e. the fixed point of $\beta$ lying in $A_{\beta}$.

Hence we have chosen the diffeomorphism from $T$ to $\overline{\mathbb H^-}$ as before, we extend $D$ to the union of the images of $T$ by the group $\Gamma$ using the equivariance relation $D (\gamma z ) = \gamma D (z)$. The complement of the union of the images of $T$ by the elements of $\Gamma$ is an infinite set of semi-planes. There are three particular ones which are the semi-planes $P_ {\alpha}$, $P_{\beta}$ and $P_{\gamma}$ at the left of the piecewise geodesic curves defined respectively by $\cup _{n\in \mathbb Z} [\alpha^n q, \alpha^{n+1} q]$, $\cup _{n\in \mathbb Z} [\beta^n \alpha q, \beta ^{n+1} \alpha q]$ and $\cup _{n\in \mathbb Z} [\gamma^n q, \gamma^{n+1} q]$. These curves are mapped by $D$ to the intervals between the repulsive fixed points and the attractive fixed points of $\alpha$, $\beta$ and $\gamma$ respectively. One extends $D$ to a diffeomorphism from $P_{\alpha}$, $P_{\beta}$ and $P_{\gamma}$ to $\mathbb H^{+}$ which is equivariant with respect to $\alpha$, $\beta$ and $\gamma$ respectively. All the other components of the complement of $\cup_\gamma \gamma T$ is the image of one of the semi-planes $P_{\alpha}$, $P_{\beta}$ or $P_{\gamma}$ by an element of $\Gamma$. Hence, one can extend $D$ to the whole upper half plane $\mathbb H^+$ by equivariance. This defines a branched projective structure $\sigma_{\Pi}$ on the pair of pants $\Pi= \Gamma \backslash \mathbb H^+$. By construction it satisfies conditions (1), (2) and (3). We denote by $p$ the branch point of angle $6\pi$ of this structure, and $A_x$, $x=\alpha,\beta,\gamma$ the three positive annuli of $\sigma_{\Pi}$ (those are the quotients of $P_{\alpha}, P_{\beta}$ ad $P_{\gamma}$ respectively).

To construct an example of a branched projective structure with a "triangle" peripheral geodesic as described above, we move the branch point $p$ of $\sigma_{\Pi}$ in the positive component.
\begin{figure}[httb] \centering
\centering
\includegraphics[width=3in]{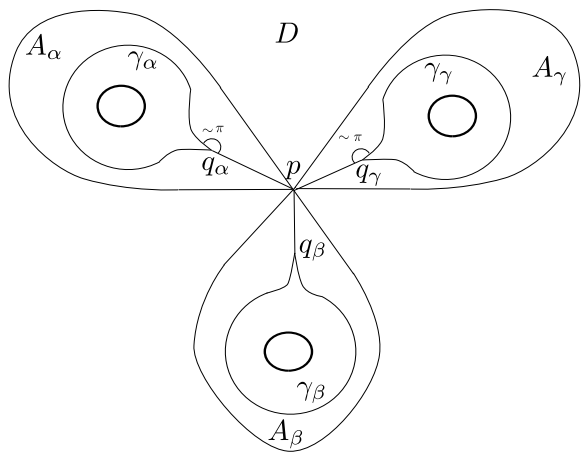}

\caption{Moving the $6\pi$-point to the positive part}\label{fig:moving6pi}
\end{figure}

This movement is done by cutting and pasting $\Pi$ along three curves going from $p$ and entering inside the three positive annuli of $\Pi$ (see Figure~\ref{fig:moving6pi}). We denote these curves by $[p, q_y]$, $y=\alpha,\beta,\gamma$, where $q_y$ are points in the respective annuli $A_y$. After the cut and paste, we get a new structure $(\Pi', \sigma_{\Pi'})$ on a pair of pants, the three points $q_y$'s being identified to a single conical positive point $q$ of angle $6\pi$. We may assume that the segments $(p,q_y]\subset \Pi$ are geodesics. Let $\gamma_y\subset A_y$ be the geodesic loop starting and ending at $q_y$ and making a turn around $A_y$. Observe that, up to shortening the segments $[p,q_y]$, we may assume that the angle between the two branches of $\gamma_y$ at $q_y$ and $[q_y,p]$ is approximately $\pi$, and that $\gamma_y$ intersects $[q_y,p)$ only at $q_y$. This shows that after the cut and paste, the curves $\gamma_y$ produces closed curves $\gamma_y'$ in $\Pi'$ passing through $q$, and that the concatenation $\gamma'_\alpha * \gamma'_\beta * \gamma'_\gamma \subset \Pi'$ is the peripheral geodesic associated to the curve $\partial D$. This is due to the fact that the exterior angles of this curve are approximately $2\pi$ at $q$ (see Figure~\ref{fig:moving6pi}).

Then, to get an example on a compact surface, it suffices to glue on the other side of $\Pi'$ a pair of pants equipped with a non branched projective structure consisting of a positive component being a pair of pants, and three negative annuli attached to it. We leave the details to the reader.

\section{Index formul\ae}
\label{s:index_formula}
We provide useful index formul\ae\ \`a la Goldman (see \cite{Goldman1}),
for branched projective structures with Fuchsian holonomy relating properties of the previously defined real curve decomposition. Again, these formulas extend to the case where the representation is quasi-Fuchsian, but for simplicity we restrict ourselves to the Fuchsian case.

In this section $S$ is a compact surface equipped with a branched
projective structure $\sigma$ with Fuchsian holonomy $\rho$ and
developing map $D$. The assumption that no element in the holonomy is
elliptic will be of particular importance.
 Moreover, we suppose that the real curve $S_{\mathbb R}$
contains no branch points, so that the components of the real curve
are simple closed curves in $S$.  Proposition~\ref{p:RP1structure} and
an analytic continuation argument shows that
the holonomy of any component of the real curve together with its
index (see~\ref{ss:real projective structures}) completely determine
the projective structure in its neighborhood.

Our aim is to describe numerical relations between the topological invariants of the
decomposition, those of the holonomy representation and the
indices of the real curves. In particular, inspired by the techniques used by Goldman
in~\cite{Goldman1} for the case of unbranched structures, we provide a
useful index formula relating the Euler invariant of $\rho$, the Euler
characteristic of the components of $S^\pm$, the number of their
branch points and the indices of their boundaries
(see Theorem~\ref{t:first_index_theorem} below).

Next we will focus on the topological properties of the representation
$\rho$. Recall that given an oriented closed surface with boundary $C$
and a Fuchsian representation $\rho:\pi_1(C)\rightarrow {\rm
PSL}(2,\mathbb R)$ we can naturally associate a $\mathbb{RP}^1$-bundle
$F_{\rho}\rightarrow C$ equipped with a flat connection. Indeed,
$F_{\rho}$ is obtained as the quotient of
$\widetilde{C}\times\mathbb{RP}^1$ by the action of $\pi_1(C)$ such
that for $\gamma\in\pi_1 (C)$ and
$(p,z)\in\widetilde{C}\times\mathbb{RP}^1$

\begin{equation}
\label{eq:id_rho_action} \gamma \cdot (p, z) = (\gamma (p) , \rho
(\gamma) (z)).
\end{equation} If the boundary is empty, we can define the Euler
number of $\rho$ as the element $eu(\rho)=eu(F_{\rho})\in
H^2(C,\mathbb{Z})=\mathbb{Z}$ defined by the Euler class of the bundle
$F_{\rho}$. Otherwise, if there are no elliptic elements, over each
component $l\subset\partial C$ we can define a section of
$s_{\rho}:l\rightarrow (F_{\rho})|_{l}$ by following a fixed point of
the action of $\rho(l)$ on $\mathbb{RP}^1$ along $l$ with the use of
the connection. If $\rho(l)$ is the identity or loxodromic, the
homotopy class of the section is independent of the chosen fixed
point. As we will show shortly we can associate an Euler number
$eu(\rho)\in\mathbb{Z}$ to the representation by using the pair
$(F_{\rho},s_{\rho})$.
In the sequel we will prove the following

\begin{theorem}[First Index Formula]
\label{t:first_index_theorem} Let $S$ be a compact surface equipped
with a BPS $\sigma$ with Fuchsian holonomy. Suppose no
branch point belongs to $S_{\mathbb R}$.  Let $C$ be a component of
$S\setminus S_{\mathbb{R}}$ with the orientation induced by that of
$S$ and denote by $\rho_{C}$ the restriction of $\rho$ to $\pi_1(C)$.

If $k$ denotes the number of branch points in $C$ and $l_1,\ldots l_n$
are the components of $\partial C\subset S_{\mathbb{R}}$, then $$\pm
eu(\rho_C)=\chi(C)+k-\sum_{i=1}^n I_{l_i}$$ where the sign is positive
if $C\subset S^+$ and negative otherwise.
\end{theorem}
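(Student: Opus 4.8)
The plan is to compute $eu(\rho_C)$ by identifying the flat bundle $F_{\rho_C}$ with the unit tangent bundle of the branched hyperbolic metric on $C$, and then to run a Poincaré--Hopf count in which the three summands $\chi(C)$, $+k$ and $-\sum_i I_{l_i}$ appear as the interior, branch-point and boundary contributions. The bridge between the two bundles is the \emph{endpoint map} $\varepsilon\colon T^1\mathbb H^2\to\partial\mathbb H^2=\mathbb{RP}^1$ sending a unit vector to the forward endpoint of the geodesic ray it spans. This map is $\mathrm{PSL}(2,\mathbb R)$-equivariant and restricts on each fiber to an orientation-preserving diffeomorphism $S^1\to\mathbb{RP}^1$ of degree one. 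Away from the branch points the developing map $D$ is a local isometry onto $\mathbb H^+$ (the metric being the pullback), so $dD$ carries $T^1\widetilde C$ to $T^1\mathbb H^2$; composing with $\varepsilon$ and using equivariance produces a bundle map $\hat\varepsilon\colon T^1C\to F_{\rho_C}$ over $\mathrm{id}_C$, defined off the branch points and fiberwise a diffeomorphism.

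Write $C_0$ for $C$ minus its branch points. Over $C_0$ the map $\hat\varepsilon$ is an isomorphism of oriented circle bundles (it need not respect the connections, but the relative Euler number is a topological invariant). Consequently the relative Euler number of $(F_{\rho_C},s_\rho)$ equals that of $(T^1C,V)$, where $V:=\hat\varepsilon^{-1}\circ s_\rho$ is the boundary unit vector field obtained by pulling back $s_\rho$, corrected by the localized contributions at the branch points, where $\hat\varepsilon$ fails to extend. By construction $V(x)$ is the unit vector at $x\in l_i$ whose developed geodesic ray limits to the fixed point $p$ of $\alpha=\rho(l_i)$ that $s_\rho$ follows; here the extension of geodesics to $\partial C$ from Lemma~\ref{l:limit} and the boundary circle structure from Corollary~\ref{cor:cat-1} guarantee that $V$ is well defined and continuous along each $l_i$.

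I would then evaluate the three contributions separately. First, by the relative Poincaré--Hopf theorem the relative Euler number of the unit tangent bundle with prescribed boundary field $V$ is $\chi(C)$ minus the total winding of $V$ measured against the boundary tangent (peripheral geodesic) direction. Second, near a branch point of order $n$ the developing map has the local form $D=\phi(z^{n+1})$ with $\phi$ a Möbius germ, and a direct computation in this model shows that the discrepancy between $T^1C$ and $F_{\rho_C}$ is concentrated at that point and equals $+n$; summing over branch points gives $+k$ (the orders added with multiplicity). Third, the winding of $V$ along $l_i$ is exactly $I_{l_i}$: as $x$ traverses $l_i$ once, the developed image $d(x)$ sweeps $\mathbb{RP}^1$ and meets the fixed point $p$ precisely $I_{l_i}$ times, by the very definition of the index in Proposition~\ref{p:RP1structure}, and each passage contributes one full turn to the direction-to-$p$ field $V$ relative to the tangent of $l_i$. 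Assembling the three pieces yields $eu(\rho_C)=\chi(C)+k-\sum_{i=1}^n I_{l_i}$ up to an overall sign.

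The main obstacle, and the step requiring the most care, is precisely this boundary computation: one must pin down the reference frame (outward normal, equivalently the peripheral-geodesic tangent) against which $V$ winds and verify, with the correct sign, that each transit of $d(x)$ through the fixed point contributes exactly one unit, so that the boundary term is $-\sum_i I_{l_i}$ rather than a multiple or a signed variant of it. The remaining sign bookkeeping is what produces the dichotomy in the statement: for $C\subset S^+$ the construction above is orientation-consistent and gives the $+$ sign, while for $C\subset S^-$ one uses $\overline D$ in place of $D$, which reverses the orientation of the target half-plane and hence of $\varepsilon$, flipping the global sign to $-$. Once the boundary identification is secured, the interior term follows from Poincaré--Hopf and the branch-point term from the local $z^{n+1}$ model, completing the proof.
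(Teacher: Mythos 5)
Your proposal is correct and follows essentially the same route as the paper: both identify $F_{\rho_C}$ with the (projectivized) tangent bundle away from the branch points via the developing map composed with the geodesic--endpoint map $T^1\mathbb H^2\to\partial\mathbb H^2$, and both split the relative Euler number into an interior term $\chi(C)$, local defects at branch points computed from the $z^{n+1}$ model, and a boundary winding equal to $I_{l_i}$. The paper organizes the count by cutting $C$ into a core piece, small discs around the branch points, and boundary annuli and invoking additivity of relative Euler numbers (its Lemma on Euler decomposition), which is just a repackaging of your relative Poincar\'e--Hopf argument; in particular it sidesteps your small issue of defining $V$ on $l_i$ itself by working on parallel curves $\bar l_i$ pushed slightly into $C$.
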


\begin{cor}[Second Index Formula]
\label{c:secondindextheorem} If there are no branch points on the
real curve and $k^\pm$ denotes the number of branch points contained
in $S^{\pm}$ then $$eu (\rho)=(\chi(S^+)+k^+)-(\chi(S^-)+k^-).$$
\end{cor}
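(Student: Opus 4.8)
The plan is to derive the Second Index Formula by summing the First Index Formula (Theorem~\ref{t:first_index_theorem}) over all the components of $S\setminus S_{\mathbb R}$ and arranging the bookkeeping so that the index terms cancel and the Euler numbers of the pieces assemble into $eu(\rho)$.

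First I would record the two elementary combinatorial facts that make the sum collapse. Since $S_{\mathbb R}$ contains no branch point, each of its components $l$ is a smooth embedded circle along which $D$ is a local diffeomorphism onto a neighbourhood of $\mathbb{RP}^1$; consequently the two sides of $l$ develop into $\mathbb H^+$ and $\mathbb H^-$ respectively, so $l$ lies in the boundary of exactly one positive component and exactly one negative component. Hence summing the quantity $\sum_{l\subset\partial C}I_l$ over all positive components $C$ counts each real curve once, giving $\sum_l I_l$, and the same holds over the negative components. Together with the additivity of the Euler characteristic and of the branch-point count over disjoint pieces, Theorem~\ref{t:first_index_theorem} summed over $C\subset S^+$ and over $C\subset S^-$ yields
\begin{equation*}
\sum_{C\subset S^+}eu(\rho_C)=\chi(S^+)+k^+-\sum_l I_l,\qquad
-\sum_{C\subset S^-}eu(\rho_C)=\chi(S^-)+k^--\sum_l I_l.
\end{equation*}
Subtracting the second from the first makes the common term $\sum_l I_l$ disappear and produces
\begin{equation*}
\sum_{C\subset S^+}eu(\rho_C)+\sum_{C\subset S^-}eu(\rho_C)=(\chi(S^+)+k^+)-(\chi(S^-)+k^-).
\end{equation*}

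The remaining and genuinely substantial step --- the part I expect to be the main obstacle --- is to identify the left-hand side with $eu(\rho)$, i.e.\ to prove that the relative Euler numbers of the restrictions $\rho_C$ are additive under regluing the pieces along $S_{\mathbb R}$. Each $eu(\rho_C)$ is computed from the pair $(F_{\rho_C},s_{\rho_C})$, where $s_{\rho_C}$ is the boundary section obtained by following a fixed point of the boundary holonomy. The key point is that along a common real curve $l$ the holonomy $\rho(l)$ is loxodromic (or the identity, when $l$ is null-homotopic), never parabolic nor elliptic, so by the discussion preceding Theorem~\ref{t:first_index_theorem} the homotopy class of $s_\rho|_l$ is independent of the chosen fixed point and hence agrees whether $l$ is viewed from its positive or its negative side. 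When the pieces are reassembled into the closed surface $S$ the flat bundles $F_{\rho_C}$ glue to $F_\rho$, and because the boundary sections match along each $l$ (with the opposite induced orientations that the relative count requires) a standard obstruction-theoretic additivity for relative Euler numbers gives $\sum_C eu(\rho_C)=eu(F_\rho)=eu(\rho)$.

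Combining this identity with the displayed equation yields $eu(\rho)=(\chi(S^+)+k^+)-(\chi(S^-)+k^-)$, as claimed. I would be careful to check that the orientation conventions in the definition of $eu(\rho_C)$ match those used in Theorem~\ref{t:first_index_theorem}, in particular the sign assigned to positive versus negative components, since an unnoticed sign error there is the most likely source of trouble; the cancellation of the index terms, by contrast, is robust and orientation-independent.
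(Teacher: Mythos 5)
Your argument is correct and follows essentially the same route as the paper: the paper likewise applies Theorem~\ref{t:first_index_theorem} to each component, uses the fact that every real curve bounds exactly one positive and one negative component to cancel the index sums, and obtains the identity $eu(\rho)=eu(\rho^+)+eu(\rho^-)$ from its Lemma~\ref{l:Euler_decomposition} with the boundary sections given by fixed points of the $\rho(l_i)$ --- which is precisely the additivity step you describe. The only cosmetic difference is that you invoke ``standard obstruction-theoretic additivity'' where the paper has already packaged that statement as Lemma~\ref{l:Euler_decomposition}.
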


For the proof of the theorem it will be convenient to have the theory
of Euler classes of sections of oriented circle bundles at hand.

Let $F\rightarrow C$ be an oriented $\mathbb{RP}^1$-bundle over a
compact oriented surface with boundary $C$. For each section
$s:\partial C\rightarrow \mathbb{RP}^1$ we define the Euler number
$eu(F,s)$ as follows. Consider a triangulation $\tau$ of $C$ such that
over each triangle $T$ of $\tau$ the bundle is isomorphic to
$T\times\mathbb{RP}^1$. By connectedness of $\mathbb{RP}^1$ the
section $s$ can be extended continuously to a section $\overline{s}$
defined on the 1-skeleton of $\tau$. The restriction of $\overline{s}$
to $\partial T$ can be thought of as a map $s:\partial
T\rightarrow\mathbb{RP}^1$ that has degree $n_T\in \mathbb{Z}$ with
respect to the given orientations. The sum $\sum n_T$ can be shown to
be independent of the triangulation and the chosen extension
$\overline{s}$ through basic algebraic topology methods. This allows
to define $$eu(F,s)=\sum n_T.$$ In fact $eu(F,s)$ depends only on the
homotopy class of $s$.

\begin{rem}\label{r:annulus}  If
$C=\mathbb{S}^1\times[0,1]$ is an annulus and $s=\{s_i\}$ is a section
of $F$ over $\mathbb{S}^1\times \{i\}$ for $i=0,1$ then $eu(F,s)=\deg
f$ where $f:\mathbb{S}^1\rightarrow \mathbb{RP}^1=\mathbb{S}^1$ is
such that $s_0=f\cdot s_1$ and the degree is computed with respect to
the orientation induced by $C$ on the component where $s_0$ is
defined.
\end{rem}

The following lemma is immediate.

\begin{lemma} \label{l:Euler_decomposition} Let $F$ be an oriented $\mathbb{RP}^1$-bundle over $\overline{C}$ and $\{\lambda_i\}$ be a
finite family of disjoint simple closed curves in $\overline{C}$
containing the boundary components of $C$. Let $s$ be a continuous
section of $F$ defined on $\cup_i \lambda_i$. Denote by $\{ C_j \}_j$
the collection of the closure of connected components of $C \setminus
\big( \cup _i \lambda_i \big)$. Then
\[ eu (F , s|_{\partial C} ) = \sum _j eu (F|_{C_j}, s|_{\partial C_j}
) . \]
\end{lemma}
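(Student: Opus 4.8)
The plan is to exploit the fact that $eu(F,s)$ is, by its very definition, a sum of local contributions indexed by the triangles of an adapted triangulation, so that the asserted additivity is nothing more than regrouping these contributions according to the pieces $C_j$. The whole content therefore lies in producing \emph{one} triangulation compatible at the same time with the bundle and with the cutting curves.

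First I would choose a triangulation $\tau$ of $\overline{C}$ adapted simultaneously to the cutting curves and to the bundle. Triangulate the pair $(\overline C,\cup_i\lambda_i)$ so that $\cup_i\lambda_i$ is a subcomplex (this is possible since the $\lambda_i$ are finitely many disjoint embedded circles), and then subdivide barycentrically enough times so that $F$ becomes trivial over each closed triangle (possible by compactness together with the local triviality of $F$, and harmless since $\cup_i\lambda_i$ remains a subcomplex of every subdivision). With this choice $\tau$ restricts to a triangulation $\tau_j$ of each closed piece $C_j$, and each $\partial C_j$ is a subcomplex contained in $\cup_i\lambda_i$.

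Next I would extend the section. Since $s$ is defined on $\cup_i\lambda_i\supseteq\partial C\cup\bigcup_j\partial C_j$ and the fibers $\mathbb{RP}^1$ are connected, $s$ extends to a continuous section $\overline s$ over the whole $1$-skeleton of $\tau$. Its restriction to the $1$-skeleton of each $\tau_j$ is then a legitimate extension of $s|_{\partial C_j}$, and hence may be used to compute $eu(F|_{C_j},s|_{\partial C_j})$; here I use, exactly as recalled before the statement, that the Euler number depends neither on the chosen triangulation nor on the chosen extension, but only on the homotopy class of the boundary section. By definition one then has $eu(F,s|_{\partial C})=\sum_T n_T$, where $T$ ranges over all triangles of $\tau$ and $n_T=\deg(\overline s|_{\partial T})$ is computed with respect to the orientation of $\partial T$ induced by that of $C$. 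Each triangle lies in exactly one piece $C_j$, and its orientation agrees with the one $C_j$ inherits from $C$, so the sum splits as $\sum_j\sum_{T\subset C_j}n_T$, and each inner sum is precisely $eu(F|_{C_j},s|_{\partial C_j})$ computed with $\tau_j$ and $\overline s$. This gives the claimed identity.

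The only point requiring genuine care, and the reason the lemma is ``immediate'' rather than completely trivial, is the construction of this common adapted triangulation making all the $\lambda_i$ subcomplexes while keeping $F$ trivial over each triangle. Once it is in place there is no cancellation to worry about: the Euler number is organized as a sum over triangles, each counted exactly once, rather than over oriented boundary edges, so the interior curves shared by two adjacent pieces produce no double counting and the bookkeeping is automatic.
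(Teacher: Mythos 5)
Your proof is correct, and it is exactly the argument the paper has in mind: the lemma is stated as ``immediate'' precisely because $eu(F,s)$ is defined as a sum of triangle contributions, so additivity follows once one has a triangulation in which the $\lambda_i$ are subcomplexes and $F$ is trivial over each triangle, which is what you construct. Your write-up correctly supplies the details the paper omits (the common adapted triangulation, the single extension $\overline s$ of $s$ over the $1$-skeleton serving simultaneously for $C$ and for every $C_j$, and the orientation and no-double-counting checks), so there is nothing to add.
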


To abridge notations, rename $\rho_C$ as $\rho$. We
define $$eu(\rho):=eu(F_{\rho},s_{\rho})$$ where the pair
$(F_{\rho},s_{\rho})$ was defined by the relations in
(\ref{eq:id_rho_action}), shortly before the statement of
Theorem~\ref{t:first_index_theorem}.
\begin{rem}\label{rem:eulerfuchsian}
 If $S$ is a compact surface and $\rho:\pi_1(S)\rightarrow \mathrm{PSL}(2,\mathbb{C})$ is a Fuchsian representation, by using the uniformizing structure on $S$ it is easy to show that for any incompressible subsurface $C\subset S$ we have  $$eu(\rho|_{\pi_1(C)})=\chi(C).$$
\end{rem}

Let us proceed to the proof of Theorem~\ref{t:first_index_theorem}. Given the connected component $C$ of $S\setminus S_{\mathbb{R}}$, we
introduce $E=\mathbb{P}^+(TC)$ the $\mathbb{RP}^1$-bundle over $C$
whose fibre over $p\in C$ is the set of semi-lines in $T_pC$.  For
each branch point $p\in C$ we consider a small open disc $B$ in $C$
centered at $p$.  We number such discs $B_1,\dots,B_k$ and call
$\lambda_i$ their boundary curves. On the other hand, for each
boundary component $l_i$ of $\partial C$ consider a curve $\bar{l}_i$
in $C$ that is isotopically equivalent to $l_i$ in
$\overline{C}\setminus\cup_j B_j$. The proof of
Theorem~\ref{t:first_index_theorem}
consists in using the developing map $D$ to define a
bundle isomorphism $\mathrm{D}:E\rightarrow F_{\rho}$ over
$C\setminus\cup_j B_j$ , which allows to define a section of
$F_{\rho}$ over the family of curves
$\{\bar{l}_1,\ldots,\bar{l}_n,\lambda_1,\ldots, \lambda_k\}$ and apply
Lemma~\ref{l:Euler_decomposition}. The conclusion will follow from
the knowledge on the topology of the associated decomposition and the
properties of $D$.

Consider a lift $\widetilde{C^*}\subset \widetilde{S}$ of
$C^*:=C\setminus\{\textrm{branch points}\}$. The restriction of the
developing map $D$ to $\widetilde{C^*}$ or its complex conjugate
defines a local diffeomorphism $\mathbf{D}:\widetilde{C^*}\rightarrow
\mathbb{H}^2$ that preserves orientation if $C$ is positive and
reverses it otherwise. In either case $\mathbf{D}$ induces a
map $$\mathbb{P}^+(T\widetilde{C^*})\rightarrow \widetilde{C^*}\times
\mathbb{P}^+(T\mathbb{H}^2)$$ defined by $(p,[v_p])\mapsto
(p,[d\mathbf{D}_p(v_p)])$, where the brackets denote equivalence
classes under multiplication by a positive real number. Recall that
the complete hyperbolic metric on $\mathbb{H}^2$ induces a map
$\infty: P^+(T\mathbb{H}^2)\rightarrow \mathbb{RP}^1$ that is
equivariant under the natural actions of ${\rm PSL}(2,\mathbb{R})$ on
source and target. Indeed, for each point $(p,[v_p])\in
P^+(T\mathbb{H}^2)$ we associate the point
$\infty(p,[v_p])\in\partial\mathbb{H}^2=\mathbb{RP}^1$ obtained by
following the unique geodesic passing through $p$ tangent to $v_p$
until infinity in the direction of $v_p$. This allows to consider the
map
$$\mathbb{P}^+(T\widetilde{C^*})\rightarrow
\widetilde{C^*}\times\mathbb{RP}^1$$ defined by $(p,[v_p])\mapsto
(p,\infty(p,[d\mathbf{D}_p(v_p)]))$ which is equivariant with respect
to the actions of $\pi_1(C)$ on $\mathbb{P}^+(T\widetilde{C^*})$ by
deck transformations and on $\widetilde{C^*}\times\mathbb{RP}^1$ by
$id\times\rho$.  Hence it induces an isomorphism of
$\mathbb{RP}^1$-bundles over
$C^*$ $$\mathrm{D}:\mathbb{P}^+(TC^*)=E|_{C^*}\rightarrow
(F_{\rho})|_{C^*}.$$ Next we define a section $t$ of $F_{\rho}$ over
the family of curves
$$L=\{l_1,\ldots,l_n,\bar{l_1},\ldots,\bar{l_n},\lambda_1,\ldots,\lambda_k\}.$$
Over each of the boundary components $l_i$, $t$ is the section defined
by a fixed point of $\rho(l_i)$; over any other component $c$, $t$ is
the image by $\mathrm{D}$ of the section $p\mapsto (p,c'(p))$ of $E$
where the orientation of the parametrization is that of $l_i$ if
$c=\bar{l_i}$ and that of $\partial B_i$ if $c=\lambda_i$.  The
complement of $L$ in $\overline{C}$ is a disjoint union of annuli
$A_1,\ldots, A_n$ each having exactly one boundary component in
$\partial C$, discs $B_1,\ldots,B_R$ and a component $C'\subset C$. By
Lemma~\ref{l:Euler_decomposition}
\begin{eqnarray*} & &eu(\rho)=eu(F_{\rho},t)\\
&=&eu\big((F_{\rho})|_{C'},t|_{\partial C'}\big) +\sum_{j=1}^R
eu\big((F_{\rho})|_{B_j},t|_{\partial B_j}\big) +\sum_{i=1}^n
eu\big((F_{\rho})|_{A_i},t|_{\partial A_i}\big).
\end{eqnarray*}

Now, since $\mathbf{D}$ is a local diffeomorphism when restricted to a
lift $\widetilde{C'}\subset \widetilde{C}$ of $C'$, $\mathrm{D}|_{C'}$
is a bundle isomorphism $E|_{C'}\rightarrow (F_{\rho})|_{C'}$ and
hence
$$eu\big((F_{\rho})|_{C'},t|_{\partial C'}\big)=\pm\chi(C')$$
where the sign is positive if $\mathbf{D}$ preserves orientation and
negative otherwise. On the other hand since $\mathbf{D}$ has a single
simple branch point on the disc $B_j$,
$$eu\big((F_{\rho}) |_{B_j},t|_{\partial B_j}\big)=\deg (t|_{\partial
  B_j})=\pm 2$$ where the sign is positive if $\mathbf{D}$ preserves
orientation and negative otherwise.  Finally for an annulus $A_i$
denote by $\phi: l_i\rightarrow \bar{l}_i$ the homeomorphism induced
by the isotopy joining $l_i$ with $\bar{l}_i$.  As noted in
Remark~\ref{r:annulus}, if we write $t|_{l_i}=f\cdot
(t|_{\bar{l}_i}\circ\phi)$, then by the definition of the index of
$l_i$
$$eu\big((F|_{\rho})_{A_i},t|_{\partial A_i}\big)=\deg f=\mp I_l.$$
Since $\deg f$ is measured with respect to the orientation induced on
$l_i$ by that of $C$ , the sign is negative if $\mathbf{D}$ preserves
orientation and positive otherwise. By summing up we get
$$eu(\rho)=\pm
\big(\chi(C')+2k-\sum_{i=1}^{n} I_{l_i}\big)
=\pm\big(\chi(C)+k-\sum_{i=1}^{n} I_{l_i}\big)$$ where the sign is
positive if if $\mathbf{D}$ preserves orientation and negative
otherwise.  This finishes the proof of
Theorem~\ref{t:first_index_theorem}.

For the proof of Corollary~\ref{c:secondindextheorem}, by
considering over each $l_i$ the section of $F_{\rho}$ associated to a
fixed point of $\rho(l_i)$, and applying
Lemma~\ref{l:Euler_decomposition},
we have $$eu(\rho)=eu(\rho^+)+eu(\rho^-)$$ where
$eu(\rho^{\pm})$ denotes the Euler number of $\rho$ restricted to
$\pi_1(S^\pm)$. An instance of Theorem~\ref{t:first_index_theorem} on
each connected component of $S^{\pm}$ and the fact that each curve
$l_i$ is the boundary of exactly one positive and one negative
component give
\begin{eqnarray} eu(\rho) &=& \big( \chi(S^+) + k^+
-\sum_{i=1}^{n}I_{l_i} \big) - \big( \chi( S^- ) + k^-
-\sum_{i=1}^{n}I_{l_i}\big)\nonumber\\ &=& \big( \chi( S^+ ) + k^+
\big) - \big( \chi(S^-) + k^- \big)\nonumber.
\end{eqnarray}

As another application of Theorem~\ref{t:first_index_theorem}, we note
that if $\chi(S)\leq0$, one has $\chi(S)=eu(\rho)$ because $\rho$ is
Fuchsian. From $\chi(S)=\chi(S^+)+\chi(S^-)$ we therefore obtain
$$2\chi(S^-)=k^+-k^-.$$

\begin{cor}\label{cor:eulerannulus}
  If $A$ is an annulus with loxodromic holonomy $\rho$ then $eu(\rho)=0$.
\end{cor}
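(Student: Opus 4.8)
The plan is to exhibit a section of $F_\rho$ defined over the whole annulus that restricts to the boundary section $s_\rho$ used in the definition of $eu(\rho)$; once such a global section exists, the relative Euler number must vanish. The key observation is that the fixed point of a loxodromic element produces not merely a boundary section but a \emph{flat} (equivariant) section over all of $A$.

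First I would recall that $\pi_1(A)\cong\mathbb Z$ is generated by the core curve, so $\rho$ is determined by the single loxodromic element $g=\rho(\text{generator})\in\mathrm{PSL}(2,\mathbb R)$. Being loxodromic, $g$ fixes exactly two points of $\mathbb{RP}^1$; I choose one of them and call it $p$. Since every element of $\rho(\pi_1(A))$ is a power of $g$, it fixes $p$ as well. Consequently the constant map $\widetilde A\to\mathbb{RP}^1$, $x\mapsto p$, is equivariant for the action \eqref{eq:id_rho_action}, and therefore descends to a continuous section $s\colon A\to F_\rho$ defined over the entire annulus.

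Next I would identify this global section with the boundary data defining $eu(\rho)$. By construction $s_\rho$ is obtained over each boundary component $l_i$ by following a fixed point of $\rho(l_i)$ along $l_i$ with the flat connection; taking the common fixed point $p$ on both components, $s_\rho$ is precisely the restriction $s|_{\partial A}$ of our flat section (recall that the homotopy class of such a section does not depend on which fixed point is chosen). In the triangulated definition of $eu(F_\rho,s_\rho)$ one may then choose the extension $\overline s$ to the $1$-skeleton to be $s$ itself. Since $s$ is already defined over every $2$-cell $T$, each map $\overline s|_{\partial T}\colon\partial T\to\mathbb{RP}^1$ bounds and hence has degree $n_T=0$, so $eu(\rho)=\sum_T n_T=0$. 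Equivalently, one can invoke Remark~\ref{r:annulus}: the flat section relates the two boundary sections by parallel transport, so the comparison map $f\colon\mathbb S^1\to\mathbb{RP}^1$ is constant and $\deg f=0$.

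The only point requiring care is verifying that the boundary section entering the definition of $eu(\rho)$ really is the restriction of a flat section, i.e.\ that ``following a fixed point with the connection'' yields the equivariant constant map above; but this is immediate from the description of $F_\rho$ in \eqref{eq:id_rho_action}, since a point fixed by the whole image of $\rho$ is exactly what makes the constant map equivariant. I do not expect any serious obstacle here, as the argument is purely a matter of matching the intrinsic construction of $s_\rho$ with the flat global section, and the vanishing of the Euler number is then the standard statement that a relative Euler class dies once the section extends across all $2$-cells.
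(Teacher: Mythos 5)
Your argument is correct and is essentially the proof the paper intends (the corollary is stated without an explicit proof): since every element of $\rho(\pi_1(A))$ fixes a common point $p\in\mathbb{RP}^1$, the constant equivariant map gives a flat global section of $F_\rho$ restricting to $s_\rho$ on both boundary circles, so by Remark~\ref{r:annulus} the comparison map $f$ is constant and $eu(\rho)=\deg f=0$. No gaps.
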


\section{Grafting and bubbling}\label{s:graftbub}

In this section we will prove that grafting can be obtained by a bubbling followed by a debubbling, as was stated in Theorem ~\ref{l:graftbub} in the Introduction. We recall that a graftable curve is a simple closed curve with loxodromic holonomy such that the developing map is injective on one of its lifts. A more precise restatement of Theorem~\ref{l:graftbub} is:

\begin{theorem}\label{t:graftingvsbubbling}
Let $\sigma$ be a BPS on a surface $S$ and $\gamma$ be a graftable simple closed curve in $S$ that does not pass through the branch points of $\sigma$. Then the grafting of $\sigma$ along $\gamma$ can be obtained by a bubbling followed by a debubbling on $\sigma$.
\end{theorem}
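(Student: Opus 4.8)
The plan is to prove the equivalent assertion that there is a single branched projective structure $\Sigma$, carrying exactly two more branch points than $\sigma$, which can be debubbled in two different ways: along one arc it gives back $\sigma$, and along a second arc it gives the grafting $\mathrm{Gr}_\gamma\sigma$. Since debubbling is by definition the inverse of bubbling, exhibiting such a $\Sigma$ shows that $\mathrm{Gr}_\gamma\sigma$ is obtained from $\sigma$ by a bubbling (producing $\Sigma$) followed by a debubbling, which is exactly the statement. First I would normalize the developed picture: conjugating the holonomy, I may assume the loxodromic element $g=\rho([\gamma])$ fixes $0$ and $\infty$, and that a lift $\widetilde\gamma$ develops to a $g$-invariant embedded arc $\eta\subset\mathbb{CP}^1$ joining the two fixed points. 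This embeddedness is precisely the graftability hypothesis ($D$ injective on $\widetilde\gamma$). With this notation the grafting replaces $\gamma$ by the annulus $\mathcal C=\Delta/\langle g\rangle$, where $\Delta:=\mathbb{CP}^1\setminus\eta$ is a disc which is the universal cover of $\mathcal C$ and $g$ generates its fundamental group. Because grafting and bubbling each alter only a neighborhood of the curve (resp.\ of the arc) and leave holonomy and marking untouched, it suffices to work inside a fixed annular neighborhood of $\gamma$ and to produce a projective identification there.

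For the construction of $\Sigma$ I would fix two regular points $a,b$ of $\sigma$, one on each side of $\gamma$ and just outside the chosen annular neighborhood, and take an embedded arc $\beta$ from $a$ to $b$ meeting $\gamma$ transversally once, with embedded developed image $D\beta$ crossing $\eta$ once. Set $\Sigma:=\mathrm{bubble}(\sigma,\beta)$, whose two new branch points $P,Q$ lie over $a,b$. In $\mathrm{Gr}_\gamma\sigma$ the same points $a,b$ persist, and I choose a second arc $\beta'$ from $a$ to $b$ that now traverses the inserted annulus $\mathcal C$, winding once around its core as it crosses from $\gamma^{+}$ to $\gamma^{-}$. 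The whole point of this winding is that, once developed, $\beta$ and $\beta'$ differ by the holonomy translation $g$: the arcs $D\beta$ and $D\beta'$ bound between them a region of $\mathbb{CP}^1$ that is exactly a fundamental domain for the action of $g$ on $\Delta$. Assembling that fundamental domain by gluing it to itself along $\beta'$ reconstitutes $\mathcal C=\Delta/\langle g\rangle$, which is the reason that inserting the large bubble $\mathbb{CP}^1\setminus D\beta$ and then removing the smaller bubble $\mathbb{CP}^1\setminus D\beta'$ leaves behind precisely the grafting cylinder. Concretely I would verify that $\mathrm{bubble}(\sigma,\beta)$ and $\mathrm{bubble}(\mathrm{Gr}_\gamma\sigma,\beta')$ have identical developing maps on the universal cover after the canonical marking identification, hence coincide as branched projective structures; this developing-map comparison is the computation that drives the proof.

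The step I expect to be most delicate is ensuring that every arc used genuinely has an \emph{embedded} developed image, since this is what makes it a legitimate bubbling or debubbling arc. The naive "radial" cross-arc of $\mathcal C$, meeting the two banks of the slit $\eta$ at the same parameter, develops not to an embedded arc but to a closed loop in $\mathbb{CP}^1$, along which one cannot bubble; one must slant $\beta'$ so that its two endpoints hit $\eta$ at two distinct parameters, so that $D\beta'$ becomes an embedded spiral arc (its interior stays in $\Delta$ and avoids $\eta$). The injectivity of $D$ on $\widetilde\gamma$ is exactly what keeps $\eta$, and therefore these spiral arcs, embedded. The remaining bookkeeping — that the cut-and-paste identifications on the two sides agree, and that the result is insensitive to the chosen isotopy representatives of $\beta$ and $\beta'$ — is routine and can be dispatched with Lemma~\ref{lemma28}; and since bubbling and grafting manifestly preserve both the holonomy $\rho$ and the marking, no further verification is needed on that front.

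I should note that the argument uses nothing about $\rho$ beyond the existence of the graftable curve $\gamma$ and the loxodromicity of $g$, so it applies to arbitrary holonomy, matching the emphasis in Theorem~\ref{l:graftbub}. The only genuinely global input is the normalization of the developed image of $\widetilde\gamma$; everything else is a local cut-and-paste supported in the annular neighborhood, which is what allows the two presentations of $\Sigma$ to be compared directly.
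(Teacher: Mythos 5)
Your overall strategy --- exhibit one intermediate structure carrying two bubbles, one whose removal gives back $\sigma$ and one whose removal gives $\mathrm{Gr}_\gamma\sigma$ --- has exactly the right shape, and is also the shape of the paper's argument. The gap is that your candidate $\Sigma=\mathrm{bubble}(\sigma,\beta)$, built from a \emph{transversal} arc $\beta$ crossing $\gamma$ once, is never actually shown to coincide with $\mathrm{bubble}(\mathrm{Gr}_\gamma\sigma,\beta')$: you explicitly defer ``the computation that drives the proof'', and the one geometric reason you offer in its support does not hold up. Normalize $g=\rho([\gamma])\colon z\mapsto\lambda z$ and let $\eta=\overline{D\widetilde\gamma}$, $\Delta=\mathbb{CP}^1\setminus\eta$. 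The arc $D\beta$ runs from $A=D\widetilde a$ to $B=D\widetilde b$, whereas the relevant lift of $\beta'$ through the grafted annulus ends at a lift of $b$ developing to $g^{k}B$ for some winding integer $k$ that you never pin down. If $k\neq 0$ the two developed arcs do not share their second endpoint, so they bound no region at all. If $k=0$ the Jordan curve $D\beta\cup D\beta'$ has winding number $\pm1$ about the fixed points of $g$, so \emph{each} complementary disc contains one of $0,\infty$; a region containing a neighborhood of a fixed point can never be a fundamental domain for the free, properly discontinuous $\mathbb{Z}$-action of $g$ on $\Delta$. Finally, ``assembling that fundamental domain by gluing it to itself along $\beta'$'' is not how $\Delta/\langle g\rangle$ arises: the quotient annulus is obtained by identifying two \emph{distinct} boundary arcs of a fundamental domain that differ by $g$, and nothing in your construction produces a pair of boundary arcs so related. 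So the central identity is unsupported, and the picture meant to make it plausible is incorrect.

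The feature of the paper's construction that the transversal arc loses is that the first bubbling is performed along a long sub-arc $[p,q]$ of $\gamma$ \emph{itself}, whose developed image $[p_0,q_1]$ lies inside $\eta$ and covers essentially a full period of the $\langle g\rangle$-action on it. This is what allows the second pair of twin paths $\tau_1=\beta\star\eta_0\star\alpha_0$ and $\tau_2=\beta_1\star\eta_1\star\alpha$ to be drawn weaving between the surface and the inserted sphere in such a way that the disc they cut off leaves behind precisely a fundamental domain for $g$ acting on $\mathbb{CP}^1\setminus\mathrm{Fix}(g)$, bounded by arcs that the debubbling identification matches up via $g$ --- and it is that leftover domain which closes up into the grafting cylinder. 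A transversal bubbling arc meets $\eta$ in a single point, the bubble is attached along an arc transverse to $\eta$ rather than along a near-period of $\eta$, and this mechanism is simply not available; establishing that such a $\Sigma$ nonetheless contains a second bubble whose removal leaves the cylinder would require the developing-map computation you have postponed, and I see no reason to expect it to succeed. As written, the proof does not go through.
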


\begin{proof}
First, remark that a small annular neighborhood $U$ of $\gamma$ has a lift in the universal cover that develops injectively in $\mathbb{CP}^1$. Everything will take place in that annular neighborhood. The whole process of bubbling and debubbling is sketched in Figure~\ref{fig:bubdebub}, and details are described below.

\begin{figure}[httb]\centering
\includegraphics[width=6in]{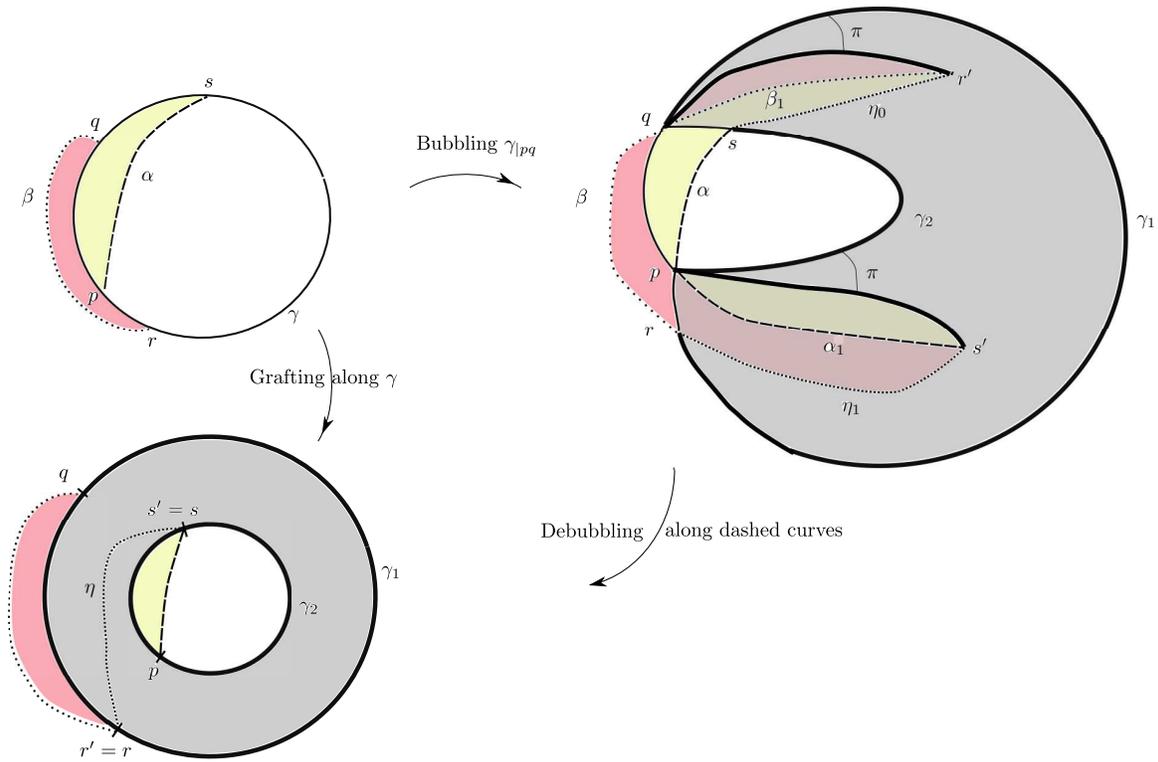}

\caption{Grafting can be obtained by bubbling and debubbling}\label{fig:bubdebub}
\end{figure}

We choose an orientation for $\gamma$. Consider four points $s,q,p,r\in\gamma$ in cyclic order. Choose paths $\alpha$ and $\beta$ joining $s$ to $p$ and $q$ to $r$, obtained by pushing the segments $[s,p]$ and $[q,r]$ on the left and on the right side of $\gamma$ respectively, as in the upper left corner of Figure~\ref{fig:bubdebub}. 
We denote by $\widetilde{\gamma}$ a
lift of $\gamma$ to the universal cover of $S$, $\widetilde{U}$ the corresponding lift of $U$, by  $D$ the developing
map of $\sigma$ and by $\rho$ the (loxodromic) holonomy of $\gamma$. Consider one of the images
$p_0,q_0,r_0,s_0, \alpha_0,\beta_0\subset\mathbb{CP}^1$ by $D$ of each
of the corresponding elements in the initial situation and call
$p_1,q_1,r_1,s_1, \alpha_1,\beta_1\subset\mathbb{CP}^1$ the images of
the latter by $\rho$ (see Figure~\ref{fig:imagebubble}).

\begin{figure}[httb]\centering
\includegraphics[width=5in]{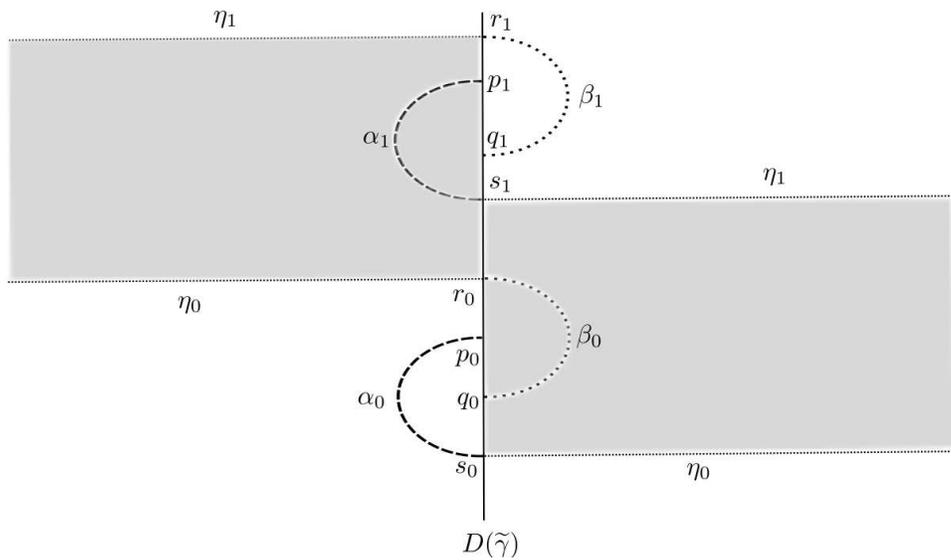}

\caption{Developed image in $\mathbb{CP}^1$}\label{fig:imagebubble}
\end{figure}

Consider the annulus
$A=(\mathbb{CP}^1\setminus\overline{D(\widetilde{\gamma})})/<\rho>$
equipped with its natural projective structure. Still denote by $\alpha$ and $\beta$ of the image in $A$ of the $\alpha_i$'s and $\beta_i$'s. We denote by $s,q,p,r$ their respective extremities in $\partial A$ (therefore $q,r$ lie in a component of $\partial A$ and $p,s$ on the other one). In particular we can
find a simple arc $\eta$ in $A$ joining the points $r$ to $s$
and avoiding the developed images of $\alpha$ and $\beta$. In
Figure~\ref{fig:imagebubble} we find a sketch of two lifts $\eta_0$ and
$\eta_1$ of $\eta$ to $\mathbb{CP}^1$. They will be important
for the construction of twin paths.

Next we  consider the bubbling $\mathrm{bub}(\sigma)$ of $\sigma$ along the oriented arc $[p,q]$ of
$\gamma$ between $p$ and $q$ (the one that contains the points $r$ and $s$), see Figure~\ref{fig:bubdebub}, right side. This is obtained by cutting $\sigma$ along $[p,q]$ and $\mathbb{CP}^1$ along $[p_0,q_1]$ (see Figure~\ref{fig:imagebubble}) and pasting together. Two
branch points of angle $4\pi$ appear at $p$ and $q$. The oriented segment $[p,q]$ in $\sigma$ is separated into twins segments $[p,q]_{\rm right}$ and $[p,q]_{\rm left}$ in $\mathrm{bub}(\sigma)$. Observe that the arcs $\alpha$ and $\beta$ in $S$ survive after the bubbling. We denote their
endpoints in $\mathrm{bub}(\sigma)$ with the same
letters, $s,p,q,r$ as before the bubbling ($r$ lives in $[p,q]_{\rm right}$ and $s$ lives in $[p,q]_{\rm left}$).


We proceed now to see that there is another bubble in $\mathrm{bub}(\sigma)$, such that its debubbling is the grafting $\textrm{Gr}_{\gamma}(\sigma)$ of $\sigma$ along $\gamma$. To identify a bubble it is sufficient to find a pair of twin paths that join two simple branch points, bound a disc, and develop to a segment. Consider the paths in $\mathrm{bub}(\sigma)$ starting at $q$,
\[ \tau_1 = \beta \star \eta_0 \star \alpha_0  \ \ \ \text{and}\ \ \ \tau_2 = \beta_1 \star \eta_1 \star \alpha, \]
that are drawn with dashed lines on the right part of Figure~\ref{fig:bubdebub}. These paths are twins. Moreover, we claim that $\tau_1 * \tau_2^{-1}$ bounds a disc. Indeed, the curve $[p_0,r_0]\star \eta_0 \star \alpha_0$ in $\mathbb{CP}^1\setminus [p_0,q_1]$ bounds a disc $\mathrm{Disc}_0$, and similarly $\beta_1 \star \eta_1 \star [s_1,q_1]$ bounds a disc $\mathrm{Disc}_1$. Clearly $\beta \star [p,r]^{-1} \star \alpha^{-1} \star [s,q]$ bounds a disc in $\sigma$. These three discs glue together to a disc bounded by $\tau_1 * \tau_2^{-1}$.


We are left to prove that the debubbling of these twin paths
produces a new branched projective structure that coincides with $\textrm{Gr}_{\gamma}(\sigma)$. To this end, it
suffices to find
two parallel curves whose developed image is precisely
$D(\widetilde{\gamma})$, bounding an annulus with the projective
structure of $A$, and whose complement has the structure induced by $\sigma$ on $S \setminus \gamma$.

 We proceed to analyze the preimages of
$D(\widetilde{\gamma})$ via the developing map of $\mathrm{bub}(\sigma)$
to identify such curves. Denote by $\gamma_1$ the segment $[r,q]\subset [p,q]_{\mathrm{right}}$ and $\gamma_2$ the segment $[p,s]\subset [p,q]_{\mathrm{left}}$. as in
Figure~\ref{fig:bubdebub}. Remark that the twin path of the segment
$[s,p]$ in $\gamma$ starting from $p$ is the segment $[p,s_0]$ (in the bubble $\mathbb{CP}^1 \setminus [p_0,q_1]$)
that joins $p=p_0$ and $s_0$ and that does not enter the second
bubble till $s_0$, as $\mathrm{Disc}_0$ is delimited by $\alpha_0$, see Figure~\ref{fig:imagebubble}.
Similarly, the twin path $[q,r_1]$ of the segment $[q,r]$ joins $q$ and $r_1$ in the first bubble without entering the second
bubble. These twins correspond to the thick segments inside the shaded bubble
in  Figure~\ref{fig:bubdebub}). Thus they
appear in the structure after the debubbling and will have developed
image contained in $D(\widetilde{\gamma})$. By construction, after the
debubbling their union with $\gamma_1$ and $\gamma_2$ form a pair of
parallel closed curves having the same developed image.

The shaded part in Figure~\ref{fig:imagebubble} is, by
construction, a fundamental domain for the action of $\rho$ on
$\mathbb{CP}^1\setminus\textrm{Fix}(\rho)$. Moreover, the region bounded by $\alpha_0 \star [p,s_0]$ is projectively equivalent to the region delimited by $\alpha \star [s,p]^{-1}$. Similarly, the region bounded by $\beta \star [r_1,q]$ is projectively equivalent to the one delimited by $\beta \star [q,r]^{-1}$. This implies both properties we need: namely that the projective structure in the region between $\tau_1$ and $\tau_2$ after debubbling
coincides with $A$, and that the structure on the complement of $A$ is the one induced by $\sigma$ on $S\setminus \gamma$.
\end{proof}

\section{Finding embedded twin paths}\label{ap:wecanmove}
In this section, we give two criteria to ensure that a pair of twin
paths is embedded. This is necessary to perform all the movements of branch points we carry in
Sections~\ref{s:debubbling},~\ref{s:deg}, and~\ref{s:moving_positive}.

The pathologies that one has to avoid are mainly two. Suppose that we
have a geodesic ray $\tau$ emanating from a branch point and want to
follow its twin $\tau'$, which is locally well-defined. Even if $\tau$ is
embedded it could happen that $\tau'$ is wild (remark that a bubbling introduces  a whole copy of the universal cover of the surface!). Secondly, it could happen that
$\tau'$ crosses $\tau$ very soon, say
$\tau(\varepsilon)=\tau'(\varepsilon)$ at a smooth point (as in fact
happens in a conical cut and paste described at
page~\pageref{conicalcutandpaste}) with no a priori control on
$\varepsilon$. In both cases a cut and paste procedure would change  the
topology of $S$.

Here we prove two lemmas. The first one ensures that if we follow the
pre-image of a geodesic under a projective map, the twin paths we
obtain are in fact embedded. The second shows that if the
holonomy is Fuchsian, then pathologies like the conical cut and
paste cannot occur. Both lemmas rely on the hypothesis of Fuchsian holonomy and their falseness in  more general settings constitutes one of the main obstructions to generalize the arguments to other types of representations.

\begin{lemma}[Twin geodesics are embedded] \label{l:embedded_twins}
Suppose $S$ is a surface equipped with a BPS having Fuchsian
holonomy. Let $U\subset S^\pm$ be an open domain in $S^\pm$ with smooth
boundary and corners. Let
$\Sigma$ be a complete hyperbolic surface, and $f: U\rightarrow
\Sigma$ be a local isometry (on the complement of branch points).
Let $T\in [0,\infty]$ and $(\tau_1,\tau_2)$ be a pair of twin
geodesics $\tau_i : [0, T)\rightarrow U$ starting at a branch point
$p\in\overline U$ such that
\begin{itemize}
\item for every $i=1,2$ and $t\in (0,T)$, $\tau_i(t)$ belongs to $U$
and is not a branch point of $S$,
\item $f\circ \tau_1 = f\circ \tau_2 = \tau$ is a properly embedded geodesic
in $\Sigma$.
\end{itemize} Then, $(\tau_1, \tau_2)$ is a pair of embedded twin
paths in $S$.
Moreover, suppose that $\Sigma$ does not have parabolic ends and that
$T= +\infty$. Then $\tau_i(t)$
tends to a point $u_i$ in $S_{\mathbb R}$ when $t$ tends to infinity,
for $i=1,2$, with $u_1 \neq u_2$.
\end{lemma}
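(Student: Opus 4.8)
The plan is to work inside the universal cover $\widetilde C$ of the component $C$ of $S\setminus S_{\mathbb R}$ containing $U$, which by Corollary~\ref{cor:cat-1} is a $\mathrm{CAT}(-1)$ space with unique geodesics between any two points (and a unique geodesic ray from each point toward each ideal boundary point), and to exploit two consequences of Fuchsian holonomy: the developing map restricts to a local isometry $D:\widetilde C\to\mathbb H^2$ (its complex conjugate if $C\subset S^-$), and every nontrivial holonomy element is loxodromic, so its fixed points lie on $\partial\mathbb H^2=\mathbb{RP}^1$ and never in the open disc $\mathbb H^2$. I would lift $\tau_1,\tau_2$ to geodesic rays $\tilde\tau_1,\tilde\tau_2$ issuing from a common lift $\tilde p$ of $p$; since $\tau_1\neq\tau_2$ these rays have distinct initial directions, and the twin hypothesis says precisely that $D\circ\tilde\tau_1=D\circ\tilde\tau_2=:\delta$, a unit-speed geodesic of $\mathbb H^2$, hence injective.

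For embeddedness, each $\tau_i$ is injective because $f\circ\tau_i=\tau$ is (a properly embedded geodesic is injective). For disjointness, suppose $\tau_1(s)=\tau_2(t)$: applying $f$ gives $\tau(s)=\tau(t)$, whence $s=t$ by injectivity of $\tau$. Writing the coincidence upstairs as $\tilde\tau_1(s)=g\,\tilde\tau_2(s)$ for a deck transformation $g\in\pi_1(C)$ and applying $D$ together with the twin identity yields $\delta(s)=\rho(g)\,\delta(s)$. If $g\neq 1$ this exhibits the interior point $\delta(s)\in\mathbb H^2$ as a fixed point of the loxodromic $\rho(g)$, which is impossible; and if $g=1$ we would get two distinct geodesic rays from $\tilde p$ meeting again at $s>0$, contradicting uniqueness of geodesics in $\widetilde C$. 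Hence $s=t=0$, so $\tau_1,\tau_2$ meet only at $p$ and form embedded twin paths. Note this is where Fuchsianity is genuinely used, via the absence of interior fixed points.

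For the ``moreover'' part I would first show each $\tau_i$ leaves every compact subset of $S^+$. Otherwise some compact $K\subset S^+$ contains $\tau_i(t_n)$ for $t_n\to\infty$; passing to a subsequence with $\tau_i(t_n)\to x$ at finite distance, the points $\tau_i(t_n)$ are eventually mutually close, so their images $\tau(t_n)=f(\tau_i(t_n))$ form a Cauchy sequence which converges in the complete surface $\Sigma$, contradicting that the properly embedded ray $\tau$ exits every compact set of $\Sigma$. With $\tau_i$ leaving all compacta, Lemma~\ref{l:limit} supplies the limits $u_i=\lim_t\tau_i(t)\in\partial S^+=S_{\mathbb R}$.

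Finally, to see $u_1\neq u_2$: if $u_1=u_2$, then $\tilde\tau_1$ and a suitable translate of $\tilde\tau_2$ converge to the same point of $\partial\widetilde C$, so they are asymptotic and $d(\tau_1(t),\tau_2(t))\to 0$ (equal-parameter points are asymptotically closest, as both develop to $\delta$). For large $t$ this distance drops below the injectivity radius of $f$ at $\tau_1(t)$, and local injectivity of $f$ then forces $\tau_1(t)=\tau_2(t)$, contradicting the disjointness just proved. I expect this last step to be the main obstacle: one must bound the injectivity radius uniformly from below along the escaping ray so that the ``folding'' $f\circ\tau_1=f\circ\tau_2$ collapses the two asymptotic rays. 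This is exactly where the absence of parabolic ends is indispensable, since $\tau$ being properly embedded forces $\tau(t)$ out a funnel-type end of $\Sigma$ along which the injectivity radius stays bounded away from $0$, whereas a cusp would let the injectivity radius decay and the argument would fail.
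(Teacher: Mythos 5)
Your argument for the disjointness of $\tau_1$ and $\tau_2$ has a genuine gap. After the (correct, and nicely economical) reduction to $s=t$ via injectivity of $\tau$, you write the coincidence upstairs as $\tilde\tau_1(s)=g\,\tilde\tau_2(s)$ with $g\in\pi_1(C)$ and split into ``$g\neq 1$, so $\rho(g)$ is loxodromic'' versus ``$g=1$''. But $\rho(g)$ here really means $\rho(i_*(g))$, where $i_*:\pi_1(C)\to\pi_1(S)$ is induced by inclusion, and $i_*$ need not be injective: the lemma is applied to compressible domains, most starkly in the proof of Proposition~\ref{p:_every_positive_component_is_a_disc}, where the relevant end is bounded by a curve that is null-homotopic in $S$, so that the \emph{entire} deck group of the covering you use maps to the identity under $\rho$. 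In the case $g\neq 1$ but $i_*(g)=1$, the identity $\delta(s)=\rho(g)\,\delta(s)$ is vacuous, and since the two rays then issue from the distinct lifts $\tilde p$ and $g\tilde p$, the uniqueness-of-geodesics argument of your $g=1$ branch does not apply either. So neither horn of your dichotomy covers this case, and it is precisely the case that matters in several of the lemma's applications.

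The paper closes this by never leaving $S$: at a coincidence point $q=\tau_1(s)=\tau_2(t)$ with $s,t>0$, the map $f$ is a local diffeomorphism, and since both geodesics push forward to the single geodesic $\tau$, they cannot be transverse at $q$; analytic continuation of the overlap (through points that are not branch points, by hypothesis) then either reaches $p$, forcing $\tau$ to pass twice through $f(p)$ and contradicting embeddedness of $\tau$, or produces a fold point at which $f$ would be a branched covering, contradicting the no-branch-point hypothesis. Your own observation that $s=t$ makes this even shorter: injectivity of $df_q$ gives $\tau_1'(s)=\tau_2'(s)$ (both push to $\tau'(s)$), so the two geodesics coincide near $s$ and hence, continuing back through non-branch points, on all of $[0,s]$ and then everywhere, contradicting that the twins are distinct; no universal cover or holonomy is needed. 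The remaining parts of your proposal --- injectivity of each $\tau_i$, escape from compacta via properness of $\tau$ and completeness, Lemma~\ref{l:limit} for the limits $u_i$, and the asymptotic-rays-versus-injectivity-radius argument using the absence of parabolic ends --- are essentially the paper's own and are fine.
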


\begin{proof}
Each of the paths $\tau_i$'s are embedded since $\tau$
is embedded. The first part of the lemma says that the images of
$\tau_1$ and $\tau_2$ are disjoint. We argue by contradiction. Suppose
that there are two numbers $0\leq s,t < T$, not both equal to $0$,
such that $\tau_1 (s)= \tau_2(t)$. Because $\tau$ passes once through
the point $p$, both $s$ and $t$ are positive. By exchanging the roles of
$\tau_1$ and $\tau_2$ if necessary, we can suppose that $s\geq t$.

At the point $q = \tau_1(s) = \tau_2 (t)$, the geodesics $\tau_1$ and
$\tau_2$ cannot be transverse, because the map $f$ is a local
diffeomorphism at $q$.
 Hence, we necessarily have $\tau_ 1 (s + u ) =
\tau_2 (t+u)$ for small values of $u$, or $\tau_1 (s-u) = \tau_2
(t+u)$ for small values of $u$. In the first case, we have $\tau_1
(s+u) = \tau_2(t+u)$ for every $u \geq -t$ by analytic
continuation. Because $\tau_1$ and $\tau_2$ are different, we have $s
>t$. At $u= -t$, we find $\tau_1 (s-t) = \tau_2 (0) = p$. Hence
$\tau(s-t) = p$ which contradicts that $\tau$ is embedded. In the
second case, we get $\tau_1 (s-u) = \tau_2 (t+u)$ for $0\leq u\leq
(s-t)/2$ by analytic continuation (note that $t+(s-t)/2\leq
s<T$). In particular we get
$\tau_1(\frac{s+t}{2}) = \tau_2(\frac{s+t}{2})$.
At $u=(s+t)/2>0$, we therefore obtain that $f$ is a branched covering,
contradicting the hypothesis that $\tau_1(t)$ is not a branch point of
$S$ for $t>0$. The first claim is proved.

Since $\tau$ is properly embedded both $\tau_1$ and $\tau_2$
must exit any compact
set of $S$ as $t\to\infty$, otherwise an accumulation point would
exists. By Lemma~\ref{l:limit} both $\tau_i$'s  have limits $u_i$ as
$t\to\infty$. Such limits must belong to $S_{\mathbb R}$ because
both $\tau_i$ exist all compact.

If $u_1 = u_2$, then $\tau_1$ and $\tau_2$ are exponentially
asymptotic at infinity in $U$ for the hyperbolic distance. However,
they have the same image $\tau$ by $f$, and $\Sigma$ has no parabolic
end, so this is impossible. Hence the limits are distinct, and the
lemma is proved. \end{proof}

Suppose that the holonomy of $S$ is Fuchsian.
For a component $C\subset S^\pm$ we denote by $C\f$ the
hyperbolic surface $\rho(\pi_1(C))\backslash\mathbb H^\pm$. The following lemma shows that we can always move branch points at least a distance bounded below by the injectivity radius of
$C\f$.

\begin{lemma}[Local movements]\label{wecanmove}
Let $S$ be a closed surface equipped with a BPS with Fuchsian
holonomy. Let $C$ be a component of $S^\pm$ and let $\varepsilon>0$ be
smaller than the injectivity radius of $C\f$.
Let $\gamma$ be a geodesic segment starting from a branch point $p$ of
$C$ and shorter than $\varepsilon$. Let $\gamma'$ be a geodesic
segment starting from $p$, of the same length as $\gamma$,
and forming with $\gamma$ at $p$ an angle $2k\pi$,
with $0<k\in\mathbb N$.
Suppose that both $\gamma$ and $\gamma'$ do not
contain branch points other than $p$.
Then $\gamma$ and $\gamma'$ form a pair of embedded twin paths.
 \end{lemma}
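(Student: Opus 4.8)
The plan is to realize this as a direct application of Lemma~\ref{l:embedded_twins}, with target surface $\Sigma=C\f$. First I would produce the local isometry through which both geodesics acquire the same image. Fix a lift $\widetilde C\subset\widetilde S$ of $C$. The developing map, or its complex conjugate when $C\subset S^-$, restricts to a local isometry $\mathbf D:\widetilde{C^*}\to\mathbb H^\pm$ for the branched hyperbolic metric on $C$; by the equivariance relation $D(\gamma x)=\rho(\gamma)D(x)$, composing $\mathbf D$ with the quotient projection $\mathbb H^\pm\to C\f=\rho(\pi_1(C))\backslash\mathbb H^\pm$ descends to a well-defined local isometry $f:C^*\to C\f$. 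Here $C\f$ is a complete hyperbolic surface, since $\rho$ is Fuchsian with neither elliptic nor parabolic elements. This $f$, together with a thin tubular neighbourhood $U\subset C$ of $\gamma\cup\gamma'$ (chosen with smooth boundary and corners, containing the open geodesics, with $p\in\overline U$), will play the roles of local isometry and domain in Lemma~\ref{l:embedded_twins}, and $T$ will be the common length $\ell<\varepsilon$.

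Next I would verify the two hypotheses of that lemma. That $(\gamma,\gamma')$ is a pair of \emph{twin} geodesics, and that $f\circ\gamma=f\circ\gamma'$, is a purely local matter at $p$: in a branched chart adapted to $p$ the developing map reads $z\mapsto z^{m}$, where $2\pi m$ (with $m\ge 2$ an integer) is the cone angle at $p$, so a geodesic ray leaving $p$ at cone-angle $\psi$ develops to the geodesic ray leaving $D(p)$ in the direction $\psi\pmod{2\pi}$. Since $\gamma$ and $\gamma'$ leave $p$ at cone-angles differing by $2k\pi$, they develop to one and the same geodesic ray of $\mathbb H^\pm$; having equal length, their developed images coincide, so the concatenation $\alpha$ has even developed image and $(\gamma,\gamma')$ are twins. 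A fortiori $f\circ\gamma=f\circ\gamma'=:\tau$.

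It then remains to see that $\tau$ is embedded, and this is exactly where the hypothesis enters: $\tau$ is a geodesic arc of length $\ell<\varepsilon<\mathrm{inj}(C\f)$, hence simple and properly embedded. The remaining requirements of Lemma~\ref{l:embedded_twins}, namely that for $0<t<T$ each $\tau_i(t)$ lies in $U$ and is not a branch point of $S$, hold by the choice of $U$ and by hypothesis. The lemma then yields that $\gamma$ and $\gamma'$ are embedded and meet only at $p$, i.e. they form a pair of embedded twin paths.

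I expect the only genuine nuisance to be the treatment of the closed endpoints $\gamma(\ell),\gamma'(\ell)$, since Lemma~\ref{l:embedded_twins} is phrased on the half-open interval $[0,T)$. This is handled by the same analytic-continuation argument used in its proof: were $\gamma(\ell)$ to coincide with some $\gamma'(t)$, then either $f$ would fail to be a local diffeomorphism at that regular point or $\tau$ would fail to be embedded, a contradiction. Separation of the endpoints from the interiors then upgrades embeddedness on $[0,\ell)$ to embeddedness of the closed segments by continuity, completing the proof.
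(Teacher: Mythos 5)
Your proposal is correct and follows exactly the paper's argument: the developing map induces a local isometry $f:C\to C\f$, the common image $f(\gamma)=f(\gamma')$ is an embedded geodesic because its length is less than the injectivity radius of $C\f$, and Lemma~\ref{l:embedded_twins} concludes. Your additional care with the local form $z\mapsto z^{m}$ at the branch point and with the closed endpoints merely fills in details the paper leaves implicit.
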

\proof
A developing map for $S$ induces a map $f:C\to C\f$ which is a local
isometry. The image of $\gamma$ is therefore a geodesic in
$C\f$. Since $\gamma$ is shorter than the injectivity radius of $C\f$,
then $f(\gamma)$ is properly embedded. As $\gamma'$ forms an angle
$2k\pi$, we have $f(\gamma)=f(\gamma')$ and
Lemma~\ref{l:embedded_twins} concludes.
\qed

\section{Debubbling adjacent components} \label{s:debubbling}

In this section, we give a criterion ensuring that a BPS can be
debubbled, after possibly moving the branch points. The main result
is
the following.

\begin{theorem} [Debubbling] \label{t:debubbling} Let $S$ be a
compact surface equipped with a BPS $\sigma$ having Fuchsian
holonomy. Suppose that there exists a positive and a negative
component, that we denote $C^+$ and $C^-$, with a common boundary
component $l$, such that
\begin{enumerate}
\item the index of $l$ is $1$, and its holonomy is loxodromic,
\item the index of any component of $\partial C^+$ or $\partial C^-$
other than $l$ vanishes,
\item each component $C^+$ and $C^-$ contains a single branch point of
angle $4\pi$.
\end{enumerate} Then, after possibly moving the branch points in the
components $C^+$ and $C^-$, the branched projective structure on $C^+
\cup C^-$ is a bubbling.
\end{theorem}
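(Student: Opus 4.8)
The plan is to combine the rigidity of index-one real curves (Proposition~\ref{p:RP1structure}), the index formula of Section~\ref{s:index_formula} together with Gauss--Bonnet, and the freedom to move the two simple branch points (Lemmas~\ref{wecanmove} and~\ref{l:embedded_twins}), in order to exhibit an explicit pair of embedded twin paths joining the branch point of $C^+$ to that of $C^-$ and bounding a bubble. I would begin by fixing the local geometry. Since $l$ has index one and loxodromic holonomy, Proposition~\ref{p:RP1structure} shows that on a lift $\tilde l$ the developing map is a diffeomorphism onto the universal cover $\widetilde{\mathbb{RP}^1}$, so $l$ covers $\mathbb{RP}^1$ exactly once and the real structure along $l$ --- hence the germs of $C^+$ and $C^-$ near $l$ --- is completely standard. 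The other boundary components of $C^\pm$ have index zero, so by the $I=0$ case of the same proposition they develop onto an interval between consecutive fixed points and bound embedded half-plane ends that can be pushed away from the region of interest. Applying the First Index Formula (Theorem~\ref{t:first_index_theorem}) to $C^-$ with $k^-=1$ and $\sum_i I_{l_i}=1$ gives $eu(\rho_{C^-})=-\chi(C^-)$, and combined with Gauss--Bonnet for the conical hyperbolic metric --- the single branch point contributing angle excess $2\pi$ --- this bounds the complexity of $C^\pm$ and forces the peripheral geodesic of $l$ in each component, analysed as in Section~\ref{s:decomposition}, to be a bouquet of at most three circles through the branch point. This reduces the theorem to finitely many combinatorial configurations.

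Next I would normalize the positions of the branch points. Using Lemma~\ref{wecanmove} I can move $q^+$ and $q^-$ freely inside balls of radius the injectivity radius of the hyperbolic pieces $C\f$, and using Lemma~\ref{l:embedded_twins} I can slide them along embedded twin geodesics toward $l$. The goal of this normalization is to produce a piecewise geodesic arc $\gamma$ running from $q^+$, crossing $l$ transversally exactly once, to $q^-$, whose developed image $\eta=D(\gamma)$ is an embedded segment of $\mathbb{CP}^1$ joining a point $A\in\mathbb H^+$ to a point $B\in\mathbb H^-$ and meeting $\mathbb{RP}^1$ once. The index-one normalization of $l$ is what makes such a crossing arc available with injective developed image.

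Then I would recognize the bubble using the criterion already exploited in the proof of Theorem~\ref{t:graftingvsbubbling}: it suffices to produce a pair of twin paths joining two simple branch points, developing to a segment, and bounding a disc. I split $\gamma$ into twin paths $\tau_1,\tau_2$ from $q^+$ to $q^-$; as both endpoints are branch points and $D\circ\tau_1=D\circ\tau_2=\eta$ is embedded, the first two conditions hold by construction, and it remains only to check that $\tau_1\cup\tau_2$ bounds a disc in $C^+\cup C^-$. Granting this, the twin paths cut off a bubble, and debubbling along $(\tau_1,\tau_2)$ removes both $q^+$ and $q^-$; this is exactly the assertion that $\sigma|_{C^+\cup C^-}$ is a bubbling.

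The hard part will be verifying that $\tau_1\cup\tau_2$ bounds a disc, and this is precisely what forces the case-by-case analysis. One has to follow $\tau_1$ and $\tau_2$ across $l$ and through $C^\pm$ while controlling their position relative to the peripheral geodesic of $l$; embeddedness of the twin geodesics involved is guaranteed by Lemma~\ref{l:embedded_twins}, but the global separation statement genuinely depends on which admissible bouquet (one, two, or three circles) occurs, so each must be treated separately. I expect the subtlest case to be the three-circle ``triangle'' of Subsection~\ref{ss:_triangle}, where the peripheral geodesic develops as a genuine hyperbolic triangle and the disc-bounding property has to be read off from that explicit picture. Essentially all the real content of the theorem should be concentrated in this disc-bounding verification.
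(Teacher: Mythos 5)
Your overall plan (move the two branch points toward $l$, produce twin paths crossing $l$, and recognize a bubble) points in the right direction and cites the right tools, but it has two concrete gaps. The first and most serious is the matching problem that you gloss over when you say you will ``split $\gamma$ into twin paths $\tau_1,\tau_2$ from $q^+$ to $q^-$'': a single arc cannot be split into twins. What you actually have is a developed segment whose two lifts at the $4\pi$-point $q^+$ are twin geodesics exiting $C^+$ through $l$ at \emph{two different points}, and nothing in your argument guarantees that, after crossing into $C^-$, both lifts terminate at the branch point $q^-$. This is precisely where the paper's notion of half-bubble and Proposition~\ref{p:existenceofhalf-bubble} do the work: one shows that for \emph{every} $u\in l^+$ there is a half-bubble in $C^+$ with endpoints exactly $(u,u/\lambda)$ --- the quantitative use of the index-one hypothesis being that the two twins hit $\widehat l$ at consecutive preimages of the same point of $\mathbb{RP}^1$, which differ by division by $\lambda$ --- and likewise in $C^-$; choosing the same pair $(\lambda u, u)$ on both sides is what makes the four geodesic rays close up into two smooth twin paths. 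Once that is done, the disc-bounding you single out as ``the hard part'' is essentially automatic: each half-bubble cuts off a region isometric to a half-plane minus a semi-infinite geodesic, and the two regions glue along $l$ to $\mathbb{CP}^1$ minus a closed segment.

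The second gap is the case analysis. Under hypothesis (3) the branch point has total angle $4\pi$, so the peripheral geodesic of $l$ passes through it at most twice (its exterior angles are disjoint and at least $\pi$), and after the normalization $D_C(p)\in\gamma_{fuchs}$ Lemma~\ref{l:twice} leaves only the once- and twice-through configurations; the three-circle ``triangle'' of Subsection~\ref{ss:_triangle} requires a $6\pi$-point and belongs to the proof of Theorem~\ref{t:mainpositive} (case~\ref{enum:3} of Lemma~\ref{l:triangle}), not to this theorem. Moreover the genuinely delicate step is not deciding ``which bouquet occurs'' but passing from the twice-through configuration to the once-through one: the zig-zag argument of Lemma~\ref{l:transformtocase1} moves the branch point along a carefully chosen pair of twins inside the end and tracks the developed image of the peripheral geodesic continuously during the move. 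Finally, your use of the index formula correctly forces $C^-$ to be an annulus, but it does not bound the topology of $C^+$ (which the paper notes may be arbitrary), so it cannot by itself reduce the theorem to finitely many configurations.
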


Before entering into the details, let us explain the strategy for the
proof of this result, and introduce the notion of
half-bubble:

\begin{definition} [Half-bubble] \label{def:half-bubble} Given a
positive or negative component $C$ of $\sigma$ and a component $l$ of
$\partial C$, a {\bf half-bubble} in the direction of $l$ is a pair of
embedded twin geodesics $(\tau_1, \tau_2)$ contained in $C$ and
tending at different points $u_1$ and $u_2$ of $l$ at infinity, such
that $C \setminus (\tau_1 \cup \tau_2)$ has two connected components,
one of them being isometric via the developing map to $\mathbb H^2$ minus a
semi-infinite geodesic. We require moreover that the oriented angle
$\measuredangle \tau_1 \tau_2$ is the $2\pi$-angle of
that region. \end{definition}

In other words, the branched $\mathbb H^2$-structure on $C$ has been
obtained from another branched $\mathbb H^2$-structure $C'$ by
inserting a hyperbolic plane with a cut and paste procedure along a
properly embedded semi-infinite geodesic of $C'$.

The proof of Theorem~\ref{t:debubbling} consists in finding
half-bubbles in the direction of $l$ in each of the components $C^+$
and $C^-$ and ensure that, after possibly moving the branch points in $C^+$ and
$C^-$, they glue together to produce a bubble. This is done in Proposition~\ref{p:existenceofhalf-bubble}.

Recall that a connected subsurface $C\subset S$ is called {\bf
  incompressible} if any loop in $C$, which is homotopically trivial in
$S$, is also homotopically trivial in $C$.

\begin{lemma} \label{l:incompressible} The components $C^+$ and $C^-$ are
incompressible in $S$.
\end{lemma}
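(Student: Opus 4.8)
The plan is to prove directly that the inclusion induces an injection $\pi_1(C)\hookrightarrow\pi_1(S)$ for $C$ standing for either $C^+$ or $C^-$; this is exactly what incompressibility means for a two-sided subsurface, and it is what is needed. I would argue by contradiction, assuming $C$ is compressible, and confront a compressing disc with the real-curve decomposition rather than with the metric. So say $C\subset S^+$ and suppose $\pi_1(C)\to\pi_1(S)$ is not injective. First I would invoke the standard fact from surface topology that non-injectivity is always detected by a simple closed curve: there is an embedded loop $\delta\subset C$ that is \emph{essential} in $C$ (in particular it does not bound a disc in $C$) but is null-homotopic in $S$. Since a null-homotopic simple closed curve on the closed orientable surface $S$ bounds an embedded disc, we may write $\delta=\partial\Delta$ with $\Delta\subset S$ an embedded disc.

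Next I would feed $\Delta$ into the decomposition $S=S^+\sqcup S_{\mathbb R}\sqcup S^-$. Observe that $\delta\subset C\subset S^+$ is disjoint from the real curve $S_{\mathbb R}$. If $\Delta$ were also disjoint from $S_{\mathbb R}$, then being connected it would lie entirely in $S^+$, hence in the single component $C$, so $\delta$ would bound a disc in $C$ — excluded. Therefore $\Delta\cap S_{\mathbb R}\neq\emptyset$. Because $\partial\Delta=\delta$ avoids $S_{\mathbb R}$ and each component of $S_{\mathbb R}$ is an embedded circle (the real curve carries no branch points in this situation), each such component lies either in the interior of $\Delta$ or in its complement, and only finitely many lie inside. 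Let $c$ be a component of $S_{\mathbb R}$ lying in $\Delta$ and bounding the piece of $\Delta\setminus S_{\mathbb R}$ adjacent to $\delta$. Then $c$ separates that piece (which is contained in $C$) from the opposite-sign component across it, so $c$ is a boundary component of $C$; and being an embedded circle in the disc $\Delta$, it bounds a sub-disc of $\Delta$, so $c$ is null-homotopic in $S$ and its holonomy $\rho(c)$ is trivial.

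Finally I would apply Proposition~\ref{p:RP1structure} to the unbranched $\mathbb{RP}^1$-structure on $c$: a structure with trivial holonomy must have index at least $1$, since the pair $\alpha=\mathrm{id}$, $I=0$ is exactly the case excluded there. Thus $c$ is a boundary component of $C$ with trivial holonomy and index $\geq 1$. This contradicts the hypotheses of Theorem~\ref{t:debubbling}: every boundary component of $C^{\pm}$ other than $l$ has vanishing index, while $l$ itself has loxodromic — hence nontrivial — holonomy. This contradiction proves $\pi_1(C)\hookrightarrow\pi_1(S)$, i.e.\ that $C^+$ and $C^-$ are incompressible.

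The step I expect to require the most care is the bookkeeping of the middle paragraph — verifying that the chosen real-curve circle $c$ really is a genuine boundary component of $C$ carrying an \emph{unbranched} $\mathbb{RP}^1$-structure, so that Proposition~\ref{p:RP1structure} applies. The more geometric alternative — homotoping $\delta$ to the unique closed geodesic of the $\mathrm{CAT}(-1)$ metric on $C$ (Corollary~\ref{cor:cat-1}) and developing it to contradict the absence of closed geodesics in $\mathbb H^2$ — is tempting but delicate, precisely because this geodesic may pass through the $4\pi$ branch point, where the developing map folds and the developed image need not be a genuine $\mathbb H^2$-geodesic. The topological route above sidesteps that difficulty, at the cost of leaning on the index classification of Proposition~\ref{p:RP1structure} together with the boundary hypotheses (1)–(2) of the theorem.
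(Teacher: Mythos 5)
Your proof is correct and rests on exactly the same ingredients as the paper's: the paper observes that no boundary component of $C^\pm$ is null-homotopic in $S$ (because $l$ has loxodromic holonomy and the other components have index $0$, which by Proposition~\ref{p:RP1structure} forces nontrivial holonomy) and then cites the standard criterion that a subsurface with essential boundary is incompressible. You simply prove that criterion by hand via a compressing disc and an innermost real-curve circle, arriving at the same contradiction with Proposition~\ref{p:RP1structure} and hypotheses (1)--(2) of Theorem~\ref{t:debubbling}.
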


\begin{proof} A well-known criterion for
a connected subsurface of $S$
to be incompressible is that its boundary components are not
homotopically trival in $S$.
By hypothesis $l$ has loxodromic holonomy. Since any
component $l'$ of $\partial C^+$ or $\partial C^-$ different from $l$ has index $0$, its
holonomy is non-trivial (see Proposition~\ref{p:RP1structure}.) Thus,
neither $l$ nor any $l'$ can be homotopically trivial.
\end{proof}
Remark that
Lemma~\ref{l:incompressible}, Remark \ref{rem:eulerfuchsian} and the
index formula~\ref{t:first_index_theorem} show
that necessarily $C^-$ is an annulus. However, $C^+$ may have more
complicated topology.

It is necessary now to fix some {\bf notation}.
Let $\rho$ denote the holonomy
of a developing map $D$ for $\sigma$. The facts we are going to prove
hold true for both $C^+$ and $C^-$. For lightening
notations we fix $C=C^+$. In order to obtain the proofs for $C^-$ one
has just
to replace the upper half-plane model for $\mathbb H^2$ with the lower
half-plane model for $\mathbb H^-$.

 Let $\pi : \widetilde{S} \rightarrow S $ be the universal covering of $S$, with
covering group $\pi_1(S)$. We chose a connected component $\widehat C$
of $\pi^{-1}(C)$. The restriction of $\pi$ to $\widehat{C}$
is a Galois covering over $C$, with
Galois group the stabilizer of $\widehat{C}$ in $\pi_1(S)$. We set
$\pi_1(C)=\operatorname{Stab}(\widehat C)<\pi_1(S)$
(note that using this notation, in
general the group
$\pi_1(C)$ could be different from the fundamental group of $C$, for
instance if $C$ were compressible in $S$).
As above, we denote by $C_{fuchs}$ the complete hyperbolic surface
\[ C_{fuchs} := \rho(\pi_1(C)) \backslash \mathbb H^2.\] The
restriction of $D$ to $\widehat{C}$, induces a map $D_C : C
\rightarrow C_{fuchs}$, which is a local isometry.

The topology of $C_{fuchs}$ may be very different from that of
$C$. For instance, for a BPS  with discrete holonomy in $PSL(2,\mathbb
R)$, a positive
component $C$ may be diffeomorphic to a pair of pants but $C_{fuchs}$
to a disc (case of a branched covering over $\mathbb C \mathbb P ^1$)
or to a punctured torus.

\begin{example}
Consider a
complete hyperbolic metric on a punctured torus $\Sigma$, with a cusp
of infinite area. Let $\gamma\subset \Sigma$ be a properly embedded
semi-infinite geodesic. Let $\alpha$ and $\beta$ be some generators of
the fundamental group of $\Sigma$, and let $$\pi_{\alpha} :
\Sigma_{\alpha}= \widetilde{\Sigma}/\alpha \rightarrow \Sigma, \ \ \ \
\ \pi_{\beta}: \Sigma_{\beta}=\widetilde{\Sigma} /\beta \rightarrow
\Sigma$$ be the intermediate coverings defined by $\alpha$ and
$\beta$; both are isometric to loxodromic annuli. Let
$\gamma_{\alpha}$ and $\gamma_{\beta}$ be some lifts of $\gamma$ by
$\pi_{\alpha}$ and $\pi_{\beta}$. These semi-infinite paths are
properly embedded as well. Let us cut $\Sigma_{\alpha}$ and
$\Sigma_{\beta}$ and paste these annuli along these cuts. One obtains a
pair of pants $\Pi$ together with a hyperbolic metric with one conical
point of angle $4\pi$. Moreover, the maps $\pi_{\alpha}$ and
$\pi_{\beta}$ glue together to produce a map $D : \Pi \rightarrow
\Sigma$, which is a local isometry and a $\pi_1$-isomorphism (see \cite[Lemma 4,
p. 658]{Tan} for more details).
\end{example}

However, such examples are incompatible with our Fuchsian
assumption:

\begin{lemma} [Identification between $C$ and $C_{fuchs}$]
\label{l:identification_with_Cfuchs} Let $S$ be a closed surface
equipped with a BPS with Fuchsian holonomy. If $C$ is an
incompressible component of $S^\pm$, then
there is an orientation preserving diffeomorphism $\Phi: C_{fuchs} \rightarrow C$
such that the map induced by $D_C \circ \Phi$ on the set of free-homotopy
classes of closed loops is the identity.
\end{lemma}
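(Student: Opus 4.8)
The plan is to realize the local isometry $D_C$ itself as a homotopy equivalence and then upgrade it to a diffeomorphism by matching the ends of $C$ and $C_{fuchs}$. First I would record the consequences of incompressibility: since $\rho$ is Fuchsian it is faithful, so $\pi_1(C)=\operatorname{Stab}(\widehat C)$ injects into $\pi_1(S)$, and incompressibility forces the covering $\widehat C\to C$ to be the universal cover $\widetilde C$, which is a $\mathrm{CAT}(-1)$ disc. The map $\mathbf D:\widetilde C\to\mathbb H^2$ is $\rho$-equivariant and preserves the length of every path (it is a local isometry away from the cone points and locally $z\mapsto z^{n+1}$ at them), hence $1$-Lipschitz; passing to quotients, $D_C$ induces on $\pi_1$ exactly the isomorphism $\rho:\pi_1(C)\to\rho(\pi_1(C))=\pi_1(C_{fuchs})$. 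Since both $C$ and $C_{fuchs}$ have contractible universal covers, they are aspherical orientable surfaces of finite type with isomorphic free fundamental groups, and by Whitehead's theorem $D_C$ is a homotopy equivalence.

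The difficulty is that homotopy-equivalent open surfaces need not be homeomorphic, so I must compare their ends; this is where the Fuchsian hypothesis does the work. Let $l_1,\dots,l_n$ be the boundary components of $C$. By the incompressibility criterion used in Lemma~\ref{l:incompressible} each $l_i$ is essential in $S$, so by faithfulness $\rho([l_i])\neq 1$, and by Proposition~\ref{p:RP1structure} it is loxodromic with two fixed points on $\mathbb{RP}^1$. By Lemma~\ref{l:peripheralannulus} each $l_i$ determines an annular end $E_i$ of $C$ with lift $R(a_i,b_i)\subset\widetilde C$, and by equivariance together with Lemma~\ref{l:limit} the ideal endpoints $a_i,b_i$ develop to the two fixed points of $g_i:=\rho([l_i])$; the developed image of $E_i$ is the funnel region between the axis of $g_i$ and the corresponding boundary arc of the domain of discontinuity. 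I would then show that $D_C$ is \emph{proper}: a sequence leaving every compact set of $C$ must eventually enter some $E_i$ and run out toward $l_i\subset S_{\mathbb R}=D^{-1}(\mathbb{RP}^1)$, so its image runs out the corresponding funnel of $C_{fuchs}$. Properness together with the $\pi_1$-isomorphism makes $D_C$ a proper homotopy equivalence, hence it induces a bijection between the ends of $C$ and those of $C_{fuchs}$, with Lemma~\ref{endisjoint} guaranteeing that distinct ends map to distinct funnels. In particular $C$ and $C_{fuchs}$ have the same number of ends and isomorphic fundamental groups, hence are homeomorphic orientable surfaces of finite type.

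Finally I would invoke the classical fact that a proper homotopy equivalence between finite-type orientable surfaces respecting the peripheral structure is properly homotopic to an orientation-preserving diffeomorphism $\Psi:C\to C_{fuchs}$ (in the degenerate case $\partial C=\emptyset$, where $C$ is closed, this is just the statement that a degree-one homotopy equivalence of closed surfaces is homotopic to a homeomorphism). The map $D_C$ is orientation-preserving by the choice of $\mathbf D$, using its complex conjugate in the negative case. Setting $\Phi:=\Psi^{-1}:C_{fuchs}\to C$, which is an orientation-preserving diffeomorphism, we obtain $D_C\circ\Phi\simeq\Psi\circ\Psi^{-1}=\mathrm{id}_{C_{fuchs}}$; being homotopic to the identity, $D_C\circ\Phi$ induces the identity on free-homotopy classes of closed loops, which is exactly the conclusion.

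The main obstacle I expect is the end-matching, namely proving that $D_C$ is proper and sets up a bijection of ends. This is precisely the step that fails for general holonomy: one must rule out that a sequence exiting an end of $C$ develops back toward the limit set and re-enters the convex core of $C_{fuchs}$ infinitely often. Controlling this uses that the boundary elements $g_i$ are loxodromic with disjoint axes (Fuchsian discreteness) and that the ends $E_i$ are genuinely embedded annuli developing monotonically toward $\mathbb{RP}^1$ (Lemmas~\ref{l:peripheralannulus}, \ref{endisjoint} and~\ref{l:limit}); since the peripheral geodesics themselves may be complicated bouquets of circles, the argument must be carried out at the level of the ideal boundary rather than on the geodesics directly.
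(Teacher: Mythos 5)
There is a genuine gap, and it sits exactly where you place your ``main obstacle'': the map $D_C$ is \emph{not} proper in general, so the proper-homotopy-equivalence route cannot be run on $D_C$ itself. Your first paragraph is fine (incompressibility makes $\widehat C$ the universal cover, $D_C$ induces $\rho|_{\pi_1(C)}$ on fundamental groups, and asphericity makes it a homotopy equivalence), but consider the situation of Theorem~\ref{t:debubbling}, which is precisely where this lemma is applied: $C=C^+$ has a boundary component $l$ of index $1$ with loxodromic holonomy $g=\rho([l])$, normalized as $g(z)=\lambda z$. Index $1$ forces some point $x_0\in l$ to develop to the fixed point $0$. Choosing $p_n\in C$ converging to $x_0$ whose developed images $D(\widetilde p_n)=it_n$ with $t_n\to 0$ lie on the axis of $g$, the sequence $p_n$ leaves every compact subset of $C$, yet all the points $it_n$ project to the closed geodesic of $C_{fuchs}$ corresponding to $g$, which is compact. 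Hence $D_C(p_n)$ stays in a compact set and $D_C$ is not proper. For the same reason your description of the developed image of an end as ``the funnel region between the axis of $g_i$ and the domain of discontinuity'' is too optimistic: in Proposition~\ref{p:existenceofhalf-bubble} the end $E_l$ contains a half-bubble, i.e.\ a region developing onto all of $\mathbb H^2$ minus a ray. (A smaller point: incompressibility alone does not make every $l_i$ essential in $S$ --- Lemma~\ref{l:incompressible} goes in the opposite direction --- although the only exception is the case where $C$ is a disc, which is trivial for the lemma.)

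Note that the conclusion only requires \emph{some} orientation-preserving diffeomorphism $\Phi:C_{fuchs}\to C$ inducing $\rho^{-1}$ up to conjugation on fundamental groups; it does not require $D_C$ to be properly homotopic to anything. The paper obtains $\Phi$ from global data, sidestepping the end-matching problem entirely: the definition of Fuchsian includes a $\rho$-equivariant orientation-preserving diffeomorphism $\widetilde G:\widetilde S\to\mathbb H^2$, and incompressibility provides a $\pi_1(C)$-equivariant diffeomorphism $\widetilde F:\widetilde S\to\widehat C$ (the cover of $S$ associated to $\pi_1(C)$ retracts onto $C$); the composition $\widetilde F\circ\widetilde G^{-1}$ descends to $\Phi$, and the statement about free homotopy classes is then a two-line consequence of the $\rho$-equivariance of $D$. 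If you wish to salvage your route, the properness claim must be replaced by the assertion that $\rho$ carries the peripheral structure of $C\subset S$ to the peripheral structure of the convex cocompact group $\rho(\pi_1(C))$, and the natural proof of that again passes through the intermediate cover of the \emph{closed} surface $S$ --- i.e.\ through the paper's argument --- rather than through the geometry of $D_C$ on the ends.
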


\begin{proof} Since $C$ is incompressible, any connected component
of $\pi^{-1} (C)$ is simply connected. If follows that $\widehat C$ is the
universal covering of $C$ and the Galois group
$\pi_1(C)$ is indeed isomorphic to the
fundamental group of $C$, via an isomorphism that is well-defined up
to conjugation.

It is classical to see that there exists an orientation preserving
diffeomorphism $\widetilde F:\widetilde S\to\widehat C$ that is
$\pi_1(C)$-equivariant and $\pi_1(C)$-equivariantly homotopic to the identity.
 On the other hand, since  $\rho$ is Fuchsian and $S$ is closed,
there is a $\rho$-equivariant orientation preserving diffeomorphism
$\widetilde G:\widetilde S\to\mathbb H^2$. The map $\widetilde
\Phi=\widetilde F\circ\widetilde G^{-1}$ descends to a diffeomorphism
$\Phi:C\f\to C$. Since $D$ is $\rho$-equivariant, the
map $D\circ \widetilde \Phi$
has the property that $$D(\widetilde
\Phi(h\cdot x))=h\cdot D(\widetilde\Phi(x))$$
for all $h\in\rho(\pi_1(C))$. Since $\rho(\pi_1(C))$ is the Galois
group of the universal covering $\mathbb H^2\to C\f$, it follows that
$D\circ\Phi$ fixes free-homotopy classes of loops.

\end{proof}

We identify $C$ with $C_{fuchs}$ using the diffeomorphism $\Phi$ of
Lem\-ma~\ref{l:identification_with_Cfuchs} so that now it makes sense
to say that $D_C$ fixes free homotopy classes of loops. We still use
the notation $C$ and $C\f$
to mean that the structure of $C$ is the branched one
while that of $C\f$ is the hyperbolic unbranched one.

Since the holonomy of
$l$ is loxodromic, we consider the geodesic representatives $\gamma$ in
$C$ and $\gamma_{fuchs}$ in $C_{fuchs}$ in the respective homotopy classes. Note that
$D_C(\gamma)$ is in general different from $\gamma_{fuchs}$ (they
only belong to the same homotopy-class),
since $D_C$ is not a global
isometry. As above, we denote by $E_l$ the end of $C$ corresponding to $l$.
If $C\f$ is an annulus let $E\f$ be the end which is on the same
side of $\gamma\f$ as $E_l$ of $\gamma$. Otherwise
$E\f$ is just the end of $C\f$ corresponding to $l$. In both cases
$E\f$ is an annulus  with
geodesic boundary $\gamma\f$ and a complete hyperbolic metric with
loxodromic holonomy.

In order to prove Theorem~\ref{t:debubbling},
we begin by moving the branch point $p$ in $C$ so that its image $q=
D_C(p)$ in $C_{fuchs}$ belongs to $\gamma_{fuchs}$. To this end,
let  $\delta$ be an embedded
geodesic segment starting at $q$ and ending at a point of
$\gamma_{fuchs}$, not containing the image of branch points other
than $q$. By Lemma~\ref{l:completeness}, the geodesic $\delta$
can be lifted to a pair of twin geodesics $(\delta_1, \delta_2)$ in
$C$.  Thanks to Lemma~\ref{l:embedded_twins}, these twins are in fact
embedded. By cutting and pasting along these twins, we obtain a new
BPS on $S$ such that $D_C(p)$ belongs to $\gamma_{fuchs}$.

Under this condition, the geodesic $\gamma$ passes through $p$.
Indeed, otherwise $\gamma$ would be a smooth geodesic and since $l$ has
index $1$,  $p$ would  belong to the end $E_l$. In this case we could
chose $\sigma$ the orthogonal segment from $p$ to $\gamma$. Since
$D_C$ fixes free homotopy classes of curves, we would get
 $D_C(\gamma)=\gamma\f$ as oriented loops, and $D_C(\sigma)$
should be a smooth geodesic segment starting from $q=D_C(p)\in\gamma\f$, and
orthogonally ending to $\gamma\f$ on the side of $E\f$. But a geodesic segment
starting at a point of $\gamma_{fuchs}$ and entering $E\f$ never  comes
back to $\gamma_{fuchs}$ again (because $E\f$ is a genuine hyperbolic
surface) that would be a contradiction.
Hence the only possibility is that $p\in\gamma$.

The total angle at $p$ is
$4\pi$ and the exterior angles of $\gamma$ at $p$ are $\geq \pi$ and
disjoint (by Lemma~\ref{halftub}). Hence $\gamma$ cannot pass through $p$
more than $4$ times. Examples of peripheral geodesics passing four
times through a conical point of angle $4\pi$ exist in
general.

\begin{example}
Consider an annulus $A$ equipped with a complete
hyperbolic metric, with a loxodromic end, and with a geodesic boundary
component $\gamma$. Cut $\gamma$ in four segments of equal length
$I_1,I_2, I_3,I_4$ arranged in cyclic order. Glue $I_1$ with $I_3$ and
$I_2$ with $I_4$ by reversing the orientation. We obtain a punctured
torus $P$ in which the extremities of the intervals $I_k$ are glued to
a same point $p$ of total angle $4\pi$, and in which the peripheral
geodesic is the image of $\gamma$ by the quotient map $A\rightarrow
P$. This peripheral geodesic passes through $p$ four times.
\end{example}
However
these cases do not arise under our assumptions.

\begin{lemma}\label{l:twice} Suppose that $D_C(p)\in\gamma\f$.
Then, the peripheral geodesic $\gamma$  passes through $p$ exactly once
if and only if it forms a pair of  angles $(\pi,3\pi)$ at $p$,
and in this case $D_C$ embeds $\gamma$ to $\gamma\f$.
Moreover, if $\gamma$ forms no angle $\pi$ at $p$, then
it passes through $p$ exactly twice.
\end{lemma}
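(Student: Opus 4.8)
The plan is to read everything off the developed image of $\gamma$ near the single branch point $p$, using the identification of $C$ with $C\f$ and the standing hypothesis $q=D_C(p)\in\gamma\f$. First I would fix the lift $\tilde\gamma$ of the peripheral geodesic that is invariant under the deck transformation $g_l$ attached to $[l]$, so that its developed image $\Gamma=D_C(\tilde\gamma)$ is an $\alpha$‑invariant piecewise geodesic in $\mathbb H^2$, where $\alpha=\rho([l])$ is loxodromic; since the $\alpha$‑orbit of any of its points tends to the fixed points $a^\pm$, $\Gamma$ has the same endpoints at infinity as the axis $\mathcal A$ of $\alpha$ (which covers $\gamma\f$). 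Its vertices are the images of the points where $\tilde\gamma$ meets the lifts of $p$, and each vertex is a lift of $q$. The only local input needed is that at an order‑two branch point $D_C$ is a $2$‑to‑$1$ branched cover, so a cone sector of angle $\psi$ at $p$ develops to a sweep of angle $\psi$ about $q$, wrapping $\lfloor\psi/2\pi\rfloor$ times; in particular a passage with angles $(\theta_\ell,\theta_r)$ develops to a \emph{straight} vertex of $\Gamma$ exactly when $\theta_\ell\equiv\pi\ (\mathrm{mod}\ 2\pi)$, and its exterior (right‑hand) sector wraps once around $q$ precisely when $\theta_r>2\pi$.

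With this dictionary the first two assertions are quick. If $\gamma$ passes through $p$ exactly once, the unique arc of $\tilde\gamma$ between consecutive lifts of $p$ is a geodesic segment of $\Gamma$ joining two lifts of $q$ and representing the class $[l]$; being the geodesic between its endpoints it is the minimiser in its based class, which in the $\mathrm{CAT}(-1)$ surface $C\f$ is $\gamma\f$ itself, lying on $\mathcal A$. Hence the two vertices it joins lie on $\mathcal A$, the next segment (its $\alpha$‑translate) also lies on $\mathcal A$, and the two meet straight; thus $\Gamma=\mathcal A$. Straightness forces $\theta_\ell\equiv\pi\ (\mathrm{mod}\ 2\pi)$, and the only pair of angles $\ge\pi$ summing to $4\pi$ with this property is $(\pi,3\pi)$; this proves the ``only if'' direction (the converse being immediate, a single pair of angles being a single passage). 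Moreover $D_C(\gamma)=\gamma\f$ as a smooth closed geodesic, so $D_C$ restricts to a degree‑one local isometry of $\gamma$ onto the simple geodesic $\gamma\f$, i.e. an embedding, which is the second assertion.

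For the last statement I would argue by elimination on the number $m$ of passages. The exterior angles of the $m$ passages are pairwise disjoint and each is $\ge\pi$, while their sum is at most the total angle $4\pi$ (Lemma~\ref{halftub} and the corollary following Lemma~\ref{endisjoint}); hence $m\le 4$, and $m=4$ would force the sum to exceed $4\pi$ and is excluded. The hypothesis that no angle equals $\pi$ rules out $m=1$, since a single passage always yields $(\pi,3\pi)$. It therefore remains to exclude $m=3$: if the three exterior angles $e_1,e_2,e_3$ are all $>\pi$ and $e_1+e_2+e_3\le4\pi$, then each satisfies $e_i<4\pi-\pi-\pi=2\pi$, so no exterior sector exceeds $2\pi$ and none of them wraps. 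On the other hand the index of $l$ being $1$ means the developed image of the real curve covers $\mathbb{RP}^1$ once, hence crosses both fixed points $a^\pm$; this forces the developed end (the right‑hand side of $\Gamma$) to wrap around some image of $p$ at least once, i.e. to have some exterior angle $>2\pi$ — contradicting the $m=3$ estimate. Thus $m=2$.

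The step I expect to be the genuine obstacle is exactly the italicised correspondence used at the end: that a positive index is equivalent to the developed end wrapping around the images of $p$, quantitatively that $I_l\ge1$ forces some exterior angle to exceed $2\pi$. The naive linear guess $2\pi I_l=\sum_i(\theta_r^{(i)}-\pi)$ is too crude — it is correct for a single passage but fails for several, because the exterior sectors of distinct passages interleave with interior ones and, when $\pi_1(C)$ is non‑abelian (so that $\Gamma$ need not lie on $\mathcal A$), the push‑off of $\gamma$ into the end rounds the vertices in a configuration‑dependent way. The cleanest route I can see is to compute $I_l$ as the winding of the forward visual endpoint of the developed tangent along a smooth curve $\bar l$ isotopic to $\gamma$ and pushed slightly into the end (as in the proof of the index formula), carefully separating the contribution of the genuine turning of $\Gamma$ at its vertices from that of the full wraps of the end, and treating the degenerate case $\Gamma=\mathcal A$ where the endpoint is constantly $a^+$. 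Controlling these two contributions simultaneously over all admissible ray arrangements is the delicate heart of the ``case by case'' analysis.
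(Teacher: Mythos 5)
Your proof of the ``moreover'' part --- the statement that actually gets used downstream --- rests on the claim that $I_l=1$ forces some exterior angle of $\gamma$ at $p$ to exceed $2\pi$, and you have flagged yourself that you cannot justify it; that is a genuine gap, not a routine verification, and it is precisely the step needed to exclude three passages. The paper does not run a winding/index computation here at all. Its mechanism is purely local at $p$: since $D_C$ is a double branched cover from a neighbourhood $B'$ of $p$ onto a ball $B$ around $q=D_C(p)$, the $4\pi$-cone at $p$ splits into four $\pi$-sectors $E_1,F_1,E_2,F_2$ in cyclic order, where the $E_i$ are the two preimages of the sector $E\subset B$ lying on the side of the end $E\f$ and the $F_i$ are the preimages of the opposite sector. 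Because a geodesic of $C\f$ that leaves $\gamma\f$ into $E\f$ never returns, \emph{every ray of $\gamma$ at $p$ develops into $\overline{F}$}, i.e.\ $\gamma\cap B'\subset F_1\cup F_2$. Consequently each exterior angle, being strictly larger than $\pi$ with both bounding rays in $F_1\cup F_2$, must contain one of the two sectors $E_i$ in its entirety; since the exterior angles are pairwise disjoint and there are only two $E_i$'s, there are at most two passages. You never use the confinement of the rays of $\gamma$ to $F_1\cup F_2$, and without it (or a completed version of your winding argument, which looks at least as hard as the lemma itself) the case of three passages is not ruled out.

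Two further steps are not sound as written. In the ``only if'' direction you conclude that the developed edge lies on the axis $\mathcal A$ because it is ``the minimiser in its based class, which is $\gamma\f$ itself''; but Lemma~\ref{l:identification_with_Cfuchs} only gives that $D_C$ induces the identity on \emph{free} homotopy classes, so the based class of $D_C(\gamma)$ at $q$ is a conjugate $c[l]c^{-1}$, whose geodesic representative based at $q$ need not be $\gamma\f$ --- ruling out such conjugated loops again requires knowing that the germ of $D_C(\gamma)$ at $q$ avoids the end-side sector $E$. And you dismiss the ``if'' direction as immediate, whereas the substantive statement (an angle equal to $\pi$ or $3\pi$ at $p$ precludes a second passage) is exactly the first claim of the paper's proof: two consecutive edges meeting at such an angle would each develop onto a power $\gamma\f^{n_i}$, forcing $n_1=n_2=1$ because simple loops on an oriented surface are not proper powers, and this contradicts the length-minimality of $\gamma$ in the free class of $l$. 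Without that claim the equivalence in the first sentence of the lemma is not established.
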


\begin{proof}  Let $B$ be an embedded metric ball around
$q=D_C(p)$. Let $E$ denote the sector $E\subset B$ on the side of $\gamma_{fuchs}$ belonging to $E_{fuchs}$ and $F$ the sector on the other side. Since $p$ is a branch point of
total angle $4\pi$, there exists a neighborhood $B'$ of $p$ such that
$D_C: B' \to B$ is a double covering, branched at $p$. Let $E_i$ and
$F_i$, $i=1,2$ be the preimages of $E$ and $F$ in $B'$; these are four
sectors of angle $\pi$ at $p$ arranged in a cyclic order
$E_1,F_1,E_2,F_2$. See Figure~\ref{f:BEF}.

\begin{figure}[httb] \centering

\centering
\includegraphics[width=4in]{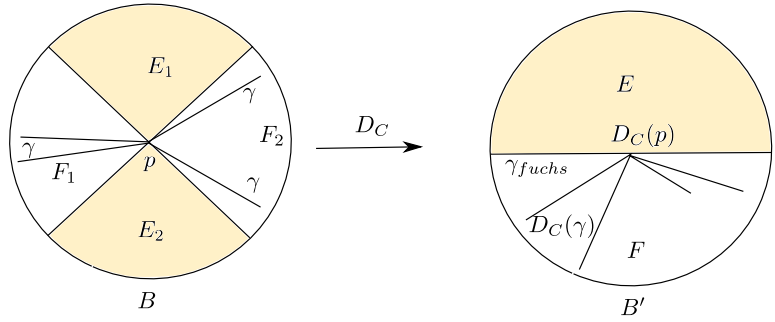}
  \caption{The map $D_C$ restricted to $B'$}
  \label{f:BEF}
\end{figure}

We call an {\bf edge  of $\gamma$} any embedded sub-loop of $\gamma$,
that is to say, a segment of $\gamma$ starting
and ending at $p$ but not passing through $p$ appart at its
extremities. Any edge is smooth outside $p$ so its image in
$C_{fuchs}$ is a geodesic starting and ending at $q$.  Note that
any semi-geodesic in $C_{fuchs}$ starting at $\gamma_{fuchs}$ and
entering the interior of $E_{fuchs}$ tends to infinity in $C_{fuchs}$
without coming back to $\gamma_{fuchs}$. It follows that $D_C(\gamma)
\cap B \subset \overline{C\f\setminus E}$. In particular, $\gamma\cap
B' \subset F_1\cup F_2$ (possibly the intersection is empty if
$\gamma$ does not contain $p$).

Let us prove that if an exterior angle of $\gamma$ at $p$ is equal to
$\pi$ or $3\pi$, then $\gamma$ passes through $p$ once. Suppose by
contradiction that $\gamma$ has $s\geq 2$ edges and that the exterior
angle between two consecutive edges is $\pi$ or $3\pi$. Then up to
cyclic permutation of the edges, one can write $\gamma = \gamma_1
\star \ldots \star \gamma_s$, where the $\gamma_i$'s are the edges of
$\gamma$, for $i=1,\ldots ,s$, and the exterior angle between
$\gamma_1$ and $\gamma_2$ is $\pi$ or $3\pi$. Set
$I= \gamma_1\star \gamma_2$.
Then, the image of $I$ by $D_C$ is a union of
geodesic lines contained
in $\overline{C\f\setminus E}$, passing through $q$ and forming
an angle of $\pi$ at $q$.  Hence $D_C (I)\subset\gamma_{fuchs}$
at the neighborhood of $q$. Since the
image of the $\gamma_i$'s by $D_C$ are geodesic segments, this implies
that the image of $\gamma_1$ and $\gamma_2$ by $D_C$ are some powers
of $\gamma_{fuchs} $: $D_C (\gamma_1) = \gamma_{fuchs}^{n_1}$,
$D_C(\gamma_2) = \gamma_{fuchs}^{n_2}$ for some integers $n_1,n_2$.
Because $D_C$ induces the identity on the set
of free homotopy classes of loops, the
loops $\gamma_1$ and $\gamma_2$ are homotopic to the loops
$l^{n_1}$ and $l^{n_2}$. As $S$ is oriented, no simple loop can be a
proper power, and since both $\gamma_1$ and
$\gamma_2$ are embedded loops, we get $n_1=n_2=1$. But then $\gamma$
is no longer the curve minimizing the length in the free homotopy
class of $l$, a contradiction.

On the other hand, if $\gamma$ passes
only once through $p$, then it is a simple loop which is
mapped to a geodesic loop through $q$ and in the same homotopy
class as $\gamma\f$. This shows that $D_C(\gamma)=\gamma\f$ and that
$(D_C)|_{\gamma}$ is in fact an embedding.
It follows that the angles that $\gamma$ forms
at $p$ are $\pi$ on one side and $3\pi$ on the other.

It remains to prove the last claim.
Since the exterior angles are always in $[\pi, 3\pi]$, we proved that
if $\gamma$ passes through $p$ at least twice, then the exterior angles of
$\gamma$ at $p$ belong to $(\pi, 3\pi)$. Hence, because $\gamma \cap
B' \subset F_1\cup F_2$, any exterior angle of $\gamma$ must cover one
of the $E_i$. Because these angles at $p$ are disjoint, their number
cannot exceed $2$.
\end{proof}

Let $\widehat{l}$ be a lift of $l$ in the boundary of
$\widehat{C}$ and $\alpha\in \pi_1(C)$ be the generator of the group
stabilizing $\widehat{l}$ in $\pi_1(C)$ that acts as a positive
translation on $\widehat{l}$ for the orientation of $\widehat{l}$
given by the $\mathbb R \mathbb P ^1$-structure of $\widehat{l}$.
Since $l$ is loxodromic, we can use the upper half-plane model for
$\mathbb H^2$, and choose the developing map in such a
way that for some real number $\lambda >1$
\[ D \circ \alpha = \lambda D \] on $\widehat{l}$. That is to say,
$\rho (\alpha)(z)=\lambda z$.

The map $D$ induces a decomposition $\widehat{l} = D^{-1} (0) \cup
D^{-1} (\mathbb R^{>0}) \cup D^{-1}(\infty) \cup D^{-1} (\mathbb
R^{<0})$ which is $\alpha$-invariant.  Since $l$ is of index
$1$, the parts $\pi(D^{-1}(0))$ and $\pi(D^{-1}(\infty))$
consist of single points.
We denote by $l = \{0\} \cup l^+ \cup \{\infty\} \cup l^-$ the decomposition
induced on $l$ (see Figure~\ref{fig:lu1u2}, left side). Observe
that the map $D$ induces a projective diffeomorphism between $l^+$ and
$\mathbb R^{>0}$ which is well defined up to multiplication by a power
of $\lambda$. The same for $l^-$. Hence the multiplication by $\lambda$ is
well defined on $l^+\cup l^-$.

Note that because of the choice of the upper half-plane model,
we have $E\f$ is half of $\lambda\backslash\mathbb
H^2$, and $\gamma\f$
lifts to the imaginary axis of the upper half-plane.

\begin{prop}[Existence of a half-bubble]
\label{p:existenceofhalf-bubble}
If $D_C(p)$ belongs to
$\gamma_{fuchs}$, and $\gamma$ passes through $p$ exactly once, then
for each $u\in l^+$ there exists a half bubble
in the direction of $l$ whose endpoints in $l$ are  $(u,u/\lambda)$.
\end{prop}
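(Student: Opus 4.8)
The plan is to realize the half-bubble as the preimage under the developing map of a single geodesic ray of $C\f$ issuing from $q=D_C(p)$ and escaping to infinity through the funnel $E\f$. Fix the upper half-plane normalization of the statement, so that $\rho(\alpha)z=\lambda z$, the lift $\tilde\gamma\f$ of $\gamma\f$ is the imaginary axis, and $E\f$ is the funnel $\{\Re z>0\}/\langle\lambda\rangle$, whose ideal boundary circle carries the developed image of $l^+$. Given $u\in l^+$, choose the lift $\tilde p$ of $p$ with $\hat q:=D(\tilde p)$ on the imaginary axis, and the lift $\hat u\in\widehat l$ with $x_0:=D(\hat u)\in\mathbb R^{>0}$, and let $\hat\tau$ be the geodesic ray from $\hat q$ to $x_0$. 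First I would check that its image $\tau$ in $C\f$ is a properly embedded geodesic ray: it is a radial geodesic escaping the loxodromic funnel $E\f$, and the $\rho(\alpha)^n$-translates of $\hat\tau$ have pairwise distinct ideal endpoints $\lambda^n x_0$, so they are disjoint, whence $\tau$ does not self-intersect; properness follows since the distance to $\gamma\f$ grows along $\tau$.

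Next I would lift. Since $D_C(p)\in\gamma\f$ and $\gamma$ passes through $p$ exactly once, Lemma~\ref{l:twice} gives the local double cover $B'\to B$ around $p$ with its four $\pi$-sectors $E_1,F_1,E_2,F_2$, and the direction of $\hat\tau$ at $\hat q$ points into the sector $E$ on the $E\f$ side. Its two $D_C$-preimages lie in $E_1$ and $E_2$; let $\tau_1,\tau_2$ be the geodesic rays from $p$ in these directions, so that $D_C\circ\tau_1=D_C\circ\tau_2=\tau$. As $p$ is the only branch point of $C$ and $C\f$ has no parabolic end (the holonomy being loxodromic), Lemma~\ref{l:embedded_twins} applies with $f=D_C$, $\Sigma=C\f$ and $T=+\infty$, so that $(\tau_1,\tau_2)$ is a pair of embedded twin geodesics converging to two distinct points $u_1\neq u_2$ of $l$.

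It then remains to identify the endpoints and exhibit the slit plane. Developing the lifts $\tilde\tau_1,\tilde\tau_2$ issuing from $\tilde p$, both map onto the single ray $\hat\tau$, so their ideal endpoints are two points of $\partial\widetilde S$ developing to $x_0$; I would show that these are $\hat u$ and $\alpha\hat v$, where $\hat v\in\widehat l$ is the point developing to $x_0/\lambda$, and that they project exactly to $u$ and to $u/\lambda$. This uses that the two sheets $E_1,E_2$ sit on opposite sides of the lift $\tilde\gamma$ of $\gamma$ through $\tilde p$, and that passing from one to the other shifts the period of $\widehat l$ by $\alpha$; concretely, the arc of $\widehat l$ between $\hat u$ and $\alpha\hat v$ develops homeomorphically onto $\mathbb{RP}^1\setminus\{x_0\}$, i.e. over one full period of $l$. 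Finally, because $D_C$ is $2$-to-$1$ branched at $p$, the two preimages of a single direction differ by a $2\pi$-angle, so each component of $C\setminus(\tau_1\cup\tau_2)$ makes the angle $2\pi$ at $p$; I would prove that on the component $R$ whose lift $\tilde R\subset\widehat C$ is bounded by $\tilde\tau_1,\tilde\tau_2$, the restriction $D|_{\tilde R}$ is an isometry onto $\mathbb H^2$ minus the ray $\hat\tau$, whence $(\tau_1,\tau_2)$ is a half-bubble in the direction of $l$ with endpoints $(u,u/\lambda)$.

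The hard part will be this last injectivity statement: showing that $D$ embeds $\tilde R$ onto the slit plane $\mathbb H^2\setminus\hat\tau$ rather than wrapping or overlapping. I expect to argue that $D|_{\tilde R}$ is a local isometry from a complete, simply connected $\mathrm{CAT}(-1)$ region (via Corollary~\ref{cor:cat-1}) whose boundary develops to the two sides of the slit and whose ideal boundary develops homeomorphically onto $\mathbb{RP}^1\setminus\{x_0\}$; properness of $\tau$ should then upgrade it to a covering of the simply connected $\mathbb H^2\setminus\hat\tau$, forcing it to be an isometry. The delicate point is controlling that the interior of $\tilde R$ meets $D^{-1}(\hat\tau)$ only along its boundary twins, equivalently that the developed ray is never re-crossed; this is exactly where the proper embeddedness of $\tau$ and the disjointness of lifts furnished by Lemmas~\ref{l:disjointsemi-planes} and~\ref{l:peripheralannulus} enter.
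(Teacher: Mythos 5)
Your proposal follows the paper's proof essentially verbatim: the same geodesic from $q=D_C(p)$ to $D(\widehat u)\in\mathbb R^{>0}$ is projected to the funnel $E\f$, lifted to twin rays through the two sectors covering $E$, shown to be embedded and to have distinct limits via Lemma~\ref{l:embedded_twins}, and the endpoints are identified as $(u,u/\lambda)$ using the index-$1$ hypothesis and the fundamental domain for $\alpha$ exactly as in the paper. The only step you gloss over --- and which the paper does address, deducing it from the index hypothesis and Corollary~\ref{cor:eulerannulus} --- is that the $3\pi$ exterior angle of $\gamma$ at $p$ lies on the side of the end $E_l$, which is what guarantees that both twin rays actually enter and stay in $E_l$ and hence terminate on $l$ rather than on some other boundary component of $C$.
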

\setlength{\unitlength}{1ex}
\begin{figure}[htbp]
  \centering
  \begin{picture}(66,14)

    \qbezier(0,7)(0,14)(7,14)
    \qbezier(0,7)(0,0)(7,0)
    \qbezier(14,7)(14,14)(7,14)
    \qbezier(14,7)(14,0)(7,0)
    \put(2,12){\makebox(-1,0){$\bullet$}}
    \put(1,13){\makebox(-2,1){$u$}}
    \put(12,12){\makebox(1,0){$\bullet$}}
    \put(13,13){\makebox(2,1){$u/\lambda$}}
    \put(2,2){\makebox(-1,0){$\bullet$}}
    \put(1,1){\makebox(-1,-1){$\infty$}}
    \put(12,2){\makebox(1,0){$\bullet$}}
    \put(13,1){\makebox(1,-1){$0$}}


    \put(20,2){\vector(1,0){46}}
    \multiput(25,2)(5,0){8}{\makebox(0,0){$\bullet$}}
    \multiput(25,0)(20,0){1}{\makebox(-2,-1){\footnotesize $\alpha^{-1}(\widehat{u})$}}
          \put(45,0){\makebox(-1,-1){\footnotesize$\widehat{u}$}}
    \multiput(30,0)(20,0){1}{\makebox(0,-1){\footnotesize$\widehat\infty$}}
          \put(50,0){\makebox(0,-1){\footnotesize$\alpha(\widehat\infty)$}}
    \multiput(35,0)(20,0){1}{\makebox(0,-1){\footnotesize$\widehat 0$}}
          \put(55,0){\makebox(0,-1){\footnotesize$\alpha(\widehat 0)$}}
    \multiput(40,0)(20,0){1}{\makebox(0,-1){\footnotesize$\widehat u/\lambda$}}
          \put(60,0){\makebox(1,-1){\footnotesize$\alpha(\widehat u/\lambda)$}}
    \put(25.5,3){\line(2,3){5}}
    \put(39,3){\line(-2,3){5}}
    \put(32,12){\makebox(0,0){Half-bubble}}
    \put(40,2){\line(3,2){15}}
    \put(45,2){\line(1,1){10}}
    \put(55,14){\makebox(14,1){\parbox{25ex}{Fundamental domain for the
        multiplication by $\lambda$}}}
  \end{picture}
  \caption{The curve $l$ oriented counterclockwise, and its lift
    $\widehat l$}
  \label{fig:lu1u2}
\end{figure}
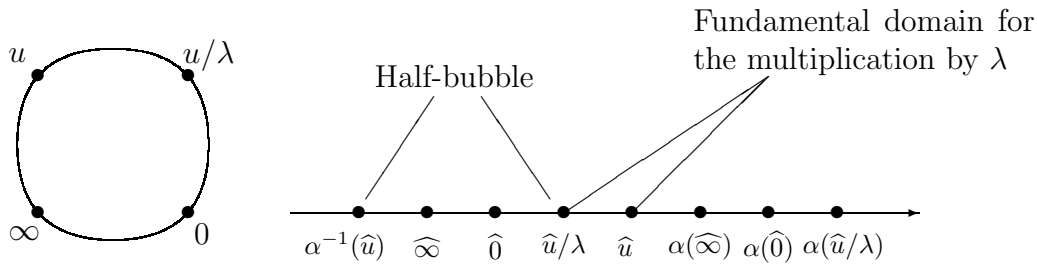

\begin{proof}
By Lemma~\ref{l:twice} $\gamma$ forms at $p$ angles $\pi$ and $3\pi$.
Since $l$ has
index $1$ it follows that the angle $G$ on the side of the end is the
one of $3\pi$. Otherwise we would be able to move the singularity out
of the end, which is in contradiction with the index hypothesis and
corollary \ref{cor:eulerannulus}. Hence, the restriction of $D$ to $G$
covers twice the
exterior angle of $\gamma_{fuchs}$ in $C_{fuchs}$ (which is the sector $E$ of
Figure~\ref{f:BEF}).

Let $\widehat p$ be a lift of $p$ in $\widehat C$.
In the upper half-plane model of $\mathbb H^2$ that we are using,
the point $D(\widehat p)$ belongs to the positive imaginary
axis. Since $l$ has index $1$,
the segment from $0$ to $D(\widehat p)$ has two pre-images joining
$\widehat p$ to two consecutive elements of $D^{-1}(0)$ that bound a
segment $J$ in $\widehat l$ which is a fundamental domain for the
action of $\alpha$ (see Figure~\ref{fig:lu1u2}).

Let $u$ be a point of $l^+$ and $\widehat u$ be its lifts in
$J$. The point $D(\widehat u)$ belongs to the positive real line by
definition of $l^+$. Let $\widetilde\tau$
be a geodesic in $\mathbb H^2$
from $D(\widehat p)$ to $D(\widehat u)$, and let $\tau$ be its projection
to $E\f\subset\lambda\backslash\mathbb H^2$.

By construction, $\tau$ is a geodesic in $E\f$ starting from
$q=D_C(p)$. As the angle of $\gamma$ at $p$ on the side of the end $E_l$ is
$3\pi$, $\tau$ lifts to two twin geodesic paths $(\tau_1,\tau_2)$ in
$E_l$, with the convention that the angle $2\pi$ is the one from
$\tau_1$ to $\tau_2$ in the positive direction
given by the orientation of $S$.

By Lemma~\ref{l:embedded_twins} (applied with $U=\overline{E_l}$,
$\Sigma=\overline{E\f}\subset \lambda\backslash \mathbb H^2$,
and $f=D_C$) the pair $(\tau_1, \tau_2)$ is embedded in
$E_l$, with distinct limits at infinity $u_1,u_2$.
The pair $(\tau_1,\tau_2)$ lifts to a pair of twin
geodesic $(\widehat\tau_1,\widehat\tau_2)$ starting from $\widehat
p$.
The arc
$\tau_1\star\tau_2$ cuts $E_l$ --- which is a topological annulus ---
 in two parts, one of which is a disc.
Since the angle from $\tau_1$ and $\tau_2$ is $2\pi$ and it is contained in
$E_l$, the disc is the part going from
$\tau_1$ to $\tau_2$ in the positive sense.

The disc between $\tau_1$ and $\tau_2$ lifts to a disc in
$\widehat C$, bounded by $\widehat\tau_1\star\widehat\tau_2$ and a
segment of $\widehat l$.
Such a disc  is homeomorphically mapped to $\mathbb H^2$ minus $\widetilde
\tau$ by construction. Therefore, $(\tau_1,\tau_2)$ is a half-bubble.

If we show that $(u_1,u_2)=(u,u/\lambda)$ we are done.
 By construction we have
$D(\widehat\tau_1)=D(\widehat\tau_2)=\widetilde\tau$. It follows that
the end point $\widehat u_1$ of $\widehat\tau_1$ in $\widehat l$
belongs to $D^{-1}(D(\widehat u))$, and the same for $\widehat u_2$.
Since the angle between
$\tau_1$ and $\tau_2$ is $2\pi$ in the positive sense, it follows that
$\widehat u_1\in J$ and $\widehat u_2\in\alpha(J)$. In particular,
$\widehat u_1=\widehat u$, hence $u_1=u$ as we needed.
Also, the point $\widehat u_2$ is the successor
of $\widehat u_1$ w.r.t. the ordering of $D^{-1}(D(\widehat u))$
induced by $\widehat l$.
From the fact that $l$ has
index $1$ we get that when restricted to $[\widehat
u,\alpha(\widehat u)]$ (which is a fundamental domain for the
action of $\alpha$ on $\widehat l$) the map $D$ starts from $D(\widehat
u)$, makes a whole turn around $\mathbb{RP}^1$ and finally
covers the fundamental domain $[D(\widehat u),\lambda D(\widehat
u)]$. (See Figure~\ref{fig:lu1u2}.) It follows that, according to our
notation for the multiplication by $\lambda$ on $\widehat l$, $\widehat
u_2=\lambda^{-1}\alpha(\widehat u_1)$. So $u_2=u_1/\lambda=u/\lambda$.
 This ends the proof of
Proposition~\ref{p:existenceofhalf-bubble}.
\end{proof}

\begin{lemma}\label{l:transformtocase1}
By moving $p$ in $C$ we can reduce to the case
that $\gamma$ passes through $p$
precisely once, and $D_C(p)\in\gamma\f$.
\end{lemma}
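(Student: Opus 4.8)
The plan is to build on the normalization already performed earlier in this section: by cutting and pasting along the twin geodesics lifting a geodesic segment from $q=D_C(p)$ to $\gamma\f$, we may assume from the outset that $D_C(p)\in\gamma\f$. Under this hypothesis Lemma~\ref{l:twice} tells us that $\gamma$ passes through $p$ either exactly once---in which case there is nothing left to prove---or exactly twice, and that in the latter case both exterior angles of $\gamma$ at $p$ lie in the \emph{open} interval $(\pi,3\pi)$. So assume we are in the twice case and write $\gamma=\gamma_1\star\gamma_2$ as a concatenation of its two edges at $p$, with exterior angles $G_1,G_2\in(\pi,3\pi)$; by the corollary following Lemma~\ref{endisjoint} these two sectors are disjoint, so $G_1+G_2\le 4\pi$, and the two interior angles fill the complementary $4\pi-G_1-G_2$.

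The key point I want to exploit is that this two-edge configuration is never rigid: the obstruction to reducing the number of passes is precisely that both exterior angles stay strictly inside $(\pi,3\pi)$, and moving $p$ lets us drive one of them to the boundary. Concretely, I would move $p$ along $\gamma\f$, that is, choose a short geodesic segment $\delta$ issuing from $q$ and contained in $\gamma\f$, lift it by Lemma~\ref{l:embedded_twins} (applied with $U=\overline{E_l}$, $\Sigma\subset C\f$ and $f=D_C$) to a pair of \emph{embedded} twin geodesics in $C$, and perform the associated cut-and-paste. Since $\delta\subset\gamma\f$, this move keeps $D_C(p)$ on $\gamma\f$, and it deforms the developed images of the two edges continuously; correspondingly the exterior angles $G_1,G_2$ vary continuously, one opening at the expense of the other. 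Lemma~\ref{wecanmove} guarantees that each such elementary move has admissible size and meets no other branch point, so the one-parameter family of structures it generates is well defined.

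I would then run this family until an exterior angle first reaches the value $\pi$ (equivalently, until the other reaches $3\pi$). At that threshold Lemma~\ref{l:twice} applies contrapositively: a \emph{two}-edge peripheral geodesic cannot form an exterior angle equal to $\pi$ or $3\pi$, since the argument in its proof would then exhibit the two edges as embedded loops both freely homotopic to $l$ and contradict length-minimality. Hence, as the angle degenerates, the minimizing peripheral geodesic of the new structure must reorganize into a single edge; it therefore passes through the (moved) branch point $p$ exactly once, forms angles $(\pi,3\pi)$, and still satisfies $D_C(p)\in\gamma\f$. This is exactly the normal form required by the statement, which then feeds into Proposition~\ref{p:existenceofhalf-bubble}.

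The step I expect to be the main obstacle is the control of the peripheral geodesic along the family. Since the peripheral geodesic is defined as the length minimizer in the free homotopy class of $l$ for the current branched hyperbolic metric, it can in principle jump as $p$ moves, so I must show that the number of passes through $p$ is upper semicontinuous in the parameter and that it genuinely drops at the threshold, rather than the minimizer migrating to a different configuration or a new short loop appearing through the moving cone point. Making this rigorous amounts to tracking, via $D_C$, how the two geodesic loops $D_C(\gamma_1)$ and $D_C(\gamma_2)$ based at $q$ deform as $q$ slides along $\gamma\f$, and checking that one interior angle opens to $\pi$ exactly when the conjugate exterior angle closes to $\pi$; this degeneration of the two-edge configuration into a single edge is the heart of the argument.
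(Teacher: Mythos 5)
Your reduction to the twice-through case (first normalize $D_C(p)\in\gamma\f$, then invoke Lemma~\ref{l:twice}) matches the paper's setup, and you have correctly located the heart of the matter in your last paragraph. But the mechanism you propose for resolving it does not work. You move $p$ so that $D_C(p)$ slides \emph{along} $\gamma\f$ and assert that the two exterior angles ``vary continuously, one opening at the expense of the other'' until one reaches $\pi$. Neither half of this is justified. The sum of the two exterior angles at $p$ is not conserved (it is only bounded by $4\pi$), so there is no trade-off forcing one angle down as the other goes up. Worse, the deformation you describe is \emph{periodic}: in the upper half-plane model where $\rho(\alpha)(z)=\lambda z$ and $\gamma\f$ lifts to the imaginary axis, the developed zig-zag has vertices $\lambda^n\widetilde q$ and $\lambda^n g(\widetilde q)$ for a fixed $g\in\rho(\pi_1(C))$, and sliding $\widetilde q$ from $iy_0$ to $i\lambda y_0$ carries the whole configuration to its image under $z\mapsto\lambda z$, i.e.\ back to the same structure. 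Hence the exterior angles are periodic functions of your parameter, and the condition ``angle $=\pi$'' (three consecutive developed vertices collinear) is a codimension-one condition with no topological reason to be attained within a period. Your family can simply oscillate forever inside $(\pi,3\pi)$, and the ``threshold'' you want to run to may never occur.

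The paper avoids this by choosing a move \emph{transverse} to $\gamma\f$ with built-in monotonicity. In the developed picture the zig-zag alternates between on-axis vertices $\lambda^n\widetilde q$ (exterior angle $\Omega\in(\pi,2\pi)$) and off-axis vertices $\lambda^n\widetilde r$ (exterior angle $\Theta=2\pi+\theta$ with $\theta<\pi$). One takes the pair of twin geodesics at $p$ developing, from the lift over $\widetilde r$, to a segment $\widetilde\tau$ that enters the small sector $\theta$ and runs all the way to the imaginary axis; this is legitimate by Lemma~\ref{l:embedded_twins} precisely because $\widetilde\tau$ can be extended without meeting $D(\widehat\gamma)$. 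Along this move $\widetilde r_t$ drifts toward the axis and $\widetilde q_t$ drifts away, so $\Theta_t$ increases and $\Omega_t$ decreases \emph{monotonically}; at the time $t_0$ where $\Omega_{t_0}=\pi$ the on-axis vertices detach from the geodesic (this is the degeneration you describe, and here your appeal to the mechanism of Lemma~\ref{l:twice} is essentially the right picture), after which $\gamma_t$ passes through $p_t$ once, and at $t=1$ the remaining vertex lands on the imaginary axis so $D_C(p_1)\in\gamma\f$. Note that during this move $D_C(p_t)$ leaves $\gamma\f$ and only returns at the end --- the opposite of your constraint. To repair your argument you would need to replace the slide along $\gamma\f$ by such a transverse move, or else supply a genuine reason (which I do not see) that the periodic family must cross the degenerate locus.
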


\begin{proof}
Suppose that $\gamma$ passes twice through $p$. Let $\widehat\gamma$ be
the lift of $\gamma$ to $\widehat C$ corresponding to $\widehat l$ (so
that $\alpha$ acts by translations on $\gamma$).
We look at the developed image $D (\widehat\gamma)$.
It draws a ``zig-zag'' line in $\mathbb H^2$.
More precisely, let $\widehat p$ be a lift of $p$ so that $\widetilde
q=D(\widehat p)$ belongs to the imaginary axis in our upper half-plane
model. Since $\gamma$
consists of two segments, $D (\widehat
\gamma)$ starts from a lift $\widetilde q$ in the imaginary axis, goes to
some point $\widetilde{r}$, comes back to $\lambda \widetilde q$
and then repeats the same path, multiplied by
$\lambda,\lambda^2,...$. (See Figure~\ref{fig:zigzag}, left side.)

\setlength{\unitlength}{1ex}
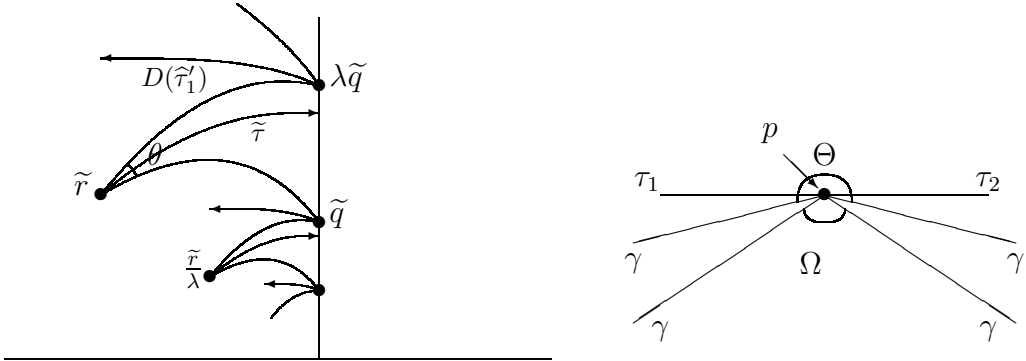
\begin{figure}[htbp]
  \centering
  \begin{picture}(76,25)
  \put(-10,0){
    \put(10,0){\line(1,0){40}}
    \put(33,0){\line(0,1){25}}

    \put(33,5){\makebox(0,0){$\bullet$}}
    \put(33,10){\makebox(0,0){$\bullet$}$\ \widetilde q$}
    \put(33,20){\makebox(0,0){$\bullet$}$\ \lambda\widetilde q$}

    \put(25,6){\makebox(0,0){$\bullet$}$\!\!\!\!\! \frac{\widetilde r}{\lambda}$}
    \put(17,12){\makebox(0,0){$\bullet$}$\!\!\!\!\! \widetilde r$}
    \qbezier(19,14.2)(19.4,13.9)(19.6,13.4)
    \put(21,15){\makebox(0,0){$\theta$}}

    \qbezier(33,5)(30,9)(25,6)
    \qbezier(25,6)(29,11)(33,10)

    \qbezier(33,10)(27,18)(17,12)
    \qbezier(17,12)(25,22)(33,20)

    \qbezier(29.5,3)(31,5)(33,5)
    \qbezier(33,20)(31,23)(27,26)

    \qbezier(25,6)(28.5,9)(32,9)
    \put(32,9){\vector(1,0){1}}

    \qbezier(17,12)(24,18)(31,18)
    \put(31,18){\vector(1,0){2}}

    \put(28,16){\footnotesize $\widetilde\tau$}
    \put(20,20){\footnotesize $D(\widehat\tau_1')$}
    \qbezier(33,5)(32,5.5)(30,5.5)
    \put(30,5.5){\vector(-1,0){1}}
    \qbezier(33,10)(31,11)(27,11)
    \put(27,11){\vector(-1,0){2}}
    \qbezier(33,20)(29,22)(21,22)
    \put(21,22){\vector(-1,0){4}}
  }

  \put(60,12){
    \put(0,0){\makebox(0,0){$\bullet$}}
    \put(0,0){\line(4,-1){14}}
    \put(0,0){\line(-4,-1){14}}
    \put(0,0){\line(3,-2){14}}
    \put(0,0){\line(-3,-2){14}}
    \put(-12,0){\line(1,0){24}}

    \qbezier(-2,-0.5)(-2,1.5)(0,1.5)
    \qbezier(2,-0.5)(2,1.5)(0,1.5)
    \put(0,3){\makebox(0,0){$\Theta$}}

    \qbezier(-1.5,-1)(-1.5,-2)(0,-2)
    \qbezier(1.5,-1)(1.5,-2)(0,-2)
    \put(-1,-5){\makebox(0,0){$\Omega$}}

    \put(-14,-5){\makebox(0,0){$\gamma$}}
    \put(-12,-10){\makebox(0,0){$\gamma$}}
    \put(14,-5){\makebox(0,0){$\gamma$}}
    \put(12,-10){\makebox(0,0){$\gamma$}}

    \put(-5,3){\makebox(2,3){$p$}}
    \put(-3,3){\vector(1,-1){2.5}}

    \put(-13,1){\makebox(0,0){$\tau_1$}}
    \put(12,1){\makebox(0,0){$\tau_2$}}
}
  \end{picture}
  \caption{The ``zig-zag'' line $D(\widehat \gamma)$ and the local
    picture near $p$}
  \label{fig:zigzag}
\end{figure}

If we look at $D_C(\gamma)$, we see that the two segments emanating
from $D_C(p)$ does not enter the end $E\f$, by the usual argument that
a geodesic cannot enter and exit $E\f$. Translated to the universal
covering, this says that the point $\widetilde{r}$ belongs to the
interior of the left
quadrant, i.e. its real part is negative.
It follows that the angles that we see
on the right-side of $D(\widehat\gamma)$ are,
alternatively bigger than $\pi$ (at the points $\widetilde q, \lambda
\widetilde q, \ldots$) and smaller than $\pi$ (at the points $\widetilde{r},
\lambda \widetilde{r}, \ldots$).

Let $\widehat s$ be the point of $\widehat{\gamma}$
 such that $D(\widehat{s}) = \widetilde{r}$.  Denote
by $\Theta$ the exterior angle
 seen on the right of $\widehat{\gamma}$ at
$\widehat s$, and set $\theta= \measuredangle \widetilde{q} \widetilde{r}
(\lambda\widetilde{q})<\pi$.
As $D$ is a double branched covering at $\widehat s$,
either $\Theta=\theta$ or $\Theta=2\pi+\theta$, and since $\Theta$
must exceed $\pi$ we get $\Theta=2\pi+\theta$.
Moreover,
since $\gamma$ passes
twice through $p$, locally at $p$ we see
the angular sector  $\Theta$, plus the sector $\Omega$ which
corresponds to the exterior angle at $\widetilde
q$. Figure~\ref{fig:zigzag}, right side, shows a local picture in
which the angles we see on the paper are half of those around
$p$ (so segments forming an angle $\pi$ represent twin
segments). Since $\Theta$ and $\Omega$ are disjoint by
Lemma~\ref{halftub}, there exist a pair of twin geodesic
segments $\tau_1$ and
$\tau_2$ starting from $p$ and pointing into
$\Theta$, and both lying in the complement of $\Omega$ (for instance
the paths obtained as preimages of the bisector of $\theta$).
Such segments are in $E_l$ because the angular sector $\Theta$ is that on the
side of $E_l$.

For $i=1,2$ let $\widehat\tau_i$ be the lift of $\tau_i$
starting at $\widehat s$ and let
$\widetilde\tau=D(\widehat\tau_i)$. We have that
$\widetilde\tau$ points in the angular sector $\theta$ because
$\tau_i$'s are in $\Theta$. Similarly, if $\widehat\tau_i'$
is the lift of $\tau_i$ that starts at $\widehat p$, then
$D(\widehat\tau_1')=D(\widehat\tau_2')$
points left as in Figure~\ref{fig:zigzag}, left side because
$\tau_i$'s are in the complement of $\Omega$.
Therefore we can  geodesically
extend $\widetilde\tau$ until the imaginary axis,
without intersecting $D(\widehat \gamma)$ nor $D(\widehat\tau'_i)$.

Since $\tau_1$ and $\tau_2$ both enter $E_l$, they never exit $E_l$.
By construction
$\lambda\widetilde\tau \cap\widetilde \tau=\emptyset$. So, it projects to
a segment $\tau$ in $\lambda\backslash \mathbb H^2$.
Now, we consider the map $D_E:E_l\to\lambda\backslash \mathbb H^2$
induced by the developing map.
Lemma~\ref{l:embedded_twins}, applied with $U$ the open end $E_l$,
$\Sigma=\lambda\backslash \mathbb H^2$, and $f=D_E$ tells us that in
fact $(\tau_1,\tau_2)$ are embedded twin paths.

We claim that after
the moving along such twin paths, we reduce to the case that $\gamma$
passes only once through $p$, and  $D_C(p)\in\gamma\f$.
For that, we have to focus on the ``zig-zag'' line. Since any vertex of the ``zig-zag'' is the
image of a lift of $p$, we see at any vertex
the image of $\widetilde \tau$ via a certain element of
$\rho(\pi_1(C))$. Namely, at $\lambda^n\widetilde r$ we see
$\lambda^n\widetilde\tau$ and at $\lambda^n\widetilde q$ we see
$\lambda^nD(\widehat\tau_1)$.

Now, we parameterize $\tau$ with times $t\in[0,1]$ and perform the cut
and paste procedure continuously on $t$. With the same notation we are
using, we
put an index $t$ at the bottom of every object to mean its evolution at
time $t$. Namely, $\gamma_t$ is the geodesic representative of
$l$, and $\widehat \gamma_t, \widetilde r_t, \widetilde
q_t$ are the corresponding of $\widehat \gamma,
\widetilde r, \widetilde q$ and so on.
By the uniqueness of the geodesic representative of $l$ we get that
$\gamma_t$ changes continuously on $t$. It follows that, as far as the
angles that $\gamma_t$ forms at $p_t$ are bigger than $\pi$,
the developed image of $\widehat \gamma_t$ is the ``zig-zag'' line trough
$\widetilde q_t,\widetilde r_t,\lambda\widetilde q_t,\lambda\widetilde
r_t...$, which are nothing but the point $\widetilde\tau(t)$ and its images.
Since $\widetilde r_t$ moves right and $\widetilde
q_t$ moves left, eventually $\widetilde q$ and its images detach
from $D(\widehat\gamma_t)$ (in fact one can easily check with some
elementary hyperbolic trigonometry, that $\Theta_t$ increases on $t$
while $\Omega_t$ decreases, and there is a time $t_0$ where $\Omega_t$
becomes $\pi$). From this time on, $\gamma_t$ passes
through $p_t$ only once, and for $t=1$ the developed image of $\widehat
\gamma_1$ is the imaginary axis, so $D_C(p_1)\in\gamma\f$.
\end{proof}

We remark that putting Proposition~\ref{p:existenceofhalf-bubble} and Lemma~\ref{l:transformtocase1} together, we can directly find a (non-geodesic) half-bubble in the case where the peripheral geodesic passes twice through the branch point. This is done by considering a continuation of the path $\tau$ defined in the proof of Lemma~\ref{l:transformtocase1} to a point lying in $\mathbb R^+$.

\proof[Proof of Theorem~\ref{t:debubbling}]
We consider a couple of the form
$(\lambda u , u)$ in $l^+$. Combining Lemma~\ref{l:transformtocase1}
and Proposition~\ref{p:existenceofhalf-bubble} we show
that after moving the branch points in $C^+$ and $C^-$ respectively,
one can find half-bubbles $(\eta_1 ^+, \eta_2^+)$ in $C^+$ and
$(\eta_1^- , \eta_2^-)$ in $C^-$ whose extremities are $(u_1^+, u_2^+
) = (\lambda u, u)$ and $(u_1^-, u_2^-) = (\lambda u, u )$. Since
geodesics in the components meet orthogonally the real curve
$S_{\mathbb R}$, the union of $(\eta_1 ^+, \eta_2^+)$  and
$(\eta_1^- , \eta_2^-)$ is a pair of smooth twin paths, and
the two regions between the
half-bubbles is in fact projectively equivalent to $\mathbb{CP}^1$
minus a closed segment, hence a bubble.\qed

\section{Degeneration dichotomy}\label{s:deg}

In this section, given a BPS with Fuchsian holonomy, we try to move all the branch points in a given  component of $S^{\pm}$ to a unique branch point of high branching order. By doing this, the surface may degenerate to a nodal curve. In such a situation, the degeneration allows us to find a bubble just before degenerating. If the structure does not degenerate, we succeed in our task.

\begin{prop}\label{l:degeneracion} Let $S$ be a closed surface
  equipped with a BPS with Fuchsian holonomy.
Let $C$ be a component of $S^{\pm}$,
with $R$ branch points of angle $4\pi$. Then, either we can
move branch points so that we find a bubble,
or we can move all the branch points of $C$ to a
single branch point of angle $2\pi (R+1)$.
\end{prop}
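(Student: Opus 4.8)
The plan is to try to fuse all the branch points of $C$ into a single one, performing the fusions one at a time, and to extract a bubble precisely from the first fusion that cannot be completed. Since moving and collapsing branch points preserve the total branching order, it is enough to establish the following alternative at each stage: either, after moving branch points, the structure contains a bubble, or two of the branch points of $C$ can be made to collide, strictly decreasing their number. Colliding a simple point (angle $4\pi$) with a point of angle $2\pi(m+1)$ produces one point of angle $2\pi(m+2)$, so if the second alternative always holds, then after $R-1$ collisions all the branching is concentrated at one point of angle $2\pi(R+1)$, which is the second horn of the statement. I work entirely inside the fixed component $C$: by Corollary~\ref{cor:cat-1} its universal cover is $\mathrm{CAT}(-1)$, any two of its points are joined by a unique geodesic, and the developing map induces a local isometry $D_C\colon C\to C\f$ onto $C\f=\rho(\pi_1(C))\backslash\mathbb H^\pm$ (if $C$ is compressible I simply replace $C\f$ by $\mathbb H^\pm$ and argue in the chosen lift $\widehat C$; incompressibility is never used).

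A single fusion step goes as follows. As long as $C$ has at least two branch points, at least one of them, say $p$, is simple; I choose a shortest geodesic $\delta$ in $C$ joining $p$ to the set of the remaining branch points, with other endpoint a branch point $p'$, so that by minimality the interior of $\delta$ contains no branch point. I then slide $p$ towards $p'$ along $\delta$. Since $p$ has angle $4\pi$, the two geodesic rays issued from $p$ in the developed direction of $\delta$ differ by an angle $2\pi$ and form a pair of twin geodesics; by Lemma~\ref{l:embedded_twins} this pair is embedded as long as its common developed image, an initial arc of the geodesic $D_C(\delta)\subset C\f$, stays properly embedded and misses the images of the branch points, while Lemma~\ref{wecanmove} guarantees that each elementary move can be carried out over a length at least the injectivity radius of $C\f$. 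Cutting and pasting along these twins transports $p$ forward along $\delta$ without changing its angle $4\pi$, and I continue this continuously in the sliding parameter.

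The outcome of the step is dictated by what first stops the slide. If $p$ can be pushed all the way to $p'$ --- i.e. $D_C(\delta)$ stays embedded and the trailing twin meets no branch point --- then $p$ collides with $p'$, the number of branch points drops by one, and I recurse; after at most $R-1$ such steps I reach the single branch point of angle $2\pi(R+1)$ (the second horn). Otherwise the slide is obstructed strictly before $p$ reaches $p'$: either the geodesic $D_C(\delta)$ is about to self-intersect, or the trailing twin runs into a branch point. In both situations the stopping data is a pair of embedded twin geodesics between branch points whose common developed image is an embedded geodesic arc and which, by the $2\pi$-angle condition together with Lemma~\ref{halftub} and Lemma~\ref{l:limit}, bound a disc mapping injectively onto a hyperbolic half-plane cut along a geodesic --- a half-bubble in the sense of Definition~\ref{def:half-bubble}. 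This is the concrete geometric shadow of the surface being on the verge of degenerating to a nodal curve, and, exactly as in the proofs of Theorem~\ref{t:graftingvsbubbling} and Theorem~\ref{t:debubbling}, it provides the bubble required by the first horn.

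The hard part is the analysis of this obstruction. I must prove that the \emph{first} failure of the hypotheses of Lemma~\ref{l:embedded_twins} along the slide is of exactly one of the two clean types above, and that the limiting twins remain embedded up to and including the stopping time; this is where the minimality of $\delta$, the disjointness of the exterior angles at a cone point (Lemma~\ref{halftub}), and the properness of geodesics exiting every compact set (Lemma~\ref{l:limit}) must be combined. I must also carry out the angle bookkeeping at each collision so that total branching order is manifestly preserved, and, in the obstructed case, verify that the half-bubble found inside $C$ genuinely completes to a bubble of $S$, gluing with the adjacent component as in Proposition~\ref{p:existenceofhalf-bubble}. The subtle and unavoidable feature is that embeddedness of the twins can only be certified through the developed picture in $C\f$, so the whole argument has to be run in terms of the geodesic $D_C(\delta)$ and its returns, never directly in $C$.
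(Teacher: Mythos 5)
Your high-level plan (fuse the branch points one at a time, and extract a bubble from the first fusion that cannot be completed) matches the paper's, but the mechanism you propose for the obstructed case is not the right one, and that is where the entire content of the proof lies. In the paper the first move is to gather \emph{all} branch points of $C$ into a ball $B(p_0,\varepsilon)$ with $\varepsilon$ less than the injectivity radius of $C\f$, so that everything develops into an embedded disc of $\mathbb H^2$ where geodesic segments between developed images are unique. In that setting the only possible obstruction to fusing the nearest branch point $p$ with $p_0$ is that there are \emph{two} distinct shortest paths $\sigma_0,\sigma_1$ from $p_0$ to $p$; these are automatically twins (both develop onto the unique segment joining $D_B(p_0)$ to $D_B(p)$), so the loop $\sigma_0\sigma_1^{-1}$ has trivial holonomy and, by faithfulness of $\rho$, bounds a disc $Q$ in $S$. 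The bubble is then produced by splitting the cone angle at $p_0$, cutting $S$ along $\sigma_0\cup\sigma_1$ to obtain a sphere $P$ carrying a branched covering of $\mathbb{CP}^1$ together with a complementary surface $\bar S$, invoking the Clebsch--Hurwitz connectivity of the space of simple branched coverings $\mathbb{CP}^1\to\mathbb{CP}^1$ to connect $P$ to a composition of bubblings $P_1$, and then applying Corollary~\ref{lemmaunico} to conclude that $S$ is connected by moving branch points to a cut-and-paste of $\bar S$ and $P_1$, which visibly contains a bubble.

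Your proposal replaces this with the claim that the stopping data of the slide is a half-bubble in the sense of Definition~\ref{def:half-bubble} which ``completes to a bubble by gluing with the adjacent component as in Proposition~\ref{p:existenceofhalf-bubble}.'' This cannot work: a half-bubble is by definition a pair of twin geodesics tending to two points of a boundary component $l$ of $C$ at infinity, and completing it requires a matching half-bubble in the component on the other side of $l$. The obstruction to fusing two branch points in the interior of $C$ has nothing to do with $\partial C$ or adjacent components; the twin paths you obtain join two branch points inside $C$, the region they bound is a disc of $S$ (a nodal degeneration), not a half-plane attached along $l$, and the passage from that disc to an actual bubble requires the Hurwitz-space connectivity argument, which is absent from your proof. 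You also defer exactly the step you would need (``I must prove that the first failure \dots is of exactly one of the two clean types above''), and without the preliminary gathering of all branch points into a small ball the developed geodesic $D_C(\delta)$ and its twin have no reason to admit such a clean classification of failures. So the proposal as written has a genuine gap in the first horn of the alternative.
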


\begin{proof} The proof goes by induction, joining one by one the
branch points, the inductive step being summarized as ``either we can
join one more branch point or we find a bubble''.

To begin with, we choose a branch point $p_0$ and a positive constant
$\varepsilon$ smaller than the injectivity radius of
$C\f$.

\begin{lemma}
  We can move branch points so that after the moving, for every branch point~$p$
  $$d(p,p_0)<\varepsilon.$$
\end{lemma}
\begin{proof}
Let $p$ be one of the branch points which is closest to $p_0$ among
those with $d(p,p_0)\geq\varepsilon$ (if any). Let $\sigma$ be a
geodesic joining $p$ to $p_0$, and of length $d(p,p_0)$.
If $d(p,p_0)=\varepsilon$ we just move a little $p$ so that
$d(p,p_0)<\varepsilon$. Otherwise, let
$\delta=d(p,p_0)-\varepsilon>0$. The
initial segment of $\sigma$ of length $\delta$ does not
contain any other branch point because of the criteria for choosing $p$.

By Lemma~\ref{wecanmove} we
can move $p$ along an initial segment of $\sigma$ of length at least
$\min\{\varepsilon,\delta\}$. A recursive argument proves
the claim.
\end{proof}

We denote by $D_B$ a
developing map from $B(p_0,\varepsilon)$
to a disc of radius $\varepsilon$ in $\mathbb H^2$. Up to little
movements, we can assume that
distinct branch points have distinct images and
distances from $p_0$. At the beginning of the our strategy all conical
angles are $4\pi$, but when we join a branch point to $p_0$ its angle
increases. So we consider the general situation where the angle at
$p_0$ is $2k\pi$.

Let $p$ be the branch point (other than $p_0$) closest to $p_0$
and  $\sigma$ be
a path realizing the distance from $p_0$ to $p$. The dichotomy is now
the following:
\begin{enumerate}
\item There are at least two shortest paths $\sigma_0$ and $\sigma_1$
from $p_0$ to $p$ (and in this case our claim is that up to moving
branch points one founds a bubble).
\item $\sigma$ is the unique path from $p_0$ to $p$ minimizing
the distance (and
we claim that in this case one can join $p$ to $p_0$).
\end{enumerate}

Suppose we are in the first case.
Paths $\sigma_0$ and $\sigma_1$ are both embedded otherwise there
would exist a short-cut from $p$ to $p_0$. For the same reason, if
$\sigma_0(t)=\sigma_1(s)$ for some $s,t>0$, then $s=t$.
If $\sigma_0(t)=\sigma_1(t)$, for some $t>0$ then
$\sigma_0(t)=\sigma_1(t)$ is not a branch point because $p$ is the
closest to $p_0$, and also in this case we easily find shortcuts, as
one of the angles between $\sigma_0$ and $\sigma_1$ is smaller than $\pi$.
 Therefore $\sigma_0$ and $\sigma_1$ do not cross each other.

The images $D_B(\sigma_0)$ and $D_B(\sigma_1)$ are therefore smooth
geodesics in a disc, thus they are segments. In particular, they both
are the only segment between $D_B(p)$ and
$D_B(p_0)$. Therefore $\sigma_0$ and $\sigma_1$ are twin.
It follows that the loop $\sigma_0\sigma_1^{-1}$ has trivial holonomy,
hence it is homotopically trivial in $S$ because the holonomy is
faithful. Thus it bounds a disc $Q$ in
$S$. Note that by moving $p$ along $\sigma_0\cup\sigma_1$ we would
disconnect $S$.

The angle that $p_0$ forms on the side of $Q$ is $2h\pi$ with $h<k$.
If $h>1$, we move a little $p_0$ using twin paths $\tau_0,\tau_1$
contained in the interior of $Q$
so that, if $\sigma_0,\tau_0,\tau_1,\sigma_1$ are in cyclic order in
$Q$, then the angles $\alpha_a=\angle(\sigma_0,\tau_0)$ and
$\alpha_1=\angle(\tau_1,\sigma_1)$
satisfy $\alpha_0+\alpha_1=2\pi$. This separates
$p_0$ in $2$ branch points. One of them, which we still denote $p_0$,
is the end-point of $\sigma_0$
and $\sigma_1$,  and has an angle $2\pi$ on the side of $Q$,
the other has a total angle $2(h-1)\pi$ and belongs to the interior
of $Q$. We repeat the same construction outside $Q$ so to obtain that
$p_0$ has a total angle $4\pi$, with $2\pi$ in $Q$, and $2\pi$ outside $Q$.

We cut $S$ along $\sigma_0\cup\sigma_1$ and glue
the boundary of the disc $Q$ along $\sigma_0\cup\sigma_1$ by gluing
$\sigma_1(t)$ to $\sigma_0(t)$.  The result is a sphere $P$ with a
branched projective structure, a marked segment $\sigma$ with marked
end-points $p_0$ and $p$, both being regular. Similarly, on the other
piece of $S$ we get a surface $\bar S$ with a market segment $\sigma$
with regular end-points $p_0$ and $p$.

Now, via movings, we split any branch point present in $Q$ in some number of
$4\pi$-points.
We obtained a branched covering
$\mathbb{CP}^1\to\mathbb{CP}^1$ with branch-order at most two at any
point. By moving a little branch points we can assure that they have distinct
images (hence they are ``simple'', with the terminology
of~\cite{Hu91}.) The space of simple branched coverings from
$\mathbb{CP}^1$ to itself is connected
(\cite{Hu91}, see for instance~\cite[Thm 1.54, p. 34]{HsM} for a
proof). A detailed account on the topology under consideration can be
found in the Appendix.
By Corollary~\ref{c:secondindextheorem}
we know that the number of branch points is
even. Thus we can easily construct a BPS $P_1$ which is branched covering
$\mathbb{CP}^1\to\mathbb{CP}^1$, with the same number of branch points
as $P$, by consecutive bubblings.

By Corollary~\ref{lemmaunico}, applied with $A=S$, $B=\bar S$, $C=P$ and
$D=P_1$, the structure on $S$ is connected by moving branch points to
a cut-and-paste of $\bar S$ and $P_1$ which clearly contains a bubble.

Suppose now that we are in case $(2)$. The path $\sigma$ does not contain
other branch points because $p$ is the closest to $p_0$. If the image
$D_B(\sigma)$ contains the image of some other branch point $q$, then we
move $q$ a little away by using twin paths outside the ball
$B(p_0,d(p_0,p))$. This implies that after the moving of $q$,
the unique shortest path from $p$ to $p_0$ is still $\sigma$.
The path $D_B(\sigma)$
is the straight segment in $\mathbb H^2$ from $D_B(p_0)$ and $D_B(p)$.
The segment $D_B(\sigma)$ has two lifts $\sigma$ and $\tau$ emanating
from $p$. Since $p$ is the branch point closest to $p_0$, and since we
are not in case $(1)$, the
end-point of $\tau$ other than $p$ is regular (and it is not $p_0$).
By applying Lemma~\ref{l:embedded_twins} to $U=B(p_0,\varepsilon)$, $\Sigma=\mathbb H^2$, and $f=D_B$ we deduce that$\sigma\cup\tau$ is
embedded.
We move $p$ by cut-and-pasting along $\sigma\cup\tau$.
The result is that $p_0$ and $p$ join together to
give a branch point of angle $2(k+1)\pi$
(keep in mind Figure~\ref{f:movsing}). Now the induction on the number of
branch points other that $p_0$ concludes the proof.

\end{proof}

We remark that we just proved that when we collapse some branch points
to a single point, then the limit structure is a nodal
curve (possibly with no nodal
point if the limit is non-degenerate)
consisting in a bps on $S$ with some branched coverings of
$\mathbb{CP}^1$ attached to the nodal points.

\section{Moving branch points to the positive part}
\label{s:moving_positive}

In this section we prove the following result.
\begin{theorem}\label{t:thmsec7}
  Let $S$ be a closed surface equipped with a BPS with Fuchsian
  holonomy. Then one can move branch points so that either a bubble
  appears or all branch points belong to $S^+$.
\end{theorem}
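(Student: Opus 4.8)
The plan is to run an induction on the total branching order $k^-$ of the negative part $S^-$, showing that at each stage one can either produce a bubble or strictly decrease $k^-$ by transporting one unit of branching across the real curve into $S^+$. If $k^-=0$ there is nothing to do, so assume $S^-$ carries a branch point and let $C^-$ be a negative component containing one.

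First I would normalize $C^-$. After splitting every branch point of $C^-$ into simple ($4\pi$) ones by moving branch points, I apply the degeneration dichotomy, Proposition~\ref{l:degeneracion}, to $C^-$: if it yields a bubble we are done, since then a bubble appears. Otherwise all branch points of $C^-$ are collapsed, by moving branch points, to a single interior branch point $p$ of angle $2\pi(R+1)$; in particular the boundary $\partial C^-$ becomes a smooth real curve free of branch points. Having a single branch point is what makes the next step controllable: any geodesic issued from $p$ meets no other branch point, so Lemma~\ref{wecanmove} can be applied freely along it.

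The core of the argument is to push $p$ through a boundary component $l$ of $C^-$. I would first drag $p$ towards a regular point $u\in l$ along a geodesic ray of $C^-$ issued from $p$ that exits every compact set; by Lemma~\ref{l:limit} such a ray limits to a point of $l$, which I take to be regular. Moving $p$ along it in steps of length at most the injectivity radius of $C\f$, each step a legitimate movement by Lemma~\ref{wecanmove}, brings the developed image of a lift $\tilde p$ arbitrarily close, in the spherical metric, to $\mathbb R=D(\tilde l)$, so that the branched chart at $p$ may be taken to be a disc $\Delta\subset\mathbb{CP}^1$ centred at $D(\tilde p)$ that crosses $\mathbb R$ transversally along a regular arc of $l$. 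Now choose an embedded arc $\eta\subset\Delta$ from $D(\tilde p)$ crossing $\mathbb R$ exactly once and ending at a regular point of $\mathbb H^+$; two consecutive local preimages of $\eta$ at $p$ form a pair of embedded twin paths $\gamma_0,\gamma_1$ (embedded and disjoint off $p$ because near $p$ the developing map is modelled on $z\mapsto z^{R+1}$ and $\eta$ avoids the branch value), each crossing $l$ into $S^+$. Cutting and pasting along $(\gamma_0,\gamma_1)$ transports one unit of branching out of $C^-$: the merged endpoint is a simple branch point $y\in S^+$, while the branch point left behind in $C^-$ has angle $2\pi R$. Thus $k^-$ strictly decreases.

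Iterating decreases $k^-$ at each round, each round either exhibiting a bubble or peeling off one unit into $S^+$, so after finitely many rounds either a bubble has appeared or $S^-$ is free of branch points, which is the claim. I expect the main obstacle to be the crossing construction of the third paragraph: one must be sure that, once $p$ has been brought close to a regular boundary point, the branched chart genuinely extends across $\mathbb R$ and that the two selected preimages of $\eta$ stay embedded and disjoint in $S$ itself --- not merely inside the local chart --- thereby excluding the wild and early-crossing pathologies recalled before Lemma~\ref{l:embedded_twins}. The reduction to a single branch point with smooth boundary, via Proposition~\ref{l:degeneracion}, is precisely what keeps the neighbourhood of $p$ simple enough for this to go through.
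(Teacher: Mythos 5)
Your overall scheme (collapse the branch points of a negative component via Proposition~\ref{l:degeneracion}, then transport branching across the real curve one unit at a time) matches the paper's strategy, but the step that actually carries a branch point across $S_{\mathbb R}$ --- your third paragraph --- is asserted rather than proved, and it is precisely the hard part of the theorem. You claim that after dragging $p$ toward a regular point of $l$, ``the branched chart at $p$ may be taken to be a disc $\Delta\subset\mathbb{CP}^1$ centred at $D(\tilde p)$ that crosses $\mathbb R$,'' and that two consecutive local preimages of an arc $\eta\subset\Delta$ give embedded, disjoint twin paths in $S$. Neither claim is justified. There is no a priori lower bound on the (spherical) size of a branched chart at $p$: as $p$ approaches $l$, the preimage of a disc $\Delta$ meeting $\overline{\mathbb H^+}$ must contain pieces of the adjacent positive component, which may itself contain branch points near $l$ or return in complicated ways, so the degree-$(R+1)$ branched covering onto a round disc crossing $\mathbb R$ need not exist at any controlled scale. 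Embeddedness ``inside the chart'' is therefore not enough; and the paper's only tool for certifying global embeddedness of twin paths, Lemma~\ref{l:embedded_twins}, requires the paths to be geodesics of a \emph{single} component pushing forward to a \emph{properly embedded} geodesic in a complete hyperbolic surface. Your paths cross the real curve, so that lemma does not apply; worse, in the key case where $S\setminus C^-$ is a union of discs, $\pi_1(C^-)\to\pi_1(S)$ is surjective, so $C^-\f$ is a \emph{closed} hyperbolic surface and no geodesic ray from $D(p)$ is properly embedded in it --- which is exactly why moving $p$ ``in steps of length at most the injectivity radius'' gives you no control over the final crossing configuration. This is the early-crossing/wildness pathology the paper explicitly warns against at the end of Section~\ref{s:definition_and_preliminaries}.

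The paper closes this gap with two genuinely different arguments that your proposal omits. When $S\setminus C^-$ is not a union of discs (Proposition~\ref{p:prop71}), $\rho(\pi_1(C_{fill}))$ is a free group, so $C^-\f$ has infinite area and admits a properly embedded semi-infinite geodesic from $D(p)$; its twin lifts are embedded and disjoint by Lemma~\ref{l:embedded_twins} and land at distinct regular points of $S_{\mathbb R}$, where they extend analytically into $S^+$. When the complement is all discs (Proposition~\ref{p:_every_positive_component_is_a_disc}), one collapses the branch points as you do, and then exploits that the peripheral geodesic of a homotopically trivial boundary component develops onto a closed polygon: taking its convex hull, the Gauss--Bonnet count (Lemma~\ref{l:sum_of_B}) shows the relevant angular domains sum to more than $2\pi$, and the pigeonhole principle produces two twin geodesic rays which Lemmas~\ref{l:geodesics_do_not_intersect} and~\ref{l:embedded_twins} show are disjoint with distinct endpoints on $l$. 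Some substitute for these constructions is needed; the local-chart crossing you propose does not supply one. (A smaller issue: a single pair of twin paths only peels off a $4\pi$ point, so ``dragging'' the whole $2\pi(R+1)$ cone point as in your second paragraph requires moving along all $R+1$ lifts simultaneously, which Lemma~\ref{wecanmove} as stated does not literally provide.)
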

\proof
The idea is to move one by one branch points
from the negative to the positive part. During the process bubbles
could possibly appear.

\begin{prop}\label{p:prop71} Let $\sigma$ be a
BPS with Fuchsian holonomy on a closed
surface $S$. If $p$ is a branch point contained
in a negative component whose complement is not
a union of discs, then $p$ can be moved to $S^+$.
\end{prop}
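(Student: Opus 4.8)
The plan is to push $p$ out of $C^-$ across the real curve into the adjacent positive part, by cutting and pasting along a pair of embedded twin geodesics that escape $C^-$ to infinity and then cross $S_{\mathbb R}$. Write $C=C^-$ and let $D_C:C\to C\f$ be the local isometry into $C\f=\rho(\pi_1(C))\backslash\mathbb H^-$ induced by the developing map. The hypothesis enters exactly here: I would first establish that the complement of $C$ is a union of discs if and only if $\rho(\pi_1(C))$ has finite index in the cocompact group $\rho(\pi_1(S))$, equivalently if and only if $C\f$ is a \emph{closed} surface. Hence, under our assumption, $\rho(\pi_1(C))$ has infinite index; being finitely generated, torsion-free and parabolic-free as a subgroup of the cocompact Fuchsian group $\rho(\pi_1(S))$, its quotient $C\f$ is a complete infinite-area hyperbolic surface, hence geometrically finite with at least one funnel (loxodromic) end and no cusp.

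With a funnel available I would produce the geodesic to move along. The directions at $q=D_C(p)$ whose geodesic rays aim into the domain of discontinuity of $\rho(\pi_1(C))$ on $\partial\mathbb H^-$ form a non-empty open cone, and every such ray escapes to a funnel and is properly embedded in $C\f$; choosing one that in addition avoids the finitely many images $D_C(p')$ of the remaining branch points $p'$ of $C$, I obtain a properly embedded geodesic ray $\tau\subset C\f$ issued from $q$. Since $p$ is a branch point, $\tau$ lifts through $p$ to two geodesic rays $\tau_1,\tau_2$ in $C$ with $D_C\circ\tau_1=D_C\circ\tau_2=\tau$, which form a germ of twin paths. Applying Lemma~\ref{l:embedded_twins} with $U=C$, $\Sigma=C\f$, $f=D_C$ and $T=+\infty$ (legitimately, as $C\f$ has no parabolic end) shows that $(\tau_1,\tau_2)$ is a genuinely embedded pair of twins and that $\tau_i(t)$ tends, as $t\to\infty$, to two \emph{distinct} points $u_1\neq u_2$ of the real curve $S_{\mathbb R}$.

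Since $C\subset S^-$ and $S_{\mathbb R}$ separates $S^+$ from $S^-$, both $u_1$ and $u_2$ lie on $\partial C\subset S_{\mathbb R}$, and by the genericity of $\tau$ they may be assumed to be regular points. By Lemma~\ref{l:limit} each $\tau_i$ continues analytically across $S_{\mathbb R}$ at $u_i$ into $S^+$; truncating the two rays just after they cross the real curve yields a pair of embedded twin paths starting at $p$ and ending at points $y_1,y_2\in S^+$ with a common developed image. Moving the branch point along this pair then transports $p$ to the point $y\in S^+$ obtained by identifying $y_1$ with $y_2$ (leaving, when $p$ is simple, two regular points behind). The marking, the holonomy and the diffeomorphism type of $S$ are preserved by construction of the move, so $p$ has indeed been relocated into the positive part.

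The step I expect to be delicate is the first one: the precise dictionary between the topological hypothesis and the geometric statement that $C\f$ is non-compact with a funnel. Proving that $\rho(\pi_1(C))$ has infinite index amounts to relating the inclusion $C\hookrightarrow S$ to the covering $C\f\to S\f$ and excluding that capping $\partial C$ reconstructs all of $S$; this is the only place where the hypothesis is genuinely used. In the excluded case, when the complement of $C$ is a union of discs, $C\f$ is closed, a generic geodesic ray is recurrent, no properly embedded $\tau$ exists, and the above construction breaks down --- and this is precisely the configuration that must instead be resolved by exhibiting a bubble.
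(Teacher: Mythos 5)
Your proposal is correct and follows essentially the same route as the paper: it uses the hypothesis to show that $\rho(\pi_1(C))\backslash\mathbb H^2$ is non-compact of infinite area (the paper does this by filling the disc components of $S\setminus C$ to get a proper incompressible subsurface $C_{fill}$ with free fundamental group, which is the same dictionary you flag as the delicate step), then picks a properly embedded geodesic ray from $D(p)$ avoiding the other branch-point images, lifts it to twins, and invokes Lemma~\ref{l:embedded_twins} to move $p$ across the real curve. Your detour through funnels and the domain of discontinuity is a slightly more elaborate way of producing the escaping ray, but the argument is the same.
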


\begin{proof} Let $C^-$ be a the component of $S^-$ containing $p$.
We claim that there are two embedded twin paths $\gamma_1, \gamma_2$
in $S$ starting at $p$, disjoint
appart from $p$, and which end in the positive part. Hence, by moving
$p$ along the $\gamma_i$'s, we get the desired result.

Let $C_{fill}$ be the union of $C^-$ and the components of $S\setminus
C^-$ which are homeomorphic to discs;
this is a proper subsurface of
$S$. As $C_{fill}$ is not the whole $S$, its fundamental group is a
free group. Elementary considerations show that the map $\pi_1 (C_{fill})
\rightarrow \pi_1 (S)$ induced by the inclusion $C_{fill} \rightarrow
S$ is injective. So the image of the inclusion map $\pi_1 (C_{fill})
\rightarrow \pi_1 (S)$ is a free subgroup of $\pi_1
(S)$. Consequently, the surface $C^-\f = \rho(\pi_1(C_{fill}))
\backslash \mathbb H^2$ is not compact and of infinite area.

The developing map induces a projective (hence a local isometry)
map $D : C^- \rightarrow C^-\f$.
Let $\gamma : [0,\infty) \rightarrow C^-\f$ be an injective
semi-infinite geodesic path starting at $D(p)$ that goes to infinity
in $C^-\f$. We can suppose that $\gamma$ does not contain any
point of the form $D(q)$ where $q$ is a branch point of $C^-$, other
than $D(p)$ -- to ensure that we can change $p$ to a branch point $q$
so that $D(q)$ is the last point of this form that $\gamma$
encounters.

The geodesic $\gamma$ can be lifted to twin paths $\gamma_i: [0,
\infty ) \rightarrow C^-$ starting at $p$, with $i=1,2$, such that
$D(\gamma_i) = \gamma$. By Lemma~\ref{l:embedded_twins}
the images of $\gamma_1$ and $\gamma_2$ are disjoint appart from $p$
and they have distinct end-points on $S_{\mathbb{R}}$. So we can move
$p$ by cut and pasting anlog such twins paths and bring $p$ to $S^+$.
\end{proof}

It remains to deal with the case that $S\setminus C^-$ is a union of
discs.  By Proposition~\ref{l:degeneracion} either we find a bubble or
we can join together all the branch points belonging to the same
components.

\begin{prop} \label{p:_every_positive_component_is_a_disc} Let $C$ be
a negative (resp. positive)
component with a single branch point $p$ of angle $2\pi
(R+1)$. Suppose that $\partial C$ has a component $l$ which
is homotopically trivial in $S$. Then there exist two embedded twin
paths starting at $p$ with extremities in the positive
(resp. negative) component having $l$ in its boundary.
\end{prop}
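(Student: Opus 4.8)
The plan is to exploit that $l$ is homotopically trivial to pin down the geometry near $l$, and then to manufacture the two twin paths as the two sheets of a geodesic ray aimed at $l$. Treat the case $C\subset S^-$; the positive case is identical after replacing $\mathbf D$ by $\overline{\mathbf D}$ and exchanging the roles of $\mathbb H^+$ and $\mathbb H^-$. Write $C^+$ for the positive component having $l$ in its boundary, so that crossing $l$ transversally out of $C$ lands in $C^+$. First I would record the consequences of the hypothesis: since $\rho$ is faithful and $[l]=1$ in $\pi_1(S)$, the holonomy $\rho([l])=\mathrm{id}$, so by Proposition~\ref{p:RP1structure} the (unbranched) $\mathbb{RP}^1$-structure on $l$ has index $I_l\geq 1$ and its developing map is a degree-$I_l$ covering $l\to\mathbb{RP}^1$; in particular \emph{every} point of $l$ is a regular point of the developing map, and the developed image of $l$ is all of $\mathbb{RP}^1$. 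Fixing a point $\xi\in\mathbb{RP}^1$ that is not the developed image of a branch point and that is not a fixed point of any nontrivial element of $\rho(\operatorname{Stab}(\widehat C))$ (a countable set to avoid), crossing $l$ transversally over $\xi$ passes from $\mathbb H^-$ through $\mathbb{RP}^1$ into $\mathbb H^+$, i.e.\ from $C$ into $C^+$.

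Next I would build the twins in the universal cover $\widetilde C$, where Corollary~\ref{cor:cat-1} guarantees unique geodesics and a circle at infinity, and where the developing map induces a local isometry $\mathbf D:\widetilde C\to\mathbb H^-$. Let $\widehat l\subset\partial\widetilde C$ be the ideal arc lifting $l$ on which $\mathbf D$ is $[l]$-invariant and covers $\mathbb{RP}^1$, pick a lift $\widetilde p$ of $p$, and choose the regular ideal point $\check x\in\widehat l$ with $\mathbf D(\check x)=\xi$. Let $\tau_1$ be the geodesic ray from $\widetilde p$ to $\check x$ (unique by Corollary~\ref{cor:cat-1}, complete by Lemma~\ref{l:completeness}); its developed image $r=\mathbf D\circ\tau_1$ is a geodesic ray in $\mathbb H^-$ landing at $\xi$, hence \emph{properly embedded}. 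Because the cone angle at $p$ equals $2\pi(R+1)\geq 4\pi$, the ray $r$ has at least two preimage directions at $\widetilde p$; let $\tau_2$ be the lift making the oriented angle $2\pi$ with $\tau_1$, so that $(\tau_1,\tau_2)$ are twins with $\mathbf D\circ\tau_1=\mathbf D\circ\tau_2=r$. Applying Lemma~\ref{l:embedded_twins} with $U=\widetilde C$, $\Sigma=\mathbb H^-$ (complete, no parabolic ends) and $f=\mathbf D$ yields that $(\tau_1,\tau_2)$ is embedded, disjoint apart from $p$, with distinct ideal limits $\widehat u_1\neq \widehat u_2$; by construction $\widehat u_1=\check x\in\widehat l$. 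Descending to $C$ is harmless because $\xi$ was chosen off the fixed-point set, so no two points of $\tau_1\cup\tau_2$ are identified by $\operatorname{Stab}(\widehat C)$, and the images remain embedded twin paths in $S$.

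The heart of the matter is to force the \emph{second} limit $\widehat u_2$ to lie on the \emph{same} component $l$, since a priori it could escape to another boundary component of $S_{\mathbb R}$. Here I would use the $2\pi$ angle: the region $\Omega$ between $\tau_1$ and $\tau_2$ on the $2\pi$ side contains no branch point in its interior (there is only one branch point, $p$, and it lies on $\partial\Omega$), so $\mathbf D|_\Omega$ is injective and maps $\Omega$ homeomorphically onto $\mathbb H^-$ slit along $r$ — exactly the half-bubble picture of Definition~\ref{def:half-bubble}. Consequently the ideal boundary of $\Omega$ is a single connected arc joining $\widehat u_1$ to $\widehat u_2$ that develops homeomorphically (once around) onto $\mathbb{RP}^1$. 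Among the boundary arcs of $\widetilde C$ that develop onto all of $\mathbb{RP}^1$ the only trivial-holonomy one through $\check x$ is $\widehat l$ (a loxodromic boundary would omit its two fixed points), and one full turn starting at $\check x$ reaches the next $\widehat l$-preimage of $\xi$ at finite distance, well before the ideal endpoints of the peripheral geodesic. Hence the arc stays inside $\widehat l$ and $\widehat u_2\in\widehat l$, so both limits project to points $u_1,u_2\in l$. Since these are regular, non-branch points of $l$, Lemma~\ref{l:limit} lets me continue each $\tau_i$ analytically across $l$; the continuations enter $C^+$ at the two distinct points $u_1\neq u_2$, remain embedded and disjoint there, and truncating them just inside $C^+$ produces the desired embedded twin paths from $p$ with extremities in $C^+$.

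The step I expect to be the genuine obstacle is precisely this identification of both limits with the prescribed component $l$: it is where the two standing hypotheses are indispensable, namely that $C$ carries a \emph{single} branch point (so that $\mathbf D$ is injective on $\Omega$ and the half-bubble rigidity applies) and that $l$ is \emph{homotopically trivial} (so that $\mathbf D(\widehat l)$ is a full, unbranched covering of $\mathbb{RP}^1$ rather than a loxodromic arc omitting fixed points). Everything else — existence and embeddedness of the twins, and the crossing into $C^+$ — is then a direct application of Lemmas~\ref{l:embedded_twins} and~\ref{l:limit}, and the negative-from-positive case follows by the symmetry already noted.
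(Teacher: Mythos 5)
Your setup (develop a geodesic ray from $p$ aimed at a preimage of $l$, lift it to twins differing by $2\pi$, invoke Lemma~\ref{l:embedded_twins}) is a reasonable start, and you correctly isolate the crux: forcing the \emph{second} endpoint onto the same component $l$. But the argument you give for that crux does not work. You claim that the region $\Omega$ between $\tau_1$ and $\tau_2$ on the $2\pi$ side ``contains no branch point in its interior (there is only one branch point, $p$)'', hence $\mathbf D|_\Omega$ is injective onto $\mathbb H^-$ slit along $r$. This conflates $C$ with $\widetilde C$: in the universal cover, where $\Omega$ and $\mathbf D$ live, the single branch point $p$ has infinitely many lifts (a whole $\pi_1(C)$-orbit), and nothing prevents $\Omega$ from containing many of them in its interior. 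Even if $\Omega$ contained none, a local homeomorphism on a simply connected domain need not be injective; injectivity onto the slit half-plane is exactly the \emph{half-bubble} condition of Definition~\ref{def:half-bubble}, which the paper establishes only under much stronger hypotheses (index $1$, a single branch point of angle $4\pi$; see Proposition~\ref{p:existenceofhalf-bubble}), certainly not for an arbitrary pair of twin rays at a point of angle $2\pi(R+1)$. Since the identification of $\widehat u_2$ with a point of $\widehat l$ rests entirely on this injectivity, the proof has a genuine gap at its central step. Two secondary problems: the CAT$(-1)$ geodesic from $\widetilde p$ to the ideal point $\check x$ may pass through other lifts of $p$ and bend there, so its developed image need not be a geodesic of $\mathbb H^-$ (so $r$ is not properly embedded as claimed and Lemma~\ref{l:embedded_twins} does not apply as stated); and embeddedness upstairs does not descend to $C$ merely because $\xi$ avoids fixed points --- you would also have to exclude transversal intersections $\rho(g)r\cap r\neq\emptyset$ for $g\neq 1$, which your genericity choice does not address.

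The paper's proof avoids all of this by working inside the end $E$ of $l$, where $\rho([l])=\mathrm{id}$ makes the developing map single-valued on $E$, and by \emph{choosing the directions of the twins first}: it forms the convex hull $\mathcal C$ of the developed peripheral geodesic, shows via Gauss--Bonnet that the angular domains $B_v$ at the vertices of $\mathcal C$ sum to $2\pi+\mathrm{Area}(\mathcal C)>2\pi$, and applies the pigeonhole principle inside the cone of total angle $2\pi(R+1)$ at $p$ to produce two twin directions lying in two distinct exterior angles of the peripheral geodesic. Rays launched in those directions never re-enter $\mathcal C$ (Lemma~\ref{l:geodesics_do_not_intersect}), hence stay in $E$ and are forced to converge to two distinct points of $l$, after which they extend across $l$ into the adjacent component. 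That convexity/pigeonhole step is the missing idea; without it, or a correct substitute, there is no control on where your second twin ends.
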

\proof
We begin by some easy preliminaries on convex subsets of $\mathbb
H^2$. Suppose that $\mathcal C$ is a compact convex set in the
upper-half plane, with piecewise geodesic boundary. A vertex of
$\mathcal C$ is a point $v$ of $\partial \mathcal C$ such that the
boundary makes an exterior angle bigger than $\pi$ at $v$. We denote
such an exterior angular domain by
$[\theta, \theta']\subset \mathbb R / 2\pi \mathbb Z$,
and we introduce the angular
domain $B = [\theta + \frac{\pi}{2}, \theta' - \frac{\pi}{2} ]$ at
$v$.

\begin{lemma} \label{l:geodesics_do_not_intersect} Let $\gamma_1$ and
$\gamma_2$ be distinct semi-infinite geodesic rays starting at some
vertices $v_1$ and $v_2$ of $\mathcal C$ in the angular domains $B_1$
and $B_2$. Then $\gamma_1$ and $\gamma_2$ are disjoint, and their
limit at infinity are different. Moreover, each $\gamma_i$ does not
intersect $\mathcal C$ appart at $v_i$.
\end{lemma}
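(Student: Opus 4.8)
The plan is to deduce all three assertions from a single geometric fact: for a direction $\phi_i$ in the angular domain $B_i$, the \emph{entire} ray $\gamma_i$ is sent to the vertex $v_i$ by the nearest-point projection onto $\mathcal C$. First I would recall the relevant background on convex projection. Since $\mathbb H^2$ is $\mathrm{CAT}(-1)$, hence $\mathrm{CAT}(0)$, and $\mathcal C$ is closed and convex, the nearest-point projection $P=\pi_{\mathcal C}:\mathbb H^2\to\mathcal C$ is well defined and $1$-Lipschitz, and a point $y\in\mathcal C$ equals $P(x)$ if and only if $\angle_y(x,z)\ge \pi/2$ for every $z\in\mathcal C$ (the first-variation, or obtuse-angle, criterion for convex sets in $\mathrm{CAT}(0)$ spaces).

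The key step is to check that $B_i$ is precisely the outward normal cone of $\mathcal C$ at $v_i$, so that $P(\gamma_i(t))=v_i$ for all $t\ge 0$. Let the two edges of $\partial\mathcal C$ at $v_i$ point in directions $\theta_i$ and $\theta_i'$, with exterior angular domain $[\theta_i,\theta_i']$ of width $\theta_i'-\theta_i>\pi$. By convexity, the direction at $v_i$ of the geodesic joining $v_i$ to any $z\in\mathcal C$ lies in the interior tangent cone $[\theta_i',\theta_i+2\pi]$, an arc of length $2\pi-(\theta_i'-\theta_i)<\pi$. A short computation then shows that every $\phi_i\in B_i=[\theta_i+\tfrac{\pi}{2},\theta_i'-\tfrac{\pi}{2}]$ sits in the complementary arc at angular distance at least $\pi/2$ from both endpoints, hence from the whole interior cone; equivalently $\angle_{v_i}(\gamma_i(t),z)\ge\pi/2$ for every $z\in\mathcal C$ and every $t>0$. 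By the obtuse-angle criterion this yields $P(\gamma_i(t))=v_i$ for all $t$. This is exactly where the $\pi/2$-shrinking in the definition of $B_i$ is used essentially, and I expect the bookkeeping of these angles (together with correctly identifying $B_i$ as the normal cone) to be the main, though elementary, obstacle; the rest is formal.

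Granting this fact, the three conclusions follow quickly. If $\gamma_i(t_0)\in\mathcal C$ for some $t_0>0$, then $P(\gamma_i(t_0))=\gamma_i(t_0)\ne v_i$, contradicting $P(\gamma_i(t_0))=v_i$; hence $\gamma_i$ does not meet $\mathcal C$ apart from $v_i$. For disjointness, any common point $x\in\gamma_1\cap\gamma_2$ would satisfy $v_1=P(x)=v_2$; if $v_1\ne v_2$ this is impossible, so the rays are disjoint, while if $v_1=v_2=v$ two distinct geodesic rays issuing from $v$ can only meet at $v$. Finally, suppose $\gamma_1$ and $\gamma_2$ shared an endpoint $\xi$ at infinity. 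Then they are asymptotic, so one can choose $t_n\to\infty$ and $s_n$ with $d(\gamma_1(t_n),\gamma_2(s_n))\to 0$; since $P$ is $1$-Lipschitz, $d(v_1,v_2)=d\bigl(P\gamma_1(t_n),P\gamma_2(s_n)\bigr)\to 0$, forcing $v_1=v_2$. But two distinct geodesic rays from a common point have distinct limits at infinity, a contradiction. Hence the limits are different, which completes the proof.
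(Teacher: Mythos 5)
Your proof is correct, but it reaches the conclusion by a different route than the paper. Both arguments rest on the same key angle computation: a direction in $B_i$ makes an angle at least $\pi/2$ with every direction from $v_i$ into $\mathcal C$ (equivalently, $B_i$ is the set of directions at angular distance $\geq \pi/2$ from the tangent cone of $\mathcal C$ at $v_i$). The paper then closes the argument in one line of elementary hyperbolic geometry: if $v_1\neq v_2$ and $\gamma_1,\gamma_2$ met at a point $p$ (possibly at infinity), the triangle $v_1v_2p$ would have two angles $\geq\pi/2$ at $v_1$ and $v_2$ (since $[v_1,v_2]\subset\mathcal C$ by convexity), contradicting the fact that hyperbolic triangles, including ideal ones, have angle sum $<\pi$; the case $v_1=v_2$ and the non-return of $\gamma_i$ to $\mathcal C$ are dismissed as immediate from convexity. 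You instead package the angle fact as the statement that the nearest-point projection onto $\mathcal C$ sends each entire ray $\gamma_i$ to $v_i$, and then derive all three conclusions from $1$-Lipschitzness of the projection. What your approach buys is uniformity (one fact yields all three claims) and generality (it works verbatim in any CAT(0) setting, except that the "same endpoint at infinity" step uses that asymptotic rays in $\mathbb H^2$ become arbitrarily close, which is a negative-curvature/visibility fact rather than a CAT(0) one); what the paper's approach buys is brevity and self-containedness, needing only the angle-sum theorem rather than the obtuse-angle characterization of convex projections.
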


\begin{proof} By convexity of $\mathcal C$, it is clear that
$\gamma_1$ and $\gamma_2$ do not intersect $\mathcal C$ appart at
$v_1$ and $v_2$. If $v_1=v_2$, the first statement of the lemma is
clear. Suppose now that $v_1\neq v_2$. If $\gamma_1$ and $\gamma_2$
intersect in a point $p$ (even if this point is at infinity), then the
triangle $v_1 v_2 p$ has two angles not smaller than $\pi /2$, which
violates the fact that the sum of the angles of a triangle in the
hyperbolic plane is strictly less than $\pi$ if the points are not on
a same line.
\end{proof}

\begin{lemma} \label{l:sum_of_B} The sum of the angles of $B_v$ over
all vertex $v$ of $\mathcal C$ equals $2\pi + \mathrm{Area}(\mathcal
C)$.
\end{lemma}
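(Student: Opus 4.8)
The plan is to recognize the quantity $\sum_v |B_v|$ as the total exterior (turning) angle that appears in the Gauss--Bonnet formula for the region $\mathcal{C}$. First I would express the angular measure of each $B_v$ in terms of the interior angle $\alpha_v$ of $\mathcal{C}$ at the vertex $v$. Since $\mathcal{C}$ is convex, at each vertex the two incident geodesic sides meet with interior angle $\alpha_v<\pi$, so the exterior angular domain $[\theta,\theta']$ has measure $\theta'-\theta=2\pi-\alpha_v$; note that this is indeed $>\pi$, which is consistent with the very definition of a vertex. Consequently the shrunk domain $B_v=[\theta+\tfrac{\pi}{2},\theta'-\tfrac{\pi}{2}]$ is non-empty, and its angular measure is
\[
|B_v|=(\theta'-\theta)-\pi=(2\pi-\alpha_v)-\pi=\pi-\alpha_v .
\]

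The second step is to invoke the Gauss--Bonnet theorem for the compact region $\mathcal{C}$. Its boundary is piecewise geodesic, so the geodesic curvature vanishes along the sides and the only boundary contributions come from the vertices. Since $\mathcal{C}$ is a topological disc (so $\chi(\mathcal{C})=1$) lying in the hyperbolic plane (constant curvature $K\equiv -1$), Gauss--Bonnet reads
\[
\int_{\mathcal{C}} K\,dA + \sum_v (\pi-\alpha_v) = 2\pi\,\chi(\mathcal{C}),
\]
that is, $-\mathrm{Area}(\mathcal{C})+\sum_v(\pi-\alpha_v)=2\pi$. Combining this with the measure of $B_v$ computed above yields
\[
\sum_v |B_v| = \sum_v (\pi-\alpha_v) = 2\pi + \mathrm{Area}(\mathcal{C}),
\]
which is exactly the asserted identity.

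There is essentially no hard part here: the entire content is the elementary identity $|B_v|=\pi-\alpha_v$ together with the standard Gauss--Bonnet formula for hyperbolic polygons. The only point demanding a little care is the angle bookkeeping — one must check that the exterior angular domain has measure $2\pi-\alpha_v$ (and not $\alpha_v$), and that subtracting $\pi/2$ from each endpoint of $[\theta,\theta']$ recovers precisely the Gauss--Bonnet turning angle $\pi-\alpha_v$. Once this identification is in place, the conclusion is immediate.
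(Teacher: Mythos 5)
Your proof is correct and follows essentially the same route as the paper's: both reduce to the identity $|B_v|=\pi-\alpha_v$ and then apply the Gauss--Bonnet formula for a hyperbolic polygon. The only difference is cosmetic — the paper phrases Gauss--Bonnet as ``the sum of the interior angles equals $(k-2)\pi-\mathrm{Area}(\mathcal C)$'' while you keep the turning-angle form — and your extra care with the angle bookkeeping for $B_v$ is a welcome elaboration of a step the paper states without computation.
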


\begin{proof} By Gauss-Bonnet formula, the sum of the interior angles
of $\mathcal C$ is equal to $(k-2)\pi - \mathrm{Area}(\mathcal C)$,
where $k$ is the number of vertices of $\mathcal C$. Because the angle
of $B_v$ of a vertex $v$ equals $\pi$ minus the interior angle at $v$,
we deduce the formula.
\end{proof}

We are now able to finish the proof of
Proposition~\ref{p:_every_positive_component_is_a_disc}. We suppose
$C\subset S^-$. The case $C \subset S^+$ has the same proof with
roles of $S^+$ and $S^-$ switched.
Let $\gamma $ be the peripheral geodesic associated to
$l$, and let $E$ be the end corresponding to $\gamma$. Recall (see
Lemma~\ref{halftub})
that $E$ is
an open Annulus bounded by $\gamma$ which may be not embedded as it
may self-intersect at $p$. The universal covering $\tilde E$ of $E$ is
a half-plane delimited by a lift $\tilde \gamma$ of $\gamma$.
Since $l$ has trivial
holonomy, a developing map $D$ for the BPS of $S$ induces a well-defined
projective map from $E$ to $\mathbb H^2$ which is a local
isometry. Also, this map lifts to a map, still called $D$,
from $\tilde E$ to $\mathbb
H^2$ which extends to $\partial \tilde E$. The image of $\partial
\tilde E$ is a closed polygonal $D(\tilde\gamma)$ that agrees
with the developed image of a lift of $\gamma$. The
vertices of such a polygon are images of lifts of $p$.
In particular, since there are at least two vertices,
$\gamma$ is not embedded.

Let $A_1,\ldots, A_k$ be the exterior (i.e. contained in $E$)
angular domains of $\gamma$ at $p$. Each $A_i$ is mapped
to an angular domain of $\mathbb H^2$ at some vertex of
$D(\tilde\gamma)$ delimited by two half-geodesics.
Such geodesics cut $\mathbb H^2$ into two angular
domains, a large one $L_i$ and a small one $S_i$. Since the angle of
$A_i$ is at least $\pi$, the image of $A_i$ by $D$
is $L_i$.

Let $\mathcal C$ be the convex hull of $D(\tilde\gamma)$.
Suppose first that $\mathcal C$ has non-empty interior. For each
vertex $v$ of $\mathcal C$, denote by $B_v$ the domain constructed
just before Lemma~\ref{l:geodesics_do_not_intersect}. Each vertex $v$
of $\mathcal C$ is a point of the form $D(p_i)$ for some lift $p_i$ of
$p$. Moreover, by the preceeding observation, the image of
$A_i$ contains the angular domain $B_v$. Let us
consider pre-images $C_v\subset A_v$
so that $D(C_v) = B_v$. Hence $C_v$ is an angular domain contained
in the exterior angle $A_i$ of $\gamma$ at $p$.

By Lemma~\ref{l:sum_of_B}, the sum of the angles of the domains $B_v$
is larger than $2\pi$. Hence, because the angle $C_v$ is equal to the
angle $B_v$, their sum exceeds $2\pi$. Thus a developing map at $p$
must overlaps some of the $C_v$ because of
the pigeon hole principle. This shows that there exists two
geodesic rays $\gamma_1$ and $\gamma_2$ starting at $p$, which are
contained in the union of the domains $C_v$, and differing by a
multiple of $2\pi$. Hence $\gamma_1$ and $\gamma_2$ are twin.
Moreover, they cannot be contained in the same $C_v$
because twin geodesics make angle at least $2\pi$ and any $B_v$ is
strictly less than $\pi$.
Hence we can suppose that $\gamma_1$
starts in $A_1$ and $\gamma_2$ in $A_2$.

We claim that these half-geodesic rays stay in $E$,
do not intersect, and tend to different points of $l$ at
infinity. The images $D(\gamma_1)$ and $D(\gamma_2)$ are
two half-geodesic rays in $\mathbb H^2$ that do
not intersect by Lemma~\ref{l:geodesics_do_not_intersect}, even at
infinity, and never intersect $\mathcal C$ appart from their starting
point. Lemma~\ref{l:embedded_twins} concludes.

Hence, the paths $\gamma_1$ and
$\gamma_2$ can be analytically extended across $c$, ending in the
positive disc, and the proposition is proved in the case $\mathcal C$
has non empty interior.

In the case where $\mathcal C$ is of empty interior, it is a geodesic
segment with distinct endpoints $q_1$ and $q_2$. Sectors $B_1$ and $B_2$
measure exactly $\pi$. Therefore there exists sectors $B_1'$ and
$B_2'$ each strictly larger than $\pi$ so that the conclusion of
 Lemma~\ref{l:geodesics_do_not_intersect} holds, and the proof goes
 now as in the general case.
This ends the proof of
Proposition~\ref{p:_every_positive_component_is_a_disc}.
\qed

\medskip
An induction based on
Propositions~\ref{p:prop71}
and~\ref{p:_every_positive_component_is_a_disc} proves
Theorem~\ref{t:thmsec7}.\qed

\section{Debubbling BPS with all
  branch points in
the positive part} \label{s:fuchspositivesing}

In this section, we consider a BPS $\sigma$
with Fuchsian holonomy on a closed compact surface $S$, having
branch points {\bf only in the positive part},
and we show that such a structure can always be
debubbled provided it has some branch point.

\begin{theorem}\label{t:mainpositive}
  Let $S$ be a closed surface equipped with a BPS with Fuchsian
  holonomy. Suppose that all branch points of $S$ belong to
  $S^+$. Then, if there is at least one branch point, we can move
  branch points so that the resulting BPS is a bubbling of another BPS
  with the same holonomy.
\end{theorem}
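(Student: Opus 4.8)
The plan is to extract a negative disc from the index formula, to recognise it as exactly the lower half of a bubble, and to complete it to a bubble using the positive component glued along its boundary.

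First I would run the index count. Since every branch point lies in $S^+$ we have $k^-=0$, so the relation $2\chi(S^-)=k^+-k^-$ recorded after Corollary~\ref{c:secondindextheorem} reads $2\chi(S^-)=k>0$. Hence some component $D$ of $S^-$ has $\chi(D)>0$, i.e. is a topological disc. Carrying no branch point and being complete (Lemma~\ref{l:completeness}), $D$ is a complete simply connected surface of curvature $-1$, so it is isometric to $\mathbb H^2$ and its developing map is a diffeomorphism onto $\mathbb H^-$. Its boundary $l$ is a simple closed curve bounding $D$, hence of trivial holonomy; applying Theorem~\ref{t:first_index_theorem} to $D$, where $eu(\rho|_{\pi_1(D)})=0$ because this holonomy is trivial, forces the index of $l$ to be $1$. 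Thus $l$ develops onto $\mathbb{RP}^1$ and $D$ onto the whole $\mathbb H^-$: the disc $D$ is precisely the negative half of a bubble.

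Next I would study the positive component $C$ adjacent to $D$ along $l$. Since $l$ is null-homotopic, its geodesic representative $\gamma$ in $C$ is a non-constant contractible geodesic loop, which in a branched hyperbolic surface must run through a branch point (by Corollary~\ref{cor:cat-1} there is no null-homotopic smooth closed geodesic); in particular $C$ is branched. By Proposition~\ref{l:degeneracion} applied to $C$, either a bubble already appears---and we are done---or, after moving branch points, all branch points of $C$ merge into a single point $p$ of angle $2\pi(R+1)$, through which $\gamma$ then passes. As $\gamma$ is contractible, a lift develops to a closed geodesic polygon in $\mathbb H^+$ with vertices at images of lifts of $p$, whose exterior angles on the side of the end are $\geq\pi$ and pairwise disjoint (Lemma~\ref{halftub}). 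A convexity and Gauss--Bonnet estimate in the spirit of Lemmas~\ref{l:sum_of_B} and~\ref{l:geodesics_do_not_intersect} bounds the number of vertices by three, while the absence of geodesic monogons and bigons in $\mathbb H^2$ forces it to be at least three. Hence $\gamma$ is a geodesic triangle $T$, a bouquet of three circles: the ``triangle'' configuration of subsection~\ref{ss:_triangle}.

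Finally I would debubble. The end $E_l$ develops onto $\mathbb H^+\setminus\mathrm{int}(T)$, so the region $E_l\cup l\cup D$ develops onto $\mathbb{CP}^1\setminus\mathrm{int}(T)$, a topological disc bounded by $\gamma$ which is not yet a bubble precisely because $T$ has non-empty interior. Moving $p$ along embedded twin geodesics in $C$---embeddedness guaranteed by Lemma~\ref{l:embedded_twins}---I would flatten $T$ to a geodesic segment, exactly reversing the construction of subsection~\ref{ss:_triangle}; at that moment $\mathbb{CP}^1\setminus\mathrm{int}(T)$ becomes $\mathbb{CP}^1$ slit along an arc, that is, a genuine bubble whose two sides are the twins arising from $\gamma$, and debubbling it produces a BPS with two fewer branch points and the same holonomy. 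The hard part will be precisely this flattening: the three vertices of $T$ are the images of the single point $p$ under the holonomy and so cannot be displaced independently, and proving that $p$ can nonetheless be moved to make them collinear, while keeping the twins embedded and contained in the end, is the genuine obstacle---this is exactly where the case-by-case analysis of the triangle is required.
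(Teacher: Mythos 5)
Your overall architecture (index formula producing a negative disc $D$ with $\partial D$ of index $1$, then Proposition~\ref{l:degeneracion} to merge the branch points of the adjacent positive component into a single $p$, then a Gauss--Bonnet analysis of the peripheral geodesic) matches the paper's. But the case analysis of the peripheral geodesic is wrong in a way that removes the main mechanism of the proof. You claim that ``the absence of geodesic monogons and bigons in $\mathbb H^2$ forces'' the developed polygon to have at least three vertices, so that $\gamma$ is always a triangle. This is false: the developed image of $\gamma$ can be a \emph{degenerate} polygon, namely a geodesic segment traversed back and forth, with exactly two vertices and empty convex hull. Far from being excluded, this two-vertex case is the generic and principal case in the paper: when $\gamma$ passes through $p$ exactly twice it is a bouquet of two circles formed by two pairs of twin geodesics, and splitting $p$ along both pairs produces two simple branch points joined by twin segments bounding the disc $D$ --- i.e.\ a bubble directly. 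Your upper bound of three vertices is also not justified: the inequality $\sum m_i' + \frac{1}{\pi}\sum\mathrm{Area}(\mathcal C_i)\le 2$ permits a single peripheral geodesic with four passes through $p$ (or two geodesics with three passes each) when the convex hulls are degenerate, and the paper needs a separate topological argument (these configurations would force $\chi(S)=0$) to rule them out; see the proof of Lemma~\ref{l:triangle}.

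In the genuine triangle case ($n=1$, $k=2$, three passes) your proposed resolution --- moving $p$ inside $C$ to make the three vertices collinear --- is exactly the step you flag as ``the genuine obstacle,'' and you do not carry it out; as stated it is a gap, and it is doubtful it can be done within $C$, since the three vertices are images of the single point $p$ under the (trivial) holonomy of $l$ and the developed polygon bounds a region of positive area that cannot be collapsed by an isotopy of twin paths staying in the end. The paper does something different: by Proposition~\ref{p:_every_positive_component_is_a_disc} it moves $p$ along embedded twin geodesics \emph{across the real curve into the negative disc}, converting $D$ into a negative annulus with one branch point of angle $4\pi$ and leaving a positive part with one branch point of angle $4\pi$, and then invokes the separately proven debubbling criterion (Theorem~\ref{t:debubbling}) for adjacent components. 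Without either the two-vertex case or this transfer-and-debubble step, your argument does not close.
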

\proof
Fix a positive component $C$, let $k$ be the
number of branch points in $C$, and $n$ be the number of negative discs
$D_1,\ldots ,D_n$ adjacent to $C$. Let $l_i = \partial D_i$, with
the orientation induced by $C$.

\begin{lemma}\label{l:index01}
  The index of any $l_i$ is $1$, while the index of any other
  component of $\partial C$ is $0$.
\end{lemma}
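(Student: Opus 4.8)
The plan is to exploit the hypothesis that all branch points lie in $S^+$, so that every negative component is an \emph{unbranched} surface. I would first observe that each component $C'$ of $S^-$, endowed with the hyperbolic metric pulled back by the developing map, is a smooth complete hyperbolic surface: it has constant curvature $-1$ everywhere (there are no cone points), and completeness follows arguing exactly as in Lemma~\ref{l:completeness}. Hence its universal cover is isometric to $\mathbb{H}^2$, and the associated developing map is a local isometry $\widetilde{C'}\to\mathbb{H}^-$ out of a complete surface, thus a covering and, $\mathbb{H}^-$ being simply connected, an isometry. Therefore $C'\cong\mathbb{H}^-/\Gamma'$ with $\Gamma'=\rho(\pi_1(C'))$ a torsion-free purely loxodromic Fuchsian group (recall $\rho$ has no elliptic nor parabolic elements other than the identity).

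Next I would read off the index of a boundary component $l$ of such a $C'$ directly from this uniformization. Since $\rho$ is Fuchsian, $\alpha:=\rho(l)$ is either trivial or loxodromic. If $\alpha$ is loxodromic with fixed points $p_1,p_2\in\mathbb{RP}^1$, then $l$ is the ideal boundary of a genuine funnel, and under the developing isometry a lift $\tilde l$ maps injectively and $\langle\alpha\rangle$-equivariantly onto the open arc of $\mathbb{RP}^1$ between $p_1$ and $p_2$; in particular its image contains neither fixed point, so $D|_{\tilde l}^{-1}(p_i)=\emptyset$ and $I_l=0$. If instead $\alpha=\mathrm{id}$, then $\Gamma'$ must be trivial (on a genuine hyperbolic quotient an ideal boundary has trivial holonomy only for the full $\mathbb{H}^-$), so $C'\cong\mathbb{H}^-$ is a disc and $l$ develops homeomorphically onto all of $\mathbb{RP}^1$; as $\alpha$ is trivial the index equals the degree of this map, which is $1$.

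It then remains to assemble the two cases. Each boundary curve of $C$ is shared with exactly one negative component $C'$, which is unbranched, so the dichotomy above applies: $I_l=1$ exactly when $\rho(l)=\mathrm{id}$, equivalently when $C'$ is a disc, that is when $l$ is one of the curves $l_i=\partial D_i$; in every other case $\rho(l)$ is loxodromic, $C'$ is not a disc, and $I_l=0$. This is precisely the statement of the lemma. For the discs one may alternatively bypass the boundary discussion by applying the First Index Formula of Theorem~\ref{t:first_index_theorem} to $D_i$: since $\pi_1(D_i)$ is trivial we have $eu(\rho_{D_i})=0$, while $\chi(D_i)=1$ and $D_i$ carries no branch point, whence $0=1-I_{l_i}$ and $I_{l_i}=1$.

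I expect the crux to be the first step, namely that an unbranched negative component uniformizes as $\mathbb{H}^-/\Gamma'$, together with the precise description of the developed image of its boundary: that a loxodromic boundary develops onto the open arc bounded by the two fixed points (hence index $0$), while a trivial-holonomy boundary develops onto the entire circle and forces the component to be a disc (hence index $1$). Once this uniformization is available, the index computation reduces to the standard geometry of funnels and of $\mathbb{H}^-$ itself.
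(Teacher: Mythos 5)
Your argument is correct, and it agrees with the paper's proof for the curves $l_i$: the paper also observes that the developing map identifies the unbranched, complete, simply connected disc $D_i$ with $\mathbb H^-$, so that $l_i$ develops homeomorphically onto $\mathbb{RP}^1$ and has index $1$ (your alternative via the First Index Formula applied to $D_i$ works as well). For the remaining boundary components the two proofs genuinely diverge. The paper notes that a non-disc negative component $C^-$ adjacent to $C$ is an unbranched complete hyperbolic surface without cusps, hence incompressible with loxodromic boundary holonomy, and then simply applies Theorem~\ref{t:first_index_theorem} together with $eu(\rho_{C^-})=\chi(C^-)$ from Remark~\ref{rem:eulerfuchsian}; this gives $\sum_j I_{l_j}=2\chi(C^-)\le 0$, forcing every index to vanish (and, as a by-product, $\chi(C^-)=0$, i.e.\ $C^-$ is an annulus). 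You instead uniformize $C^-$ as $\mathbb H^-/\Gamma'$ and compute each index directly from the geometry of the funnel ends: the boundary circle develops onto the open interval of discontinuity bounded by the two fixed points of its loxodromic holonomy, so the fixed points have empty preimage. Both routes rest on the same initial observation that negative components carry genuine unbranched hyperbolic structures; yours avoids the Euler-class bookkeeping at the cost of having to justify carefully that the developed image of $\tilde l$ really is the open arc between the fixed points (equivalently, that the positive-index alternative of Proposition~\ref{p:RP1structure}, where the developed image is all of $\mathbb{RP}^1$, cannot occur), whereas the paper's route dispatches all the indices of $\partial C^-$ at once with a single Gauss--Bonnet-type identity.
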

\begin{proof}
The closed disc $\overline{D_i} = D_i \cup l_i$ does not contain
branch points, hence the developing map is a diffeomorphism from $D_i$
to $\mathbb H^-$, hence the index of the curve $l_i$ is $1$.

On the other hand, any negative component $C^-$ adjacent to $C$ has the
structure of a complete
hyperbolic surface with no cusps and,
if it is different from a disc,
its boundary has loxodromic holonomy. The claim follows
from the index formula \ref{t:first_index_theorem} applied to $C^-$.
\end{proof}

\begin{lemma} \label{l:negative_disc_component} $k = 2 n $.
\end{lemma}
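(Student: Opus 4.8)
The plan is to derive two expressions for the Euler number $eu(\rho_C)$ and compare them. The first comes from the First Index Formula applied directly to $C$; the second comes from computing the same Euler number on the incompressible surface obtained by capping off the negative discs, where the Fuchsian value $\chi$ is available through Remark~\ref{rem:eulerfuchsian}.

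First I would apply Theorem~\ref{t:first_index_theorem} to the positive component $C$. Since $C\subset S^+$ and $C$ contains $k$ branch points, writing $l_1,\dots,l_n$ for the disc boundaries and $l_{n+1},\dots,l_m$ for the remaining boundary components of $C$, Lemma~\ref{l:index01} gives $I_{l_i}=1$ for $i\le n$ and $I_{l_j}=0$ for $j>n$. Hence $\sum_j I_{l_j}=n$, and the index formula reads
$$eu(\rho_C)=\chi(C)+k-n.$$

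The second, and main, step is to show $eu(\rho_C)=\chi(C)+n$. The difficulty here is that $C$ is typically \emph{compressible} — each $l_i$ bounds the disc $D_i$ in $S$ — so Remark~\ref{rem:eulerfuchsian} cannot be applied to $C$ itself. To circumvent this I would pass to $C':=C\cup D_1\cup\dots\cup D_n$, obtained by capping off the negative discs. Its boundary is $l_{n+1},\dots,l_m$, each of index $0$ and hence with non-trivial (loxodromic) holonomy by Proposition~\ref{p:RP1structure}; faithfulness of $\rho$ then makes every boundary component homotopically non-trivial in $S$, so $C'$ is incompressible by the boundary criterion used in Lemma~\ref{l:incompressible}. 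Remark~\ref{rem:eulerfuchsian} now gives $eu(\rho|_{\pi_1(C')})=\chi(C')=\chi(C)+n$, the last equality because capping each disc raises $\chi$ by one.

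It remains to transfer this to $C$ via Lemma~\ref{l:Euler_decomposition}, applied to $F_\rho$ over $C'$ with the separating family $\{l_1,\dots,l_m\}$ and the fixed-point section $s$. The closures of the complementary components are $C,D_1,\dots,D_n$, so
$$eu(\rho|_{\pi_1(C')})=eu(\rho_C)+\sum_{i=1}^n eu\big(F_\rho|_{D_i},s|_{l_i}\big).$$
Since $l_i$ bounds the disc $D_i$ we have $\rho(l_i)=\mathrm{id}$, so the fixed-point section over $l_i$ is horizontal and extends flatly over $D_i$, whence $eu\big(F_\rho|_{D_i},s|_{l_i}\big)=0$ (equivalently, Theorem~\ref{t:first_index_theorem} on the negative disc $D_i$ gives $-eu(\rho_{D_i})=\chi(D_i)-I_{l_i}=1-1=0$). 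Therefore $eu(\rho_C)=eu(\rho|_{\pi_1(C')})=\chi(C)+n$. Comparing with the first equation yields $\chi(C)+n=\chi(C)+k-n$, that is $k=2n$.
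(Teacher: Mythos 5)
Your proposal is correct and follows essentially the same route as the paper: apply the First Index Formula to $C$ to get $eu(\rho_C)=\chi(C)+k-n$, cap off the negative discs to form the incompressible surface $C'$ with $eu(\rho_{C'})=\chi(C')=\chi(C)+n$, and observe that the discs contribute nothing to the Euler number so that $eu(\rho_{C'})=eu(\rho_C)$. Your justification of the last step via Lemma~\ref{l:Euler_decomposition} and the flat extension of the fixed-point section is just a spelled-out version of the paper's remark that the twisted bundle is trivial over the discs.
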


\begin{proof}  By Lemma~\ref{l:index01}, the index formula applied to $C$ gives
\[ eu(\rho_{C}) = \chi (C) + k - n .\] To compute $eu(\rho_{C} ) $,
introduce the subsurface $C' = C \cup \overline{D_1} \cup \ldots \cup
\overline{D_n}$, oriented with the orientation of $S$. By the
above considerations $C'$ is an incompressible subsurface of
$S$. Thus by remark \ref{rem:eulerfuchsian} $eu(\rho_{C'}) =
\chi (C')$. Since
the twisted bundle $C' \times _{\rho} \mathbb R \mathbb P ^1$ is
trivial on the discs $D_i$'s , $eu(\rho_{C'}) = eu (\rho_{C})$. On the other hand, $\chi (C') = \chi
(C) + n$. We deduce that $eu (\rho _{C} ) = \chi (C) + n$, and the
lemma follows.
\end{proof}

Let us suppose that $k$ is strictly positive.
By Proposition~\ref{l:degeneracion}, either we can
find a bubble in $\sigma$ after moving branch points, or we can join
all the branch points to a single branch point $p$ of angle $2\pi
(k+1)$. For every $i= 1,\ldots , n$, let $\gamma_i$ be the peripheral
geodesic in $C$ corresponding to $l_i $. Since
$l_i$ is homotopically trivial in $S$, the developed image of
$\gamma_i$ is a closed piecewise geodesic of $\mathbb H^2$,
that has at least two vertices since it is closed.
Thus $\gamma_i$ passes at least twice through $p$.
Denote by $m_i = m_i ' + 2$ the number of times $\gamma_i $
passes through $p$, with $m_i' \geq 0$, and by
$\alpha^i_1,\ldots, \alpha^i_{m_i}$ the exterior angles of $\gamma_i$ at
$p$.

\begin{lemma} The developed image of $\gamma_i$ is the oriented
boundary of a convex polygon $\mathcal C_i$ of $\mathbb H^2$
whose exterior angles are $\alpha^i_1,\ldots, \alpha^i_{m_i}$.
\end{lemma}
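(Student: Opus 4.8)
The developed image of the peripheral geodesic $\gamma_i$ is the oriented boundary of a convex polygon $\mathcal{C}_i$ in $\mathbb{H}^2$ whose exterior angles are $\alpha_1^i, \ldots, \alpha_{m_i}^i$.

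Let me think about what we're given and what tools are available.

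We have a BPS with Fuchsian holonomy, all branch points in $S^+$, and after Proposition 7.5 (l:degeneracion) we've joined all branch points in the component $C$ to a single point $p$ of angle $2\pi(k+1)$. The curve $l_i$ is a boundary component of $C$ that bounds a negative disc $D_i$ on the other side, so $l_i$ is homotopically trivial in $S$, meaning the holonomy of $l_i$ is trivial.

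The peripheral geodesic $\gamma_i$ corresponding to $l_i$ lives in $C$ (positive part), passes through $p$ exactly $m_i$ times, and has exterior angles $\alpha_1^i, \ldots, \alpha_{m_i}^i$ at $p$.

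Key facts I can invoke:
- Since $l_i$ is homotopically trivial, the developing map $D$ restricted to the end $E_i$ (the annulus corresponding to $l_i$) is a well-defined local isometry into $\mathbb{H}^2$ (holonomy trivial means no equivariance issue — this was used in Prop 8.3 / p:_every_positive_component_is_a_disc).
- Corollary 3.x (cor:cat-1): geodesics in $\tilde C$ are simple, two geodesics intersect transversally or in a connected segment.
- The peripheral geodesic is the geodesic representative of $l_i$ in $C$.
- The exterior angles at a branch point of angle $2\pi(n+1)$ are $\geq \pi$ and their sum is $\leq 2\pi(n+1)$ (the corollary after endisjoint).
- Gauss-Bonnet type arguments as in l:sum_of_B.

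**My proof strategy:**

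The plan is to lift $\gamma_i$ to the universal cover and develop. Since $\gamma_i$ is a closed geodesic that is null-homotopic in $S$ (because $l_i$ is), its developed image is a closed piecewise-geodesic curve in $\mathbb{H}^2$. I need to show this closed curve bounds a convex region, i.e., it's the boundary of a convex polygon.

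First, I would work in the end $E_i$. Recall $E_i$ is an open annulus (Lemma l:peripheralannulus), and its universal cover $\tilde{E}_i$ is a half-plane bounded by a lift $\tilde\gamma_i$ of $\gamma_i$. Since $l_i$ has trivial holonomy, the developing map descends to a well-defined map $D: E_i \to \mathbb{H}^2$ which is a local isometry, and it lifts to $\tilde{E}_i \to \mathbb{H}^2$ extending continuously to $\partial\tilde{E}_i = \tilde\gamma_i$. This is exactly the setup at the start of the proof of Prop 8.3.

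So the developed image $D(\tilde\gamma_i)$ is a biinfinite piecewise-geodesic curve in $\mathbb{H}^2$, and because the holonomy of $l_i$ is trivial, it is actually *periodic* — no wait, trivial holonomy means it's a *closed* curve: the deck transformation generating $\pi_1$ of the annulus acts trivially downstairs, so $D$ factors through $E_i$, and $D(\tilde\gamma_i)$ is the (multiply-traversed) image of a closed polygon.

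**The key step — convexity.** The geodesic segments (edges) of $D(\tilde\gamma_i)$ are the developed images of the smooth geodesic arcs of $\gamma_i$ between consecutive passages through $p$. The vertices are images of lifts of $p$. The exterior angle of $D(\tilde\gamma_i)$ at each vertex equals the exterior angle $\alpha_j^i$ of $\gamma_i$ at $p$ — this follows because $D$ is a local isometry away from $p$, and at $p$ the developing map locally unfolds the cone angle, sending each angular sector isometrically (the exterior angle seen on the appropriate side is preserved). Since $\gamma_i$ is a *peripheral geodesic*, its exterior angles on the side of the end $E_i$ are all $\geq \pi$. The exterior angle of the *polygon* $D(\tilde\gamma_i)$ at each vertex is $\pi$ minus the interior angle, so "exterior angle $\geq \pi$ of the geodesic" needs to translate correctly. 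I expect the angle $\alpha_j^i$ here, measured as the turning angle, to be the relevant exterior angle of the polygon, and convexity means each interior angle is $\leq \pi$, equivalently each turning is $\geq 0$ and they're all on the same side.

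The main obstacle will be showing the polygon is *embedded* (simple) and *convex*, not just closed and piecewise-geodesic with the right angles. For this I would argue as follows. Because $\gamma_i$ bounds the end $E_i$ on one side, and the developing map is a local isometry on $E_i$ mapping into $\mathbb{H}^2$, the image region $D(\tilde{E}_i)$ sits on one side of $D(\tilde\gamma_i)$. Since each exterior angle of $\gamma_i$ is $\geq \pi$ on the end side, the curve always "turns the same way," and combined with the fact that geodesics in $\mathbb{H}^2$ are simple and two geodesic rays emanating into the correct angular sectors don't re-cross (precisely the content of Lemma l:geodesics_do_not_intersect), the developed image must be the boundary of its convex hull. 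Concretely, the convex hull $\mathcal{C}_i$ of $D(\tilde\gamma_i)$ has the developed image as its boundary exactly when every vertex is a genuine vertex of the hull with interior angle $\leq \pi$; the condition that each $\alpha_j^i \geq \pi$ (exterior angle of the peripheral geodesic) forces precisely this.

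**The likely hard part.** The delicate point is ruling out that the polygon is non-convex or self-intersecting, which a priori a closed piecewise-geodesic with all-turning-one-way could still be (in the plane a curve turning always left can wind multiple times). The resolution must use that we are in $\mathbb{H}^2$ and that the total turning is constrained: a closed geodesic polygon in $\mathbb{H}^2$ with all exterior angles $\geq \pi$ and total area positive has, by Gauss–Bonnet (sum of interior angles $= (m-2)\pi - \mathrm{Area}$, as in Lemma l:sum_of_B), its angle sum strictly less than $(m-2)\pi$, so each interior angle being genuinely $< \pi$ forces the polygon to wind exactly once. I would make this precise by applying the Gauss-Bonnet argument to conclude the winding number / total turning is exactly $2\pi$, which for a curve turning consistently one way means it is the embedded convex boundary. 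I expect this to be the crux, and I would lean on Lemma l:geodesics_do_not_intersect together with the CAT$(-1)$ simplicity of geodesics (Corollary cor:cat-1) to exclude self-intersection, then invoke Gauss-Bonnet for convexity.
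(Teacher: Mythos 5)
Your setup is fine (trivial holonomy of $l_i$ makes the developed image a closed piecewise-geodesic curve, the developing map preserves the exterior angles, and each $\alpha^i_j\geq\pi$ because these are exterior angles of a peripheral geodesic), and you correctly identify the crux: ruling out that a consistently-turning closed geodesic polygon winds more than once. But your proposed resolution of that crux is circular. The Gauss--Bonnet identity you invoke, $\sum(\text{interior angles})=(m-2)\pi-\mathrm{Area}(\mathcal C_i)$, is the formula for an \emph{embedded} polygon traversed once; for a curve of winding number $n$ the total turning is $2\pi n$ plus area counted with multiplicity, so the formula cannot be used to conclude $n=1$ --- it presupposes it. A hyperbolic ``pentagram'' (five vertices, each joined to the second-next one) is a closed piecewise-geodesic curve turning consistently one way with all exterior angles in $(\pi,2\pi)$ and winding number $2$; nothing in your argument excludes the developed image of $\gamma_i$ from being of this type.

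The input you are missing is the index of $l_i$. The paper considers, for $y$ running along the developed image, the endpoint at infinity $x_\infty(y)\in\mathbb{RP}^1$ of the geodesic ray normal to the curve on its right side (sweeping out the sector $\alpha^i_j-\pi$ at each vertex). The degree of the resulting closed curve in $\mathbb{RP}^1$ equals the index of $l_i$, which is $1$ by Lemma~\ref{l:index01} since $l_i$ bounds an unbranched negative disc. Since every $\alpha^i_j\geq\pi$ forces $x_\infty$ to move monotonically (counterclockwise), degree $1$ pins the total turning down to exactly one revolution; this simultaneously yields $\pi\leq\alpha^i_j\leq 2\pi$ (a fact your proof also never establishes, though it is part of the claim that the $\alpha^i_j$ are the exterior angles of a convex polygon) and the convexity/embeddedness of the developed image. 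Without feeding in the index, the argument cannot close.
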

\begin{proof} Let $y$ be a generic point of the developed image of $\gamma_i$
in $\mathbb H^2$ (viewed as the upper hemisphere of $\mathbb{CP}^1$).
We consider the extremity at
infinity $x_{\infty}(y)\in \mathbb R \mathbb P^1$ of the
geodesic ray starting from $y$ in the direction normal to $\gamma_i$,
on the right-side.

We follow $x_\infty$ as $y$ varies in the developed image of $\gamma_i$.
When $y$ passes through a vertex (a developed image of $p$),
we stop $y$ and let the normal vector describe the angular domain
$\alpha_j^i-\pi$.
As $y$ sweeps the whole developed image of $\gamma_i$,
the point $x_{\infty}(t)$ describes a closed curve in
$\mathbb R \mathbb P^1$ whose degree is the index of
$l_i$. Hence, the degree of this map is $1$. Because all the angles
$\alpha_j^i$ are at least $\pi$, the map $x_{\infty}(y)$ turns always
counterclockwise, and this implies that $\pi \leq \alpha^i_j \leq 2\pi$ for
every $j=1,\ldots,m_i$. This
means that the developed image of $\gamma_i$ always turns
counterclockwise,
with angles $\alpha^i_j$ on its right. Because the turning number of the
developed image of $\gamma_i$ is $1$, this implies that it bounds a
convex domain of $\mathbb H^2$ on its left, ant the lemma is
proved.
\end{proof}

The following result shows that there is always at least a peripheral
geodesic which is not too complicated.
\begin{lemma} \label{l:triangle} There are two possibilities:
\begin{enumerate}
\item \label{enum:2} either there exists a peripheral
geodesic which passes through the point $p$ exactly twice,
\item \label{enum:3} or $n=1$ -- equivalently $k=2$ -- and
the peripheral geodesic $\gamma_1$ passes through $p$ exactly three times.
\end{enumerate}
\end{lemma}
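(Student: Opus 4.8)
The plan is to prove the lemma by an angle count at the point $p$, combining the hyperbolic Gauss--Bonnet formula for the convex polygons $\mathcal{C}_i$ with the disjointness of the exterior angles at $p$. Recall that we are in the situation where all branch points have been collapsed to a single point $p$ of cone angle $2\pi(k+1)$, and that $k=2n$ by Lemma~\ref{l:negative_disc_component}, so the total cone angle at $p$ is $2\pi(2n+1)$. By the preceding lemma the developed image of each peripheral geodesic $\gamma_i$ bounds a convex polygon $\mathcal{C}_i\subset\mathbb{H}^2$ with $m_i$ vertices, turning number one, and exterior angles $\alpha^i_1,\dots,\alpha^i_{m_i}\in[\pi,2\pi]$. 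Writing the interior angles as $\beta^i_j=2\pi-\alpha^i_j$, Gauss--Bonnet gives $\sum_j\beta^i_j=(m_i-2)\pi-\mathrm{Area}(\mathcal{C}_i)$, and hence
\[
\sum_{j=1}^{m_i}\alpha^i_j=(m_i+2)\pi+\mathrm{Area}(\mathcal{C}_i).
\]

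Next I would invoke the disjointness of the exterior angles of all peripheral geodesics through $p$ (the corollary following Lemma~\ref{endisjoint}): their total is bounded by the cone angle, so in particular $\sum_{i=1}^n\sum_{j=1}^{m_i}\alpha^i_j\le 2\pi(2n+1)$. Substituting the Gauss--Bonnet identity, writing $m_i=m_i'+2$ with $m_i'\ge 0$, the $4\pi n$ terms cancel and one is left with the clean estimate
\[
\pi\sum_{i=1}^n m_i'+\sum_{i=1}^n\mathrm{Area}(\mathcal{C}_i)\le 2\pi.
\]
Since the areas are nonnegative, this immediately forces $\sum_i m_i'\le 2$, which is the numerical heart of the lemma.

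The case analysis is then short. If some $m_i'=0$, that $\gamma_i$ passes through $p$ exactly twice and we are in case~\ref{enum:2}. Otherwise every $m_i'\ge 1$, so $n\le\sum_i m_i'\le 2$. When $n=1$ the estimate gives $m_1'\le 2$: the value $m_1'=1$, i.e. $m_1=3$, is precisely case~\ref{enum:3}, while $m_1'=2$ (so $m_1=4$) would force $\mathrm{Area}(\mathcal{C}_1)=0$. When $n=2$ we must have $m_1'=m_2'=1$, i.e. $m_1=m_2=3$, and the estimate then forces $\mathrm{Area}(\mathcal{C}_1)=\mathrm{Area}(\mathcal{C}_2)=0$. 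Thus, apart from the two cases claimed in the statement, every remaining configuration is one in which some peripheral geodesic passes through $p$ at least three times yet has a \emph{degenerate} (zero-area) developed polygon.

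The hard part, and the only step that needs real care, is therefore to exclude these degenerate configurations, i.e. to show that $m_i\ge 3$ forces $\mathrm{Area}(\mathcal{C}_i)>0$. I would argue this directly: a zero-area convex polygon of turning number one is a back-and-forth segment contained in a single geodesic $\ell$, and such a slit has only its two extreme endpoints as genuine corners. Any further passage of $\gamma_i$ through $p$ must then develop to an interior point of the slit, which is traversed twice — once outward and once back — giving two distinct lifts of $p$ with the same developed image. This contradicts the distinctness of the $m_i$ developed images of $p$ (equivalently, the injectivity of the developing map on the universal cover of the end), which is implicit in the preceding lemma's description of $\mathcal{C}_i$ as a polygon with $m_i$ vertices. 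Hence $m_i\ge 3$ indeed gives positive area, the leftover cases $m_1=4$ and $m_1=m_2=3$ are impossible, and exactly one of~\ref{enum:2} and~\ref{enum:3} must hold.
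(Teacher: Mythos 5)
Your Gauss--Bonnet computation and the resulting inequality $\pi\sum_i m_i' + \sum_i\mathrm{Area}(\mathcal C_i)\le 2\pi$ coincide with the paper's argument, as does the reduction to the two residual zero-area configurations ($n=1$, $m_1=4$, and $n=2$, $m_1=m_2=3$). The gap is in your exclusion of these degenerate configurations. Your claim that a further passage of $\gamma_i$ through $p$ developing to an interior point of the slit yields ``two distinct lifts of $p$ with the same developed image'', and that this contradicts the preceding lemma, does not hold up: that lemma only asserts that the developed image of $\gamma_i$ is the oriented boundary of a convex polygon with prescribed exterior angles in $[\pi,2\pi]$; it nowhere asserts that the $m_i$ developed images of the passages through $p$ are pairwise distinct, and the developing map is certainly not injective on the (closure of the) universal cover of the end --- it is only a local isometry. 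A vertex with exterior angle exactly $\pi$ is precisely a passage of $\gamma_i$ through $p$ whose developed image is a straight, non-corner point of the polygonal path, and nothing local forbids this. So the implication ``$m_i\ge 3$ forces $\mathrm{Area}(\mathcal C_i)>0$'' is not established, and I do not believe it can be established by a purely local argument near $p$.

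The paper rules out the degenerate cases globally. Since a degenerate $\mathcal C_i$ is a segment, at least two of its exterior angles equal $2\pi$; combined with the lower bound $\pi$ on each exterior angle, in both residual configurations the angles are forced to be $(2\pi,2\pi,\pi,\pi)$, respectively $(2\pi,2\pi,\pi)$ twice, and they sum to exactly $2(k+1)\pi$, i.e.\ they exhaust the whole cone angle at $p$. Hence the ends $E_i$, the discs $D_i$ and the peripheral geodesics fill all of $S$, and an Euler characteristic count gives $\chi(S)=0$; a torus carries no Fuchsian representation, contradiction. This use of the global hypothesis (Fuchsian holonomy, hence $g\ge 2$) is the essential ingredient missing from your argument, so the degenerate cases are not actually excluded by your proof.
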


Observe that case~\ref{enum:2} occurs in a classical
bubbling, and an example of case~\ref{enum:3}
is described in Subsection~\ref{ss:_triangle}.

\begin{proof} The proof is based on the following Gauss-Bonnet
  formula: the sum of
the exterior angles of a convex hyperbolic polygon with $m$ vertices
equals its area plus $(m +2) \pi$.
Because the union of all the exterior angles of the
$\gamma_i$'s in the angular domain at $p$ are disjoint, we get
\[ \sum _{1\leq i \leq n } (m_i + 2) \pi + \mathrm{Area} (\mathcal C_i
) \leq 2\pi (k+1).  \] Dividing by $\pi$, by
Lemma~\ref{l:negative_disc_component}
and the relations $m_i = m_i ' + 2$, we get
\begin{equation}\label{eq:inequality} \sum_{1\leq i \leq n} m_i ' + \frac{1}{\pi} \sum _{1\leq i \leq n }
\mathrm{Area} (\mathcal C_i) \leq 2 . \end{equation}

If there exists $i$ such that $\mathrm{Area} (\mathcal C_i
)>0$, then $\sum_{1\leq i \leq n} m'_i\leq 1$ and  we infer
that either one of the $m_i'$'s vanishes, and we are in case~(\ref{enum:2}), or all are positive, and we are in case~(\ref{enum:3}). Suppose for a contradiction that $\mathrm{Int}(\mathcal C_i)=\emptyset$ for all $i$, $\sum_{1\leq i \leq n} m'_i\leq 2$ and  $(1)$ or $(2)$ hold. Then we are
necessarily in one of the following cases
\begin{enumerate}
\item [(a)] $n=1$ (and $k=2$) and $m_1 '=2$,
\item [(b)] $n=2$ (and $k=4$) and $m_1 '=m_2 '=1$.
\end{enumerate}
Since all the $\mathcal C_i$ have empty interior, the developed images of the $\gamma_i$'s are segments. Therefore they must have at least two exterior angles of $2\pi$. Since exterior angles are at least $\pi$, in both cases we see that the sum of the $\alpha_i^j$'s equals the total angle around $p$. In case~(a) we have angles
$\alpha^1_1,\alpha^1_2,\alpha^1_3,\alpha^1_4$ equal to $2\pi,2\pi,\pi,\pi$, and therefore they sum up to $6\pi=2(k+1)\pi$; in case~(b) we must have angles $\alpha^i_1,\alpha^i_2,\alpha^i_3$ equal to $2\pi,2\pi,\pi$ for $i=1,2$, and they sum up to $10\pi=2(k+1)\pi$. Therefore the ends corresponding to $l_i$ fill the whole of $C$. In particular in case~(a) the four petals of $\gamma_1$ are identified in pairs, whereas in case~(b) the petals of $\gamma_1$ and $\gamma_2$, which are each homeomorphic to a bouquet of $3$ circles, are identified in pairs of circles, so that in fact the union $\gamma_1 \cup \gamma_2$ is a bouquet of three circles. If we denote by $E_i$ the annular end corresponding to $\gamma_i$, we deduce that $S=E_1 \cup D_1\cup\gamma_1$ in case~(a); and $S=E_1 \cup E_2 \cup D_1\cup D_2\cup\gamma_1\cup \gamma_2$ in case~(b). In both cases the characteristic of $S$ is zero, hence $S$ is a torus, which does not carry Fuchsian representations. Hence we get the desired contradiction.
\end{proof}

\begin{lemma} Suppose that there is a peripheral geodesic
  $\gamma\in\{\gamma_1,\dots,\gamma_n\}$
which passes only twice through the point $p$. Then, by moving
$p$, one finds a bubble.
\end{lemma}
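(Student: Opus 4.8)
The plan is to read a bubble directly off the two edges of $\gamma$ at $p$, and then to move $p$ so as to put that bubble into standard form. \emph{First} I would determine the developed image of $\gamma$. Since $\gamma$ passes through $p$ exactly twice it is a concatenation $e_1\star e_2$ of two edges meeting at $p$, and by the lemma above its developed image bounds a convex polygon $\mathcal{C}$ with two vertices and exterior angles $\alpha_1,\alpha_2\in[\pi,2\pi]$. The Gauss--Bonnet formula for convex hyperbolic polygons used above gives $\alpha_1+\alpha_2=4\pi+\mathrm{Area}(\mathcal{C})\ge 4\pi$, while $\alpha_j\le 2\pi$ forces $\alpha_1+\alpha_2\le 4\pi$. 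Hence $\mathrm{Area}(\mathcal{C})=0$ and $\alpha_1=\alpha_2=2\pi$: the polygon degenerates to a single geodesic segment $[q_0,q_1]$, and $e_1,e_2$ have the same developed image $[q_0,q_1]$. Thus, suitably oriented from a common corner, $e_1$ and $e_2$ are twin geodesics, embedded and disjoint apart from $p$ by Lemma~\ref{l:embedded_twins}.

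\emph{Second} I would exhibit the bubble disc. Let $D$ be the negative disc adjacent to $C$ along $l$, and $E_l$ the end of $C$ bounded by $\gamma$. By Lemma~\ref{l:index01} the curve $l=\partial D$ has index $1$, and since $\overline D$ contains no branch point $D$ develops diffeomorphically onto $\mathbb H^-$. Because the slit $[q_0,q_1]$ lies in $\mathbb H^+$, the region $\Delta=E_l\cup l\cup D$ is a topological disc whose developing map is a homeomorphism onto $\mathbb{CP}^1\setminus[q_0,q_1]$, the boundary $e_1\cup e_2$ covering the two sides of the slit. This is precisely the disc that is inserted in a bubbling, the edges $e_1,e_2$ being the two sides of the cut; the only obstruction to recognising a genuine bubble is that the two endpoints of the slit are the single (non-simple) branch point $p$.

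\emph{Finally} I would remove this obstruction by moving $p$. By the recognition criterion recalled in the proof of Theorem~\ref{t:graftingvsbubbling}, it suffices to produce a pair of twin paths joining \emph{two simple} branch points, bounding a disc, and developing to a segment. Using Lemma~\ref{wecanmove} to guarantee that the local movements are admissible and Lemma~\ref{l:embedded_twins} to keep the moving twins embedded, I would slide $p$ along initial segments of $e_1$ and $e_2$ so as to detach a simple $4\pi$ point at each end of the slit: the two exterior $2\pi$--sectors of $\Delta$ at $p$, which are disjoint by Lemma~\ref{halftub}, become the bubble contributions at the two endpoints, while the remaining cone angle of $p$ is transported to a single point of higher order. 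After this move the two sides of the slit join two simple branch points, bound the disc $\Delta$, and develop to a segment, so that the structure is a bubbling.

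The \emph{main difficulty} is the execution of this last movement. One must choose the moving twins so that they are genuinely embedded and so that the cone angle $2\pi(2n+1)$ at $p$ splits into admissible multiples of $2\pi$ yielding simple endpoints; the angular bookkeeping is tight precisely because $\Delta$ already occupies $4\pi$ at $p$, and in the extremal case $k=2$ (so that $p$ has the minimal order $2$ and only $2\pi$ of angle is left outside $\Delta$) the second simple endpoint cannot be carved out of $p$ alone and must instead be created by the movement itself. Controlling the embeddedness of the moving twins throughout, via Lemma~\ref{l:embedded_twins}, is what makes the argument go through.
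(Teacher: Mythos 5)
Your argument is correct and follows the paper's proof essentially verbatim: both degenerate the two-vertex convex polygon to a segment via Gauss--Bonnet (forcing both exterior angles to equal $2\pi$), so that $\gamma$ becomes a bouquet of two loops developing onto the slit, and then move $p$ along the twin germs bounding the two exterior $2\pi$-sectors so as to split off a simple $4\pi$-point at each end of the slit while the residual angle $2(k-1)\pi$ is carried by a separate point (regular when $k=2$), yielding the bubble. The only slip is the phrase ``initial segments of $e_1$ and $e_2$'': literally those two germs develop to different segments and are not twins; the pairs one actually moves along are \{terminal germ of $e_1$, initial germ of $e_2$\} and \{terminal germ of $e_2$, initial germ of $e_1$\}, i.e.\ precisely the pairs adjacent to each exterior sector, as your following clause about the two $2\pi$-sectors correctly indicates.
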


\begin{proof} In this case $\gamma$ is a bouquet of two
circles. Moreover, since the developed image of $\gamma$ is a segment,
this bouquet is formed by two pairs of twin
geodesics emanating from $p$. Let $(\nu_1 ^1, \nu_1^2)$ and $(\nu_2^1,\nu_2 ^2)$ be their germs at $p$, around the
two angles $\alpha_1$ and $\alpha_2$. Let us move the multiple branch
point $p$ along both twin paths. If the angle around $p$ is
$2(k+1)\pi$, the structure resulting
from these movements has three distinct branch points
$p',p_1,p_2$. The point $p'$ has angle $2(k-1)\pi$ (since $k=2n$,
we always have $(k-1)\geq 1$,
and $p'$ is a smooth point if $k=2$); the points $p_1$ and
$p_2$ are both of angle $4\pi$ and correspond
to the extremities of $\nu_1^i$'s and
$\nu_2^i$'s respectively. The peripheral geodesic corresponding to $l$
for this new structure is formed by two geodesic
segments going from $p_1$ to $p_2$.
The exterior angle are $2\pi$ at each point
$p_i$. Hence, since $l$ bounds a disc in $S^-$,
we are in the situation of a bubbling.
\end{proof}

\begin{lemma} \label{l:move2onesing} Suppose that $n=1$ and that
$\gamma_1$ passes through $p$ three times. Then one can move
branch points so that a bubble appears.
\end{lemma}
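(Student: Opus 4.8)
The plan is to reduce this \emph{triangle} case to the situation of the previous lemma, where the peripheral geodesic passes only twice through the branch point, by moving $p$ so as to degenerate the developed triangle to a segment.

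First I would fix the developed picture. As shown above, the developed image of $\gamma_1$ is the oriented boundary of a convex triangle $\mathcal C\subset\mathbb H^2$ whose three vertices are developed images of $p$ and whose exterior angles $\alpha^1_1,\alpha^1_2,\alpha^1_3\ge\pi$ satisfy the Gauss--Bonnet relation $\alpha^1_1+\alpha^1_2+\alpha^1_3=\mathrm{Area}(\mathcal C)+5\pi$; since the total angle at $p$ is $6\pi$ and these three sectors are disjoint (Lemma~\ref{halftub}), one has $0<\mathrm{Area}(\mathcal C)\le\pi$. The end $E_1$ corresponding to $l_1$ develops onto the region of $\overline{\mathbb H^+}$ between $\mathbb R\mathbb P^1$ and $\partial\mathcal C$, while the negative disc $D_1$ develops isometrically onto $\overline{\mathbb H^-}$; as $l_1$ has trivial holonomy these glue to a developing map carrying $\overline{E_1}\cup\overline{D_1}$ onto the closed disc $\mathbb{CP}^1\setminus\mathrm{Int}(\mathcal C)$, with the three vertices of the triangle identified to $p$. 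The key remark is that \emph{if} $\mathcal C$ were degenerate, i.e. a geodesic segment, then $\mathbb{CP}^1\setminus\mathrm{Int}(\mathcal C)$ would be $\mathbb{CP}^1$ slit along an arc, and $\overline{E_1}\cup\overline{D_1}$ would be exactly a bubble. This is the configuration at the end of the construction in Subsection~\ref{ss:_triangle}, run backwards.

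So the heart of the argument is to move $p$ until the triangle degenerates. I would move $p$ along a pair of embedded twin geodesics emanating from $p$ into two of the exterior sectors and developing to a single geodesic segment of $C\f$; embeddedness is furnished by Lemma~\ref{l:embedded_twins} (using that $C\f$ has no parabolic ends), and the move can be carried over a definite length by Lemma~\ref{wecanmove}. Sliding $p$ deforms the developed triangle, and the twins are to be chosen so that $\mathrm{Area}(\mathcal C)$ strictly decreases. Following the deformation by continuity, and using the Gauss--Bonnet identity together with the budget $\alpha^1_1+\alpha^1_2+\alpha^1_3\le 6\pi$, the area is driven to $0$; at that moment one exterior angle reaches $\pi$, which forces the other two to equal $2\pi$, the corresponding vertex flattens, and the peripheral geodesic associated to $l_1$ comes to pass through $p$ only twice. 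This is precisely the hypothesis of the lemma treating peripheral geodesics that pass twice, which then produces the bubble; equivalently, one reads the bubble off directly from the now-degenerate disc $\overline{E_1}\cup\overline{D_1}\cong\mathbb{CP}^1\setminus(\text{a segment})$ by a final Schiffer move splitting $p$ into the two $4\pi$ endpoints of the slit and a regular point.

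The main obstacle is the control of this deformation. Moving the single point $p$ simultaneously moves all three vertices of $\mathcal C$ (they are the developed images of $p$ on three different sheets of $\overline{E_1}\cup\overline{D_1}\to\mathbb{CP}^1\setminus\mathrm{Int}(\mathcal C)$), so the three vertices cannot be displaced independently and the triangle evolves in a constrained one-parameter family. The delicate points are therefore: choosing the pair of twin rays at $p$ so that the coupled motion genuinely decreases the area; checking that the twins stay embedded and that the move can be iterated over a length sufficient to reach full degeneration, without $p$ meeting the real curve or the developed images of the other strands, and without the cut-and-paste disconnecting $S$; and bookkeeping the $3\pi$ end-side angle so that the region one removes is the disc bounded by the twins and not its complement. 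Once the triangle is degenerate, identifying $\overline{E_1}\cup\overline{D_1}$ with a bubble, and hence concluding, is routine.
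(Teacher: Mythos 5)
Your proposal takes a genuinely different route from the paper, but it contains a gap at its central step. The paper's proof is short and structural: since $n=1$, the boundary component $l_1$ bounds a negative disc $D_1$, so Proposition~\ref{p:_every_positive_component_is_a_disc} furnishes two embedded twin geodesics from the $6\pi$-point $p$ whose extremities lie in $D_1$; moving $p$ along them splits it into a $4\pi$-point inside a new negative annulus $A^-$ and a $4\pi$-point remaining in the positive part, after which Lemma~\ref{l:index01} and the index formula show that the hypotheses of the Debubbling Theorem~\ref{t:debubbling} are met. No degeneration of the triangle is needed. By contrast, the engine of your argument --- choosing twin rays at $p$ so that the coupled motion of the three vertices of the developed triangle $\mathcal C$ \emph{strictly decreases} its area, and iterating this until the area reaches zero --- is exactly the part you leave unproved, and you say so yourself (``the main obstacle,'' ``delicate points''). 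Since the three vertices are all developed images of the single point $p$ and move in a constrained one-parameter family, it is not at all clear that a monotone area-decreasing direction exists, nor that the twins remain embedded long enough to reach full degeneration; the paper explicitly warns (end of Section~2) that there is no a priori lower bound on how far twin paths stay embedded, which is why every move in the paper is justified by Lemma~\ref{l:embedded_twins} applied to a concrete properly embedded geodesic in a concrete $\Sigma$, not by a continuity/limit argument.

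There is also a secondary problem with the endgame: even granting that $\mathrm{Area}(\mathcal C)\to 0$, when one exterior angle reaches $\pi$ the peripheral geodesic merely becomes smooth at that visit to $p$ --- it still passes through $p$ three times, since the total cone angle at $p$ is unchanged. So the configuration does not literally become the hypothesis of the ``passes twice'' lemma without a further argument detaching that vertex from $p$. Your alternative reading (splitting $p$ by a final Schiffer move into the two $4\pi$ endpoints of the slit) is closer in spirit to what the paper actually does, but done directly --- by sending one $4\pi$-point into the negative disc from the start --- it requires none of the area-decreasing machinery. I would recommend abandoning the degeneration scheme and arguing as the paper does.
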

\begin{proof}
Let $D_1$ be the negative disc bounded by $l_1$.
By Proposition~\ref{p:_every_positive_component_is_a_disc},
we can find two embedded twin geodesics $\nu_1$ and $\nu_2$
starting from $p$ and going in $D_1$, in such a way that
they stay in the end corresponding to $l_1$ untill they
cross $l_1$. We move $p$ along these two
geodesics.

The resulting branched projective structure has the
following properties.
The disc $D_1$ gives raise to a negative annulus $A^-$, with a single
branch point of angle $4\pi$.
The component $C$ gives raise to one or two components whose union
$C^+$ contains a single branch point of angle $4\pi$ and so that
$\partial A \subset \partial C^+$.
Lemma~\ref{l:index01} and Theorem~\ref{t:first_index_theorem} assures that
Theorem~\ref{t:debubbling} apply in the present case and the proof is
complete.
\end{proof}

The proof of Theorem~\ref{t:mainpositive} is now complete.\qed

\section{Proof of Theorem~\ref{t:main}}\label{s:proofoftheorem}

First we prove that we can assume that the representation $\rho$ is Fuchsian.

\begin{prop}\label{p:homeomorphism}
Suppose that $\rho$ and $\rho'$ are two representations whose actions on the Riemann sphere are topologically conjugated. Then for every $k\geq 0$, the spaces $\mathcal M_{k,\rho}$ and $\mathcal M_{k,\rho'}$ are homeomorphic.
\end{prop}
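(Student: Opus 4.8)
The plan is to realise the homeomorphism by post-composing developing maps with the conjugating homeomorphism of the sphere. The starting observation is that, once the marking is fixed, a branched projective structure with holonomy $\rho$ is nothing more than an orientation-preserving, $\rho$-equivariant \emph{topological} branched covering $D:\widetilde S\to\mathbb{CP}^1$ of total branching order $k$. Indeed, given such a $D$ one recovers the structure by pulling back the complex structure of $\mathbb{CP}^1$: away from the discrete branch locus $D$ is a local orientation-preserving homeomorphism, so the pullback is well defined there, and near a branch point $D$ is locally modelled on $z\mapsto z^n$, so the pullback extends across the puncture. This complex structure is $\pi_1(S)$-invariant because $D\circ\gamma=\rho(\gamma)\circ D$ and $\rho(\gamma)\in\mathrm{PSL}(2,\mathbb C)$ is holomorphic; hence it descends to $S$ and makes $D$ holomorphic, with local inverses providing projective charts whose transitions lie in $\rho(\pi_1(S))\subset\mathrm{PSL}(2,\mathbb C)$. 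Thus $\mathcal M_{k,\rho}$ is the set of such equivariant branched coverings modulo the equivalence of the definition.

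Fix an orientation-preserving homeomorphism $\phi:\mathbb{CP}^1\to\mathbb{CP}^1$ with $\phi\circ\rho(\gamma)=\rho'(\gamma)\circ\phi$ for all $\gamma$, and set $\Psi(D)=\phi\circ D$. Post-composing a branched covering with a homeomorphism yields again a branched covering with the same branch points and the same local degrees (in the chart $\phi^{-1}$ on the target, $\phi\circ D$ reads as $z\mapsto z^n$ wherever $D$ did), so $\phi\circ D$ is an orientation-preserving branched covering of total branching order $k$; moreover $(\phi\circ D)\circ\gamma=\phi\circ\rho(\gamma)\circ D=\rho'(\gamma)\circ(\phi\circ D)$, so it has holonomy $\rho'$. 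By the previous paragraph $\phi\circ D$ determines a point of $\mathcal M_{k,\rho'}$, and the construction is inverted by post-composition with $\phi^{-1}$. The decisive point is that $\phi$ need not be conformal: the complex structure of the resulting structure is the one pulled back by $\phi\circ D$, which is in general different from that of $D$, but this is harmless since $\mathcal M_{k,\rho'}$ contains structures of every underlying complex type.

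Next I would check that $\Psi$ descends to the quotients and is continuous. For \emph{non-elementary} $\rho$ --- in particular for the quasi-Fuchsian and Fuchsian representations that are the only ones needed to deduce Theorem~\ref{t:main} --- the centraliser of $\rho(\pi_1(S))$ in $\mathrm{PSL}(2,\mathbb C)$ is trivial, so two developing maps represent the same structure exactly when they differ by a reparametrisation $\widetilde h$ lifting a marking-preserving diffeomorphism; since $\phi\circ(D\circ\widetilde h)=(\phi\circ D)\circ\widetilde h$, the map $\Psi$ is well defined. For continuity I would use that $\Psi$ intertwines the surgeries generating the topology: if $\alpha$ realises a pair of twin paths for $D$, meaning $D\circ\alpha$ is even, then $\phi\circ D\circ\alpha$ is again even, so the very same twin paths move branch points for $\phi\circ D$; likewise a segment $\gamma$ has embedded developed image for $D$ if and only if it does for $\phi\circ D$, because $\phi$ is a homeomorphism, so bubblings correspond. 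Hence $\Psi$ carries each elementary move for $\rho$ to the analogous move for $\rho'$, and continuous families of moves to continuous families. Equivalently, $\Psi$ is continuous for the compact--open topology on equivariant developing maps, $\phi$ being uniformly continuous on the compact $\mathbb{CP}^1$; the same argument for $\phi^{-1}$ gives continuity of $\Psi^{-1}$, whence $\Psi$ is a homeomorphism.

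The main obstacle is conceptual rather than computational: one must work with $\phi$ as a mere topological conjugacy. This forces the argument to regard $\mathcal M_{k,\rho}$ as a purely topological object, built from equivariant branched coverings and the cut-and-paste moves, and it explains why the complex-manifold structure of the Appendix is \emph{not} transported by $\Psi$. A secondary point to address is the elementary case, where a nontrivial centraliser $Z(\rho)$ could make $\phi\circ D$ depend on the chosen developing map through the possibly non-projective homeomorphism $\phi\,\varphi\,\phi^{-1}$ with $\varphi\in Z(\rho)$; this difficulty disappears for the non-elementary representations relevant to us, which is all that is required for the reduction to the Fuchsian case.
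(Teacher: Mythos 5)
Your proof is correct and takes essentially the same route as the paper: the paper's argument is exactly to post-compose the $\rho$-equivariant developing map with the conjugating homeomorphism $\Phi$ and declare the resulting correspondence $\mathcal M_{k,\rho}\to\mathcal M_{k,\rho'}$ to be the desired homeomorphism. You additionally supply details the paper leaves implicit --- why $\Phi\circ D$ still determines a branched projective structure even though $\Phi$ is merely topological, well-definedness of the map on equivalence classes (where you rightly note the centraliser issue for elementary $\rho$, harmless in the quasi-Fuchsian application), and continuity in both directions --- all of which is consistent with, and a useful fleshing-out of, the paper's two-line proof.
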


\begin{proof}
Let $\Phi$ be a homeomorphism of $\mathbb {CP}^1$ such that $\rho'(\cdot) \circ \Phi = \Phi \circ \rho(\cdot)$. Then, if $(S,\sigma)\in \mathcal M_{k,\rho}$, there is a unique developing map $D: \widetilde{S}\rightarrow \mathbb {CP}^1 $ which is $\rho$-equivariant. Observe that the map $\Phi \circ D$ is $\rho'$-equivariant, and define a BPS $(S,\sigma')$ with holonomy $\rho'$ and $k$ branch points. The correspondance $(S,\sigma)\in \mathcal M_{k,\rho} \mapsto (S,\sigma') \in \mathcal M_{k,\rho'}$ is the desired homeomorphism.
\end{proof}

It remains to prove that if $\rho$ is a Fuchsian representation, and $k>0$ is an integer, then $\mathcal M_{k,\rho}$ is connected. Let $S$ be a connected closed oriented surface, and let $\sigma$ be a
branched projective structure on $S$. Suppose that $\sigma$
has holonomy $\rho$ and that it has at least one branch point.

By Corollary~\ref{c:secondindextheorem}
we know that the number $k$ of branch points is
even. Let $\Sigma$ be the uniformizing structure on $S$ with holonomy $\rho$. Let
$\sigma_0^k$ be the BPS obtained by applying $k/2$  bubblings to $\Sigma$. By
Corollary~\ref{c:bc} this does not depend on where we perform
bubblings.

We are going to prove by induction on $k$ that $\sigma$ is connected
to $\sigma_0^k$ by moving branch points, and this clearly implies
Theorem~\ref{t:main}.
The base for induction is $k=2$ as we suppose $k>0$ (for $k=0$ the
claim of the theorem is false).

First, by moving a little branch points we reduce to the case that no
branch point belongs to the real line $S_{\mathbb R}$.

By Theorem~\ref{t:thmsec7} we can move branch points so that either
we find a bubble, or all branch points belong to the positive part
$S^+$. Now, By Theorem~\ref{t:mainpositive} we find a bubble. In any
case, there is a finite sequence of movements of branch points that
connects $\sigma$ to a BPS $\sigma_1'$ which is a bubbling on a
BPS $\sigma_1$.

If $k>2$, by induction $\sigma_1$ is connected to
$\sigma_0^{k-1}$ by moving branch points. From
Corollary~\ref{lemmaunico} applied to $A=\sigma_1'$,
$B=\mathbb{CP}^1$, $C=\sigma_1$, and $D=\sigma_0^{k-1}$, we know that
$\sigma_1'$ is connected to a bubbling of $\sigma_0^{k-1}$ and by
Corollary~\ref{c:bc} such a bubbling is connected to $\sigma_0^k$. In
conclusion, $\sigma$ is connected to $\sigma_0^k$.

If $k=2$, then $\sigma_1$ has no branch points. By a celebrated theorem
of W. Goldman in~\cite{Goldman1},
$\sigma_1$ is obtained by grafting $\Sigma$ along a disjoint
union of simple closed curves $\gamma_1,\ldots, \gamma_n$. By moving branch points to the position of the bottom left part of Figure~\ref{fig:bubdebub}, where we put $\gamma=\gamma_n$, we get a BPS $\sigma_1^{(2)}$. By Theorem~\ref{t:graftingvsbubbling} it is precisely $\Sigma$ grafted along $\gamma_1,\ldots, \gamma_{n-1}$ and  bubbled once. We can repeat this process $n$ times to get, after appropriately moving the branch points, a BPS $\sigma_1^{(n)}$ that is $\Sigma$ grafted along $\emptyset$ and bubbled once, i.e. $\sigma_0^2$.
\qed

\section{Appendix: deformation spaces of branched projective structures} \label{s:moduli}
Let $S$ be a compact orientable surface of genus $g\geq
2$, equipped with a marking (i.e. an identification with the group of
covering transformations of $\pi:\tilde S\to S$ with a fixed group $\Gamma_g$). Let
$k$ be non-negative integer, and $\rho : \Gamma_g
\rightarrow \mathrm{PSL}(2,\mathbb C)$ be a non-elementary
representation. In this section, we endow the set $\mathcal
M_{k,\rho}$ of equivalence classes of BPS on $S$ with total branching order $k$ and holonomy conjugated to $\rho$, with
the structure of a non-singular complex manifold of dimension $k$. In
fact, we explicitly construct a complex atlas modeled on Hurwitz spaces (which
inherit the complex structure from a particular space of polynomials).

We will state our main result in terms of deformations of branched
projective structures, in the spririt of Kodaira-Spencer's theory of
deformations of complex manifolds, see e.g.~\cite{Kodaira}.

\begin{definition}\label{def_hol_fam}
Let $S$ be a marked compact surface of genus $g\geq 2$.
A \textit{holomorphic family of BPS} on $S$ is a quadruple $(X,B,\pi, \mathcal W)$ where
\begin{enumerate}
  \item $X$ and $B$ are complex manifolds, $ \pi : X \rightarrow B$ is
    a holomorphic submersion with compact  fibers $S_b = \pi^{-1} (b)$
  diffeomorphic to $S$ for all $b\in B$.
  \item $\mathcal W=\{(U_i,w_i)\}$ is a maximal set of holomorphic
    functions $w_i : U_i \rightarrow \mathbb {C P}^1 $  such that
    $\{U_i\}$ is an open cover of $X$, the restriction of $w_i$ to
    each fibre $S_b$ is non-constant and on each connected component
    $V$ of $U_i\cap U_j$ the functions $w_i$ and $w_j$ are related by
       \[w_i=\phi_{ij}(b) (w_j)\] where $\phi_{ij}:\pi(V)\rightarrow
       \mathrm{PSL}(2,\mathbb{C})$ is a holomorphic map.

\item there is a holomorphic  covering $\widetilde{X} \rightarrow X$
  whose covering group is $\Gamma_g$ and such that over each fiber
  $S_b$, it is the universal covering of $S_b$.
\end{enumerate}

The restriction of $\mathcal{W}$ to any  fibre $S_b$ defines a BPS
$\sigma_b=\mathcal W|_{S_b}$ on
$S$  whose holonomy representation (up to conjugation) will be
denoted by $\rho_{\mathcal{W}}(b):\Gamma_g\rightarrow\mathrm{PSL}(2,\mathbb{C})$.

When $(X,B,\pi,\mathcal W)$ is a holomorphic family of BPS,
we also say that $X$ is a holomorphic family of BPS over $B$, with atlas
$\mathcal W$.
\end{definition}

\begin{theorem} \label{t:universalfamily}
Given a non-elementary representation  $\rho : \Gamma_g
\rightarrow \mathrm{PSL}(2,\mathbb C)$, there exists a smooth complex manifold structure on $\mathcal
M_{k,\rho}$ such that,  for any holomorphic family $(X, B, \pi ,
\mathcal W )$ of BPS on $S$ with $k$ branch points counted with multiplicity
and holonomy $\rho_{\mathcal W}=\rho$, the map $$b\in
B\mapsto \sigma_b=\mathcal W|_{S_b} \in \mathcal M_{k,\rho}$$ is
holomorphic.
\end{theorem}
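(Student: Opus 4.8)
The plan is to use that $\rho$ is non-elementary, so its centralizer is trivial; as observed after the definition of the developing map, once a representative of the holonomy class is fixed each structure has a \emph{unique} developing map. I therefore identify $\mathcal M_{k,\rho}$ with the set of $\rho$-equivariant branched holomorphic maps $D:\widetilde S\to\mathbb{CP}^1$ of total branching order $k$, modulo marking-preserving biholomorphism, where the complex structure on $S$ is the one pulled back by $D$ from $\mathbb{CP}^1$. Everything then reduces to producing a complex atlas on this set of developing maps and checking that it is universal. The charts will be the analytic incarnation of the ``moving branch points'' operation, and the polynomial model is used precisely to avoid the algebraic conditions and the discontinuity in the lowest Schwarzian coefficient mentioned in the introduction.

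First I would build local charts near a fixed structure $\sigma_0$ with developing map $D_0$ and branch points $q_1,\dots,q_r$ of orders $n_1,\dots,n_r$, where $\sum_i n_i=k$. Around (a lift of) each $q_i$ I choose a local coordinate and a Möbius map $M_i$ so that $M_i\circ D_0$ is in the normal form $w\mapsto w^{n_i+1}$. The miniversal unfolding of this germ is the space $\mathcal P_i$ of monic centered polynomials $w^{n_i+1}+a^{(i)}_{n_i-1}w^{n_i-1}+\dots+a^{(i)}_0$, a copy of $\mathbb C^{n_i}$; since $\sum_i n_i=k$ the product $\prod_i\mathcal P_i\cong\mathbb C^{k}$ is the candidate chart. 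To a small parameter I associate a map $D$ obtained by replacing $M_i\circ D_0$ by the unfolded polynomial inside each disc, interpolating in a collar, and leaving $D_0$ unchanged elsewhere, the whole modification carried out $\Gamma_g$-equivariantly so that $D$ remains $\rho$-equivariant. For small parameters $D$ is an orientation-preserving local branched cover, so the pulled-back almost complex structure $D^{*}j_{\mathbb{CP}^1}$ is integrable and makes $(S,D^{*}j)$ a Riemann surface on which $D$ is a developing map; this produces a BPS $\sigma$ with holonomy $\rho$ and total branching $k$ (the unfolding preserves total critical multiplicity $n_i$ at each disc), hence a map $\Phi_{\sigma_0}$ from a neighbourhood of $0$ in $\mathbb C^{k}$ to $\mathcal M_{k,\rho}$.

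Next I must show $\Phi_{\sigma_0}$ is a genuine chart. Well-definedness: two interpolations differ by a $\Gamma_g$-equivariant diffeomorphism supported in the collars and isotopic to the identity, which induces a marking-preserving biholomorphism between the two pulled-back Riemann surfaces intertwining the developing maps, so the class of $\sigma$ is independent of the auxiliary choices. Injectivity near $0$ and surjectivity onto a neighbourhood follow from a normal-families argument: a structure close to $\sigma_0$ has, by equivariance and fixed holonomy, a developing map uniformly close to $D_0$, hence a branch divisor close to that of $D_0$ and local germs captured by the unfoldings $\mathcal P_i$. Finally, two overlapping charts differ by reading the unfolding coefficients of one developing map through the normal-form coordinates and Möbius normalizations of the other; since passing from a holomorphic germ to the coefficients of its versal unfolding, and composing with $\mathrm{PSL}(2,\mathbb C)$ on the target, are holomorphic operations, the transition maps are biholomorphic, giving $\mathcal M_{k,\rho}$ a complex manifold structure of dimension $k$ modelled on Hurwitz-type spaces. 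For the universal property, given a holomorphic family $(X,B,\pi,\mathcal W)$ with $\rho_{\mathcal W}=\rho$, triviality of the centralizer lets $\mathcal W$ assemble into a single map $\mathbf D:\widetilde X\to\mathbb{CP}^1$, holomorphic on the total space and fibrewise $\rho$-equivariant; its critical divisor moves holomorphically with $b$, and read in the normal-form coordinates of a chart the unfolding coefficients are Taylor/critical-value data of a function holomorphic in $(z,b)$, hence holomorphic in $b$. Thus $\Phi_{\sigma_0}^{-1}\circ(b\mapsto\sigma_b)$ is holomorphic.

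I expect the main obstacle to be exactly the degeneration flagged in the introduction: the naive Schwarzian description jumps in its lowest coefficient when two branch points collide, so the charts must be shown to remain biholomorphic across the collision locus. Controlling well-definedness of $\Phi_{\sigma_0}$ and the holomorphy of the transition maps through these degenerations — which is precisely what forces the polynomial (Hurwitz) model over the Schwarzian — is the delicate part; establishing surjectivity of $\Phi_{\sigma_0}$ onto a full neighbourhood, a completeness statement asserting that every nearby BPS arises from the local unfoldings, will also require care.
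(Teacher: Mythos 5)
Your proposal follows essentially the same route as the paper's appendix: charts modeled on spaces of normalized polynomial unfoldings of $w\mapsto w^{n_i+1}$ (the paper's Hurwitz spaces $\mathcal H(\psi_p)$, parametrized by monic polynomials with $P(1)=1$ and all critical values in the disc), realized by excising discs around the branch points and gluing in $P^{-1}(\mathbb D)$, with injectivity coming from the rigidity of the developing map for non-elementary $\rho$ and with holomorphy of the transitions and of the universal property read off from the family's developing map. The two delicate points you flag at the end are handled in the paper by (i) introducing a separate ``cut-and-paste'' topology, shown Hausdorff via Arzel\`a--Ascoli, against which the polynomial charts are verified to be homeomorphisms, and (ii) proving holomorphy of the chart reading first on the generic stratum of the family, where each local polynomial is determined by its single critical value (which moves holomorphically with $b$), and then extending across the collision locus by continuity together with Riemann's extension theorem.
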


 It would be interesting to endow the set $\mathcal
  M_k=\cup_{\rho}\mathcal{M}_{k,\rho}$ of BPS with $k$ branch points
  counted with multiplicity  with a structure of a complex analytic
  space, by gluing the complex structures on $\mathcal M_{k,\rho}$
  together. This can be done over the set of non elementary
  representations, but we don't know how to manage this over the
  elementary representations. Some preliminaries are in order.

Let $T_g$ be the Teichm\"uller space consisting of equivalence classes
of marked Riemann surfaces of genus $g$, up to biholomorphism lifting
to the universal cover as a $\Gamma_g$-equivariant
diffeomorphism. Bers
equipped the set $T_g$ with the
structure of a complex manifold of dimension $3g-3$, and constructed
the tautological bundle $\Pi: \mathcal T_g \rightarrow T_g$ over it:
a holomorphic family of marked Riemann surfaces where the fiber over a
point $t\in T_g$ is biholomorphic to $t$. He showed that this family
of Riemann surfaces is universal in the sense of Kodaira-Spencer's
deformation theory, see~\cite{Bers},~\cite[Th\'eor\`eme 1.2]{Gro} and~\cite[p. 446]{ACG}.
Namely, if $\pi : X \rightarrow B$ is any
holomorphic family of marked Riemann surfaces, then there is a
holomorphic map $f : B \rightarrow T_g$ such that $(X,B,\pi)= f^*
(T_g,\mathcal T_g, \Pi)$.

To prove this result Bers introduced a holomorphic family of (unbranched) $\mathbb {CP}^1$-structures on $\mathcal T_g$, usually called Bers simultaneous uniformization. This is the data of a holomorphic family of quasi-Fuchsian representations $$t\in T_g \mapsto \rho _t \in \mathrm{Hom} \big(\Gamma_g, \mathrm{PSL}(2,\mathbb C)\big)$$
and of an open set $U \subset T_g \times \mathbb {CP}^1$ such that for every $t\in T_g$, the intersection $U\cap \{t\}\times \mathbb {CP}^1$ is of the form $\{t\}\times U_t $, where $U_t $ is a component of the domain of discontinuity of $\rho_t$. The set $\mathcal T_g$ is then constructed as the quotient of $U$ by the action $\gamma (t, z) = (t, \rho(t) (\gamma)(z))$ of $\Gamma_g$ on $U$. The projection onto the first coordinate gives a map $\Pi:\mathcal{T}_g\rightarrow T_g$ whose fiber over a point $t\in T_g$ is the Riemann surface structure $t$ on $S$.

Let us comment on the relationship between the two
natural projections defined so far on the space of branched projective
structures with total branching order $k$: the holonomy projection to
the $\mathrm{PSL}(2,\mathbb{C})$-character variety and the projection
to Teichm\" uller space. The first one provides, together with the
order of branching, the stratification by the sets
$\mathcal{M}_{k,\rho}$. When $k=0$, that is, on strata of unbranched
projective structures, the fibres of both projections intersect
transversally (in fact by Goldman's theorem and Baba's
generalization the holonomy fibers are discrete).
In this case, the holonomy and the underlying conformal
structure determine the projective structure
(see~\cite{Man2}). In the case of $0<k<2g-2$
where $g$ is the genus of the surface, it is still true that
the triple, holonomy, conformal structure and branching divisor
determine the branched projective structure
(see~\cite{Man2}). Moreover, for $0<k<2g-2$, the fibers of the
Teichmuller and holonomy projections are transverse: for a fixed
holonomy, moving branch points changes the conformal structure.
However, when $k\geq 2g-2$, there exist movements
of branch points that preserve the holonomy and the conformal
structure.  As suggested by G. Mondello,
they can be calculated directly by using the Beltrami
differentials associated to moving branch points and Riemann-Roch's
formula. A consequence is that the fibres of the two projections are not
transverse in this case (note that the complex
dimension of the Teichm\"uller
space of $S$ is $3g-3$ while the dimension of $\mathcal M_{\rho,k}$ is
$k$).

Coming back to the question of existence of rational curves in
$\mathcal{M}_{k,\rho}$ stated in subsection~\ref{ss:problems}, by considering the tautological bundle
associated to a rational curve, we get a holomorphic family of branched projective structures
parametrized by $\mathbb{CP}^1$ whence the
projection on Teichm\"uller space provides a rational curve, which
must be
constant, as Teichm\"uller space does not contain
rational curves. So the rational curve must be contained on the fiber
of the projection. In particular this implies $k\geq 2g-2$. The same holds true for a non trivial holomorphic family of BPS over a compact base $B$.

\subsection{The deformation spaces $\mathcal M_{k_1,\ldots , k_r,
    \rho}$}\label{ss:fixedtypes}

Let $k_1\leq \ldots \leq k_r$ be positive integers such that $k_1 +
\ldots + k_r = k$, and $\mathcal M_{k_1,\ldots, k_r}$ be the set of
BPS on $S$ whose branch divisor is of the form $\sum _{i= 1}^r k_i p_i$ for
some set of distinct points $\{p_1,\ldots, p_r\}$. The set $\mathcal
M_{k_1,\ldots, k_r}$ has a structure of an analytic space that can be
defined by using the usual Schwarzian derivative parametrization. We
recall this construction for convenience of the reader,
see~\cite[12.2, p. 679]{GKM} in the case where $k_i = 1$ for all $i$.
For any BPS in $\mathcal M_{k_1,\ldots,k_r}$, one can consider its underlying Riemann surface structure $t$ and, introducing the Bers coordinate $z\in U_t$ and a developing map $D$ for the given BPS, construct the meromorphic quadratic differential on $U_t$ defined by
\begin{equation} \label{eq:schwarzian} q(z) := \{ D(z), z \} dz^2,\end{equation}
where $\{D,z\}$ denotes the Schwarzian derivative. This meromorphic quadratic differential on $U_t$ does not depend on the choice of the developing map and is invariant by $\rho_t$, hence defines a meromorphic quadratic differential on $t$ that has pole set at the branch points $\{p_1, \ldots, p_r\}$ of the BPS. Its Laurent series expansion around $p_i$ in a coordinate $z$ where $p_i=z_i$ is
\begin{equation}\label{eq:laurentexpansion}  q(z) = \frac{1-(k_i+1)^2}{2(z-z_i)^2} + \sum _ {n\geq -1}  a_n^{(i)}(z-z_i) ^n .\end{equation}
A necessary and sufficient condition on the coefficients $a_n$ ,
$n\geq -1$, ensuring that a meromorphic quadratic differential of
type~\eqref{eq:laurentexpansion} is the quadratic meromorphic
differential associated to a BPS of $\mathcal M_{k_1,\ldots, k_r}$ as
in~\eqref{eq:schwarzian}, is that some polynomial equation on the
coefficients $a_n^{(i)}$ is satisfied, called the indicial
equation. This equation takes the form
\begin{equation}\label{eq:indicialequation} A_{k_i} ( a_{-1}^{(i)},
  \ldots, a_{k_i -1}^{(i)} ) = 0 , \end{equation}
where $A_{k_i}$ is a polynomial in $k_i$ variables with coefficients
in $\mathbb{C}$ (see~\cite[p. 268]{Man}).

Introduce the tautological fiber bundle $\mathcal T_g ^{(r)}$ over
Teichm\"uller space $T_g$, where the fibre over a point $t\in T_g$ is
biholomorphic to the set $t^r \setminus \Delta(t)$ where $\Delta(t)$
is the set of $r$-tuples $(p_1,\ldots p_r)\in t^r$ such that $p_i=
p_j$ for some distinct indices. Let $\mathcal Q_g^{(r)}$ be the vector
bundle over $\mathcal T_g ^{(r)}$ whose fiber over a point $t^{(r)}\in
\mathcal T_g^{r}$ consists of those meromorphic quadratic
differentials on $t^{r}$ whose poles are precisely at the $p_i$'s
corresponding to $t^r$ and that   satisfy
equation~\eqref{eq:laurentexpansion}.
Following the preceding discussion, the set $\mathcal M_{k_1,\ldots, k_r}$ can be identified with the subset of $\mathcal Q_g^{(r)}$ consisting of meromorphic quadratic differentials verifying the indicial equations~\eqref{eq:indicialequation}. This set has the structure of a complex analytic space. Moreover, the subset of $\mathcal M_{k_1,\ldots, k_r,\rho}$ consisting of those BPS in $\mathcal M_{k_1, \ldots, k_r}$ whose holonomy is conjugated to $\rho $ is an analytic subset. Indeed, since $\rho$ is non-elementary, $\mathcal{M}_{k_1,\ldots,k_r,\rho}$ is a fibre of the analytic map that associates to each $\sigma$ in $\mathcal M_{k_1, \ldots, k_r}$ the character $\Lambda_{\sigma}:\Gamma_g\rightarrow\mathbb{C}$ defined by $\Lambda_{\sigma}([\gamma])=\mathrm{Tr}^2(\rho_{\sigma}([\gamma]))$.

The structure of complex space structure on $\mathcal M_{k_1,\ldots,
  k_r, \rho}$ constructed in~\ref{ss:fixedtypes} cannot be extended
in an obvious manner to provide a complex analytic structure on
$\mathcal M_{ k, \rho}$. The reason is that the map that assigns to a
BPS its number of branch points is only semi-continuous, and in fact
discontinuous around points where some of the $k_i\geq 2$. This is
reflected in the Schwarzian coordinates. For instance, given a
continuous family $\sigma_b$ of elements in $\mathcal{M}_{k,\rho}$
where at $\sigma_0$ there is a branch point $p_0$ of multiplicity
$k_0\geq 2$ and for all $b\neq b_0$ all the branch points of
$\sigma_b$ are simple, there is a discontinuity in the family of
meromorphic quadratic differentials $q_b$ at the point $b_0$: its
number of poles is not constant (note that the poles of the
Schwarzian are always of order two, and the information about the
branching-index of a BPS is encoded in the principal coefficient of
the Schwarzian, see~\eqref{eq:laurentexpansion}). To overcome this difficulty, we will
adopt a different approach and use Hurwitz spaces to endow $\mathcal
M_{k,\rho}$ with the structure of a \emph{complex manifold}.

\subsection{The smooth topology on $\mathcal{M}_k$}
Given a point $\tau\in\mathcal M_k$ we consider its underlying complex structure on $S$ and name it $t(\tau)\in T_g$. Since $U_{t(\tau)}$ is the
complex analytic universal covering associated to $t$, we can consider a developing map $D_\tau$ for $\tau$. It is tautologically a holomorphic map from $U_\tau$ to $\mathbb{CP}^1$.
The {\bf smooth topology} on $\mathcal M_k$ is the topology induced by
the injective map $\tau\mapsto (\Gamma_{t(\tau)},D_\tau)$ using the topology of
the Bers slice one one component and the uniform convergence on compact sets for
developing maps. As we let $\mathrm{PSL}(2,\mathbb C)$ act on the space of the
developing maps, it is not completely obvious that this topology on $\mathcal{M}_k$ (or even its restriction to $\mathcal{M}_{k,\rho}$) is Hausdorff, so we are led to introduce the topological structure differently.
\subsection{Cut and paste topology on $\mathcal M_{k,\rho}$}

We begin by introducing a topology on $\mathcal M_{k,\rho}$ as
follows. The Riemann surface associated to a BPS in $\mathcal
M_{k,\rho}$ is equipped with its Poincar\'e metric coming from
uniformization. Given $\varepsilon >0$ and $\sigma\in\mathcal
M_{k,\rho}$ define $\mathcal V (\varepsilon,\sigma)\subset\mathcal
M_{k,\rho}$ as the set of elements $\sigma' \in \mathcal
M_{k,\rho}$ such that there exists a diffeomorphism $\Phi : S
\rightarrow S$ satisfying:
\begin{enumerate}
\item $\Phi$ is $(1+\varepsilon)$-bi-Lipschitz with respect to the
  Poincar\'e metrics on $S$ given by $\sigma$ and $\sigma'$ respectively,
\item $\Phi$ is projective outside the $\varepsilon$-neighborhoods of
  the branched set of $\sigma$,
\item $\Phi$ is isotopic to the identity.
\end{enumerate}
 The topology on $\mathcal M_{k,\rho}$ is defined as the one generated
 by the neighbourhoods $\mathcal V (\varepsilon, \sigma)$.

\begin{lemma}
The cut and paste topology on $\mathcal M_{k,\rho}$ is separated.
\end{lemma}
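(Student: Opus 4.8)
The plan is to argue by contradiction, extracting from a hypothetical failure of separation a sequence of near-isometries between the two structures that, in the limit, forces their developing maps to coincide. Suppose $\sigma_0,\sigma_1\in\mathcal M_{k,\rho}$ are distinct but cannot be separated. Since the sets $\mathcal V(\varepsilon,\sigma)$ generate the topology and each is an open neighbourhood of $\sigma$ (take $\Phi=\mathrm{id}$), this forces $\mathcal V(1/n,\sigma_0)\cap\mathcal V(1/n,\sigma_1)\neq\emptyset$ for every $n$; pick $\tau_n$ in this intersection. By definition there are diffeomorphisms $\Phi_n,\Psi_n\colon S\to S$, each isotopic to the identity, with $\Phi_n$ (resp.\ $\Psi_n$) being $(1+1/n)$-bi-Lipschitz between the Poincar\'e metrics of $\sigma_0$ (resp.\ $\sigma_1$) and $\tau_n$, and projective outside the $1/n$-neighbourhood of the branch set of $\sigma_0$ (resp.\ $\sigma_1$). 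First I would form $G_n=\Phi_n^{-1}\circ\Psi_n\colon(S,\sigma_1)\to(S,\sigma_0)$. Composing the two estimates, $G_n$ is $(1+1/n)^2$-bi-Lipschitz between the \emph{fixed} Poincar\'e metrics of $\sigma_1$ and $\sigma_0$; hence $\{G_n\}$ is equicontinuous into the compact surface $S$, and Arzel\`a--Ascoli yields a subsequence converging uniformly to a map $G$. Applying the same to $\{G_n^{-1}\}$ shows $G$ is a distance-preserving bijection, i.e.\ an isometry $(S,\sigma_1)\to(S,\sigma_0)$ homotopic (hence isotopic) to the identity.

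Next I would invoke Teichm\"uller theory: an isometry between marked hyperbolic surfaces isotopic to the identity shows they define the same point $t\in T_g$, and since for $g\ge 2$ the only automorphism of a Riemann surface homotopic to the identity is the identity, in fact $G=\mathrm{id}_S$. In particular $\sigma_0$ and $\sigma_1$ have the same underlying conformal structure $t$, so their developing maps $D_0,D_1$ (unique, since $\rho$ is non-elementary) are both $\rho$-equivariant maps on the same uniformizing cover $U_t$, and, choosing lifts so that the limiting lift is the identity, $\widetilde G_n\to\mathrm{id}$ uniformly on compact subsets of $U_t$.

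Finally I would translate projectivity into a relation between developing maps. On the complement $W_n$ of the lifted $1/n$-neighbourhood of the branch points of $\sigma_0$, the composition $D_{\tau_n}\circ\widetilde\Phi_n$ is a $\rho$-equivariant immersion (as $\widetilde\Phi_n$ commutes with the deck group and $D_{\tau_n}$ is $\rho$-equivariant), agreeing with $D_0$ up to a single $c_n\in\mathrm{PSL}(2,\mathbb C)$ by connectedness of $W_n$; equivariance forces $c_n$ to centralise $\rho(\Gamma_g)$, and non-elementarity of $\rho$ makes this centraliser trivial, so $D_{\tau_n}\circ\widetilde\Phi_n=D_0$ on $W_n$, and likewise $D_{\tau_n}\circ\widetilde\Psi_n=D_1$ off the branch neighbourhood of $\sigma_1$. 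Eliminating $D_{\tau_n}$ gives $D_1=D_0\circ\widetilde G_n$ off a shrinking neighbourhood of the finitely many branch points. For any compact $K\subset U_t$ disjoint from all lifts of branch points this relation holds for all large $n$; letting $n\to\infty$ and using $\widetilde G_n\to\mathrm{id}$ together with continuity of $D_0$ yields $D_1=D_0$ on $K$, hence on all of $U_t$. Thus $\sigma_0=\sigma_1$, contradicting $\sigma_0\neq\sigma_1$, so the topology is separated.

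The main obstacle I expect is the passage to the limit: making rigorous that the $(1+1/n)$-bi-Lipschitz and isotopy-to-identity hypotheses survive Arzel\`a--Ascoli to produce a genuine \emph{surjective} isometry isotopic to the identity (not merely a $1$-Lipschitz map), and that the identity $D_1=D_0\circ\widetilde G_n$, valid only off the branch neighbourhoods, can be taken to the limit uniformly on an exhaustion of $U_t$ by such compacta. The non-elementarity of $\rho$ is essential precisely here, both to guarantee uniqueness of the developing maps and to discard the ambiguity $c_n$.
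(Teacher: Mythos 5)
Your compactness step is exactly the paper's: compose the two comparison diffeomorphisms into $G_n=\Phi_n^{-1}\circ\Psi_n$, use the $(1+1/n)^2$-bi-Lipschitz bound and Arzel\`a--Ascoli (applied to $G_n$ and $G_n^{-1}$) to extract a limiting isometry between the Poincar\'e metrics of $\sigma_1$ and $\sigma_0$ which is isotopic to the identity. The divergence is in the endgame, and there you make one unjustified claim: that $G=\mathrm{id}_S$. The map $G$ is a biholomorphism from $(S,j_1)$ to $(S,j_0)$, where $j_0$ and $j_1$ are two \emph{a priori different} complex structures on $S$; the fact that they represent the same point of $T_g$ tells you only that a biholomorphism isotopic to the identity exists (and, by finiteness of the automorphism group, is unique) --- not that it is the identity map. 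Asserting $G=\mathrm{id}$ amounts to asserting $j_0=j_1$ as tensors, which is precisely what you cannot assume for two arbitrary representatives; the "only automorphism homotopic to the identity is the identity" principle applies to self-maps of a single Riemann surface, not to a biholomorphism between two different structures on the same underlying surface. The same conflation makes the phrase "both $\rho$-equivariant maps on the same uniformizing cover $U_t$" and the claim $\widetilde G_n\to\mathrm{id}$ imprecise.

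The good news is that the flawed claim is not needed. Keeping $G$ as it is, your elimination of $D_{\tau_n}$ gives $D_1=D_0\circ\widetilde G_n$ away from shrinking neighbourhoods of the branch points, and passing to the limit yields $D_1=D_0\circ\widetilde G$ on all of $\widetilde S$, where $\widetilde G$ is a $\Gamma_g$-equivariant diffeomorphism isotopic to the identity. That identity says exactly that $G$ is a projective equivalence between $\sigma_1$ and $\sigma_0$, i.e.\ that they coincide as elements of $\mathcal M_{k,\rho}$ --- which is the contradiction you want; you do not need $D_1=D_0$ literally. Note also that the paper's own endgame is much shorter: the limit $\Psi$ of the $\Psi_{\varepsilon_n}$ is projective on every chart away from the (finite) branch set, because on such a chart the $\Psi_{\varepsilon_n}$ are eventually given by M\"obius transformations which converge by the bi-Lipschitz bound; a continuous map that is projective off finitely many points is projective everywhere, and a projective diffeomorphism isotopic to the identity identifies $\sigma_1$ with $\sigma_2$ in $\mathcal M_{k,\rho}$. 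No detour through Teichm\"uller theory or developing maps is required, though your developing-map route does make explicit where non-elementarity of $\rho$ enters.
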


\begin{proof}
Suppose that $\sigma_i$, for $i=1,2$ are two elements of
$\mathcal M_{k,\rho}$ that cannot be separated. This means that for
every $\varepsilon >0$, there exists an element
$\sigma_{\varepsilon}$ in $\mathcal V (\varepsilon, \sigma_1)
\cap \mathcal V (\varepsilon,\sigma_2)$. By
definition, for every $\varepsilon>0$, and every $i=1,2$, we have a
diffeomorphism $\Phi_{\varepsilon, i} : S \rightarrow
S $ such that properties 1), 2) and 3) above are
satisfied w.r.t. $\sigma_\varepsilon$ and $\sigma_i$.
The diffeomorphism $\Psi_{\varepsilon}= \Phi_{\varepsilon,
  2}^{-1} \circ \Phi_{\varepsilon, 1} $ is
$(1+\varepsilon)^2$-bi-Lipschitz, w.r.t. the metrics given by
$\sigma_1$ and $\sigma_2$, is projective appart from the
$\varepsilon (1+\varepsilon)^2$-neighborhood of the branched set of
$\sigma_1$, and isotopic to the identity. By the theorem of
Arzela-Ascoli, one can find a sequence $\varepsilon_n$ tending to $0$
when $n$ tends to infinity such that $\Psi_{\varepsilon}$ converges to
an isometry $\Psi$ w.r.t the metrics given by $\sigma_1$ and
$\sigma_2$, which is projective appart
from the branched set of $\sigma_1$. Then $\Psi$ is projective
everywhere, and it is isotopic to the identity. Hence $\sigma_1
= \sigma_2$ in $\mathcal M_{k,\rho}$.
\end{proof}

\begin{lemma} \label{l:_c_is_continuous}
Given any holomorphic family $(X,B,\pi,\mathcal W)$ of BPS on $S$ with $k$
branch points and holonomy conjugated to $\rho$, the induced map from
$B$ to $\mathcal M_{k,\rho}$ (endowed with the cut-and-paste topology)
is continuous.
\end{lemma}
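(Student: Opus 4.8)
The plan is to prove continuity at an arbitrary point $b_0\in B$ directly from the definition of the cut-and-paste topology: given $\varepsilon>0$ I will produce a neighborhood $W$ of $b_0$ in $B$ such that $\sigma_b\in\mathcal V(\varepsilon,\sigma_{b_0})$ for every $b\in W$. Concretely this means exhibiting, for each such $b$, a diffeomorphism $\Phi_b:S\to S$ satisfying the three defining conditions of $\mathcal V(\varepsilon,\sigma_{b_0})$, namely $(1+\varepsilon)$-bi-Lipschitzness for the two Poincar\'e metrics, projectivity off the $\varepsilon$-neighborhood of the branch set of $\sigma_{b_0}$, and isotopy to the identity. The whole construction rests on the fact that, in a holomorphic family, the developing maps and the underlying conformal structures vary continuously with the parameter.

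First I would record the continuity inputs. Since $\pi$ is a proper holomorphic submersion with fibers diffeomorphic to $S$, Ehresmann's theorem trivializes the bundle over a small polydisc about $b_0$, identifying the fibers $S_b$ with $S$ smoothly in $b$; together with the $\Gamma_g$-covering of item (3) of Definition~\ref{def_hol_fam} this yields developing maps $D_b:\widetilde S\to\mathbb{CP}^1$, obtained by analytically continuing the holomorphic charts $w_i$ with transition maps $\phi_{ij}(b)\in\mathrm{PSL}(2,\mathbb C)$. After conjugating by a continuous family $g_b\in\mathrm{PSL}(2,\mathbb C)$ with $g_{b_0}=\mathrm{id}$ I may assume every $D_b$ is $\rho$-equivariant. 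As $b\to b_0$ the charts and transition maps converge, so $D_b\to D_{b_0}$ in $C^\infty$ uniformly on compact subsets of $\widetilde S$; moreover the point $t(b)\in T_g$ of the underlying Riemann surface varies continuously by Bers universality, whence the Poincar\'e metrics of $\sigma_b$ converge uniformly to that of $\sigma_{b_0}$. Finally the branch set, being the zero locus along the fibers of the holomorphic differential of the charts $w_i$, varies continuously, so for $b$ close to $b_0$ all branch points of $\sigma_b$ lie inside $N:=N_{\varepsilon/2}$, the union of the $\varepsilon/2$-Poincar\'e-discs around the branch points of $\sigma_{b_0}$; no branching can escape $N$, since the total order is the constant $k$.

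I then build $\Phi_b$ in two pieces. On $\widetilde S$ minus the lift of $N$, both developing maps are immersions, so the map $\widetilde\Phi_b:=D_b^{-1}\circ D_{b_0}$ is locally a well-defined projective biholomorphism; because it is the identity at $b=b_0$ and $D_b\to D_{b_0}$, the identity branch persists and $\widetilde\Phi_b$ stays globally single-valued and $C^0$-close to the identity there. A direct check using $\rho$-equivariance of $D_{b_0}$ and $D_b$ gives $\widetilde\Phi_b\circ\gamma=\gamma\circ\widetilde\Phi_b$, so $\widetilde\Phi_b$ descends to a projective embedding $\Phi_b$ of $S\setminus N$ that is $C^1$-close to the identity. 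Over each complementary disc (close to the discs of $N$) I extend $\Phi_b$ to a diffeomorphism agreeing with these boundary values, possible since they are $C^1$-close to the identity, producing a global diffeomorphism $\Phi_b:S\to S$. By construction $\Phi_b$ is projective outside $N\subset N_\varepsilon$, giving property (2); it is $C^1$-close to the identity while the two Poincar\'e metrics are uniformly close, hence $(1+\varepsilon)$-bi-Lipschitz for $b$ near $b_0$, giving property (1); and since $\Phi_{b_0}=\mathrm{id}$ and $\Phi_b$ depends continuously on $b$, it is isotopic to the identity, giving property (3).

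The delicate point is the gluing step: one must ensure that the locally defined projective transition $D_b^{-1}\circ D_{b_0}$ assembles into a single globally defined, single-valued diffeomorphism of $S\setminus N$ rather than a multivalued correspondence, which is exactly where the $C^0$-smallness and the normalization $\Phi_{b_0}=\mathrm{id}$ force the continuation to remain on the identity sheet. Equally, control of the bi-Lipschitz constant by $\varepsilon$ requires both the $C^1$-convergence of the developing maps away from the branch locus and the uniform convergence of the Poincar\'e metrics. By contrast the extension across $N$ is routine once $N$ is fixed --- even when a multiplicity-$k_i$ point of $\sigma_{b_0}$ splits into several simple branch points of $\sigma_b$ --- because there we only need a diffeomorphism of a disc close to the identity and impose no projectivity.
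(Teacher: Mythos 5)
Your proof is correct and is essentially the paper's argument in different clothing: the map $D_b^{-1}\circ D_{b_0}$ you construct away from the branch locus is exactly the leaf-holonomy of the transversally projective foliation $\{D=\mathrm{const}\}$ on the total space $X$ that the paper uses to define its leaf-preserving diffeomorphisms $\Phi_b$, and both arguments then extend arbitrarily across the excised discs around the branch points and invoke continuity of the Poincar\'e metrics. The single-valuedness worry you flag is handled automatically in the foliation formulation, but your continuation argument on the compact set $S\setminus N$ settles it just as well.
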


\begin{proof}
We have to prove that given $b_0\in B$ and $\varepsilon >0$, there is
a neighborhood of $b_0$ in $B$ such that for every $b$ in this
neighborhood, the BPS
$\sigma_b=\mathcal W|_{S_b}$ on $S$ belongs to $\mathcal V
(\varepsilon, \sigma_{b_0})$.

Since the holonomy of $\sigma_b$ is always conjugated to the non-elementary
representation $\rho$, there is a unique developing map $D_b:
\widetilde{S_b} \rightarrow \mathbb C \mathbb P^1$ which is
$\rho$-equivariant. The family of functions $\{ D_b\}$ defines a
holomorphic function $D:\widetilde{X}\rightarrow \mathbb{CP}^1$. By
$\rho$-equivariance, the foliation defined by $D=\mathrm{cst}$ is
invariant by the $\Gamma_g$-action on $\widetilde{X}$, and defines a
regular holomorphic foliation $\mathcal F$ on $X$ which is
transversally projective. By construction, the BPS on the curves $S_b$
is the restriction of the transversal projective structure of this
foliation, (see subsection~\ref{ss:examples}).
The foliation $\mathcal F$ is tangent to the curves $S_b$ precisely at
the set $B_b$ of branch points of $\sigma_b$. Hence, if we denote by
$B_b^{\varepsilon}$ the set of points of $S_b$ within distance
$\varepsilon$ from $B_b$, and if $b$ is sufficiently close to $b_0$,
there is a family of diffeomorphisms $\Phi_{b}: S_{b_0} \setminus
B_{b_0}^{\varepsilon} \rightarrow S_b$ depending differentiably on the
parameter $b$, that preserves each leaf of $\mathcal F$, and such that
$\Phi_{b_0}$ is equal to the identity. By elementary topological
arguments, this family can be extended differentiably to a family of
diffeomorphisms $\Phi_b$ between $S_{b_0}$ and $S_b$ such that
$\Phi_{b_0} = id$. Because the Poincar\'e metric on the fibers $S_b$
varies continuously with the parameter $b$, for $b$
close enough to $b_0$, this family of diffeomorphisms verifies the
conditions 1), 2) and 3) of the definition of $\mathcal V
(\varepsilon, \sigma_{b_0})$.
\end{proof}

\begin{cor}\label{p:_Ehresman} The cut-and-paste and the smooth topologies coincide on each stratum $\mathcal M_{k_1,\ldots , k_r,\rho}$.\end{cor}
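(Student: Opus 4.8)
The plan is to show that, on a fixed stratum $\mathcal M_{k_1,\ldots,k_r,\rho}$, the identity map is continuous in both directions; since the cut-and-paste topology is generated by the countable basis $\{\mathcal V(1/n,\sigma)\}_n$ and the smooth topology is first countable as well (Bers-slice convergence plus uniform-on-compacts convergence of developing maps), it will suffice to compare convergent sequences. For the direction smooth $\Rightarrow$ cut-and-paste I would use the analytic structure of Subsection~\ref{ss:fixedtypes}. On the stratum the Schwarzian parametrization equips $\mathcal M_{k_1,\ldots,k_r,\rho}$ with a complex analytic structure carrying a tautological holomorphic family of BPS, and I would first observe that along this family the analytic topology agrees with the smooth topology: inverting the Schwarzian operator is, for local uniform convergence, a homeomorphism between Schwarzian quadratic differentials and developing maps, so that convergence of the underlying Riemann surfaces in $T_g$ together with local uniform convergence of Schwarzians is precisely Bers-slice convergence of the uniformizing groups together with uniform-on-compacts convergence of developing maps. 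Granting this, Lemma~\ref{l:_c_is_continuous} applied to the tautological family says exactly that the identity from the stratum with its smooth (hence analytic) topology to the stratum with the cut-and-paste topology is continuous, i.e.\ the smooth topology is finer.

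The substance of the argument is the reverse direction, cut-and-paste $\Rightarrow$ smooth, and this is where I expect the main difficulty. Suppose $\sigma_n\to\sigma$ in the cut-and-paste topology; then I can choose $\varepsilon_n\to 0$ and diffeomorphisms $\Phi_n:S\to S$ that are $(1+\varepsilon_n)$-bi-Lipschitz for the Poincar\'e metrics of $\sigma$ and $\sigma_n$, isotopic to the identity, and projective outside the $\varepsilon_n$-neighborhood of the branch set of $\sigma$. The first task is to extract convergence of the complex structures: bi-Lipschitz constants tending to $1$ for maps isotopic to the identity force the Teichm\"uller distance $d(t(\sigma_n),t(\sigma))\to 0$, hence $t(\sigma_n)\to t(\sigma)$ in the Bers slice. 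By the theorem of Arzela--Ascoli the equicontinuous family $\Phi_n$ subconverges to a $1$-Lipschitz, hence isometric, self-map of $(S,t(\sigma))$ isotopic to the identity; as the only such isometry of a hyperbolic surface is the identity, in fact $\Phi_n\to\mathrm{id}$ uniformly, and so the lifts $\widetilde\Phi_n\to\mathrm{id}$ uniformly on compacts.

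It then remains to upgrade this to convergence of developing maps. On a compact set $K$ in the complement of the branch set of $\sigma$, the map $\Phi_n$ is eventually projective, so on a lift the composition $D_{\sigma_n}\circ\widetilde\Phi_n$ is again a developing map for $\sigma$ and therefore differs from $D_\sigma$ by left composition with some $g_n\in\mathrm{PSL}(2,\mathbb C)$. Here I would use that $\rho$ is non-elementary: choosing $\widetilde\Phi_n$ to be $\Gamma_g$-equivariant, both $D_{\sigma_n}\circ\widetilde\Phi_n$ and $D_\sigma$ are $\rho$-equivariant on the connected complement of the branch lifts, so $g_n$ commutes with $\rho(\Gamma_g)$ and the triviality of the centralizer gives $g_n=\mathrm{id}$. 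Thus $D_{\sigma_n}=D_\sigma\circ\widetilde\Phi_n^{-1}$ off the branch lifts, and letting $K$ exhaust $\widetilde S$ minus the branch lifts while using $\widetilde\Phi_n\to\mathrm{id}$ yields $D_{\sigma_n}\to D_\sigma$ uniformly on compacts. Together with $t(\sigma_n)\to t(\sigma)$ this is exactly smooth convergence, completing the comparison.

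The hard part, as indicated, is the passage through the bi-Lipschitz diffeomorphisms in the last two paragraphs: controlling simultaneously the convergence of the conformal structures and the convergence of the $\Phi_n$ to the identity—so that they can be cancelled against the developing maps—is the only genuinely analytic step, and it is precisely there that the three conditions built into $\mathcal V(\varepsilon,\sigma)$, namely the bi-Lipschitz control, the isotopy to the identity, and the projectivity away from the branch set, are all needed.
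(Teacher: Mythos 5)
Your proposal is correct, and for the direction ``smooth $\Rightarrow$ cut-and-paste'' it coincides with the paper's argument: both build the tautological holomorphic family over the stratum out of the Schwarzian parametrization of Subsection~\ref{ss:fixedtypes} and invoke Lemma~\ref{l:_c_is_continuous} (you are slightly more careful than the paper in identifying the $\mathcal Q_g^{(r)}$-induced topology with the smooth topology defined via Bers slices and developing maps, which is a worthwhile point to make explicit). Where you genuinely diverge is the reverse direction. The paper disposes of it in one line: the identity is a continuous bijection from the stratum with the smooth topology, which is locally compact with countable basis, to the stratum with the (Hausdorff) cut-and-paste topology, and is therefore declared a homeomorphism. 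You instead prove openness directly: from a cut-and-paste--convergent sequence you extract $(1+\varepsilon_n)$-bi-Lipschitz comparison maps, observe that these are $(1+\varepsilon_n)^2$-quasiconformal so the marked conformal structures converge in Teichm\"uller space, run Arzela--Ascoli to force $\Phi_n\to\mathrm{id}$ (using that an isometry of a closed hyperbolic surface homotopic to the identity is the identity), and then cancel the $\Phi_n$ against the developing maps via the triviality of the centralizer of the non-elementary $\rho$ --- exactly the normalization trick the paper itself uses in Lemma~\ref{l:bijection}. This costs you two paragraphs of analysis but buys a self-contained argument that does not lean on the soft topological step (a continuous bijection out of a locally compact second countable space is not open in general, so your explicit verification is the more robust route, and it also exhibits concretely how all three conditions defining $\mathcal V(\varepsilon,\sigma)$ enter). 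The only detail worth flagging is the comparison of developing maps near the branch points, where $\Phi_n$ is not projective: there you should pass through normal families (the $D_{\sigma_n}$ are holomorphic and $\mathbb{CP}^1$-valued, so uniform convergence on compacta of the complement of the branch lifts upgrades to uniform convergence on all compacta by removability of the finitely many punctures); this is standard and does not affect the correctness of your argument.
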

\begin{proof}
  Let $\Pi:\mathcal T_g\to T_g$, $\mathcal T_g^{(r)}\to T_g$ and $\mathcal
  Q_g^{(r)}\to \mathcal T_g^{(r)}\to T_g$ be as before
  (namely, they are respectively the tautological bundle over the
  Teichm\"uller space of $S$, its
  $r$-th symmetric product, and a vector bundle of meromorphic quadratic
  differentials) so that $\mathcal M_{k_1,\ldots , k_r,\rho}$ is identified
  with the sub-set
  of $\mathcal Q_g^{(r)}$ of Schwarzian-integrable differentials with
  holonomy conjugated to $\rho$. Now we build a holomorphic family of
  BPS as follows. The base $B$ is just $\mathcal M_{k_1,\ldots ,
    k_r,\rho}$ endowed with its smooth topology induced by $\mathcal
  Q_q^{(r)}$. Then, we let $X$ be the pull-back of the
  tautological bundle $\pi_t:\mathcal T_g\to T_g$
  via the map $\mathcal Q_g^{(r)}\to T_g$. The set of maps $\mathcal W$ is given by the integral
  of the Schwarzian differentials.  By
  Lemma~\ref{l:_c_is_continuous}, the identity between  $\mathcal
  M_{k_1,\ldots , k_r,\rho}$ with the smooth topology and itself with
  the cut-and-paste topology is continuous. Since $\mathcal
  M_{k_1,\ldots , k_r,\rho}$ with the smooth topology is locally
  compact with countable basis, the identity is a homeomorphism
  between  the two topologies.
\end{proof}

At this point it is worth mentioning that there is yet another viewpoint for studying branched projective structures: that of flat holomorphic connections.
Namely, let $d\geq 0$ and $g\geq2$ be integers. Consider the set of all
quadruples $(C,E,\nabla,s)$ where $C$ is a compact marked Riemann surface of genus $g$
$E$ is a $\mathbb{CP}^1$-bundles over $C$, $\nabla$ is a flat
holomorphic connexion on $E$ (equivalently a horizontal foliation  $\mathcal F$
which is transverse to
the fibers) with irreducible  monodromy, and $s$
is a section which has $d$ points of tangencies with $\mathcal
F$. This provides a BPS on the surface $C$ (as in
subsection~\ref{ss:examples}), and the space of connections provides
analytic coordinates (on each stratum with a fixed number of tangencies).
For details about this viewpoint we refer the reader to~\cite{Gun,Man,Man2,Man3}.

\subsection{Hurwitz spaces} Our goal now is to prove that $\mathcal
M_{k,\rho}$ is locally modeled on a product of Hurwitz spaces,
i.e. moduli spaces of coverings of the disc.

\begin{definition}[Hurwitz spaces]\label{def:hur_sp} Let $U$ be a smooth closed disc,
  $\psi : U \rightarrow \mathbb D$ be a branched covering of degree
  $d$ with no critical values on the boundary, and let
  $f:\mathbb S^1\to \partial U$ be a diffeomorphism so that $\psi\circ
  f(z)=z^d$ on $\mathbb S^1$.  We consider the
set of smooth branched coverings $\psi '  : U' \rightarrow \mathbb D$
of degree $d$
from a smooth closed disc $U'$ to $\mathbb D$, with no critical value on the boundary,
together with an identification $f': \partial U\to\partial U'$ such that
$\psi= \psi'\circ f'$ on $\partial U$ (equivalently, so that $\psi'\circ f'\circ
f(z)=z^d$ on $\mathbb S^1$).
Two such coverings $\psi_i : U_i \rightarrow
\mathbb D$, $i=1,2$, are identified if the diffeomorphism $\varphi =
f_2\circ f_1^{-1}$ from $\partial U_1$ to $\partial U_2$ extends to a
diffeomorphism $\phi: U_1 \rightarrow U_2$ such that $\psi_1=\psi_2\circ\phi$.

The set of equivalence classes under this equivalence relation
is denoted by $\mathcal H (\psi)$, and will be called a Hurwitz
space of degree $d$ coverings.
\end{definition}

Observe that the pull-back via $\psi$ of the projective structure of
$\mathbb D$ given by its inclusion in $\mathbb{CP}^1$ induces a
BPS on $U$. Moreover, if $\psi_i$, $i=1,2$ are two elements of $\mathcal H(\psi)$,
then the diffeomorphism
$\varphi = f_2 \circ f_1 ^{-1}: \partial U_1 \rightarrow \partial U_2$
extends to a projective diffeomorphism from a neighborhood of
$\partial U_1$ to a neighborhood of $\partial U_2$. The
identifications of the boundaries of the discs with $\mathbb S^1$
define a marked point $1$. Namely, we set $1:=f(1)$ in $\partial U$
and $1:=f'(1)$ in any $\partial U'$.

We now prove that Hurwitz spaces are nicely parametrized by open sets
of complex vector spaces, and this will define a natural topology on $\mathcal H(\psi)$.

\begin{lemma}\label{l:Hurwitz} Any Hurwitz space $\mathcal H (\psi)$
  of degree $d$ coverings is a smooth complex
manifold of dimension $d-1$.
More precisely,  $\mathcal H(\psi)$ is in bijection with the set of
complex polynomials of the form
\[ P(z) = z^d + a_{d-1} z^{d-1} + \ldots + a_0 \]
with $a_{d-1} + \ldots + a_0 = 0$ and with all critical values in the
interior of the unit disc.
\end{lemma}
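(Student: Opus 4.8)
The plan is to construct an explicit bijection between $\mathcal H(\psi)$ and the set $\mathcal P$ of polynomials described in the statement, and then to transport onto $\mathcal H(\psi)$ the complex structure that $\mathcal P$ manifestly carries. Indeed, writing $P(z)=z^d+a_{d-1}z^{d-1}+\dots+a_0$, the normalization $a_{d-1}+\dots+a_0=0$ is the single linear relation $P(1)=1$, so the monic polynomials satisfying it form a $(d-1)$-dimensional affine subspace of $\mathbb C^{d}$ coordinatized by $(a_0,\dots,a_{d-1})$; since the $d-1$ critical values of $P$ depend holomorphically on the coefficients, the requirement that they all lie in the open disc $\mathbb D$ cuts out an \emph{open} subset, which is therefore a complex manifold of dimension $d-1$. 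Hence everything reduces to establishing the bijection.

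From a covering to a polynomial: given $\psi':U'\to\mathbb D$ representing a class in $\mathcal H(\psi)$, I would first equip $U'$ with the complex structure pulled back from $\mathbb D$ through $\psi'$. Away from the finitely many branch points this is a smooth complex structure; near a branch point a punctured neighborhood maps as a connected finite cover of a punctured disc, hence is conformally a punctured disc, so the structure extends across the puncture. Next I would cap off the exterior: glue to $U'$ along $\partial U'$ a copy of $\mathbb C\setminus\overline{\mathbb D}$ via the boundary identification, extending $\psi'$ by the standard unbranched model behaving like $\zeta\mapsto\zeta^d$ near infinity. A van Kampen computation shows the resulting surface $\Sigma$ is simply connected, and on it I pull back the complex structure of $\mathbb C$, obtaining a proper degree-$d$ branched covering $\widehat\psi:\Sigma\to\mathbb C$. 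Since $\widehat\psi$ is modeled on $\zeta\mapsto\zeta^d$ near infinity, the unique end of $\Sigma$ is conformally a punctured disc; filling in the puncture compactifies $\Sigma$ to $\mathbb{CP}^1$ and extends $\widehat\psi$ to a degree-$d$ holomorphic self-map of $\mathbb{CP}^1$ fixing $\infty$, that is, to a polynomial $P$ of degree $d$. All its critical points come from the branch points of $\psi'$, so all critical values lie in $\mathbb D$. Finally I would apply the unique affine automorphism $z\mapsto az+b$ of the source normalizing $P$ to be monic with the marked point sent to $z=1$; as $\psi'$ carries the marked point to $1\in\partial\mathbb D$, this yields $P(1)=1$, i.e. $P\in\mathcal P$.

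For the inverse map, given $P\in\mathcal P$ I would check that $P^{-1}(\overline{\mathbb D})$ is a closed topological disc $U'$ on which $P$ restricts to the desired covering. Since no critical value lies outside $\overline{\mathbb D}$, the map $P$ is an unbranched covering over $\mathbb C\setminus\overline{\mathbb D}$, and as $P\sim z^d$ near infinity this exterior is the connected $d$-fold cover of an annulus, whose unique inner boundary circle is exactly $P^{-1}(\partial\mathbb D)$; its complement is then a disc $U'$, and $P|_{U'}$ is a degree-$d$ branched covering onto $\overline{\mathbb D}$ with $d-1$ branch points, carrying $\partial U'$ onto $\partial\mathbb D$ as a $d$-fold cover. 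The equality $P(1)=1$ places the marked point $z=1$ on $\partial U'$ over $1\in\partial\mathbb D$, and uniformizing together with this marked point recovers the boundary identification $f'$, producing an element of $\mathcal H(\psi)$. These two constructions are mutually inverse: a common polynomial with its marking yields biholomorphisms of the two source discs whose composition is a boundary-respecting diffeomorphism intertwining the two coverings, which is precisely the equivalence defining $\mathcal H(\psi)$.

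The main obstacle will be the normalization bookkeeping. The affine reparametrizations $z\mapsto az+b$ form a two-dimensional group, matching the two complex conditions (monic, $P(1)=1$) defining $\mathcal P$; but solving these leaves a finite ambiguity, namely a $d$-th root of unity in $a$ and a choice among the $d$ preimages of $1$. The delicate point is therefore to verify that the boundary parametrization carried by $f$ — equivalently, the marked point together with the induced orientation of $\partial U'$ — singles out one normalized representative, so that the assignment $\psi'\mapsto P$ is genuinely single valued and its inverse well defined. Once this is settled, transporting the complex structure of the open set $\mathcal P\subset\mathbb C^{d-1}$ through the bijection equips $\mathcal H(\psi)$ with the structure of a smooth complex manifold of dimension $d-1$; checking that it agrees with the evident topology on $\mathcal H(\psi)$ is routine and completes the proof.
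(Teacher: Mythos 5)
Your construction is the same as the paper's: glue $\mathbb{CP}^1\setminus\mathbb D$ to $U'$ along the boundary identification, uniformize the resulting genus-zero surface, normalize the induced degree-$d$ map to a monic polynomial with $P(1)=1$, and invert by taking $P^{-1}(\overline{\mathbb D})$. The difficulty is precisely the step you defer as ``the delicate point'': it is not a piece of bookkeeping that can be settled, because the normalization genuinely fails to be single-valued. The affine maps $\ell_\zeta(w)=\zeta w+1-\zeta$ with $\zeta^d=1$ preserve both normalizations (the leading coefficient changes by $\zeta^d=1$, and $\ell_\zeta(1)=1$), so the conditions $\eta(\infty)=\infty$, $\eta(1)=1$ and monicity of $\overline{\psi'}\circ\eta$ determine $\eta$ only up to precomposition by some $\ell_\zeta$: the tangent vector at $\infty$ is pinned down only up to a $d$-th root of unity. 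The marked point does not break the tie. Indeed, if $Q=P\circ\ell_\zeta$, then the normalized boundary lift satisfies $f_Q=\ell_\zeta^{-1}\circ f_P$ (both sides are lifts of $z\mapsto z^d$ sending the marked point to $1$), so $\ell_\zeta^{-1}\colon V_P\to V_Q$ restricts on $\partial V_P$ to exactly $f_Q\circ f_P^{-1}$ and intertwines $P$ with $Q$; by Definition~\ref{def:hur_sp} the two coverings define the \emph{same} class in $\mathcal H(\psi)$, while $P\ne Q$ in general. For $d=2$ take $P_1(z)=z^2$ and $P_2(z)=(z-2)^2$: both are monic, satisfy $P_i(1)=1$ with coefficient sum zero and critical value $0\in\mathbb D$, and $\phi(z)=2-z$ carries $(\overline{\mathbb D},P_1,f_{P_1})$ to $(\{|z-2|\le 1\},P_2,f_{P_2})$ compatibly with the boundary identifications. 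So the assignment you (and the paper) construct is a $d$-to-$1$ correspondence, not a bijection.

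To be fair, your worry is well placed: the paper's own proof is thin at the identical spot, since it appeals to simple transitivity of $\mathrm{PSL}(2,\mathbb C)$ on triples $(x,y,v)$, whereas the monicity condition specifies the tangent vector $v$ at $\infty$ only up to a $d$-th root of unity. The conclusion one actually needs can be salvaged: the $\mathbb Z/d$-action $P\mapsto P\circ\ell_\zeta$ on the polynomial space is free (a fixed polynomial would be invariant under a nontrivial rotation about $1\in\partial V_P$, forcing $1$ to be a critical point and hence a critical value on $\partial\mathbb D$, which is excluded), so the quotient is still a complex manifold of dimension $d-1$, and it is this quotient that is in bijection with $\mathcal H(\psi)$; alternatively one can rigidify the Hurwitz data so that cover automorphisms are not allowed to move the boundary identification. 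In your write-up you must either carry out one of these fixes or accept the $d$-to-$1$ statement; as it stands, the key injectivity claim is asserted but neither proved nor provable in the form you envision.
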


\begin{proof}
For every class $\psi'$ in $\mathcal H(\psi)$, the disc $U'$ is
equipped with a unique Riemann surface structure such that $\psi': U'
\rightarrow \mathbb D$ is holomorphic.
Let $\mathbb D^c$ be the exterior of the unit disc in $\mathbb {C P}^1$.
 We glue $U'$ and $\mathbb D ^c $  by
using the identification of their boundaries given by $f'\circ f$. We
obtain a Riemann surface of genus $0$ that we denote by $\mathbb
P_{[\psi']}$. The covering $\psi' :U'\rightarrow \mathbb D$
can be glued together with the covering $z\in \mathbb D^c \mapsto z^d
\in \mathbb D^c$ to give rise to a holomorphic branched covering
$\overline{\psi'} : \mathbb P_{[\psi']} \rightarrow \mathbb
{CP}^1$. Since the set of M\"obius transformations acts freely and
transitively on triples $(x,y,v)$ where $x\neq y\in\mathbb{CP}^1$ and
$0\neq v\in T_x(\mathbb{CP}^1)$, there is a unique biholomorphism
$\eta : \mathbb {CP}^1
\rightarrow \mathbb P_{[\psi']}$ such that
\begin{itemize}
\item $\eta(\infty)= \infty\in \mathbb D^c$;
\item $\eta(1)=1\in \mathbb S^1=\partial\mathbb D^c$;
\item $ \overline{\psi'} \circ \eta(w) = w^d + O(w^{d-1})$.
\end{itemize}
We denote by $P$ the polynomial $ \overline{\psi'} \circ \eta(w)$.
By construction it satisfies the assumptions of the lemma.
Reciprocally, if $P$ satisfies the assumptions of the lemma, denote by
$V_P:= P^{-1} (\overline{\mathbb D})=\{z:P(z)\leq 1\}$ and let $\psi'$ be the
restriction of $P$ to $V_P$. Because there is no critical value of
modulus $\geq 1$ apart from the point at infinity, $V_P$ is a disc and the covering
$\psi' : \partial V_P \rightarrow \partial \mathbb D$ is
cyclic. Observe that $P(1) = 1$, so that there is a unique
diffeomorphism $ f_P : \partial U\to\partial V_P$ such that $P (f_P(f(z))) = z^d$.
\end{proof}
\begin{rem}\label{rm:fhol}
  From the fact that $P(f_P(f(z)))=z^d$ we get that
$P(z)=(f^{-1}\circ f_P^{-1}(z))^d$ hence $f^{-1}\circ
f_P^{-1}(z)=\sqrt[d]{P(z)}$. It follows that $f_P$ depends
holomorphically on the variable $P$.
\end{rem}
\subsection{A holomorphic atlas on $\mathcal M_{k,\rho}$ modeled on Hurwitz
  spaces}

We begin with the following local description near a branch point $p$ of a
BPS $\sigma$ on a surface $S$. Let $U$ be a disk-neighbourhood of $p$,
with local complex coordinate $\zeta$ so that the map $\psi(\zeta)\to \zeta^d$ belongs to the
atlas of $\sigma$. Choose $f:\mathbb S^1\to\partial U$ as in
definition~\ref{def:hur_sp}.
We identify $\mathcal H(\psi)$ with the set of polynomials given by
Lemma~\ref{l:Hurwitz}. For any $P\in\mathcal H(\psi)$ let
$V_P=P^{-1}(\mathbb D)$ and define the set $X\subset\mathbb C\times
\mathcal H(\psi)$ by
$$X=\{(z,P)\in \mathbb C\times \mathcal H(\psi):\ z\in V_P\}.$$
 Let $\pi:X\to \mathcal H(\psi)$ be the natural projection, which is
 clearly a holomorphic submersion with fibres
 $\pi^{-1}(P)=V_P$. Moreover, the function $w:X\to \mathbb{CP}^1$
 given by $w(z)=P(z)\in \mathbb D\subset\mathbb{CP}^1$ defines a
 maximal atlas $\mathcal W$ so that $w\in\mathcal W$.
 Thus, $X$ can be viewed as a holomorphic family of BPS
 over $\mathcal H(\psi)$. (In our Definition~\ref{def_hol_fam} the
 fibres are diffeomorphic to $S$, but a similar definition can be given for
 families of BPS on a disk.)

The boundary $\partial X$ is a fibre bundle
$$\partial V_P\hookrightarrow\partial X\stackrel{\pi}{\to} \mathcal H(\psi).$$

The identification from $\partial U\times \mathcal
H(\psi)=\partial (U\times \mathcal H(\psi))$ to $\partial X$ given by
$(z,P)\to (f_P(z),P)$ is holomorphic and extends holomorphically from a
collar of $\partial U\times
\mathcal H(\psi)$ in $U\times \mathcal H(\psi)$ to a collar of $\partial X$ in $X$
by Remark~\ref{rm:fhol}.

It follows that by gluing $S\setminus U\times \mathcal H(\psi)$ with
$X$ along their common boundaries using the above identification we
get a complex manifold $\mathcal X$ and a holomorphic submersion
$\mathcal X\to\mathcal H(\psi)$ so that the fibre over $P$ is the BPS
obtained from $\sigma$ by replacing $U$ with $V_P$. Note that, after the identification
  of the collars of $\partial U\times \mathcal H(\psi)$ and of
  $\partial X$, the changes of charts near
  $\partial U$ are the identity by Remark~\ref{rm:fhol}. The developing
maps given by the atlas of $\sigma$ on $S\setminus U$ and by
$(z,P)\to P(z)$ on $X$, provide an atlas $\mathcal W$ as requested by
Definition~\ref{def_hol_fam}, and since $U$
is a disk, such a construction lifts to the universal covering of
$S$. Therefore, $\mathcal X$ is a holomorphic family of BPS over
$\mathcal H(\psi)$.

We are now ready to describe a complex atlas of $\mathcal M_{k,\rho}$
modeled on a product of Hurwitz spaces by repeating the above construction around
every branch point of $\sigma$.

Let $\sigma$ be a BPS on a surface $S$.
For every branch point $p$ of $S$, there exists a disc neighbourhood
$U_p$ with a complex coordinate $\zeta_p$ and an integer $k_p$ such
that the branched covering $\psi_p(\zeta) = \zeta_p ^{k_{p}}$ belongs
to the atlas of $\sigma$. By restricting $U_p$ if necessary and
composing $\psi_p$ on the left by an affine transformation, we may
assume that the $U_p$'s are disjoint and that the image of any
$\psi_p$ is the unit disc $\mathbb D\subset\mathbb{CP}^1$.

We denote by $\Psi:= (\psi_p)_p$, $\mathcal H (\Psi):= \prod_p
\mathcal H(\psi_p)$, and $k=\sum_p k_p$, where the index $p$ runs over all branch points
of $\sigma$. Given any element $\Psi' := (P_p)_p \in \mathcal H
(\Psi) $, we construct a new branched projective structure belonging
to $\mathcal M_{k,\rho}$ by cutting off $U_p$ and gluing back
$V_{P_p}$ via the identifications of the boundaries for every branch
point $p$. This defines a subset of $\mathcal M_{k,\rho}$ that will be
denoted by $\mathcal V ( \Psi )$. This procedure defines a map
$c(\Psi) : \mathcal H (\Psi) \rightarrow \mathcal V(\Psi)$.

\begin{lemma}\label{l:vnei} For every $\Psi$ as above, the set $\mathcal V(\Psi)$ is
  a neighborhood of $\sigma$ in the topology of $\mathcal
  M_{k,\rho}$.
\end{lemma}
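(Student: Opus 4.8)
The plan is to show that $\mathcal V(\Psi)$ contains a basic cut-and-paste neighbourhood of $\sigma$; since $\sigma\in\mathcal V(\Psi)$ already (take $\Psi'=\Psi$, i.e. $P_p(z)=z^{k_p}$, which returns $(U_p,\psi_p)$), this yields the claim. First I would fix the discs $U_p$ and charts $\psi_p$ coming from $\Psi$, and choose $\varepsilon>0$ so small that for every branch point $p$ the $\varepsilon$-ball $N_p$ around $p$ in the Poincar\'e metric of $\sigma$ satisfies $\overline{N_p}\subset U_p$, the $\overline{N_p}$ are pairwise disjoint, and $\psi_p(\overline{N_p})$ lies in the open disc $\mathbb D$. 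I claim $\mathcal V(\varepsilon,\sigma)\subset\mathcal V(\Psi)$. So let $\sigma'\in\mathcal V(\varepsilon,\sigma)$, with associated diffeomorphism $\Phi:S\to S$ that is projective on $S\setminus\bigcup_p\overline{N_p}$ and isotopic to the identity. I would set $\hat\sigma:=\Phi^*\sigma'$; since $\Phi$ is isotopic to the identity it is marking-preserving and is projective as a map $(S,\hat\sigma)\to(S,\sigma')$, so $[\hat\sigma]=[\sigma']$ in $\mathcal M_{k,\rho}$. As $\Phi$ is projective off the $\overline{N_p}$, the structures $\hat\sigma$ and $\sigma$ share charts on $S\setminus\bigcup_p\overline{N_p}$; in particular $\hat\sigma\equiv\sigma$ on $S\setminus\bigcup_pU_p$ and on each collar $U_p\setminus\overline{N_p}$.

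It then remains to identify $\hat\sigma|_{U_p}$ as a Hurwitz element. On the disc $U_p$ I would pick a developing map $\hat D$ for $\hat\sigma$, normalised by a single M\"obius transformation so that $\hat D=\psi_p$ on the collar $U_p\setminus\overline{N_p}$; this is possible because there $\psi_p$ belongs to the common atlas of $\hat\sigma$ and $\sigma$. On the collar $\hat D=\psi_p$ is an unbranched local diffeomorphism onto the annulus $\mathbb D\setminus\overline{\psi_p(N_p)}$, covering it $k_p$-to-one, so all critical values of $\hat D$ lie in the interior region $\psi_p(N_p)\subset\mathbb D$. Moreover $\hat D$ maps $\partial N_p$ to the curve $\psi_p(\partial N_p)$, which winds $k_p$ times around $0$ and bounds $\psi_p(N_p)$; by the argument principle $\hat D|_{N_p}$ is a proper branched cover of degree $k_p$ onto $\psi_p(N_p)$. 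Combining the two pieces, $\hat D:U_p\to\mathbb D$ is a proper branched cover of degree $k_p$ with no critical value on $\partial\mathbb D=\mathbb S^1$ and with $\hat D=\psi_p$ on $\partial U_p$. By Definition~\ref{def:hur_sp} and Lemma~\ref{l:Hurwitz} this is exactly (a representative of) an element $P_p\in\mathcal H(\psi_p)$. Hence $\hat\sigma$ is obtained from $\sigma$ by cutting out each $U_p$ and gluing back the Hurwitz disc $(U_p,\hat D)$, i.e. $\hat\sigma=c(\Psi)(\Psi')$ with $\Psi'=(P_p)_p$, and therefore $\sigma'\sim\hat\sigma\in\mathcal V(\Psi)$.

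The main obstacle I expect is the middle step: proving that the developed image $\hat D(U_p)$ is \emph{exactly} the disc $\mathbb D$ with no critical value on its boundary, rather than some larger or misshapen region, since this is what forces $\hat\sigma|_{U_p}$ to be a genuine degree-$k_p$ covering of $\mathbb D$ and hence a Hurwitz element. The leverage comes from the fact that $\hat D$ is rigidly determined on the collar (where it equals the unbranched $\psi_p$), so the only remaining freedom sits inside $N_p$, where the boundary winding number $k_p$ and the argument principle pin down both the image and the degree. A secondary technical point is the consistency of the M\"obius normalisation of the developing maps across the whole structure: because $S\setminus\bigcup_p\overline{N_p}$ is connected the comparison with $\sigma$ is governed by a single element of $\mathrm{PSL}(2,\mathbb C)$, and the equality of holonomies (both conjugate to $\rho$) together with $\Phi\simeq\mathrm{id}$ ensures compatibility with the equivalence relation defining $\mathcal M_{k,\rho}$.
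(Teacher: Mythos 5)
Your overall strategy is the same as the paper's: the printed proof consists of the single assertion that for sufficiently small $\varepsilon$ one has $\mathcal V(\varepsilon,\sigma)\subset\mathcal V(\Psi)$, so everything you write is detail the paper leaves implicit, and most of it (the normalisation of $\hat D$ on the annular collar by a single M\"obius transformation, which is legitimate because both $\hat D$ and $\psi_p$ are single-valued there, and the identification $[\hat\sigma]=[\sigma']$) is correct. The genuine gap sits exactly at the step you flagged. The argument principle does \emph{not} pin down the image of $\hat D|_{N_p}$: it counts zeros \emph{minus poles}, i.e.\ it yields $\#\hat D^{-1}(w)-\#\hat D^{-1}(\infty)=k_p$ for $w$ inside the boundary curve and $=0$ for finite $w$ outside it, and no local consideration prevents $\hat D$ from taking the value $\infty$ (and hence every value of $\mathbb{CP}^1$) inside $N_p$ while still agreeing with $\psi_p$ on the collar. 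Concretely, performing a bubbling along a small arc contained in $N_p$ produces a BPS that equals $\sigma$ on the collar but whose developing map is surjective onto $\mathbb{CP}^1$; your argument as written does not distinguish such a structure from a Hurwitz element. For the same reason your intermediate claim that ``all critical values of $\hat D$ lie in $\psi_p(N_p)$'' is unjustified: what you know a priori is only that all critical \emph{points} lie in $N_p$.

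What rules out this scenario is the hypothesis you never invoke at the crucial moment, namely that $\sigma'$ belongs to $\mathcal M_{k,\rho}$ and therefore has the \emph{same} total branching order as $\sigma$, all of it concentrated in $\bigcup_p N_p$. Let $m_p$ be the number of poles of $\hat D$ in $N_p$ counted with multiplicity. Cutting $N_p$ along $\hat D^{-1}(\partial\mathbb D)$ gives a proper branched cover of $\mathbb D$ of degree $k_p+m_p$ and a proper branched cover of $\mathbb{CP}^1\setminus\overline{\mathbb D}$ of degree $m_p$; adding the two Riemann--Hurwitz identities and using $\chi(N_p)=1$ shows that the branching order of $\hat\sigma$ inside $N_p$ exceeds that of $\sigma$ at $p$ by exactly $2m_p$. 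Summing over $p$ and comparing with the fixed total $k$ forces $m_p=0$ for every $p$. Once poles are excluded, your argument-principle computation does show that $\hat D(N_p)\subset\overline{\mathbb D}$ and that $\hat D:U_p\to\mathbb D$ is a proper degree-$k_p$ branched cover with no critical value on the boundary, which completes the identification of $\hat\sigma|_{U_p}$ with an element of $\mathcal H(\psi_p)$. With this one supplement your proof is complete, and is in fact considerably more explicit than the one in the paper.
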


\begin{proof} Since every element of $\mathcal V(\varepsilon,\sigma)$
is obtained from $\sigma$ by changing the projective structure only in
the $\varepsilon$-neighborhood of the branch-set of $\sigma$,
for a sufficiently small $\varepsilon>0$,
$\mathcal V(\varepsilon,\sigma)\subset \mathcal V(\Psi)$.
\end{proof}

\begin{lemma} \label{l:bijection} The map $c(\Psi)$ is a bijection. \end{lemma}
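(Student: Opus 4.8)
Surjectivity of $c(\Psi)$ onto $\mathcal V(\Psi)$ is built into the definition, so the whole content is injectivity. The plan is to exploit the rigidity of developing maps forced by non-elementarity. Fix once and for all the representative $\rho$ of the holonomy. Since $\rho$ is non-elementary its centraliser in $\mathrm{PSL}(2,\mathbb C)$ is trivial, so each class in $\mathcal M_{k,\rho}$ has, for this representative, a unique $\rho$-equivariant developing map. If $\Psi'=(P_p)_p$ and $\Psi''=(Q_p)_p$ give the same point of $\mathcal M_{k,\rho}$, there is a projective diffeomorphism $\Phi$, isotopic to the identity, whose equivariant lift $\widetilde\Phi$ satisfies $D''\circ\widetilde\Phi=B\circ D'$ for some $B\in\mathrm{PSL}(2,\mathbb C)$; comparing holonomies gives $\rho=B\rho B^{-1}$, whence $B=\mathrm{id}$ and $D''\circ\widetilde\Phi=D'$.

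By construction $D'$ and $D''$ both coincide with the developing map $D$ of $\sigma$ over the outer region $S_0:=S\setminus\bigcup_pU_p$, and on the lift $\widetilde V_{P_p}$ (resp. $\widetilde V_{Q_p}$) attached to a chosen lift of $p$ one has $D'=g_p\circ P_p$ (resp. $D''=g_p\circ Q_p$), where the M\"obius factor $g_p$ is dictated by the fixed outer chart near $\partial U_p$. The key step is to prove that $\widetilde\Phi$ is the identity on $W:=\pi^{-1}(S_0)$. Over $S_0$ the relation reads $D\circ\widetilde\Phi=D$; since $S_0$ contains no branch point, $D|_W$ is a local diffeomorphism, so the fixed locus $\{x\in W:\widetilde\Phi(x)=x\}$ is open (injectivity of $D$ near a fixed point forces $\widetilde\Phi$ to be the identity there) and trivially closed. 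As $S_0$ is connected (we delete disjoint discs from a surface of genus $\ge2$), so is $W$, and it suffices to produce one fixed point: because $\Phi$ is isotopic to the identity its Lefschetz number equals $\chi(S)=2-2g\neq0$, so $\Phi$ has a fixed point, which lifts to a fixed point of $\widetilde\Phi$; hence $\widetilde\Phi\equiv\mathrm{id}$ on $W$.

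Once $\widetilde\Phi$ is the identity over $S_0$, it restricts to the identity on each boundary $\partial U_p$, so it maps the attached disc $\widetilde V_{P_p}$ onto $\widetilde V_{Q_p}$ by a projective diffeomorphism that is compatible with the boundary identification and fixes the marked point. This is exactly the equivalence defining $\mathcal H(\psi_p)$ in Definition~\ref{def:hur_sp}, so $P_p$ and $Q_p$ represent the same class; by the normal form of Lemma~\ref{l:Hurwitz} they are the same polynomial. Therefore $\Psi'=\Psi''$ and $c(\Psi)$ is injective.

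The main obstacle is the middle step, namely showing $\widetilde\Phi=\mathrm{id}$ over the outer region $S_0$. The open--closed continuation is straightforward, but seeding it requires a genuine fixed point of $\widetilde\Phi$ lying in $W$ and, before that, knowing that $\widetilde\Phi$ respects the decomposition into outer region and attached discs. The former I would obtain from the Lefschetz fixed point theorem together with the correct equivariant choice of lift (arranging the fixed point to lie in $S_0$, for instance by noting that the fixed locus of the projective map $\Phi$ is an analytic set meeting $S_0$); the latter follows because $\Phi$ carries the branch set of one structure to that of the other and both branch sets sit inside the discs, so after the continuation $\widetilde\Phi$ automatically interchanges the corresponding discs.
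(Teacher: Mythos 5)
Your first half runs parallel to the paper's own argument: from a projective equivalence $\Phi$ isotopic to the identity you extract the relation $D''\circ\widetilde\Phi=B\circ D'$ and kill $B$ using the triviality of the centraliser of a non-elementary representation. The genuine gap is exactly in the step you flag as the main obstacle: seeding the open--closed continuation with a fixed point of the \emph{equivariant lift} $\widetilde\Phi$ lying in the outer region. Two things go wrong there. First, the Lefschetz theorem produces a fixed point $x^{*}$ of $\Phi$, but the canonical lift satisfies $\widetilde\Phi(\widetilde x^{*})=\gamma\cdot\widetilde x^{*}$, where $\gamma\in\Gamma_g$ is the class of the loop $t\mapsto\Phi_t(x^{*})$ traced by the isotopy; this $\gamma$ need not be trivial, so $x^{*}$ need not lift to a fixed point of $\widetilde\Phi$ (guaranteeing a fixed point in the trivial Nielsen class requires Nielsen--Wecken theory for surfaces with $\chi<0$, which you do not invoke). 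Second, even granting such a point, nothing places it in $S_0$: the remedy you sketch (``the fixed locus of the projective map $\Phi$ is an analytic set meeting $S_0$'') is unjustified, since $\Phi$ is holomorphic only as a map between two \emph{different} complex structures on $S$, so its fixed-point set is not cut out by a holomorphic equation in any useful sense, and there is no reason a priori for it to meet the complement of the discs. There is also a small circularity in writing $D\circ\widetilde\Phi=D$ on $W$: this uses that $D''$ agrees with $D$ at the point $\widetilde\Phi(x)$, i.e.\ that $\widetilde\Phi$ preserves $W$, which is part of what the continuation is supposed to establish.

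The paper closes the argument by a different and gap-free endgame, which you could substitute for the Lefschetz step: the relation $D\circ\widetilde h=D$, read near $\partial U_p$ in the coordinate $\zeta_p$ where $D=\zeta_p^{k_p}$, gives $(\widetilde h(\zeta_p))^{k_p}=\zeta_p^{k_p}$, so $\widetilde h$ extends holomorphically across the discs and $h$ becomes an automorphism of the compact Riemann surface underlying $\sigma$. Since $g\geq 2$ this automorphism group is finite, and a finite-order automorphism isotopic to the identity is the identity; from $h=\mathrm{id}$ one then reads off the Hurwitz equivalence of the local data exactly as in your last paragraph. Until the fixed-point seeding is repaired (either by this extension-plus-finiteness argument or by an explicit appeal to Nielsen theory), the injectivity proof is incomplete.
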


\begin{proof} To obtain this claim it is sufficient to prove that if
$\Psi'=(P_p)_p\in \mathcal H(\Psi)$
produces a branched projective structure $\sigma'$ equivalent to
$\sigma$, then $\Psi'= \Psi$.
If $\sigma'$ is equivalent to $\sigma$,
then there is a diffeomorphism
$\Phi:S\to S$ isotopic to the identity which is projective
w.r.t. $\sigma'$ and $\sigma$ respectively.  Let
$U'_p=\Phi(V_{P_p})$ and consider the inclusion
$i:S\setminus\{U_p\}\hookrightarrow S\setminus\{V_{P_p}\}$, which is
projective by definition. The map $h=\Phi\circ i$ is a projective
diffeomorphism from $S\setminus \{U_p\}$ to $S\setminus\{U'_p\}$
that is isotopic to the identity. It lifts to the universal covers
$$\tilde h:\widetilde S\setminus\{\widetilde{U_p}\}\to \widetilde
S\setminus \{\widetilde{U'_p}\}$$ as a $\Gamma_g$-equivariant
diffeomorphism. Let $D$ be a developing map for
$\sigma$. Since $\tilde h$ is locally a projective map, and since
$\widetilde S\setminus\{\widetilde {U_p}\}$ is connected, there exists
a M\"obius transformation $A$ such that
$$D\circ \tilde h=A\circ D.$$
By $\rho$-equivariance of $D$ it follows that $A$ commutes with the
image of $\rho$, hence $A=Id$ as $\rho$ is irreducible.

Now we choose local coordinates $\zeta_p$ near a branch point $p$ such that
$D(\zeta_p)=\zeta_p^{k_p}$. We get $(\tilde
h(\zeta_p))^{k_p}=\zeta_p^{k_p}$
which implies that $\tilde h$
can be analytically extended to the whole $\tilde S$. So $h$ extends
to a biholomorphism of $S$. Since $h$ is isotopic to the identity and
 $S$ admits only a finite number of automorphims, we get $h=Id$. It follows that
 $V_{P_p}=U_p$ and that $P_p(\zeta_p)=\zeta_p^{k_p}$, so $\Psi'=\Psi$.
\end{proof}

Using the complex coordinates for Hurwitz spaces given by
Lemma~\ref{l:Hurwitz}, let us now prove the following result.

\begin{lemma}\label{l:cisaholomorphicfamily} There is a
  holomorphic family
  $\pi:\mathcal X\to\mathcal H(\Psi)$ of BPS on $S$ so that the
  structure over a point $b\in \mathcal H(\Psi)$ is $c(\Psi)(b)$.
\end{lemma}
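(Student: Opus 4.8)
The plan is to perform, simultaneously at every branch point of $\sigma$ and over the product base, the very same local cut-and-glue construction carried out just above for a single branch point; the key point is that the neighbourhoods $U_p$ are pairwise disjoint and each modification is supported in a disc, so the constructions at different branch points are independent. I set $B=\mathcal H(\Psi)=\prod_p\mathcal H(\psi_p)$, a finite product of Hurwitz spaces, hence by Lemma~\ref{l:Hurwitz} a smooth complex manifold, each factor being an open subset of the affine space of polynomials $z^{k_p}+a_{k_p-1}z^{k_p-1}+\dots+a_0$ with $a_{k_p-1}+\dots+a_0=0$. Let $\mathrm{pr}_p:B\to\mathcal H(\psi_p)$ be the projection, let $X_p=\{(z,P):|P(z)|\le 1\}\subset\mathbb C\times\mathcal H(\psi_p)$ be the local total space of the single branch point construction, and let $\widehat X_p:=\mathrm{pr}_p^{*}X_p$ be its pull-back, a complex manifold whose fibre over $b=(P_q)_q$ is $V_{P_p}=P_p^{-1}(\overline{\mathbb D})$. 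Keeping the fixed piece $\mathcal X^0:=(S\setminus\bigcup_pU_p)\times B$, I glue $\mathcal X^0$ to each $\widehat X_p$ along $\partial U_p\times B$ using the fibrewise identification $(z,b)\mapsto(f_{P_p}(z),b)$ from Definition~\ref{def:hur_sp}; since the $U_p$ are disjoint these gluings occur along disjoint boundary components. The candidate family is the projection $\pi:\mathcal X\to B$.

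The only delicate point is that $\mathcal X$ is genuinely a complex manifold and $\pi$ a holomorphic submersion, i.e. that the gluing maps are biholomorphic on a collar. This is exactly what Remark~\ref{rm:fhol} provides: since $f_P$ depends holomorphically on $P$, the map $(z,b)\mapsto(f_{P_p}(z),b)$ is holomorphic in all variables, and after identifying collars the change of charts near $\partial U_p$ between the chart inherited from $\mathcal X^0$ and the chart $(z,b)\mapsto P_p(z)$ on $\widehat X_p$ becomes the identity, exactly as in the one branch point case. Thus the assertion reduces to that case, carried out once on each of the finitely many independent factors.

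Granting that $\mathcal X$ is a complex manifold, the rest is bookkeeping against Definition~\ref{def_hol_fam}. The atlas $\mathcal W$ consists of the charts of $\sigma$ on $S\setminus\bigcup_pU_p$ together with $w_p(z,b)=P_p(z)\in\mathbb{CP}^1$ on each $\widehat X_p$; its transition maps lie in $\mathrm{PSL}(2,\mathbb C)$ and depend holomorphically on $b$. Each fibre $V_{P_p}$ is a disc, because $P_p$ has no critical value of modulus $\ge 1$ other than $\infty$ (as in Lemma~\ref{l:Hurwitz}), so every fibre $\pi^{-1}(b)$ is compact and diffeomorphic to $S$; and, since all modifications are supported in simply connected discs, the $\Gamma_g$-covering of $S\setminus\bigcup_pU_p$ extends to a holomorphic $\Gamma_g$-covering $\widetilde{\mathcal X}\to\mathcal X$ restricting to the universal cover on each fibre, which is condition (3). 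By construction the fibre over $b=(P_p)_p$ is $S$ with each $U_p$ replaced by $V_{P_p}$ along the prescribed boundary identification, i.e. exactly $c(\Psi)(b)$, and its holonomy is $\rho$ because the $\rho$-equivariant structure on $S\setminus\bigcup_pU_p$ is untouched. The main (and essentially only) obstacle is the complex-manifold claim of the second paragraph, which is local near the $\partial U_p$ and is settled by Remark~\ref{rm:fhol} exactly as in the single branch point construction.
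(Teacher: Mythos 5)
Your construction is essentially identical to the paper's: your $\widehat X_p=\mathrm{pr}_p^{*}X_p$ is exactly the paper's $Y_p=X_p\times\prod_{q\neq p}\mathcal H(\psi_q)$, the fixed piece, the boundary gluings via $f_{P_p}$, the atlas $w_p(z,b)=P_p(z)$, and the appeal to Remark~\ref{rm:fhol} for holomorphy of the transition maps all match the paper's argument. The proposal is correct, with slightly more explicit bookkeeping of the conditions in Definition~\ref{def_hol_fam} than the paper gives.
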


\begin{proof} For each branch point $p$ of $\sigma$, let
  $X_p$ be as before:
$$X_p=\{(z,P_p)\in\mathbb C\times\mathcal H(\psi_p):\ z\in V_{P_p}\}$$
and define
$$Y_p=X_p\times\Pi_{q\neq p} \mathcal H(\psi_q).$$

We have $\partial Y_p=\partial X_p\times\Pi_{q\neq p} \mathcal
H(\psi_q)$ which, as before, is identified with $\partial U_p\times
\mathcal H(\Psi)$ by using the maps $f_{P_p}:\partial U_p\to\partial V_{P_p}$.
Let $Z=(S\setminus \cup_p U_p )\times  \mathcal H (\Psi)$. Since
$\partial Z=\cup_p\partial U_p\times\mathcal H(\Psi)$, we can glue
$Z$ with $\cup_p Y_p$ along their common boundaries getting a complex
manifold $\mathcal X$. The natural projection $\pi:\mathcal X\to\mathcal
H(\Psi)$ is holomorphic. The maximal atlas $\mathcal W$ is defined as
follows. On $Z$ we use the atlas of $\sigma$. On each $Y_p$ the maps
are defined by
$$(z,P_p,(P_q)_{q\neq p})\mapsto P_p(z)$$
and then we extend this set of maps to a maximal one. After the
identifications via the maps $f_{P_p}$,  the
changes of charts between the atlas of $\sigma$ and the maps on the
$Y_p$'s are projective (because in the local charts with coordinate $\zeta_p$
the change of chart is the identity by by Remark~\ref{rm:fhol}).

Since the disks $U_p$ are disjoint, the whole construction lifts to
the universal cover and so the quadruple $(\mathcal X,\mathcal
H(\Psi),\pi,\mathcal W)$ is a holomorphic family of BPS on $S$.

Finally, by the construction of $\mathcal W$, it follows that
on the fibre over a point $b=(P_p)_p\in\mathcal
H(\Psi)$ we have the structure $\sigma_b=c(\Psi)(b)$.
\end{proof}

\begin{cor}The map $c(\Psi)$ is a
  homeomorphism. \end{cor}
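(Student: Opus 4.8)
The plan is to show that $c(\Psi)$ is a continuous bijection and then to upgrade this to a homeomorphism by a properness argument. Bijectivity is already Lemma~\ref{l:bijection}, and Lemma~\ref{l:vnei} guarantees that the target $\mathcal V(\Psi)$ is a genuine neighbourhood of $\sigma$ in the cut-and-paste topology. For continuity, I would simply observe that by Lemma~\ref{l:cisaholomorphicfamily} the map $c(\Psi)$ coincides with the map $\mathcal H(\Psi)\to\mathcal M_{k,\rho}$ induced by the holomorphic family $\pi:\mathcal X\to\mathcal H(\Psi)$, and this induced map is continuous for the cut-and-paste topology by Lemma~\ref{l:_c_is_continuous}. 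Thus $c(\Psi)$ is a continuous bijection onto $\mathcal V(\Psi)$.

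It remains to prove that $c(\Psi)^{-1}$ is continuous. Here I would use that, by Lemma~\ref{l:Hurwitz}, $\mathcal H(\Psi)=\prod_p\mathcal H(\psi_p)$ is an open subset of a finite-dimensional complex vector space, hence locally compact and second countable, while $\mathcal M_{k,\rho}$ is Hausdorff by the separatedness of the cut-and-paste topology established above. Suppose $c(\Psi)^{-1}$ failed to be continuous at some $\sigma_\infty=c(\Psi)(b_\infty)$: then there is a sequence $\sigma_n=c(\Psi)(b_n)$ converging to $\sigma_\infty$ with $b_n\not\to b_\infty$, so after passing to a subsequence $b_n$ stays outside a fixed neighbourhood of $b_\infty$. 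If $(b_n)$ had a limit $b^\ast\in\mathcal H(\Psi)$, continuity of $c(\Psi)$ would give $c(\Psi)(b^\ast)=\sigma_\infty=c(\Psi)(b_\infty)$, contradicting injectivity; so the only way the inverse can fail to be continuous is that $(b_n)$ escapes every compact subset of $\mathcal H(\Psi)$. Ruling this out is exactly the statement that $c(\Psi)$ is proper, and the whole matter reduces to it.

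The main obstacle is therefore this properness step, and it is where the geometry of the construction enters. Escaping every compact subset of a factor $\mathcal H(\psi_p)$ means, by properness of the critical-value (Lyashko--Looijenga) map on the space of normalised polynomials of Lemma~\ref{l:Hurwitz}, that some critical value of $P_p^{(n)}$ tends to the boundary circle $\partial\mathbb D$. On the other hand the critical values of the $P_p$'s are literally the developed images of the branch points of $\sigma_n$ inside the fixed disc $\mathbb D\subset\mathbb{CP}^1$, since on the complement $S\setminus\bigcup_p U_p$ the chart keeps the developing map of $\sigma$ unchanged. I would argue that cut-and-paste convergence $\sigma_n\to\sigma_\infty$ forces these developed branch loci to converge to those of $\sigma_\infty$: the bi-Lipschitz maps that are projective off shrinking neighbourhoods of the branch set, combined with the rigidity coming from the non-elementary holonomy (as in Lemma~\ref{l:bijection}), pin down the developing maps up to a M\"obius factor which must be the identity, so the branch points and hence their developed images converge. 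Since $\sigma_\infty\in\mathcal V(\Psi)$, its critical values lie in the \emph{open} disc, bounded away from $\partial\mathbb D$, contradicting a critical value approaching $\partial\mathbb D$. Hence $(b_n)$ cannot escape, $c(\Psi)$ is proper, and the subsequence argument of the previous paragraph shows that $c(\Psi)^{-1}$ is continuous. (Alternatively, once the target is known to be a topological manifold of the same dimension, invariance of domain yields openness directly; but the properness route is self-contained and mirrors the reasoning used for Corollary~\ref{p:_Ehresman}.)
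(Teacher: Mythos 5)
Your first two steps coincide with the paper's: continuity of $c(\Psi)$ is deduced from Lemmas~\ref{l:cisaholomorphicfamily} and~\ref{l:_c_is_continuous}, and bijectivity is Lemma~\ref{l:bijection}. You diverge at the continuity of the inverse. The paper settles this in one clause --- $c(\Psi)$ is a continuous bijection and $\mathcal H(\Psi)$ is locally compact, hence it is a homeomorphism --- which, read as a bare general-topology implication, is not sufficient: a continuous bijection from a locally compact Hausdorff space onto a Hausdorff space need not be open (consider $[0,1)\sqcup\{1\}\to[0,1]$), so one must supply properness, or invariance of domain, or a Baire-type argument. Your properness argument is therefore not a detour but an actual completion of this step, and the reduction to properness is clean: the cut-and-paste topology is first countable (the sets $\mathcal V(1/n,\sigma)$ form a neighbourhood basis) and Hausdorff, $\mathcal H(\Psi)$ is an open subset of an affine space, and the identification of the ends of $\mathcal H(\psi_p)$ with ``some critical value tends to $\partial\mathbb D$'' is correct via properness of the critical-value map on the normalised polynomials of Lemma~\ref{l:Hurwitz}.

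The one point you should spell out is the claim that cut-and-paste convergence $\sigma_n\to\sigma_\infty$ forces the developed branch loci to converge. The rigidity argument of Lemma~\ref{l:bijection} gives $D_{\sigma_n}\circ\widetilde\Phi_n=D_{\sigma_\infty}$ only \emph{outside} the $\varepsilon$-neighbourhoods $N_\varepsilon$ of the branch set of $\sigma_\infty$, and says nothing directly about where $D_{\sigma_n}$ sends the interiors of the discs $\Phi_n(N_\varepsilon)$, which is exactly where its critical points live. The missing ingredient is the open mapping theorem: on a component of $\Phi_n(N_\varepsilon)$ the map $D_{\sigma_n}$ is holomorphic with boundary values $D_{\sigma_\infty}(\partial N_\varepsilon)$, a small loop around a critical value of $D_{\sigma_\infty}$, so its image is trapped in the region bounded by that loop. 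With this inserted, the critical values of the $P^{(n)}_p$ accumulate only at those of $P^{(\infty)}_p$, which lie in a compact subset of $\mathbb D$ since $\sigma_\infty\in\mathcal V(\Psi)$, and your contradiction goes through.
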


\begin{proof} By Lemmas~\ref{l:cisaholomorphicfamily}
  and~\ref{l:_c_is_continuous}, we deduce that $c(\Psi)$ is a
  continuous map from $\mathcal H (\Psi)$ to the neighborhood
  $\mathcal V(\Psi)$ of $\sigma$. Because of
  Lemma~\ref{l:bijection} $c(\Psi)$ is bijective, and that $\mathcal
  H(\Psi)$ is locally compact, we conclude that it is a
  homeomorphism.
\end{proof}

This already proves that $\mathcal M_{k,\rho}$ is locally homeomorphic
to $\mathbb R^{2k}$.

\subsection{Proof of theorem~\ref{t:universalfamily}} We begin by the
following holomorphic version of Lemma~\ref{l:_c_is_continuous}.

\begin{lemma}\label{l:_holomorphic_change_of_coordinates} Let
  $(X,B,\pi, \mathcal W)$ be a holomorphic family of BPS with $k$
  branch points and holonomy conjugated to $\rho$. Let $\mathcal U$ be an open set in $B$ so that the induced map
  $\mathcal U\to\mathcal M_{k,\rho}$ is contained in a neighborhood
  modeled on a Hurwitz space $\mathcal H(\Psi)$. Then the induced map
  $b\mapsto\sigma_b=\mathcal W|_{S_b}$
  is holomorphic w.r.t. the complex structure given by the polynomial
  parametrization of $\mathcal H(\Psi)$ (that is to say, the map
  $c(\Psi)^{-1}(\sigma_\bullet):B\to\mathcal H(\Psi)$ is holomorphic).
\end{lemma}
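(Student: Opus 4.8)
The plan is to show that the continuous map $g:=c(\Psi)^{-1}\circ\sigma_\bullet:\mathcal U\to\mathcal H(\Psi)$ is holomorphic by proving that its graph is a complex-analytic subset of $\mathcal U\times\mathcal H(\Psi)$. First I would record that $g$ is continuous: by Lemma~\ref{l:_c_is_continuous} the map $b\mapsto\sigma_b$ is continuous into $\mathcal M_{k,\rho}$ with the cut-and-paste topology, and by the corollary that $c(\Psi)$ is a homeomorphism this passes to $g$. Next I would bring in the two holomorphic families living over the two factors: the given family $X|_{\mathcal U}\to\mathcal U$, and the tautological Hurwitz family $\mathcal X\to\mathcal H(\Psi)$ of Lemma~\ref{l:cisaholomorphicfamily}, whose fibre over $h$ is $c(\Psi)(h)$. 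The object to study is
\[ \Gamma:=\{(b,h)\in\mathcal U\times\mathcal H(\Psi)\ :\ \sigma_b=c(\Psi)(h)\ \text{in}\ \mathcal M_{k,\rho}\}, \]
which is exactly the graph of $g$. It then suffices to prove that $\Gamma$ is analytic.

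The heart of the argument --- and the step I expect to be the main obstacle --- is to rewrite the a priori only topological, and seemingly rigid, condition $\sigma_b=c(\Psi)(h)$ as the vanishing of holomorphic data in $(b,h)$. Here I would use that, once the representative $\rho$ is fixed, a BPS on the marked surface $S$ is completely encoded by its underlying point $t\in T_g$ together with its unique $\rho$-equivariant developing map, equivalently by $t$ and its Schwarzian $q$. As in the proof of Lemma~\ref{l:bijection}, since $g\geq2$ and $\rho$ is non-elementary, two such structures define the same element of $\mathcal M_{k,\rho}$ if and only if they share the same $t$ and the same $q$: a marking-preserving projective diffeomorphism is a marking-preserving biholomorphism, hence the identity, and the conjugating M\"obius transformation commutes with $\rho$, hence is trivial. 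Both families being holomorphic families of Riemann surfaces, the classifying maps $t:\mathcal U\to T_g$ and $t^H:\mathcal H(\Psi)\to T_g$ are holomorphic by Bers' universality~\cite{Bers}; therefore $A:=\{(b,h):t(b)=t^H(h)\}$, the pullback of the diagonal of $T_g\times T_g$, is analytic. Over $A$ the two families have a common underlying holomorphic family of Riemann surfaces, and the developing maps, hence the Schwarzians $q_b$ and $q^H_h$, form holomorphic families of meromorphic quadratic differentials; their equality cuts out $\Gamma=\{(b,h)\in A: q_b=q^H_h\}$ as the zero locus of a holomorphic section, so $\Gamma$ is analytic.

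Finally I would deduce holomorphy of $g$ from analyticity of its graph. The first projection $\mathrm{pr}_1:\Gamma\to\mathcal U$ is a homeomorphism (its inverse is $b\mapsto(b,g(b))$), in particular proper and injective, so $\Gamma$ is a topological manifold homeomorphic to $\mathcal U$ and therefore a pure-dimensional analytic set of complex dimension $\dim_{\mathbb C}\mathcal U$. On the smooth locus of $\Gamma$ the map $\mathrm{pr}_1$ is an injective holomorphic map between equidimensional complex manifolds, hence a local biholomorphism, so $g=\mathrm{pr}_2\circ(\mathrm{pr}_1)^{-1}$ is holomorphic there. By Remmert's proper mapping theorem the set $Z:=\mathrm{pr}_1(\mathrm{Sing}\,\Gamma)$ is analytic, and it is proper in $\mathcal U$ because $\mathrm{pr}_1$ is injective; thus $g$ is holomorphic on $\mathcal U\setminus Z$. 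Being continuous on all of $\mathcal U$ and holomorphic off the proper analytic subset $Z$, $g$ is holomorphic everywhere by Riemann's removable singularity theorem.

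I note that an alternative, more computational route would reconstruct each polynomial $P_p(b)$ directly from the family by the canonical uniformization of Lemma~\ref{l:Hurwitz}, using holomorphic dependence of uniformization on parameters; this also should work, but the normalization of the boundary framing makes the bookkeeping heavier, so I would prefer the graph argument above, whose only delicate point is the analytic characterization of the equivalence relation carried out in the second paragraph.
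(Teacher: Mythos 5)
Your route is genuinely different from the paper's (which reduces to the open stratum of generic branching type, identifies the Hurwitz polynomial there by its unique critical value, shows that value is holomorphic via the tangency locus of the foliation with the fibres, and then extends by continuity plus Riemann's extension theorem). Your graph-theoretic strategy could in principle work, and the reduction of the equivalence $\sigma_b=c(\Psi)(h)$ to ``same Teichm\"uller point and same Schwarzian'' is correct for non-elementary $\rho$ and $g\geq 2$, by the same rigidity argument as in Lemma~\ref{l:bijection}. The final step (an analytic graph of a continuous map forces holomorphy) is also standard, even if your dimension count via ``topological manifold $\Rightarrow$ pure-dimensional'' deserves a cleaner justification through the injectivity and surjectivity of $\mathrm{pr}_1$.

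The genuine gap is exactly at the step you flag as the main obstacle: the assertion that the Schwarzians $q_b$ and $q^H_h$ ``form holomorphic families of meromorphic quadratic differentials'' over $A$ and that their equality is ``the zero locus of a holomorphic section.'' There is no vector bundle over $A$ (nor over $T_g$) of which $b\mapsto q_b$ is a section: the polar divisor of $q_b$ is the branch divisor of $\sigma_b$, which moves, collides, and changes multiplicity as $b$ varies, and the principal part $\frac{1-(k_i+1)^2}{2(z-z_i)^2}$ jumps when branch points merge. This is precisely the discontinuity of the Schwarzian parametrization across the strata $\mathcal M_{k_1,\ldots,k_r,\rho}$ that the paper discusses at length and that motivates the Hurwitz-space charts in the first place; your proposal reasserts holomorphy of the family without confronting it. To repair the argument you must either (i) restrict to the locus where the branching type is locally constant, prove analyticity of the graph there, and then extend across the complementary proper analytic set by continuity and Riemann's extension theorem --- at which point you have essentially reproduced the paper's stratified strategy --- or (ii) realize $q_b-q^H_h$ as a relative \emph{meromorphic} section on the total space of the pulled-back family of curves over $A$ (clearing the moving polar divisor locally) and prove separately that the condition ``a relative meromorphic section vanishes identically on the fibre over $a$'' cuts out an analytic subset of $A$. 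Either repair is feasible, but neither is present in the proposal, and without one the central claim that $\Gamma$ is analytic is unsupported.
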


\begin{proof} Let $k_1,\ldots, k_r$ be integers such that the generic element defined by the holomorphic family $(X,B,\pi,\mathcal W)$ belongs to the stratum $\mathcal M_{k_1,\ldots,k_r,\rho}$. Since we already have the continuity of the map by
  Lemma~\ref{l:_c_is_continuous}, it suffices, by Riemann's extension
  theorem, to show that it is holomorphic on the complement of some
  proper analytic set to deduce that it is holomorphic everywhere. The
  map defined on  $B$ that associates to a point $b$ the
  unordered set of branch points of $S_b$ (that is to say a point in
  the symmetric product of $S_b$) is
  holomorphic. Let $b_0$ be a point such that $\sigma_{b_0}$ belongs to $\mathcal M_{k_1,\ldots,k_r,\rho}$. Let $p_1,\dots,p_r$ be the branch points of
  $\sigma_{b_0}$ and $U_1,\dots,U_r$ be some disk neighbourhoods of the
  $p_i$'s used to define the Hurwitz neighbourhood of
  $\sigma_{b_0}$. The genericity of $b_0$ implies that the polynomials
  that arises near $b_0$ have all a single critical value in $\mathbb
  D$ of multiplicity $k_i$ and so they are completely determined by that (because of
  condition $P(1)=1$ of Lemma~\ref{l:Hurwitz}). We therefore have to
  show that the unique critical point of each such polynomial
  depends holomorphically on $b$.

Fix $i\in\{i,\dots, r\}$ and set $U^{b_0}=U_i$.
We use notation as in Lemma~\ref{l:_c_is_continuous} to define the
foliation $\mathcal F$ and diffeomorphisms $\Phi_b$. In particular, we
may suppose (up possibly to rescaling $U^{b_0}$) that $\Phi_b$ is defined from a
collar of $\partial U^{b_0}$ to a collar of
$\partial U^b$.

By construction, the developing map on $U^{b_0}$ is $\psi(z)= z^{k_i}$,
and the developing map on a collar of $\partial U^b$ is given by
$\psi\circ\Phi_b^{-1}$ (because $\Phi_b$ is constructed via the
$\mathcal F$-flow). The holomorphic map $\psi\circ\Phi_b^{-1}$
extends to a unique holomorphic map $\psi_b$ on $U^b$, which is a
fortiori the developing map for $\sigma_b$ on $U^b$. So, the covering
$\psi_b:U^b\to \mathbb D$ together with the identification
$\Phi_b:\partial U^{b_0}\to\partial U^b$ gives the element in
$\mathcal H(\psi)$.

Following the construction of Lemma~\ref{l:Hurwitz},
let $\eta:\mathbb{CP}^1\to\mathbb P_{\psi_b}$ be the change of
coordinates that give the requested polynomial $P$. Thus, we have
$P=\psi_b\circ\eta$ on $\eta^{-1}(U^b)$. It follows that the unique
critical value of $P$ does not depend on $\eta$, but only on $b$ and it is in fact the
unique critical value of $\psi_b$, which is in turn
the image $\xi(b)$ of the tangency point between $\mathcal F$ and the
curve $S_b$ under the map in $\mathcal W$ which extends $\psi$. This
shows that $\xi(b)$ depends holomorphically on $b$  in a neighbourhood of any point of $B^*:=\{b_0\in B: \sigma_{b_0}\in\mathcal{M}_{k_1,\ldots,k_r,\rho}\}$. If non-empty, the set $B\setminus B^*$ is a proper analytic set in $B$. In either case, the holomorphic map $B^*\rightarrow \mathcal{H}(\Psi)$ extends continuously to $B$ and hence holomorphically by Riemann's extension theorem.
 \end{proof}

\begin{cor}\label{c:holch}
Let $\sigma$ be a BPS on $S$ with $k$ branch points counted with
multiplicity, and holonomy $\rho$. Let $\Psi= (\psi_p)_p$ and $\Psi'=
(\psi_p')_p$ be some systems of projective coordinates around each of
the branch point of $\sigma$, as in Lemma~\ref{l:vnei}.
Then the map $$c(\Psi')^{-1} \circ
c(\Psi) : c(\Psi)^{-1}(\mathcal V(\Psi)\cap \mathcal V(\Psi') \big)
\rightarrow c(\Psi ')^{-1} \big( \mathcal V(\Psi)\cap \mathcal
V(\Psi')\big)$$ is holomorphic.
\end{cor}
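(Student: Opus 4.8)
The plan is to deduce this purely from Lemma~\ref{l:cisaholomorphicfamily} and Lemma~\ref{l:_holomorphic_change_of_coordinates}, since holomorphicity is a local condition that need only be checked in a neighbourhood of each point of the domain $c(\Psi)^{-1}\big(\mathcal V(\Psi)\cap\mathcal V(\Psi')\big)$.

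First I would invoke Lemma~\ref{l:cisaholomorphicfamily} applied to $\Psi$: it provides a holomorphic family $(\mathcal X,\mathcal H(\Psi),\pi,\mathcal W)$ of BPS on $S$ whose fibre over $b\in\mathcal H(\Psi)$ is precisely $c(\Psi)(b)$. By construction every such fibre lies in $\mathcal M_{k,\rho}$, so this family has exactly $k$ branch points counted with multiplicity and holonomy conjugated to $\rho$; in particular it satisfies the standing hypotheses of Lemma~\ref{l:_holomorphic_change_of_coordinates}.

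Next I would set $\mathcal U:=c(\Psi)^{-1}\big(\mathcal V(\Psi)\cap\mathcal V(\Psi')\big)$, an open subset of $\mathcal H(\Psi)$. By the very definition of $\mathcal U$ the induced map $b\mapsto\sigma_b=\mathcal W|_{S_b}=c(\Psi)(b)$ sends $\mathcal U$ into $\mathcal V(\Psi')$, hence into the neighbourhood of $\mathcal M_{k,\rho}$ modeled on the Hurwitz space $\mathcal H(\Psi')$. Thus the restricted family $(\mathcal X|_{\mathcal U},\mathcal U,\pi,\mathcal W)$ meets all the hypotheses of Lemma~\ref{l:_holomorphic_change_of_coordinates} relative to the chart $\mathcal H(\Psi')$. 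The lemma then asserts that $c(\Psi')^{-1}(\sigma_\bullet):\mathcal U\to\mathcal H(\Psi')$ is holomorphic with respect to the polynomial parametrizations of $\mathcal H(\Psi')$ furnished by Lemma~\ref{l:Hurwitz}. Since $\sigma_b=c(\Psi)(b)$ for every $b\in\mathcal U$, this map is exactly $c(\Psi')^{-1}\circ c(\Psi)$, which is the claim.

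The only point that requires care --- and it is really bookkeeping rather than a genuine obstacle --- is to check that the hypotheses of Lemma~\ref{l:_holomorphic_change_of_coordinates} are literally met: that the holonomy along the family is constant and conjugated to $\rho$ (guaranteed because $c(\Psi)$ takes values in $\mathcal M_{k,\rho}$), and that the image of $\mathcal U$ is contained in a single Hurwitz neighbourhood $\mathcal H(\Psi')$ (which is exactly how $\mathcal U$ was defined). All the analytic content --- the holomorphic dependence of the developing map on the fibre through the transverse foliation $\mathcal F$, and the extension across the non-generic strata by Riemann's extension theorem --- has already been absorbed into Lemma~\ref{l:_holomorphic_change_of_coordinates}, so no further estimates are needed here.
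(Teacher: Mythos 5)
Your proposal is correct and follows exactly the paper's argument: apply Lemma~\ref{l:cisaholomorphicfamily} to obtain the holomorphic family over $\mathcal H(\Psi)$ realizing $c(\Psi)$, then apply Lemma~\ref{l:_holomorphic_change_of_coordinates} with the target chart $\mathcal H(\Psi')$ on the overlap. The extra bookkeeping you include (checking the hypotheses on holonomy and on containment in a single Hurwitz neighbourhood) is harmless and only makes explicit what the paper leaves implicit.
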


\begin{proof}
By Lemma~\ref{l:cisaholomorphicfamily}, there is a holomorphic family
of BPS on $\mathcal X\to \mathcal H(\Psi)$ so that the map
$b\mapsto \sigma_b$ (where $b\in\mathcal H(\Psi)$) is $c(\Psi)$. Thus,
by Lemma~\ref{l:_holomorphic_change_of_coordinates},
$c(\Psi)$ is holomorphic w.r.t. the complex structure of $\mathcal
H(\Psi')$ on $c(\Psi)^{-1}(\mathcal V(\Psi)\cup\mathcal
V(\Psi'))$. That is to say, $c(\Psi')^{-1}\circ c(\Psi)$ is
holomorphic.
\end{proof}

We now finish the proof of Theorem~\ref{t:universalfamily}. For any
$\sigma$ there exists a system $\Psi$ of projective coordinates as in
Lemma~\ref{l:vnei} defining a neighbourhood $\mathcal V(\Psi)$ of
$\sigma$. The homeomorphism $c(\Psi)^{-1}:\mathcal V(\Psi)\to\mathcal
H(\Psi)$ provides local complex coordinates, and
Corollary~\ref{c:holch} tells us that the changes of charts are in
fact holomorphic. Thus $\mathcal M_{k,\rho}$ is a complex manifold, and
Lemma~\ref{l:_holomorphic_change_of_coordinates} completes the
proof. \qed


\begin{thebibliography}{10}

\bibitem{ACG}
E. Arbarello, M. Cornalba, P. Griffiths.
\newblock Geometry of algebraic curves.
\newblock Vol. II. {\em Springer Verlag} 2011.

\bibitem{Baba}
 S. Baba.
\newblock 2p-graftings and complex projective structures I.
\newblock arXiv:1011.5051

\bibitem{Bers}
L. Bers.
\newblock Spaces  of Riemann surfaces as bounded domains.
\newblock {\em Bull. Amer. Math. Soc.}  66, 98--103,  1960.

\bibitem{ChoiLee}
S. Choi and H. Lee.
\newblock Geometric structures on manifolds and holonomy-invariant metrics.
\newblock {\em Forum Math.}, 9(2):247--256, 1997.

\bibitem{Fal}
G. Faltings.
\newblock Real projective structures on Riemann Surfaces.
\newblock {\em Compositio Math.} 48, 1983, no. 2, 223--269. 

\bibitem{GKM}
D. Gallo, M. Kapovich, and A. Marden.
\newblock The monodromy groups of {S}chwarzian equations on closed {R}iemann
  surfaces.
\newblock {\em Ann. of Math. (2)}, 151(2):625--704, 2000.

\bibitem{Goldman1}
W.M. Goldman.
\newblock Projective structures with {F}uchsian holonomy.
\newblock {\em J. Differential Geom.}, 25(3):297--326, 1987.

\bibitem{Ghys}
\' E. Ghys.
\newblock D\' eformations des structures complexes sur les espaces homog\`enes de $\textrm{SL}(2,\mathbb{C})$.
\newblock {\em J. Reine Angew. Math.} 468, 1995, 113--138

\bibitem{Gro}
A. Grothendieck
\newblock Techniques de constructions en g\'eom\'etrie analytique X. Construction de l'espace de Teichm\"uller.
\newblock {\em S\'eminaire Henri Cartan}, tome 13, no. 2, p. 1-20, 1960-61.



\bibitem{Gun}
R.C. Gunning.
\newblock Special Coordinate Coverings of Riemann Surfaces.
\newblock {\em Math. Ann.}, 170:67--86, 1967.

\bibitem{HsM}
J. Harris and I. Morrison.
\newblock Moduli of curves.
\newblock {\em Graduate Texts in Mathematics}, 187. Springer-Verlag, New
York, 1998.


\bibitem{Hej}
 D. Hejhal.
  \newblock Monodromy groups and linearly polymorphic functions.
   \newblock {\em Acta Math.} 135 (1975) 1-55.

\bibitem{Farbjuly}
P. Hubert, H. Masur, T. Schmidt, and A. Zorich.
\newblock Problems on billiards, flat surfaces and tranlsation surfaces, 2005,
  Available online at http://math.uchicago.edu/\~~masur/farbjuly.pdf.

\bibitem{HM}
A.T. Huckleberry, G.A. Margulis.
\newblock Invariant analytic hypersurfaces,
\newblock {\em Invent. Math.} 71, (1983) 235-240


\bibitem{Hu91}
A.~Hurwitz.
\newblock Ueber {R}iemann'sche {F}l\"achen mit gegebenen {V}erzweigungspunkten.
\newblock {\em Math. Ann.}, 39(1):1--60, 1891.

\bibitem{KaTa}
Y. Kamishima and S.P. Tan.
\newblock Deformation spaces on geometric structures.
\newblock In {\em Aspects of low-dimensional manifolds}, volume~20 of {\em Adv.
  Stud. Pure Math.}, pages 263--299. Kinokuniya, Tokyo, 1992.

\bibitem{Kodaira}
K. Kodaira.
\newblock Complex manifolds and deformation of complex structures.
\newblock {\em Springer}, 1986

\bibitem{KoZo}
M. Kontsevich and A. Zorich.
\newblock Connected components of the moduli spaces of {A}belian differentials
  with prescribed singularities.
\newblock {\em Invent. Math.}, 153(3):631--678, 2003.

\bibitem{LiuO}
F. Liu and B.Osserman.
\newblock The irreducibility of certain pure-cycle Hurwitz spaces.
\newblock {\em American Journal of Math.} Vol. 130, no. 6, 2008, pp. 1687--1708.


\bibitem{LorayMarin}
F. Loray and D. Mar{\'{\i}}n~P{\'e}rez.
\newblock Projective structures and projective bundles over compact {R}iemann
  surfaces.
\newblock {\em Ast\'erisque}, 323:223--252, 2009.

\bibitem{LoPe}
F. Loray, J.V. Pereira.
\newblock Transversely projective foliations on surfaces: existence of minimal form and prescription of monodromy.
\newblock {\em Internat. J. Math.} 18 (2007), no. 6, 723-747.

\bibitem{Man}
R. Mandelbaum.
\newblock Branched structures on {R}iemann surfaces.
\newblock {\em Trans. Amer. Math. Soc.}, 163:261--275, 1972.

\bibitem{Man2}
R. Mandelbaum.
\newblock Branched structures and affine and projective bundles on {R}iemann surfaces.
\newblock {\em Trans. Amer. Math. Soc.}, 183:37--58,1973


\bibitem{Man3}
R. Mandelbaum.
\newblock Unstable bundles and branched structures on {R}iemann surfaces.
\newblock {\em Math. Ann.} 214:49--59, 1975.

\bibitem{Mat1}
D. V.  Mathews
\newblock Hyperbolic cone-manifold structures with prescribed holonomy I: punctured tori.
\newblock on www.arxiv.org, arXiv:1006.5223v2

\bibitem{Mat2}
D. V. Mathews
\newblock Hyperbolic cone-manifold structures with prescribed holonomy II: higher genus.
\newblock on www.arxiv.org, arXiv:1006.5384v1

\bibitem{Mas69}
B. Maskit.
\newblock On a class of {K}leinian groups.
\newblock {\em Ann. Acad. Sci. Fenn. Ser. A I No.}, 442:8, 1969.

\bibitem{Sca}
B. Sc\' ardua.\newblock Transversely affine and transversely projective holomorphic foliations.
     \newblock {\em Ann. Sci. \' Ecole Normale Sup.}, 4e s\' erie., 30 (1997), 169-204
\bibitem{ShiTa}
H. Shiga and H. Tanigawa.
\newblock Projective structures with discrete holonomy representations.
\newblock {\em Trans. Amer. Math. Soc.}, 351(2):813--823, 1999.

\bibitem{SuThu}
D. Sullivan and W. Thurston.
\newblock On manifolds with canonical coordinates
\newblock {\em Enseignement Math.} (2) 29 (1983) 15-25.

\bibitem{Tan}
S.P. Tan.
\newblock Branched {$\mathbb{CP}^1$}-structures on surfaces with prescribed
  real holonomy.
\newblock {\em Math. Ann.}, 300(4):649--667, 1994.

\bibitem{Tou} F. Touzet.
\newblock
Sur les feuilletages holomorphes transversalement projectifs.
\newblock {\em Ann. Inst. Fourier} (Grenoble) 53 (2003), no. 3, 815-846.


\end{thebibliography}
\end{document}